\newtheorem{thm}{Theorem}[section]
\newtheorem{lem}[thm]{Lemma}
\newtheorem{cor}[thm]{Corollary}
\newtheorem{prop}[thm]{Proposition}
\newtheorem{claim}[thm]{Claim}
\theoremstyle{definition}
\newtheorem{defn}[thm]{Definition}
\newtheorem{conj}[thm]{Conjecture}
\newtheorem{ques}[thm]{Question}
\theoremstyle{remark}
\newtheorem{rem}[thm]{Remark}
\newcommand{\A}{\mathcal A}
\newcommand{\B}{\mathcal B}
\newcommand{\G}{\mathcal G}
\newcommand{\su}{\mathfrak{su}(2)}
\def\ri{\rightarrow}
\newcommand{\pr}{\text{pr}}
\newcommand{\al}{\alpha}
\def\Om{\Omega}
\newcommand{\n}{\mathbb N}
\newcommand{\wt}{\widetilde}
\newcommand{\R}{\mathbb R}
\newcommand{\z}{\mathbb Z}
\def\q{{\mathbb Q}}
\newcommand{\F}{\mathcal{F}}
\newcommand{\co}{\mathbb C}
\newcommand{\calR}{\mathcal{R}}
\newcommand{\cs}{\mathit{cs}}
\newcommand{\Tr}{\operatorname{Tr}}
\renewcommand{\ker}{\operatorname{Ker}}
\newcommand{\im}{\operatorname{Im}}
\newcommand{\ad}{\operatorname{ad}}
\newcommand{\Ad}{\operatorname{Ad}}
\newcommand{\rank}{\operatorname{rank}}
\newcommand{\hol}{\operatorname{Hol}}
\newcommand{\aut}{\operatorname{Aut}}
\newcommand{\id}{\operatorname{Id}}
\newcommand{\mdeg}{\operatorname{mdeg}}
\def\inner<#1>{\langle #1 \rangle}
\def\tR{\widetilde{R}}
\newcommand{\ind}{\operatorname{ind}}
\newcommand{\grad}{\operatorname{grad}}
\newcommand{\End}{\mathrm{End}}
\def\map{\text{Map}}
\def\Af{\A^{\mathrm{flat}}}
\def\tmB{\widetilde{\mathcal{B}}} 
\newcommand{\SL}{\mathit{SL}}
\newcommand{\SU}{\mathit{SU}}
\newcommand{\Hom}{\operatorname{Hom}}
\newcommand{\irr}{\mathrm{irr}}
\newcommand{\Det}{\operatorname{Det}} 
\newcommand{\Span}{\operatorname{span}}
\begin{document}

\title[Filtered instanton Floer homology]{Filtered instanton Floer homology and the homology cobordism group}

\author{Yuta Nozaki}
\address{Organization for the Strategic Coordination of Research and Intellectual Properties, Meiji University \\
4-21-1 Nakano, Nakano-ku, Tokyo, 164-8525 \\
Japan
\newline
Current address:
Graduate School of Advanced Science and Engineering, Hiroshima University \\
1-3-1 Kagamiyama, Higashi-Hiroshima City, Hiroshima, 739-8526 \\
Japan}
\email{nozakiy@hiroshima-u.ac.jp}

\author{Kouki Sato}
\address{Department of Mathematics,
Meijo University \\
1-501 Shiogamaguchi, Tempaku-ku, Nagoya, 468-8502 \\
Japan}
\email{satokou@meijo-u.ac.jp}

\author{Masaki Taniguchi}
\address{
RIKEN, the Institute of Physical and Chemical Research \\
2-1 Hirosawa, Wako, Saitama, 351-0198\\
Japan.}
\email{masaki.taniguchi@riken.jp}

\subjclass[2020]{Primary 57Q60, 81T13, Secondary 58J28, 57R58, 57K31}
%57Q60 Cobordism and concordance in PL-topology 
%81T13 Yang-Mills and other gauge theories in quantum field theory
%58J28 Eta-invariants, Chern-Simons invariants
%57R58 Floer homology
%57K31 Invariants of 3-manifolds (also skein modules; character varieties)
%\date{\today}
\keywords{Homology cobordism group, instanton Floer homology, Chern-Simons functional, definite 4-manifold, knot concordance group}

\begin{abstract}
For any $s \in [-\infty, 0] $ and oriented homology 3-sphere $Y$,
we introduce a homology cobordism invariant $r_s(Y)\in (0,\infty]$. 
The values $\{r_s(Y)\}$ are included in the critical values of the $\SU(2)$-Chern-Simons functional of $Y$, and we show a negative definite cobordism inequality and a connected sum formula for $r_s$.
As applications, we obtain several new results on the homology cobordism group.
First, we give infinitely many homology 3-spheres which cannot bound any definite 4-manifold.
Next, we show that if the 1-surgery of $S^3$ along a knot has the Fr{\o}yshov invariant negative, then all positive $1/n$-surgeries along the knot are linearly independent in the homology cobordism group.
In another direction, we use $\{r_s\}$ to define a filtration on the homology cobordism group which is parametrized by $[0,\infty]$.
Moreover, we compute an approximate value of $r_s$ for the hyperbolic 3-manifold obtained by $1/2$-surgery along the mirror of the knot $5_2$.
\end{abstract}

\maketitle

\setcounter{tocdepth}{2}
\tableofcontents

\section{Introduction}
The study of the structure of the 3-dimensional homology cobordism group $\Theta^3_{\z}$ is one of the central topics in low-dimensional topology.
One of the motivations is a relation to the triangulation problem of topological manifolds.
In 1985, Galewski-Stern \cite{GS76} and Matumoto \cite{M78} proved that for any topological $n$-manifold $M$ with $n \geq 5$ admits a triangulation if and only if a certain cohomology class $\delta (M) \in H^5 ( M; \ker \mu)$ satisfies $\delta (M)=0$, where $\mu \colon \Theta^3_{\z} \to \z/2\z$ is the Rokhlin homomorphism.
Since there is no essential difference between PL and smooth categories for 3- and 4-manifolds, $\Theta^3_{\z}$ is isomorphic to its PL version.
On the other hand, the $n$-dimensional PL version of homology cobordism group is known to be trivial for $n\neq 3$ (\cite{K69}).
Also, Freedman's result \cite{Fr82} implies that the topological version of the 3-dimensional homology cobordism group is trivial.

Various gauge theories and Floer theories have been developed and used to improve the understanding of $\Theta^3_{\z}$.
In the 1980s, Donaldson~\cite{Do83} applied Yang-Mills gauge theory to 4-dimensional topology and proved the diagonalization theorem.
The diagonalization theorem and its extension due to Furuta \cite{Fu87} imply that the Poincar\'e sphere has infinite order in $\Theta^3_{\z}$.
%Fintushel-Stern \cite{FS85} focused on the Seifert fiber structure of the Poincar\'e
%sphere, and developed gauge theory for orbifolds to prove that a certain class of Seifert homology 3-spheres have infinite order.
Fintushel-Stern \cite{FS90} and Furuta \cite{Fu90} developed Yang-Mills gauge theory for orbifolds with cylindrical ends to prove that $\Theta^3_\z$ contains $\z^\infty$ as a subgroup.
On the other hand, Manolescu \cite{Man16} disproved the triangulation conjecture using Seiberg-Witten Floer theory.
Recently, Dai-Hom-Stoffregen-Truong \cite{DHST18} proved the existence of a $\z^\infty$-summand in $\Theta^3_\z$ using involutive Heegaard-Floer theory.
 
%
%Yang-Mills gauge theory has significantly improved the understanding
%of the homology cobordism group $\Theta^3_{\z}$.
%Historically, it was conjectured that the Rokhlin homomorphism 
%$\mu \colon \Theta^3_{\z} \to \z/2\z$ was an isomorphism.

In this paper, we interpret the work in Yang-Mills gauge theory \cite{FS90, Fu90} in terms of instanton Floer homology, and introduce a family $\{r_s\}$ of real-valued homology cobordism invariants for any homology 3-sphere.

\subsection{The invariants $r_s$}
Let $Y$ be an oriented homology 3-sphere.
In \cite{Do02}, Donaldson defined an obstruction class $[\theta]$
(denoted by $D_1$ in \cite{Do02})
lying in the first instanton cohomology group of $Y$ such that $[\theta] \neq 0$ implies the non-existence of any negative definite 4-manifold with boundary $Y$.
On the other hand, Fintushel-Stern~\cite{FS92} defined filtered versions of the instanton cohomology group $\{I^\ast_{[r,r+1]}(Y)\}_{r \in \R}$ 
such that their filtrations are given by a perturbed Chern-Simons functional.
Here, one can see that Fintushel-Stern's method actually enables us
to define a cohomology group $I^\ast_{[s,r]}(Y)$ for an arbitrary interval $[s,r]$,
and the obstruction class $[\theta]$ is well-defined in
$I^1_{[s,r]}(Y)$ for any 
$r \in (0,\infty]$ ($:= \R_{>0} \cup \{\infty\}$) and 
$s \in [-\infty, 0]$ ($:= \R_{\leq 0} \cup \{- \infty\}$).
Therefore, it is natural to ask whether $[\theta]$ vanishes in $I^1_{[s,r]}(Y)$ for a given $Y$ and interval $[s,r]$.
In light of this observation, we define
\[
r_s(Y) := \sup \{ r \in (0,\infty] \mid [\theta]=0 \in I^1_{[s,r]}(Y) \}
\]
for any oriented homology 3-sphere $Y$ and $s \in [-\infty, 0]$.
A more precise definition of $r_s$ is stated in Definition~\ref{def:r_s}.
Such a quantitative construction in the Floer homology has appeared in several Floer theories including Hamiltonian Floer homology (\cite{FH94}, \cite{FHW94}) and embedded contact homology (\cite{Ha11}).

Our main theorem of this paper is stated as follows.

\begin{thm}\label{main theorem} 
The values $\{r_s(Y)\}_{s \in [-\infty, 0]}$ are homology cobordism invariants of $Y$.
Moreover, the invariants $\{r_s\}_{s \in [-\infty, 0]}$
satisfy the following properties: 
\begin{enumerate}
\item If there exists a negative definite cobordism $W$ with $\partial W= Y_1 \amalg -Y_2$, then the inequality
\[
r_s (Y_2) \leq r_s(Y_1) \]
holds for any $s \in [-\infty, 0]$.
Moreover, if $W$ is simply connected 
and $r_s(Y_1) < \infty$, then  the
strict inequality
\[
r_s (Y_2) <r_s(Y_1) \]
holds.
\item If $r_s (Y)< \infty $, then $r_s (Y)$ lies in  the critical values of the Chern-Simons functional of $Y$.
\item If $s_1 \leq s_2$,
then $r_{s_1}(Y) \geq r_{s_2}(Y)$.
\item The inequality 
\[
r_s(Y_1 \# Y_2 ) \geq \min \{ r_{s_1}(Y_1)+s_2 , r_{s_2}(Y_2) +s_1\} 
\]
 holds for any $s,s_1,s_2 \in (-\infty, 0] $ with $s=s_1+s_2$.
\end{enumerate}
\end{thm}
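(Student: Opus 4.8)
The plan is to deduce all four properties from the functoriality of the filtered instanton cohomology groups $I^1_{[s,r]}(Y)$ together with the naturality of the Donaldson class $[\theta]$. The homology cobordism invariance will itself follow from (1): a homology cobordism $W$ from $Y_1$ to $Y_2$ has $b_2(W)=0$, so both $W$ and $-W$ are (vacuously) negative definite, and applying (1) in each direction gives $r_s(Y_1)\leq r_s(Y_2)\leq r_s(Y_1)$.

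Properties (2) and (3) are the formal ones. For (3), when $s_1\leq s_2$ the inclusion of intervals $[s_2,r]\subseteq[s_1,r]$ induces a chain-level inclusion of the truncated Floer complexes (the Floer differential changes the Chern-Simons value monotonically, so the truncations are honest subquotients), hence a natural homomorphism $I^1_{[s_2,r]}(Y)\to I^1_{[s_1,r]}(Y)$ carrying $[\theta]$ to $[\theta]$; thus $[\theta]=0$ at parameter $(s_2,r)$ forces $[\theta]=0$ at $(s_1,r)$, and taking the supremum over $r$ yields $r_{s_1}(Y)\geq r_{s_2}(Y)$. For (2), I would first show that if $[r,r']$ contains no critical value of the Chern-Simons functional of $Y$, then --- after restricting to perturbations small relative to the gaps between critical values, and using that the flat $\SU(2)$ character variety of $Y$ is compact with finitely many components, so the set of critical values is closed and discrete --- the truncated complex gains no new generators and the restriction map $I^1_{[s,r']}(Y)\to I^1_{[s,r]}(Y)$ is an isomorphism compatible with $[\theta]$. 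Hence the condition ``$[\theta]=0\in I^1_{[s,r]}(Y)$'' is locally constant in $r$ off the critical values; since the set of $r$ for which it holds is downward closed with supremum $r_s(Y)$, if $r_s(Y)=\rho<\infty$ were not a critical value we could choose a critical-value-free neighbourhood of $\rho$ on which the condition both holds at some $r<\rho$ and therefore also at some $r>\rho$, contradicting the definition of $\rho$.

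The substance is in (1) and (4). For (1), given a negative definite cobordism $W$ with $\partial W=Y_1\amalg -Y_2$, I would attach cylindrical ends modelled on $Y_1$ and $Y_2$, fix compatible holonomy perturbations, and count the zero-dimensional moduli spaces of perturbed anti-self-dual connections on the resulting manifold with prescribed flat limits; the usual gluing and Uhlenbeck-compactness package then produces a cochain map between the perturbed complexes and hence a homomorphism $I^1_{[s,r]}(Y_2)\to I^1_{[s,r]}(Y_1)$. Two points need verification: that this map carries the cocycle representing $\theta_{Y_2}$ to the one representing $\theta_{Y_1}$ --- here the reducible connection on $W$, essentially unique because $b^+(W)=0$, interpolates between the trivial connections on the two ends --- and that the map respects the window $[s,r]$, for which the negative definiteness of $W$ is used to bound from below the topological energy $\kappa$ of the contributing connections, thereby controlling the Chern-Simons difference between the ends in the right direction. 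Granting these, $[\theta]=0\in I^1_{[s,r]}(Y_2)$ implies $[\theta]=0\in I^1_{[s,r]}(Y_1)$, i.e.\ $r_s(Y_2)\leq r_s(Y_1)$. For the strict inequality when $W$ is simply connected and $r_s(Y_1)<\infty$, I would use that $H^1(W;\R)=0$ eliminates every reducible on $W$ except the trivial one, so that the null-homotopy for $[\theta]$ supplied by $W$ involves only anti-self-dual connections whose energies are bounded below by a positive constant attached to $W$; chasing this through shows that $[\theta]$ already vanishes in $I^1_{[s,r']}(Y_1)$ for some $r'>r_s(Y_2)$, contradicting $r_s(Y_1)=r_s(Y_2)<\infty$. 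I expect this strict-inequality step, together with the precise energy bookkeeping behind window-preservation, to be the main obstacle.

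Finally, for (4) I would use the standard cobordism $W_0$ between $Y_1\#Y_2$ and $Y_1\amalg Y_2$ --- a product cobordism with one $1$-handle attached, so $b_2(W_0)=0$ --- together with the additivity of the Chern-Simons functional under connected sum (the flat $\SU(2)$ connections on $Y_1\#Y_2$ restrict to pairs of flat connections on $Y_1,Y_2$ with Chern-Simons value the sum of the two) to set up, by counting anti-self-dual connections on $W_0$, a comparison between a suitable truncation of $C^*(Y_1)\otimes C^*(Y_2)$ and $C^1_{[s,r]}(Y_1\#Y_2)$ under which a cocycle built out of $\theta_{Y_1}$ and $\theta_{Y_2}$ is sent to $\theta_{Y_1\#Y_2}$. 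Tracking the Chern-Simons levels through this comparison --- where, because $s=s_1+s_2$, a class on one summand must be paired through the connected-sum region with Chern-Simons values of the other summand descending to $s_2$ (respectively $s_1$) --- shows that the null-homotopies witnessing $[\theta_{Y_1}]=0\in I^1_{[s_1,r-s_2]}(Y_1)$ and $[\theta_{Y_2}]=0\in I^1_{[s_2,r-s_1]}(Y_2)$ assemble, by a diagram chase, into a null-homotopy for $\theta_{Y_1\#Y_2}$ in the window $[s,r]$. Since $r-s_2<r_{s_1}(Y_1)$ and $r-s_1<r_{s_2}(Y_2)$ are exactly the conditions $r<r_{s_1}(Y_1)+s_2$ and $r<r_{s_2}(Y_2)+s_1$, taking the supremum over such $r$ gives $r_s(Y_1\#Y_2)\geq\min\{r_{s_1}(Y_1)+s_2,\ r_{s_2}(Y_2)+s_1\}$. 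The hard part here is again analytic: proving the requisite gluing/neck-stretching theorem for $W_0$ while keeping exact track of the Chern-Simons levels.
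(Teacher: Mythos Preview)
Your outline for parts (2), (3), (4), the non-strict half of (1), and the deduction of homology cobordism invariance from (1) all match the paper's approach closely; the analytic steps you flag (window-preservation of the cobordism map, and the connected-sum bookkeeping) are exactly where the paper spends its effort, via the energy estimates in \Cref{useful} and the chain of Lemmas~\ref{conn lem}--\ref{conn theta} leading to \Cref{conn coboundary}.

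The genuine gap is your argument for the \emph{strict} inequality in (1). The cobordism map does not upgrade a null-homotopy of $[\theta_{Y_2}]$ at level $r$ to one at a higher level $r'>r$: the chain-level identity in \Cref{func} reads $CW^{[s,r]}(\theta^{[s,r]}_{Y_2}) = c(W)\,\theta^{[s,r]}_{Y_1} + \delta(n)$ at the \emph{same} level $r$, so a positive lower bound for the energies of irreducible instantons on $W$ does not translate into an improved filtration window for $Y_1$. The paper instead runs a compactness/limit argument (see the proof of \Cref{general cobneq}): assuming $r_s(Y_1)=r_s(Y_2)=r<\infty$, one takes $r_n\searrow r$, finds generators $a_n,b_n$ with $\cs_{\pi}(a_n),\cs_{\pi}(b_n)\to r$ and $\theta(a_n),\theta(b_n)\neq 0$, and uses the identity above to locate instantons $A_n\in M(a_n,W^*,b_n)$. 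Because the energy of $A_n$ is essentially $\cs(a_n)-\cs(b_n)\to 0$, Uhlenbeck compactness yields a limit $A_\infty$ which is \emph{flat} on $W$; its boundary values are irreducible since their Chern--Simons value is $r>0$ (\Cref{fundamental lemma}). Hence $A_\infty$ defines an irreducible $\SU(2)$-representation of $\pi_1(W)$, impossible when $\pi_1(W)=1$. So the mechanism is the opposite of what you sketched: the relevant instanton energies tend to zero rather than staying bounded below, and that is precisely what forces a flat connection on $W$.
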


Recently, Daemi \cite{D18} introduced a family
$\{\Gamma_Y(i)\}_{i \in \z}$ of
real-valued homology cobordism invariants.
Since the $\Gamma_Y(i)$ are also defined by using
instanton Floer theory and satisfy the properties (1) and (2) 
in \Cref{main theorem}, it is natural to ask
whether the $\Gamma_Y(i)$ are related to our $r_s(Y)$.
Roughly speaking, our invariants $\{r_s(Y)\}_{s \in [-\infty, 0]}$
can be seen as a one-parameter family including $\Gamma_{-Y}(1)$.
Precisely, we prove the following equality.

\begin{thm} 
\label{compare}
For any oriented homology $3$-sphere $Y$, the equality
\[
r_{-\infty} (Y) = \Gamma_{-Y}(1)
\]
holds.
\end{thm}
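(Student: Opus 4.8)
The plan is to reduce both $r_{-\infty}(Y)$ and $\Gamma_{-Y}(1)$ to threshold quantities attached to the same filtered instanton complex, and then to match them. First I would recall, from Definition~\ref{def:r_s}, that $r_{-\infty}(Y)$ is the supremum of those $r\in(0,\infty]$ for which Donaldson's class $[\theta]$ (his $D_1$, built from the relative invariant of the cylinder on $Y$ relative to the trivial connection) vanishes in the half-infinite filtered group $I^1_{[-\infty,r]}(Y)$; equivalently, the truncation of $D_1$ to critical values $\le r$ must be a boundary in the corresponding truncated perturbed Chern--Simons complex of $Y$. Next I would recall from \cite{D18} that $\Gamma_{-Y}(1)$ is read off from the instanton chain complex of $-Y$, filtered by (minus the) Chern--Simons functional, as the analogous threshold for the distinguished class playing the role of the trivial connection in Daemi's construction. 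The guiding principle is that, after reversing orientation, these two pictures should literally coincide.

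The central ingredient is a dictionary between the Fintushel--Stern filtered instanton cohomology $I^*_{[s,r]}(\cdot)$ used in this paper and the filtered instanton homology used in \cite{D18}. I would set this up through the orientation-reversal duality: $I^*_{[s,r]}(Y)$ is dual to the corresponding filtered instanton homology of $-Y$, and because the Chern--Simons functional changes sign under $Y\mapsto -Y$, the half-infinite window $[-\infty,r]$ on the $Y$ side is carried to the complementary half-infinite window on the $-Y$ side. Under this duality I would check that $[\theta]\in I^1_{[-\infty,r]}(Y)$ corresponds exactly to the distinguished class entering the definition of $\Gamma_{-Y}(1)$ --- both being incarnations of the trivial $\SU(2)$-connection and its relative invariant --- so that the condition ``$[\theta]$ bounds below level $r$ on the $Y$ side'' becomes equivalent to the defining condition for $\Gamma_{-Y}(1)$ on the $-Y$ side; taking suprema over $r$ then yields $r_{-\infty}(Y)=\Gamma_{-Y}(1)$. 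Along the way I would reconcile the $\z/8$-gradings, the normalization of the Chern--Simons filtration, and the behavior at the limiting endpoints $s=-\infty$ and $r=\infty$, where the relevant groups are colimits (respectively limits) of the finite-window ones.

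The main obstacle I anticipate is precisely this identification of the two distinguished classes. In \cite{D18} the relevant class is defined through cobordism maps and relative Donaldson invariants, whereas $[\theta]$ is Donaldson's $D_1$, built intrinsically from the trivial connection inside the filtered complex; showing that they agree after dualizing amounts to identifying the relative invariant of the trivial cobordism with the canonical generator and tracking that identification compatibly through every truncation. A secondary technical point is that the two frameworks use a priori different holonomy perturbations to achieve transversality; I would dispose of this using the continuation and invariance arguments already needed for \Cref{main theorem}(1), which show the numerical thresholds do not depend on the auxiliary perturbation data, so that the comparison may be carried out for one convenient choice of perturbation on each of $Y$ and $-Y$.
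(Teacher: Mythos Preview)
Your proposal rests on a mischaracterization of Daemi's invariant and therefore misses the actual work needed. In \cite{D18}, $\Gamma_{-Y}(1)$ is \emph{not} the vanishing threshold of a single distinguished cohomology class analogous to $[\theta]$; it is an infimum over all cycles $\alpha$ in a Novikov-type complex $C^\Lambda_*$ with $D_1(\alpha)\ne 0$ of the quantity $\mdeg(D_1(\alpha))-\mdeg(\alpha)$. So there is no ``distinguished class on the $-Y$ side'' to match with $[\theta]$ under a duality, and your central step---identifying two classes across an orientation-reversal duality---has no clear target.

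Relatedly, the paper never passes to $-Y$ at all. The appearance of $-Y$ in $\Gamma_{-Y}(1)$ is an artifact of sign conventions: Daemi's Chern--Simons functional differs from the paper's by a sign, so after translating to the paper's conventions, $\Gamma_{-Y}(1)$ is already computed from the chain complex of $Y$ itself. The actual proof is then purely algebraic and takes place on one side. One rewrites $r_{-\infty}(Y)$ (over $\q$) as the infimum, over cycles $\alpha=\sum x_k[a_k]$ with $\theta_Y(\alpha)\ne 0$, of $\max_k \cs_\pi(a_k)$; this uses only that $[\theta^{[-\infty,r]}_Y]=0$ iff $\theta$ kills every cycle supported below level $r$. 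The comparison with $\Gamma_{-Y}(1)$ is then achieved by the explicit change of variables $[a]\mapsto [a]\lambda^{-\cs_\pi(a)}$ between the filtered $\q$-complex and the Novikov $\Lambda$-complex: this sends cycles to cycles, preserves nonvanishing of $\theta$, and converts the filtration level $\max_k\cs_\pi(a_k)$ into the degree difference $\mdeg(\theta^\Lambda(\widetilde\alpha))-\mdeg(\widetilde\alpha)$. A short extraction argument (picking out the coefficient of a single power of $\lambda$) gives the reverse inequality. None of the orientation-reversal duality, limits over windows, or matching of relative invariants that you outline is needed; the heart of the matter is this $\q\leftrightarrow\Lambda$ dictionary, which your proposal does not mention.
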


As consequences of \Cref{compare} and results in \cite{D18},
we can understand a relationship between $r_s$ and the Fr{\o}yshov invariant $h \colon \Theta^3_{\z} \to \z$ (\cite{Fr02}),
and obtain infinitely many examples with non-trivial $r_s$.
Remark that $r_s(S^3)= \infty$ for any $s \in [-\infty, 0]$, and so we say that $r_s(Y)$ is non-trivial if $r_s(Y)< \infty$.

\begin{cor}
\label{Froyshov}
The inequality $r_{-\infty}(Y) < \infty$ holds if and only if $h(Y)<0$.
In particular, if $h(Y)<0$, then $r_s(Y)$ has a finite value for any 
$s \in [-\infty, 0]$.
\end{cor}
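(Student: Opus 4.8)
The plan is to deduce this from \Cref{compare} together with the comparison between Daemi's invariants and the Fr{\o}yshov invariant recorded in \cite{D18}. By \Cref{compare}, $r_{-\infty}(Y) = \Gamma_{-Y}(1)$, so the claimed equivalence is exactly $\Gamma_{-Y}(1) < \infty \iff h(Y) < 0$. First I would combine the behaviour of $\Gamma$ under orientation reversal with the identity $h(-Y) = -h(Y)$ (a standard property of the instanton Fr{\o}yshov invariant), which reduces the claim to the single statement that $\Gamma_{Z}(1) < \infty$ if and only if $h(Z) > 0$ for an arbitrary oriented homology $3$-sphere $Z$; the corollary then follows on taking $Z = -Y$.

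The equivalence ``$\Gamma_Z(1) < \infty \iff h(Z) > 0$'' is precisely the comparison of $\Gamma$ with the instanton Fr{\o}yshov invariant established in \cite{D18}: finiteness of $\Gamma_Z(1)$ is equivalent to non-vanishing of the Donaldson obstruction class in the unfiltered instanton cohomology of $-Z$, which by \cite{Fr02} is governed by the sign of $h(Z)$. Feeding this back through the orientation-reversal identities yields $r_{-\infty}(Y) = \Gamma_{-Y}(1) < \infty$ if and only if $h(Y) < 0$, which is the first assertion.

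For the ``in particular'' clause I would use the monotonicity in \Cref{main theorem}(3): since $-\infty \leq s$ for every $s \in [-\infty, 0]$, that property gives $r_s(Y) \leq r_{-\infty}(Y)$. Hence if $h(Y) < 0$, then $r_{-\infty}(Y) < \infty$ by the first part, and therefore $r_s(Y) < \infty$ for all $s \in [-\infty,0]$. The one genuinely delicate point is the bookkeeping of conventions: the orientations of $Y$ versus $-Y$, the sign normalization of $h$ (which sign corresponds to a nonzero Donaldson class), and the exact form in which the comparison with $h$ is phrased in \cite{D18} must be matched so that the finite side of $\Gamma_{-Y}(1)$ lands on $h(Y) < 0$ rather than $h(Y) > 0$; the consistency check $r_s(S^3) = \infty$ together with $h(S^3) = 0$ serves to pin down the signs.
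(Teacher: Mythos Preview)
Your proposal is correct and follows essentially the same route as the paper: invoke \Cref{compare} to identify $r_{-\infty}(Y)=\Gamma_{-Y}(1)$, then appeal to \cite{D18} for the equivalence between finiteness of $\Gamma_{-Y}(1)$ and $h(Y)<0$, and use the monotonicity \Cref{main theorem}(3) for the ``in particular'' clause. The only difference is that the paper cites the [D18] comparison more directly via the recorded property ``$\Gamma_Z(k)$ is finite if and only if $2h(Z)\geq k$'' (so $\Gamma_{-Y}(1)<\infty \iff 2h(-Y)\geq 1 \iff h(Y)<0$), whereas you paraphrase it through the Donaldson obstruction class; your version is slightly less precise in its wording but amounts to the same input.
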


Let $\Sigma(a_1, a_2, \ldots, a_n)$ denote the Seifert homology 3-sphere corresponding to a tuple of pairwise coprime integers $(a_1, a_2, \ldots, a_n)$, and let $R(a_1, a_2 ,\ldots, a_n)$ be an odd integer introduced by Fintushel-Stern \cite{FS85}.

\begin{cor}
\label{general Seifert}
If  $R(a_1, a_2 ,\ldots, a_n)>0$, then for any $s \in [-\infty, 0]$,
the equalities
$$
r_s(-\Sigma(a_1,a_2, \ldots, a_n))= \frac{1}{4a_1a_2\cdots a_n}
$$
and
$$
r_s(\Sigma(a_1,a_2, \ldots, a_n))= \infty
$$
hold.
\end{cor}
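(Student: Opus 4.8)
The plan is to assemble \Cref{compare}, \Cref{Froyshov}, the properties in \Cref{main theorem}, and Daemi's explicit computations for Seifert spheres in \cite{D18}, together with the classical fact that the least positive critical value of the $\SU(2)$ Chern--Simons functional of $-\Sigma(a_1,\ldots,a_n)$ equals $\frac{1}{4a_1\cdots a_n}$ (Fintushel--Stern \cite{FS85}).

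First I would treat $-\Sigma(a_1,\ldots,a_n)$. Applying \Cref{compare} with $Y=-\Sigma(a_1,\ldots,a_n)$ gives $r_{-\infty}(-\Sigma(a_1,\ldots,a_n))=\Gamma_{\Sigma(a_1,\ldots,a_n)}(1)$, and when $R(a_1,\ldots,a_n)>0$ the computation in \cite{D18} identifies the right-hand side with $\frac{1}{4a_1\cdots a_n}$. Hence $r_{-\infty}(-\Sigma(a_1,\ldots,a_n))=\frac{1}{4a_1\cdots a_n}<\infty$, and by \Cref{main theorem}(3), for every $s\in[-\infty,0]$ one has $0<r_s(-\Sigma(a_1,\ldots,a_n))\le\frac{1}{4a_1\cdots a_n}$. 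In particular $r_s(-\Sigma(a_1,\ldots,a_n))$ is finite, so \Cref{main theorem}(2) shows it is a critical value of the Chern--Simons functional of $-\Sigma(a_1,\ldots,a_n)$; since the smallest positive such value is $\frac{1}{4a_1\cdots a_n}$, the above inequality forces equality, proving the first assertion.

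For $\Sigma(a_1,\ldots,a_n)$ I would use a bounding argument. With its standard orientation, $\Sigma(a_1,\ldots,a_n)$ is the boundary of the negative definite weighted plumbing $X$ (equivalently the minimal resolution of the associated Brieskorn singularity), which is simply connected. Removing an open $4$-ball from $X$ produces a negative definite cobordism $W$ with $\partial W=\Sigma(a_1,\ldots,a_n)\amalg(-S^3)$, so \Cref{main theorem}(1), applied with $Y_1=\Sigma(a_1,\ldots,a_n)$ and $Y_2=S^3$, yields $\infty=r_s(S^3)\le r_s(\Sigma(a_1,\ldots,a_n))$ for every $s\in[-\infty,0]$, i.e. $r_s(\Sigma(a_1,\ldots,a_n))=\infty$. (One could instead note that $R(a_1,\ldots,a_n)>0$ forces $h(\Sigma(a_1,\ldots,a_n))>0$ and invoke \Cref{Froyshov} to get $r_{-\infty}(\Sigma(a_1,\ldots,a_n))=\infty$, but since \Cref{main theorem}(3) only bounds $r_s$ from above by $r_{-\infty}$, the negative definite filling is still needed for finite $s$.)

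I expect the only genuinely delicate points to be bookkeeping ones: matching the orientation convention for $\Sigma(a_1,\ldots,a_n)$ and the normalization of $R(a_1,\ldots,a_n)$ between this paper, \cite{FS85}, and \cite{D18}, and quoting Daemi's Seifert computation in exactly the form $\Gamma_{\Sigma(a_1,\ldots,a_n)}(1)=\frac{1}{4a_1\cdots a_n}$, so that the sign in front of $\Sigma$ comes out correctly when \Cref{compare} is applied. Once these conventions are pinned down, the proof is a direct combination of \Cref{compare}, \Cref{main theorem}(1)--(3), and the classical value of the minimal Chern--Simons critical level, with no additional gauge-theoretic input.
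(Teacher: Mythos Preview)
Your proposal is correct and follows essentially the same route as the paper: for $-\Sigma(a_1,\ldots,a_n)$ the paper also combines \Cref{compare}, the monotonicity in $s$, Daemi's computation of $\Gamma_{\Sigma}(1)$, and the identification of $\frac{1}{4a_1\cdots a_n}$ as the least positive Chern--Simons critical value; for $\Sigma(a_1,\ldots,a_n)$ the paper likewise invokes the negative definite bounding (packaged as \Cref{obstruct bounding}). The only minor discrepancy is bibliographic: the minimal critical value statement is drawn from \cite{Fu90, FS90} rather than \cite{FS85}, which is the source of the invariant $R(a_1,\ldots,a_n)$.
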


For instance, it is known that $R(p,q,pqk-1)=1$ for any coprime $p,q>1$
and $k \in \z_{>0}$.
Here, one might ask whether $r_s$ is constant for any $Y$.
We show that the answer is negative.
Indeed, the connected sum formula for $r_0$ in \Cref{main theorem} and the above corollaries imply that any $Y_k := 2\Sigma(2,3,5)\#-\Sigma(2,3,6k+5)$ ($k \in \z_{>0}$) satisfies $r_0(Y_k)= \frac{1}{24(6k+5)} < \infty$, while $r_{-\infty}(Y_k)= \infty$ because of $h(Y_k)=1$.

\subsection{Topological applications}%%%%
\label{section 1.2}
Next, we show topological applications of $r_s$, 
which include several new results on the homology cobordism group $\Theta^3_\z$
and the knot concordance group $\mathcal{C}$.

\subsubsection{Homology $3$-spheres with no definite bounding}%%%
We call a 4-manifold \emph{definite} if it is positive definite or negative definite.
It is well-known that the Fr{\o}yshov invariant (\cite{Fr10, Fr02}) and the Heegaard-Floer correction term (\cite{OS04}) are obstructions for the existence of one of positive and negative definite bounding.
%For example, the Poincar\'e homology $3$-sphere $\Sigma(2,3,5)$ does not bound any positive definite $4$-manifold.
However, there was no invariant which is an obstruction for the existence of both positive and negative definite boundings.
Our invariant $r_s(Y)$ is the first example of such an obstruction.
We have the following theorem.

\begin{thm}
\label{definite bounding} 
There exist infinitely many homology $3$-spheres $\{Y_k\}_{k=1}^{\infty}$ which cannot bound any definite $4$-manifold.
Moreover, we can take such $Y_k$ so that 
the $Y_k$ are linearly independent in $\Theta^3_\z$.
\end{thm}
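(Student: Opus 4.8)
The plan is to combine the negative definite cobordism inequality and the connected sum formula of \Cref{main theorem} with the explicit computations of \Cref{general Seifert} and the Fr\o yshov criterion of \Cref{Froyshov}.

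\emph{Step 1 (an obstruction criterion).} First I would observe: if $Y$ bounds a negative definite $4$-manifold $X$, then $X\setminus B^4$ is a negative definite cobordism with boundary $Y\amalg(-S^3)$, so part (1) of \Cref{main theorem} gives $r_s(S^3)\le r_s(Y)$ and hence $r_s(Y)=\infty$ for every $s\in[-\infty,0]$. Reversing orientations, if $Y$ bounds a positive definite $4$-manifold then $r_s(-Y)=\infty$ for all $s$. Consequently it suffices to exhibit homology $3$-spheres $Y_k$ with $r_{s_1}(Y_k)<\infty$ and $r_{s_2}(-Y_k)<\infty$ for some $s_1,s_2\in[-\infty,0]$; such $Y_k$ bound no definite $4$-manifold.

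\emph{Step 2 (the family).} I would take $Y_k:=2\Sigma(2,3,5)\# (-\Sigma(2,3,6k+5))$ for $k\in\z_{>0}$, the examples introduced after \Cref{general Seifert}. Since $5=2\cdot3\cdot1-1$ and $6k+5=2\cdot3\cdot(k+1)-1$, we have $R(2,3,5)=R(2,3,6k+5)=1>0$, so \Cref{general Seifert} evaluates $r_s$ on $\pm\Sigma(2,3,5)$ and $\pm\Sigma(2,3,6k+5)$ for every $s$. Applying part (4) of \Cref{main theorem} with $s_1=s_2=0$ gives $r_0(Y_k)\ge\min\{r_0(2\Sigma(2,3,5)),\tfrac{1}{24(6k+5)}\}=\tfrac{1}{24(6k+5)}$, since $r_0(2\Sigma(2,3,5))\ge r_0(\Sigma(2,3,5))=\infty$; and applying the same formula to the relation $-[\Sigma(2,3,6k+5)]=[Y_k]-2[\Sigma(2,3,5)]$ in $\Theta^3_\z$, together with $r_0(2(-\Sigma(2,3,5)))\ge r_0(-\Sigma(2,3,5))=\tfrac{1}{120}>\tfrac{1}{24(6k+5)}$, forces the minimum to be attained by the $Y_k$-term, so $r_0(Y_k)\le\tfrac{1}{24(6k+5)}$. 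Hence $r_0(Y_k)=\tfrac{1}{24(6k+5)}<\infty$, and $Y_k$ bounds no negative definite $4$-manifold. Since $h(Y_k)=1$ and $h$ is a homomorphism, $h(-Y_k)=-1<0$, so $r_{-\infty}(-Y_k)<\infty$ by \Cref{Froyshov}, and $Y_k$ bounds no positive definite $4$-manifold either. By Step 1, each $Y_k$ bounds no definite $4$-manifold.

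\emph{Step 3 (linear independence).} As $h\colon\Theta^3_\z\to\z$ is a homomorphism and $h(Y_k)=1$ for all $k$, any relation $\sum_k m_k[Y_k]=0$ in $\Theta^3_\z$ forces $\sum_k m_k=0$; the summands $2[\Sigma(2,3,5)]$ then cancel, and the relation becomes $\sum_k m_k[\Sigma(2,3,6k+5)]=0$. Because $\Sigma(2,3,6k+5)=\Sigma(2,3,6(k+1)-1)$ for $k\ge1$, the family $\{\Sigma(2,3,6k+5)\}_{k\ge1}$ is linearly independent in $\Theta^3_\z$ by the classical results of Fintushel--Stern \cite{FS90} and Furuta \cite{Fu90}, which produce $\z^\infty$-subgroups of $\Theta^3_\z$ spanned by such families of Brieskorn spheres (this independence can alternatively be re-derived within the present framework, since the values $r_0(-\Sigma(2,3,6k+5))=\tfrac{1}{24(6k+5)}$ are pairwise distinct). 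Hence all $m_k$ vanish, so $\{Y_k\}$ is linearly independent in $\Theta^3_\z$; in particular these are infinitely many distinct homology $3$-spheres.

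The non-definite-bounding statement is essentially a formal consequence of parts (1) and (4) of \Cref{main theorem} together with \Cref{general Seifert} and \Cref{Froyshov}. The substantive point is Step 3, the linear independence of the underlying Brieskorn spheres; recovering it purely within the present setup --- rather than importing the classical result --- is the delicate part, because the connected sum formula bounds $r_s$ of a connected sum only from below, so turning a hypothetical relation into a contradiction requires supplementing it with the strict form of the negative definite cobordism inequality (part (1) of \Cref{main theorem}) for the simply connected negative definite cobordisms supplied by the surgery presentations of the $\Sigma(2,3,6k+5)$.
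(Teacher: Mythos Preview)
Your approach mirrors the paper's: the same family $Y_k=2\Sigma(2,3,5)\#(-\Sigma(2,3,6k+5))$, the same two-sided obstruction via $r_0(Y_k)<\infty$ and $r_{-\infty}(-Y_k)<\infty$, and the same reduction of linear independence to the Brieskorn spheres. Your direct computation of $r_0(Y_k)=\tfrac{1}{24(6k+5)}$ from the connected sum inequality is correct and is essentially the content of the paper's auxiliary results \Cref{addition}, \Cref{order} and \Cref{subtraction} unpacked.

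There is one genuine gap: you assert $h(Y_k)=1$ without justification. Since $h$ is a homomorphism and $h(\Sigma(2,3,5))=1$, this amounts to $h(\Sigma(2,3,6k+5))=1$. The lower bound $h(\Sigma(2,3,6k+5))\ge 1$ does follow from \Cref{Froyshov} and \Cref{general Seifert}, but the needed \emph{upper} bound $h(\Sigma(2,3,6k+5))\le 1$ does not follow from anything you cite, and without it you cannot conclude $h(-Y_k)<0$. The paper supplies this via three additional lemmas: the identification $\Sigma(2,3,6k-1)\cong S^3_{-1}(K_k)$ for the genus-one twist knot $K_k$ (\Cref{twist knots}), Fr{\o}yshov's estimate $0\le h(S^3_{-1}(K))\le 1$ for genus-one knots (\Cref{genus 1}, taken from \cite{Fr02}), and their combination $h(\Sigma(2,3,6k-1))=1$ (\Cref{Froyshov=1}). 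Without this input, or an external reference computing these Fr{\o}yshov invariants, Step~2 is incomplete; your Step~3 then also needs repair, since your derivation of $\sum_k m_k=0$ relies on $h(Y_k)=1$.

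One minor simplification: the paper's linear-independence argument does not invoke $h$ at all. Since the full family $\{\Sigma(2,3,6k-1)\}_{k\ge 1}$ (which includes $\Sigma(2,3,5)$ for $k=1$) is linearly independent by \cite{FS90,Fu90}, the identity $\sum_k m_k[Y_k]=2(\sum_k m_k)[\Sigma(2,3,5)]-\sum_k m_k[\Sigma(2,3,6k+5)]$ directly forces every coefficient to vanish.
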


Indeed, we can take 
$\{Y_k\}_{k=1}^{\infty} := \{2\Sigma(2,3,5) \# (-\Sigma(2,3,6k+5))\}_{k=1}^{\infty}$
as a concrete example for \Cref{definite bounding}.
We will show that $r_0(Y_k) < \infty$ and $r_0(-Y_k) < \infty$.
Here, we note that if a homology 3-sphere $Y$ is Seifert or obtained by a knot surgery, then $Y$ bounds a definite 4-manifold.
In addition, the existence of a definite bounding is invariant under homology cobordism.
Therefore, we have the following corollaries.

\begin{cor}
For any $k \in \z_{>0}$, the homology cobordism class
$[2\Sigma(2,3,5) \#( -\Sigma(2,3,6k+5))]$
does not contain any Seifert homology $3$-sphere.
\end{cor}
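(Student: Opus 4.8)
The plan is to play off \Cref{definite bounding} against two facts: bounding a definite $4$-manifold is invariant under homology cobordism, and every Seifert homology $3$-sphere bounds a definite $4$-manifold. Write $Y_k := 2\Sigma(2,3,5)\#(-\Sigma(2,3,6k+5))$. By \Cref{definite bounding}, $Y_k$ bounds no definite $4$-manifold; recall that concretely this is because $r_0(Y_k)<\infty$ and $r_0(-Y_k)<\infty$, while \Cref{main theorem}(1) shows that a negative definite bounding $X$ of a homology $3$-sphere $Y$ would produce a negative definite cobordism $X\setminus\operatorname{int}B^4$ with boundary $Y\amalg(-S^3)$, forcing $\infty=r_0(S^3)\le r_0(Y)$ and contradicting $r_0(Y)<\infty$; symmetrically $r_0(-Y)<\infty$ rules out positive definite boundings of $Y$.

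Next I would verify the homology cobordism invariance. Suppose $Y$ is homology cobordant to $Y'$ via a homology cobordism $W$ (so, with suitable orientations, $\partial W=(-Y)\amalg Y'$ and $H_\ast(W;\z)\cong H_\ast(Y\times[0,1];\z)$; in particular the intersection form $Q_W$ vanishes), and suppose $Y=\partial X$ for a definite $4$-manifold $X$. Glue along $Y$ to form $X':=X\cup_Y W$, a smooth $4$-manifold with $\partial X'=Y'$. Since $Y$ is an integral homology sphere, the Mayer--Vietoris sequence gives $H_2(X';\z)\cong H_2(X;\z)\oplus H_2(W;\z)=H_2(X;\z)$, and surfaces representing classes on the two sides are disjoint, so the intersection form of $X'$ is the orthogonal sum $Q_X\oplus Q_W\cong Q_X$, which is definite. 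Hence $Y'$ bounds a definite $4$-manifold, and the property ``bounds a definite $4$-manifold'' is a well-defined property of classes in $\Theta^3_\z$.

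Finally I would use the standard fact, already noted in the paper, that any Seifert homology $3$-sphere $Z$ bounds a definite $4$-manifold: with an appropriate orientation $Z$ bounds its canonical negative definite star-shaped plumbing, and the orientation reversal $-Z$, which is again Seifert, bounds the plumbing with all weights negated, a positive definite $4$-manifold; either way $Z$ bounds a definite $4$-manifold. Now if the homology cobordism class $[Y_k]$ contained a Seifert homology $3$-sphere $Z$, then $Z$ would bound a definite $4$-manifold, hence by the previous paragraph so would $Y_k$, contradicting the first paragraph. Therefore $[Y_k]$ contains no Seifert homology $3$-sphere. I do not expect a genuine obstacle here; the only points requiring a little care are the Mayer--Vietoris splitting of the intersection form across a homology-sphere boundary and the observation that \emph{both} orientations of a Seifert homology $3$-sphere bound definite $4$-manifolds, and both are routine.
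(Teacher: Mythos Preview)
Your proof is correct and follows essentially the same approach as the paper: the corollary is deduced from \Cref{definite bounding} together with the facts that bounding a definite $4$-manifold is a homology cobordism invariant and that every Seifert homology $3$-sphere bounds a definite $4$-manifold. You have simply supplied the details (the Mayer--Vietoris splitting and the plumbing bound) that the paper leaves implicit.
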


The existence of such a homology $3$-sphere was first proved by Stoffregen~\cite{S15}.
The method of the proof is $Pin(2)$-monopole Floer homology.
On the other hand, our proof is based on Yang-Mills instanton theory.

\begin{cor}
For any $k \in \z_{>0}$, 
no representative of $[2\Sigma(2,3,5) \# (-\Sigma(2,3,6k+5))]$
is obtained by a knot surgery.
\end{cor}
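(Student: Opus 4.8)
The plan is to derive the statement formally from \Cref{definite bounding}, using the two facts noted above: (a) a homology $3$-sphere obtained by surgery along a knot in $S^3$ bounds a definite $4$-manifold, and (b) the existence of a definite bounding is invariant under homology cobordism. Throughout, write $Y_k := 2\Sigma(2,3,5)\#(-\Sigma(2,3,6k+5))$.

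For completeness, here is why (a) and (b) hold. For (a): a surgery on a knot $K\subset S^3$ yields a homology sphere only for coefficient $\epsilon/n$ with $\epsilon\in\{\pm1\}$ and $n\in\z_{>0}$; the negative continued fraction expansion $-1/n=[-1,-2,\dots,-2]$ (with $n-1$ twos) exhibits $S^3_{-1/n}(K)$ as the boundary of the $4$-manifold obtained from $B^4$ by attaching $2$-handles along $K$ and a chain of meridians with those framings, whose intersection form is negative definite, and the case $\epsilon=+1$ follows by applying this to the mirror of $K$ and reversing orientation, yielding a positive definite bounding. For (b): a homology cobordism $W$ from $Y$ to $Y'$ has $H_1(W;\z)=H_2(W;\z)=0$, so gluing a definite bounding of $Y'$ onto $\overline W$ along $Y'$ produces, by Mayer--Vietoris, a $4$-manifold with boundary $Y$ whose intersection form is isomorphic to that of the original bounding, hence still definite.

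Granting these, suppose for contradiction that some representative $Y'$ of $[Y_k]\in\Theta^3_\z$ is obtained by a knot surgery. By (a), $Y'$ bounds a definite $4$-manifold; since $Y'$ is homology cobordant to $Y_k$, fact (b) forces $Y_k$ to bound a definite $4$-manifold, contradicting \Cref{definite bounding}. Hence no representative of $[Y_k]$ arises from a knot surgery.

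I do not expect a genuine obstacle here: the corollary is purely formal once \Cref{definite bounding} is in hand. The only point requiring mild care is the orientation bookkeeping in (b), so that the glued $4$-manifold really bounds $Y_k$ rather than $-Y_k$. All the substance is carried by \Cref{definite bounding} itself — concretely, the computation via the connected sum formula in \Cref{main theorem}(4) and \Cref{general Seifert} that both $r_0(Y_k)<\infty$ and $r_0(-Y_k)<\infty$, which is exactly what rules out a negative definite and a positive definite bounding of $Y_k$ — and beyond that the present statement needs no further gauge-theoretic input.
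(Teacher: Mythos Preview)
Your proof is correct and is precisely the argument the paper intends: the corollary follows formally from \Cref{definite bounding} together with the two observations (a) and (b), which the paper states in the paragraph immediately preceding the corollary. One small inaccuracy in your final aside: the paper does not obtain $r_0(-Y_k)<\infty$ from the connected sum formula (which only gives a lower bound), but rather computes $h(Y_k)=1$ via \Cref{Froyshov=1} and then invokes \Cref{Froyshov} to get $r_{-\infty}(-Y_k)<\infty$; this does not affect your derivation of the corollary itself.
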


\subsubsection{Linear independence of $1/n$-surgeries}%%%
\label{section 1.2.2}

In \cite{FS90} and \cite{Fu90}, Fintushel-Stern and Furuta proved that for any coprime integers $p,q>1$,
the Seifert homology 3-spheres $\{\Sigma(p,q,pqn-1)\}_{n =1}^{\infty}$ are linearly independent in $\Theta^3_{\z}$.
We note that $\Sigma(p,q,pqn-1) = -S^3_{1/n}(T_{p,q})$, where $T_{p,q}$ is the $(p,q)$-torus knot and $S^3_{1/n}(K)$ denotes the $1/n$-surgery along a knot $K$ in $S^3$.
From this viewpoint, we generalize the above results as follows.

\begin{thm} \label{knot surgery}
For any knot $K$ in $S^3$, if $h(S^3_1(K))<0$, then $\{S^3_{1/n}(K)\}_{n=1}^{\infty}$
are linearly independent in $\Theta^3_{\z}$.
\end{thm}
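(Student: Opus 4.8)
The plan is to deduce linear independence from the negative definite cobordism inequality and connected sum formula for $r_s$ in \Cref{main theorem}, together with \Cref{Froyshov}, by exploiting the fact that $r_{-\infty}$ takes only finitely many distinct finite values on a given family if they are all "the same surgery scaled by $n$". Concretely, suppose for contradiction that some nontrivial linear combination $\sum_{j=1}^{m} c_j S^3_{1/n_j}(K)$ is homology cobordant to $S^3$, with $n_1 < n_2 < \dots < n_m$ and $c_m \neq 0$. First I would record the basic input: since $h$ is a homomorphism and $h(S^3_1(K)) < 0$, \Cref{Froyshov} gives $r_{-\infty}(S^3_{1/n}(K)) < \infty$ for every $n \geq 1$, because $h(S^3_{1/n}(K)) = n \cdot h(S^3_1(K)) < 0$ (this is the standard fact that $h$ behaves linearly under $1/n$-surgery, following from the Fintushel--Stern / Fr{\o}yshov computation for surgeries).

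Second, I would compute or bound $r_{-\infty}$ of the spheres $S^3_{1/n}(K)$. The key is that $-S^3_{1/n}(K)$, $-S^3_{1/(n+1)}(K)$, and $-S^3$ are related by negative definite cobordisms coming from attaching a single $2$-handle (the surgery cobordism from $S^3_{1/n}(K)$ to $S^3_{1/(n+1)}(K)$ has a diagonalizable, in fact negative definite, intersection form once oriented correctly, and is simply connected). Applying \Cref{main theorem}(1) to this tower of simply connected negative definite cobordisms, I get a \emph{strict} monotonicity: $r_{-\infty}(-S^3_{1/(n+1)}(K)) < r_{-\infty}(-S^3_{1/n}(K))$ whenever the right-hand side is finite — equivalently, after passing to the correct orientation, $r_{-\infty}$ restricted to the family $\{S^3_{1/n}(K)\}$ (or its mirror) is \emph{strictly monotone in $n$}. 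This strict monotonicity is the crucial structural fact that distinguishes the surgeries.

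Third, I would feed a minimal nontrivial relation into the connected sum formula. Writing $Z = \sum_{j<m} c_j S^3_{1/n_j}(K)$, the assumed relation says $c_m S^3_{1/n_m}(K) \sim -Z$, i.e. $|c_m|$ copies of $\pm S^3_{1/n_m}(K)$ is homology cobordant to a connected sum of $\pm S^3_{1/n_j}(K)$ with $j < m$ only. Taking $s = 0$ and iterating the subadditivity/superadditivity of $r_0$ under connected sum from \Cref{main theorem}(4), together with homology cobordism invariance, one obtains an inequality of the form $r_0\bigl(c_m S^3_{1/n_m}(K)\bigr) \geq \min_j r_0(\pm S^3_{1/n_j}(K)) + (\text{bounded negative correction})$; since the summand-wise connected sum formula forces the left side to be governed by $n_m$ (the largest index, hence by strict monotonicity the \emph{smallest} value of $r$ among the relevant ones after orientation bookkeeping) while the right side is governed by the strictly larger values attached to the smaller indices $n_j$, $j<m$, we reach a contradiction once we track the signs carefully. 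The bookkeeping of which orientation makes the surgery cobordism negative definite, and consequently whether $r$ increases or decreases with $n$, has to be done on both $Y_k$ and $-Y_k$ simultaneously; this is where one must be careful because the connected sum formula (4) is only stated for $s, s_1, s_2 \leq 0$, so one cannot freely mix $Y$ and $-Y$ — one has to instead combine (4) with (1) applied to the standard negative definite cobordism realizing $Y \# -Y \sim S^3$.

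The main obstacle I expect is precisely this last point: extracting a usable inequality from a general relation $\sum c_j S^3_{1/n_j}(K) \sim 0$ when the $c_j$ have mixed signs, since the connected sum formula for $r_0$ only controls $r_0$ of a sum from below, not from above, and does not directly see mirrors. The right workaround is to isolate the extreme index: apply \Cref{main theorem}(1) to the negative definite cobordism obtained by surgery on the extra summands to compare $r_0$ of the full sum with $r_0$ of a single-index multiple $c\, S^3_{1/n_{\max}}(K)$, then use \Cref{Froyshov} plus the Fintushel--Stern value of $h$ to certify finiteness, and finally invoke strict monotonicity to rule out equality. This mirrors the Fintushel--Stern--Furuta argument for torus-knot surgeries, with $r_0$ (and its strict cobordism inequality) playing the role their $R$-invariant plays there.
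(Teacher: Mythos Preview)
Your overall plan---strict monotonicity of $r_s$ along $\{S^3_{1/n}(K)\}_n$ via simply connected definite cobordisms, then linear independence from the connected sum formula---is exactly the paper's route. But there is a genuine gap, and it is precisely the place you flag as ``the main obstacle'': handling mixed signs.

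The paper resolves this not by clever bookkeeping but by an extra input you omit: every $S^3_{1/n}(K)$ bounds a \emph{positive} definite $4$-manifold (a plumbing from the surgery trace), so \Cref{main theorem}(1) gives
\[
r_0(-S^3_{1/n}(K)) = \infty \quad \text{for all } n.
\]
With this asymmetry in hand, the connected sum formula upgrades to clean equalities (their Lemmas~5.1--5.4): if $r_0(-Y_k)=\infty$ then $r_0(nY_k)=r_0(Y_k)$ for $n>0$, and $r_0(Y_m \#\, Z)=r_0(Y_m)$ whenever $r_0(Y_m)<\min\{r_0(Z),r_0(-Z)\}$. These give \Cref{combination} and \Cref{linear independence}, which eat any relation with $n_m$ maximal and $c_m\neq 0$ directly. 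Without $r_0(-Y_k)=\infty$, your ``isolate the extreme index and compare via a negative definite cobordism obtained by surgery on the extra summands'' does not work: there is no such cobordism in general, and the connected sum inequality by itself only gives one-sided control.

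Two smaller issues. First, the assertion $h(S^3_{1/n}(K))=n\cdot h(S^3_1(K))$ is not a theorem; $h$ is a homomorphism on $\Theta^3_\z$, but $S^3_{1/n}(K)$ is not $n$ copies of $S^3_1(K)$ in $\Theta^3_\z$. You do not need it anyway: finiteness of $r_s(S^3_1(K))$ comes from $h(S^3_1(K))<0$ via \Cref{Froyshov}, and then strict monotonicity propagates finiteness down the tower. Second, your orientation in the monotonicity step is off. The surgery cobordism $W_n$ with $\partial W_n = S^3_{1/(n+1)}(K)\amalg -S^3_{1/n}(K)$ is \emph{positive} definite; applying \Cref{main theorem}(1) to $-W_n$ yields $r_s(S^3_{1/(n+1)}(K))<r_s(S^3_{1/n}(K))$, i.e.\ monotonicity on the $+$ side, which dovetails with $r_0$ being infinite on the $-$ side.
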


\Cref{knot surgery} gives a huge number of linearly independent families in 
$\Theta^3_{\z}$.
In fact, there exist infinitely many hyperbolic knots and satellite knots with
$h(S^3_1(K)) < 0$.
As hyperbolic examples, we can take the mirrors $K_k^*$ of the 2-bridge knots $K_k$ ($k \in \z_{>0}$) corresponding to the rational numbers $\frac{2}{4k-1}$. 
(These $K_k$ are often called \emph{twist knots}.
See Figure~\ref{K_k} in Section~\ref{section 5.2}.)

\begin{cor}
\label{K_k indep}
For any $k \in \z_{>0}$, the homology $3$-spheres $\{S^3_{1/n}(K_k^*)\}_{n=1}^{\infty}$ are linearly independent in $\Theta^3_{\z}$.
\end{cor}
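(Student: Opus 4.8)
The plan is to deduce this corollary directly from \Cref{knot surgery}, so the only task is to verify that the twist knots $K_k$ (corresponding to $\tfrac{2}{4k-1}$) and their mirrors $K_k^*$ satisfy the hypothesis $h(S^3_1(K_k^*)) < 0$. First I would identify the surgered manifold: since $K_k$ is the $2$-bridge knot $\mathfrak{b}(4k-1,2)$, the $1$-surgery $S^3_1(K_k^*)$ is a Brieskorn-type or small Seifert-fibered homology sphere, and for small $k$ it is in fact a Seifert homology sphere (for instance $K_1$ is the trefoil and $K_1^*$ its mirror, so $S^3_1(K_1^*) = -\Sigma(2,3,7)$, which has $h = -1$). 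More generally, I would use the surgery description of $2$-bridge knots to recognize $S^3_1(K_k^*)$ as $-\Sigma(2,3,6k+1)$ or a closely related Seifert homology sphere; one then reads off the Frøyshov invariant from the known formula for $h$ of Seifert homology spheres (equivalently, via \Cref{Froyshov} and the Fintushel--Stern $R$-invariant, since $R(2,3,6k+1) > 0$ forces $r_{-\infty} < \infty$, hence $h < 0$).

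The second step is purely an invocation: once $h(S^3_1(K_k^*)) < 0$ is established for every $k \in \z_{>0}$, \Cref{knot surgery} applied to the knot $K = K_k^*$ immediately yields that $\{S^3_{1/n}(K_k^*)\}_{n=1}^\infty$ is a linearly independent family in $\Theta^3_{\z}$. Note that each fixed $k$ gives its own independent family; the corollary as stated only asserts independence within a single such family, so no interaction between different values of $k$ needs to be analyzed here.

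The main obstacle I anticipate is the first step: correctly identifying $S^3_1(K_k^*)$ and computing its Frøyshov invariant uniformly in $k$. The cleanest route is probably to avoid Heegaard--Floer/monopole computations of $h$ altogether and instead exploit the fact that these surgeries are (negatives of) Seifert homology spheres with positive Fintushel--Stern $R$-invariant, so that \Cref{general Seifert} together with \Cref{Froyshov} gives $h < 0$ directly. I would therefore present the computation of the Seifert invariants of $S^3_1(K_k^*)$ — via Kirby calculus on the standard twist-knot diagram (Figure~\ref{K_k}) — as the technical heart, and then combine it with the already-established corollaries. If the identification with a Seifert homology sphere is only exact for $K_k$ a torus knot and merely "Seifert-fibered" in general, the $R > 0$ computation still applies because $R$ depends only on the Seifert data.
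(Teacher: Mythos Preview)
Your overall strategy is exactly the paper's: verify $h(S^3_1(K_k^*))<0$ and then invoke \Cref{knot surgery}. The paper carries this out via Lemma~\ref{twist knots}, which gives the precise identification $S^3_{-1}(K_k)\cong\Sigma(2,3,6k-1)$ (equivalently $S^3_1(K_k^*)\cong -\Sigma(2,3,6k-1)$), and then Lemma~\ref{Froyshov=1}, which pins down $h(\Sigma(2,3,6k-1))=1$; this is slightly more than is needed here, since for \Cref{K_k indep} you only need the sign of $h$, and your route through \Cref{general Seifert} and \Cref{Froyshov} suffices.

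Two small corrections are in order. First, your Seifert identifications are off by one: $S^3_1(K_1^*)\cong -\Sigma(2,3,5)$, not $-\Sigma(2,3,7)$, and in general $S^3_1(K_k^*)\cong -\Sigma(2,3,6k-1)$ rather than $-\Sigma(2,3,6k+1)$; this is precisely the content of Lemma~\ref{twist knots} (proved by a short Kirby calculus, Figure~\ref{-1 surgery}). Second, there is no need for hedging: the identification with a Seifert homology sphere is exact for every $k$, not just for $K_1$, since $S^3_{-1}(K_k)\cong S^3_{-1/k}(3_1)$ and the latter is $\Sigma(2,3,6k-1)$ on the nose. Once you correct the indices, your argument goes through without further work.
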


As satellite examples, we can take the $(2,q)$-cable of any knot $K$ 
(denoted by $K_{2,q}$) with odd $q \geq 3$.

\begin{cor}
\label{cable indep}
For any knot $K$ in $S^3$ and odd integer $q \geq 3$,
the homology $3$-spheres
$\{S^3_{1/n}(K_{2,q})\}_{n=1}^{\infty}$ are linearly independent in $\Theta^3_{\z}$.
\end{cor}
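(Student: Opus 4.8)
The plan is to reduce the statement to \Cref{knot surgery} by verifying the hypothesis $h(S^3_1(K_{2,q})) < 0$ for every knot $K$ and every odd $q \geq 3$. Once this Fr\o yshov inequality is established, \Cref{knot surgery} applies verbatim to the knot $K_{2,q}$ and yields the linear independence of $\{S^3_{1/n}(K_{2,q})\}_{n=1}^{\infty}$ in $\Theta^3_{\z}$. So the entire content of the corollary lies in the single computation $h(S^3_1(K_{2,q})) < 0$.

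First I would recall the surgery description of cables: a $1/n$-surgery on the $(2,q)$-cable $K_{2,q}$ can be related, via the cabling annulus, to surgeries on $K$ together with a standard lens-space-type piece coming from the $(2,q)$-torus knot pattern in the solid torus. More concretely, $S^3_1(K_{2,q})$ is obtained from $S^3_{q/2}(\text{pattern})$-type considerations; the cleanest route is to observe that $S^3_1(K_{2,q})$ is homology cobordant (indeed related by a negative definite cobordism, or by an explicit Mazur-type argument) to $S^3_{1/?}(T_{2,q}) \# (\text{something involving } K)$, and then invoke additivity of $h$ under connected sum together with the sign of $h$ for the relevant torus-knot surgery. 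The key external inputs are: (i) additivity $h(Y_1 \# Y_2) = h(Y_1) + h(Y_2)$ and homology-cobordism invariance of $h$; (ii) monotonicity of $h$ under negative definite cobordisms; and (iii) the computation of $h$ (equivalently the Fr\o yshov/correction-term count, or the $R$-invariant positivity via \Cref{general Seifert}) for $S^3_1(T_{2,q}) = -\Sigma(2, q, 2q-1)$-type Brieskorn spheres, which are known to have negative Fr\o yshov invariant since $R(2,q,2q-1) = 1 > 0$.

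The main obstacle I anticipate is \emph{item (i) of the reduction}: pinning down the precise cobordism or connected-sum relation between $S^3_1(K_{2,q})$ and the Brieskorn piece in a way that controls $h$. The cabling formula for the relevant surgery coefficient must be handled carefully — for the $(2,q)$-cable the surgery coefficient that ``sees'' the companion $K$ shifts, and one must check that the contribution of $K$ either drops out (because the relevant surgery on $K$ is itself an integer homology sphere with controlled $h$, or bounds a definite piece) or at least does not push $h$ up to $0$. I would phrase this via a negative definite cobordism from $S^3_1(K_{2,q})$ to the torus-knot surgery $S^3_{1/m}(T_{2,q})$ for an appropriate $m \geq 1$, built from the trace of the cabling together with a blow-up/blow-down sequence encoding the framing on $K$, so that monotonicity of $h$ forces $h(S^3_1(K_{2,q})) \leq h(S^3_{1/m}(T_{2,q})) < 0$. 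If a clean cobordism is elusive, the fallback is a direct Heegaard-Floer $d$-invariant computation using the Ni--Wu / Hom cabling formulas for correction terms, from which $h < 0$ follows via the comparison between $h$ and $-d/2$ on the relevant class of surgeries, or more robustly via \cite{D18} and \Cref{compare} relating $h$ to $r_{-\infty}$ and hence to the positivity of the associated Fintushel--Stern $R$-invariant.

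Finally, with $h(S^3_1(K_{2,q})) < 0$ in hand, I would simply cite \Cref{knot surgery} applied to the knot $K' := K_{2,q}$ to conclude that $\{S^3_{1/n}(K_{2,q})\}_{n=1}^{\infty}$ is linearly independent in $\Theta^3_{\z}$, completing the proof. The structure is therefore: \emph{one} nontrivial lemma (the sign of the Fr\o yshov invariant of the $1$-surgery on a $(2,q)$-cable), plus a one-line appeal to the main surgery theorem.
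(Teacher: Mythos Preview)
Your overall structure is exactly right: the corollary reduces to verifying $h(S^3_1(K_{2,q}))<0$ and then invoking \Cref{knot surgery}. That reduction is precisely what the paper does.

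The gap is in your proposed proof of the inequality $h(S^3_1(K_{2,q}))<0$. None of the three routes you sketch actually goes through as stated. The connected-sum/homology-cobordism picture you describe (``$S^3_1(K_{2,q})$ is homology cobordant to $S^3_{1/?}(T_{2,q}) \# (\text{something involving } K)$'') is not correct: by Gordon's cable surgery formula, $1$-surgery on a $(2,q)$-cable (with $1\neq 2q$) produces a graph manifold obtained by gluing a Seifert piece to the exterior of $K$, not a connected sum, and there is no evident negative definite cobordism to a torus-knot surgery with the orientation you need for the $h$-inequality. The $d$-invariant fallback is also problematic, since the comparison ``$h=-d/2$'' you would need is an open problem in general and cannot be invoked here. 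So the key lemma is genuinely missing.

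The paper's argument for $h(S^3_1(K_{2,q}))<0$ is quite different and avoids any cabling-formula analysis. It cites two external results: (i) Sato \cite{Sa17} proves that for any knot $K$ and odd $q\geq 3$, the manifold $S^3_1(K_{2,q})$ bounds a compact oriented positive definite \emph{spin} $4$-manifold; (ii) Fr{\o}yshov \cite[Theorem~3]{Fr02} proves that if a homology $3$-sphere $Y$ bounds a positive definite $4$-manifold with non-diagonalizable intersection form, then $h(Y)<0$. Since a spin $4$-manifold whose boundary is a homology sphere has an even (hence non-diagonalizable) unimodular intersection form, combining (i) and (ii) gives $h(S^3_1(K_{2,q}))<0$ immediately. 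This is the step you were missing.
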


%Further discussion of knots with $h(S^3_1(K)) < 0$ is seen in Section~\ref{section 5.3}.

\subsubsection{Linear independence of Whitehead doubles}%%%
In this paper, we consider the subgroup $\mathcal{T}$ in the knot concordance group $\mathcal{C}$ generated by topologically slice knots.
The group $\mathcal{T}$ has been studied well via several gauge theories, Floer theories \cite{E95, OS03, MO07, OS08, OS11, HK12, KM13, Hom14I, HW16, OSS17, HM17,PJ17, DHML19, KM19, DS19, ASA20} and Khovanov homology theory \cite{Ra10} as in the case of $\Theta^3_\z$.
However, the structure of $\mathcal{T}$ is still mysterious.

Here we focus on the positively-clasped Whitehead double $D(K)$ of a knot $K$.
Since $D(K)$ has trivial Alexander polynomial, $D(K)$ lies in $\mathcal{T}$, namely $D(K)$ is topologically slice (\cite{Fr82}).
There is a famous conjecture about the Whitehead doubles stated as follows.

\begin{conj}[\text{\cite[Problem 1.38]{Ki97}}]
As elements of the knot concordance group $\mathcal{C}$,
the equality $[D(K)]=0$ holds 
if and only if $[K]=0$. 
\end{conj}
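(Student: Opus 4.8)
We first dispose of the \emph{if} direction, which needs no new input: the Whitehead double of the unknot is again the unknot, so by the satellite construction $D(K)$ is smoothly concordant to $D(U)=U$ whenever $K$ is smoothly slice; hence $[K]=0$ implies $[D(K)]=0$. The content of the conjecture is the \emph{only if} direction, which is open; the plan is to attack it with the invariants $\{r_s\}$, expecting to settle it for a large but not exhaustive class of companions $K$.

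The plan for the \emph{only if} direction is as follows. Assuming $D(K)$ is smoothly slice, the first step will be to distill from this hypothesis a rigid constraint on a homology $3$-sphere built functorially from $K$. Two natural candidates are the double branched cover $\Sigma_2(D(K))$ — an integral homology sphere since $D(K)$ has trivial Alexander polynomial — and the surgeries $S^3_{1/n}(D(K))$: if $D(K)$ bounds a smooth disk in $B^4$, then each of these bounds a smooth homology $4$-ball (simply connected after surgering $\pi_1$), so \Cref{main theorem}(1),(2) force $r_s=\infty$ for all $s$. The second step will be to rewrite these homology spheres in terms of $K$ itself — $\Sigma_2(D(K))$ is a surgery along a connected sum built from $K$, and $S^3_{1/n}(D(K))$ sits in an explicit negative definite cobordism with a surgery manifold of $K$ — and then to feed this into the connected sum formula \Cref{main theorem}(4), the cobordism inequality \Cref{main theorem}(1), and the comparison with the Fr\o yshov invariant in \Cref{Froyshov}. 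The problem will then reduce to exhibiting a \emph{finite} value of $r_s$, equivalently a negative Fr\o yshov invariant, for a surgery manifold built from $K$; when $K$ satisfies such a positivity hypothesis the argument should run as in the proof of \Cref{knot surgery} and give not merely $[D(K)]\neq 0$ but the linear independence of a family of doubles in $\mathcal{T}$.

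The hard part is that no choice of auxiliary manifold will make this argument unconditional. Since $D(K)$ is topologically slice, it is annihilated by every invariant factoring through the topological concordance group, and $r_s$ detects it \emph{only} through a smooth $4$-manifold inequality; that inequality is vacuous unless the homology sphere extracted from $K$ already carries a nonzero gauge-theoretic quantity, which typically fails when $K$ has vanishing $\tau$ (and a fortiori for any hypothetical knot in the kernel of all known concordance invariants). So the realistic outcome is the resolution of the conjecture for an infinite family of companions, together with new $\z^{\infty}$-type subgroups of $\mathcal{T}$ detected by $\{r_s\}$; a proof in full generality would require a smooth concordance invariant that registers the companion through Whitehead doubling in every case, which is beyond the methods developed here.
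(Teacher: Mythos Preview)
The statement is a \emph{conjecture}, not a theorem: the paper records it as an open problem from Kirby's list and offers no proof. There is nothing to compare your proposal against, because the paper does not attempt a proof of the full statement; it uses the conjecture only as motivation for the partial result \Cref{np+q} on linear independence of certain Whitehead doubles.

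Your write-up is not a proof either, and to your credit you say so explicitly. The ``if'' direction you give is correct and standard. For the ``only if'' direction you sketch a strategy via $r_s$ and the Fr{\o}yshov invariant and then honestly conclude that the method cannot be made unconditional---which is exactly the situation. So as an assessment of what the paper's tools can and cannot do, your proposal is accurate; as a proof of the conjecture, it is (by your own admission) not one, and neither is anything in the paper.
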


Motivated by this conjecture, Hedden-Kirk~\cite{HK12}
conjectured that the map 
$$
D \colon \mathcal{C} \to \mathcal{C}, \ [K] \mapsto [D(K)]
$$
preserves the linear independence, and they proved that the conjecture holds for
the family $\{T_{2,2^n-1}\}_{n =2}^{\infty}$, that is, the Whitehead doubles
$\{D(T_{2,2^n-1})\}_{n=2}^{\infty}$ are linearly independent in $\mathcal{C}$. 

We refine their result as follows.

\begin{thm}\label{np+q}
For any coprime integers $p,q>1$,
the Whitehead doubles $\{ D(T_{p,np+q})\}^{\infty}_{n=0}$ are linearly independent
in $\mathcal{C}$.
\end{thm}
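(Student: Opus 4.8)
The plan is to push the statement to $\Theta^3_{\z}$ by a construction additive under connected sum of knots, and then to argue with $r_s$ along the lines of the proof of \Cref{knot surgery} and of the example following \Cref{general Seifert}. Since $D(K)$ has trivial Alexander polynomial, the double branched cover $\Sigma_2(D(K))$ (the double cover of $S^3$ branched along $D(K)$) is an integral homology $3$-sphere, and $\Sigma_2$ is additive: $\Sigma_2(J_1\#J_2)=\Sigma_2(J_1)\#\Sigma_2(J_2)$ and $\Sigma_2(\overline J)=-\Sigma_2(J)$. Hence a nontrivial relation $\sum_n a_n\,[D(T_{p,np+q})]=0$ in $\mathcal{C}$ yields a nontrivial relation $\sum_n a_n\,[Y_n]=0$ in $\Theta^3_{\z}$, where $Y_n:=\Sigma_2(D(T_{p,np+q}))$, so it suffices to prove that $\{Y_n\}_{n\ge 0}$ is linearly independent in $\Theta^3_{\z}$.

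The key step is to compute $r_s(Y_n)$. Using the standard description of the double branched cover of a genus-one knot as a surgery on the two band cores of its Seifert surface, $Y_n$ is a graph manifold built from two copies of the exterior $E(T_{p,np+q})$. From this surgery diagram I would produce, by Kirby calculus, negative definite cobordisms relating $Y_n$ --- in both directions --- to Brieskorn spheres whose defining tuples grow with $n$ and satisfy the positivity hypothesis of \Cref{general Seifert}; the natural invariant value emerging is $c_n:=\tfrac{1}{4p(np+q)(p(np+q)-1)}$, attached to $-\Sigma\!\left(p,\,np+q,\,p(np+q)-1\right)=S^3_1(T_{p,np+q})$, for which $R\!\left(p,\,np+q,\,p(np+q)-1\right)=1>0$. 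Together with \Cref{general Seifert} and \Cref{main theorem}(1) this gives
\[
r_s(Y_n)=c_n \qquad(s\in[-\infty,0]),
\]
so the $r_s(Y_n)$ are finite, independent of $s$, and strictly decreasing in $n$, while $r_s(-Y_n)=\infty$, exactly as for the corresponding Brieskorn spheres.

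Granting this, the linear independence of $\{Y_n\}$ in $\Theta^3_{\z}$ is obtained by the same bookkeeping as in the computation of $r_0\!\left(2\Sigma(2,3,5)\#{-\Sigma(2,3,6k+5)}\right)$ after \Cref{general Seifert}. Given a nontrivial relation $\sum_n a_n[Y_n]=0$, write it as $[P_+]=[P_-]$ in $\Theta^3_{\z}$, where $P_+$ (resp.\ $P_-$) is the connected sum of the summands with positive (resp.\ negative) coefficient, so $r_0(P_+)=r_0(P_-)$. The connected sum inequality \Cref{main theorem}(4) bounds each $r_0(P_\pm)$ below by $c_{N_\pm}$, where $N_\pm$ is the largest index occurring with the corresponding sign; and, provided one arranges the relevant $4$-manifolds (the extra Brieskorn summands and the terms $-Y_n$, which have $r_s(-Y_n)=\infty$) to be negative definite --- as $2\Sigma(2,3,5)$ is in the cited example --- \Cref{main theorem}(1) bounds each $r_0(P_\pm)$ above by $c_{N_\pm}$ as well. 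Then $r_0(P_\pm)=c_{N_\pm}$; since $n\mapsto c_n$ is injective this forces $N_+=N_-$, which is absurd unless one of $P_\pm$ is empty, and in that case the other is homology cobordant to $S^3$, giving $r_0=\infty\neq c_{N_\pm}$. Thus we reach a contradiction in every case, so $\{Y_n\}$, and therefore $\{D(T_{p,np+q})\}$, is linearly independent.

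The crux is the second paragraph: putting $Y_n=\Sigma_2(D(T_{p,np+q}))$, i.e.\ the surgery on the band cores of a genus-one Seifert surface of $D(T_{p,np+q})$, into a Kirby diagram from which negative definite cobordisms to Brieskorn spheres of the required growing type can be read off in both directions, with orientations chosen so that \Cref{main theorem}(1) yields both $r_s(Y_n)=c_n$ and $r_s(-Y_n)=\infty$. Closely tied to this is the definiteness bookkeeping in the third paragraph: the auxiliary $4$-manifolds used to transfer the upper bounds $r_0(P_\pm)\le c_{N_\pm}$ must be negative definite and correctly oriented, and verifying this (rather than the formal reduction through $\Sigma_2$ or the arithmetic $R(\cdots)>0$, both routine) is where the real content lies.
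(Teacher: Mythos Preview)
Your reduction to $\Theta^3_\z$ via the double branched cover is correct and matches the paper: $\Sigma_2(D(K))\cong S^3_{1/2}(K\#-K)$ is a homology sphere bounding a positive definite $4$-manifold, so $r_s(-Y_n)=\infty$. However, the heart of your plan --- computing $r_s(Y_n)$ \emph{exactly} as $c_n=\tfrac{1}{4p(np+q)(p(np+q)-1)}$ via negative definite cobordisms ``in both directions'' to Brieskorn spheres --- is a genuine gap. First, your candidate $c_n$ is the value for $S^3_1(T_{p,np+q})=-\Sigma(p,np+q,p(np+q)-1)$, whereas the obvious cobordism (crossing changes unknotting one $T_{p,np+q}$ summand) relates $Y_n$ to $S^3_{1/2}(T_{p,np+q})=-\Sigma(p,np+q,2p(np+q)-1)$, giving only the \emph{upper} bound $r_s(Y_n)\le \tfrac{1}{4p(np+q)(2p(np+q)-1)}$. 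Second, and more seriously, there is no apparent source for the matching \emph{lower} bound: you would need a negative definite cobordism \emph{into} $Y_n$ from a manifold with the same $r_s$-value, and nothing in the Kirby picture produces one. The paper does not obtain exact values of $r_s(Y_n)$ either, and I do not see how to.

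What the paper actually does --- and what your proposal is missing --- is to bypass exact values entirely by exploiting the \emph{strict} inequality in \Cref{main theorem}(1) for simply connected cobordisms (\Cref{pi1=1}). Concretely, positive crossing changes on the half-twist $\Delta_H$ in a braid diagram of $T_{p,p+q}\#T_{p,p+q}$ yield a negative definite cobordism $W$ from $\Sigma(D_{p,p+q})$ to $\Sigma(D_{p,q})$, and a direct computation with the Wirtinger presentation shows $\pi_1(W)=1$. This gives $r_s(\Sigma(D_{p,(n+1)p+q}))<r_s(\Sigma(D_{p,np+q}))$ for every $n$, and together with the finite upper bound above, the hypotheses of \Cref{linear independence} are satisfied. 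Your $P_+/P_-$ bookkeeping is then unnecessary: once $r_0(Y_n)$ is finite and strictly decreasing and $r_0(-Y_n)=\infty$, linear independence is immediate. The missing ingredient is thus not a cobordism ``in both directions'' but a single \emph{simply connected} one between consecutive $Y_n$.
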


\begin{cor}
The Whitehead doubles $\{D(T_{2,2n-1})\}^{\infty}_{n=2}$ are linearly independent
in $\mathcal{C}$.
\end{cor}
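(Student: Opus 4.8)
The plan is to obtain this statement immediately as a special case of \Cref{np+q}. I would take $p=2$ and $q=3$; these are coprime and both greater than $1$, so \Cref{np+q} applies and yields that the Whitehead doubles $\{D(T_{2,\,2n+3})\}_{n=0}^{\infty}$ are linearly independent in $\mathcal{C}$. It then remains only to reindex: for $n\ge 0$ one has $2n+3 = 2(n+2)-1$, so $T_{2,\,2n+3} = T_{2,\,2m-1}$ with $m=n+2$. As $n$ runs over $\z_{\geq 0}$, the index $m$ runs over all integers $\ge 2$, and hence the family $\{D(T_{2,\,2n+3})\}_{n=0}^{\infty}$ is literally the same set of knots as $\{D(T_{2,\,2m-1})\}_{m=2}^{\infty}$. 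Therefore the latter family is linearly independent in $\mathcal{C}$.

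There is essentially no obstacle here: the only point to verify is the elementary observation that the torus knots $\{T_{2,\,2m-1}\}_{m\ge 2}$ are exactly the $p=2$, $q=3$ slice of the two-parameter family $\{T_{p,\,np+q}\}$ appearing in \Cref{np+q}, which is the congruence $2n+3=2(n+2)-1$ above. (One could equally take $q$ to be any fixed odd integer $\ge 3$ and shift the range of $n$, but the choice $q=3$ already recovers the full family $\{D(T_{2,\,2m-1})\}_{m=2}^{\infty}$ on the nose.)
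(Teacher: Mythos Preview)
Your argument is correct and is exactly the intended one: the paper states this corollary immediately after \Cref{np+q} with no separate proof, since it is just the special case $p=2$, $q=3$ together with the reindexing $2n+3=2(n+2)-1$ that you spell out.
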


Note that Hedden-Kirk's results were extended to more general satellite knots in \cite{PJ17}, and our technique enables us to extend a result in \cite{PJ17}.
Moreover, our approach can be used to see the linear independence of $D(K)$ for a certain family of twisted knots $K$.

\subsection{Additional structures on $\Theta^3_\z$ and $\ker h$}%%%
%Next, we consider additional structures on $\Theta^3_\z$ and $\ker h$ induced from the invariants $\{r_s\}$.

%\subsubsection{A filtration on $\Theta^3_\z$}%%%
Using involutive Heegaard-Floer theory, Hendricks, Hom and Lidman~\cite{HHL21} introduced a poset filtration on $\Theta^3_\z$ and reproved the existence of a $\z^\infty $-subgroup of $\Theta^3_\z$.
Moreover, for the knot concordance group, such filtrations coming from Heegaard-Floer theory are also given in \cite{HHN13} and \cite{Sa18full}.
Inspired by these work, we give a $[0,\infty]$-filtration of $\Theta^3_\z$ using our invariant $r_s$,
which can be used to reprove that Fintushel-Stern's and Furuta's sequence $\{\Sigma(p,q, pqk -1 )\}_{k =1}^{\infty}$ is linearly independent in $\Theta^3_\z$ for any pair $(p,q)$ of coprime integers.
Since $r_s(Y)$ coincides with a critical value of the $SU(2)$-Chern-Simons functional of $Y$,
our filtration has a flavor of geometry.

More precisely, 
for any $r \in [0,\infty]$,
we consider the set
\[
\Theta^3_{\z, r} := \left\{ [Y] \in \Theta^3_\z \bigm| \min \{ r_0 (Y), r_0 (-Y )\} \geq r \right\}.
\]
Then it follows from the connected sum formula for $r_0$ that $\Theta^3_{\z, r}$ is a subgroup of $\Theta^3_{\z}$. 
Moreover, by definition, it is obvious that if $r \geq r'$, then 
$\Theta^3_{\z, r} \subset \Theta^3_{\z, r'}$.
In particular, $\Theta^3_{\z, 0} = \Theta^3_{\z}$.
For this filtration, we prove that any quotient group is infinitely generated.

\begin{thm}
\label{Theta_zr}
For any $r \in (0,\infty]$, 
the quotient group $\Theta^3_\z/\Theta^3_{\z, r}$ contains $\z^\infty$
as a subgroup.
\end{thm}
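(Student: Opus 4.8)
The plan is to fix $r \in (0,\infty]$ and exhibit explicitly an infinite family whose classes are $\z$-linearly independent in $\Theta^3_\z/\Theta^3_{\z,r}$. Choose $k_0 \in \z_{>0}$ with $\tfrac{1}{24(6k_0-1)} < r$ (when $r=\infty$ take $k_0=1$), and for $k \ge k_0$ set $\Sigma_k := \Sigma(2,3,6k-1)$. Since $R(2,3,6k-1)=1>0$, \Cref{general Seifert} gives $r_s(\Sigma_k)=\infty$ and $r_s(-\Sigma_k)=\tfrac{1}{24(6k-1)}$ for all $s \in [-\infty,0]$. It then suffices to show that for every finitely supported $(n_k)_{k\ge k_0}\neq 0$ the connected sum $Z := \#_k n_k\Sigma_k$ satisfies $\min\{r_0(Z),r_0(-Z)\}<r$, i.e.\ $[Z]\notin\Theta^3_{\z,r}$; this produces the desired $\z^\infty\le\Theta^3_\z/\Theta^3_{\z,r}$.

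The first step is to reduce the computation of $r_0(Z)$ to a connected sum of \emph{negatively} oriented Brieskorn spheres. Split $Z=Z^-\#Z^+$ with $Z^+:=\#_{n_k>0}\Sigma_k^{\#n_k}$ and $Z^-:=\#_{n_k<0}(-\Sigma_k)^{\#|n_k|}$. Each $\Sigma_k$ bounds its canonical negative definite plumbing $P_k$ (the $E_8$-manifold for $k=1$), so $Z^+=\partial\big(\natural_{n_k>0}P_k^{\natural n_k}\big)$ bounds a negative definite $4$-manifold; hence $(Z^-\times[0,1])\,\natural\,\big(\natural_{n_k>0}P_k^{\natural n_k}\big)$ is a negative definite cobordism from $Z^-$ to $Z$, and \Cref{main theorem}(1) gives $r_0(Z)\le r_0(Z^-)$. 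Conversely, iterating the connected sum inequality \Cref{main theorem}(4) with $s=s_1=s_2=0$ and using $r_0(\Sigma_k)=\infty$ gives $r_0(Z)\ge\min\{r_0(Z^-),r_0(Z^+)\}=r_0(Z^-)$, so $r_0(Z)=r_0(Z^-)$; symmetrically $r_0(-Z)=r_0\big(\#_{n_k>0}(-\Sigma_k)^{\#n_k}\big)$. Since $Z$ is nontrivial, at least one of $Z^-$ and $\#_{n_k>0}(-\Sigma_k)^{\#n_k}$ is nonempty, so it is enough to prove that $r_0(V)<r$ whenever $V=\#_{k\in S}(-\Sigma_k)^{\#m_k}$ with $\emptyset\neq S\subseteq\{k_0,k_0+1,\dots\}$ finite and all $m_k\ge1$.

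The second step peels $V$ down to an iterated sum of a single factor. Because $-\Sigma_k=S^3_{1/k}(T_{2,3})$, passing from $-\Sigma_a$ to $-\Sigma_{a+1}$ is realized by a single $(-1)$-framed $2$-handle attachment along a meridian of $T_{2,3}$; its trace is a negative definite cobordism $-\Sigma_a\rightsquigarrow-\Sigma_{a+1}$ (simply connected, since a meridian normally generates), and composing these gives negative definite cobordisms $-\Sigma_a\rightsquigarrow-\Sigma_b$ for all $a\le b$. Taking the boundary connected sum of $M:=\sum_{k\in S}m_k$ such cobordisms starting from $a:=\min S$ yields a negative definite cobordism $(-\Sigma_a)^{\#M}\rightsquigarrow V$, so by \Cref{main theorem}(1) we get $r_0(V)\le r_0\big((-\Sigma_a)^{\#M}\big)$. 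Thus it remains to prove
\[
r_0\big((-\Sigma(2,3,6a-1))^{\#M}\big)=\frac{1}{24(6a-1)}\qquad(M\ge1),
\]
which, as $a\ge k_0$, gives $r_0(V)<r$ and completes the argument.

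The last equality is the crux, and this is where I expect the real difficulty: the lower bound is \Cref{main theorem}(4), but the formal properties (1)--(4) cannot deliver the matching upper bound — genuine gauge theory is needed. One must show that Donaldson's obstruction class $[\theta]$ for $(-\Sigma_a)^{\#M}$ already survives in $I^1_{[0,r]}$ once $r>\tfrac{1}{24(6a-1)}$; morally this is because the relevant anti-self-dual connections (via neck-stretching, or the orbifold-cone moduli spaces of \cite{FS90,Fu90}) can be localized on a single $-\Sigma_a$ summand, so forming a connected sum does not raise the Chern--Simons level at which $[\theta]$ first becomes nonzero. This is exactly the instanton input behind Fintushel--Stern's and Furuta's linear independence theorem for $\{\Sigma(p,q,pqk-1)\}$: concretely one either extracts from their orbifold $4$-manifolds a negative definite (orbifold) cobordism out of $-\Sigma_a$ realizing the bound and feeds it into \Cref{main theorem}(1), or re-runs their moduli count inside the filtered instanton homology defining $r_s$. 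Granting this, every nontrivial $[Z]$ lies outside $\Theta^3_{\z,r}$, so the images of $\{[\Sigma_k]\}_{k\ge k_0}$ are $\z$-linearly independent there and $\Theta^3_\z/\Theta^3_{\z,r}$ contains $\z^\infty$.
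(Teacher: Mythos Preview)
Your belief that the upper bound $r_0\bigl((-\Sigma_a)^{\#M}\bigr) \le r_0(-\Sigma_a)$ requires gauge theory beyond \Cref{main theorem}(1)--(4) is the central gap, and it is mistaken. The missing idea is a cancellation trick: since $(-\Sigma_a)^{\#M}\#\Sigma_a^{\#(M-1)}$ is homology cobordant to $-\Sigma_a$, invariance together with property (4) at $s=s_1=s_2=0$ gives
\[
r_0(-\Sigma_a)=r_0\bigl((-\Sigma_a)^{\#M}\#\Sigma_a^{\#(M-1)}\bigr)\ge\min\bigl\{r_0\bigl((-\Sigma_a)^{\#M}\bigr),\, r_0\bigl(\Sigma_a^{\#(M-1)}\bigr)\bigr\}=r_0\bigl((-\Sigma_a)^{\#M}\bigr),
\]
because $r_0\bigl(\Sigma_a^{\#(M-1)}\bigr)=\infty$. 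No neck-stretching or orbifold moduli are needed. The same oversight produces an actual error in your first reduction: the cobordism $(Z^-\times[0,1])\natural P$ has oriented boundary $Z\amalg(-Z^-)$, so \Cref{main theorem}(1) yields $r_0(Z^-)\le r_0(Z)$ --- the \emph{same} direction as the connected-sum bound, not the opposite --- and the equality $r_0(Z)=r_0(Z^-)$ you deduce is in fact false (take $Z=-\Sigma_{k_0}\#\Sigma_{k_0+1}$: this bounds a negative definite $4$-manifold, so $r_0(Z)=\infty\ne r_0(-\Sigma_{k_0})=r_0(Z^-)$). Your second reduction has the analogous problem; in particular the $2$-handle trace you describe is positive definite, not negative.

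The paper packages the cancellation trick into a single lemma (\Cref{combination}): if $r_0(Y_m)<\min_{k<m}\{r_0(Y_k),r_0(-Y_k)\}$, $r_0(-Y_m)=\infty$, and $n_m>0$, then $r_0\big(\sum_k n_k[Y_k]\big)=r_0(Y_m)$. Applied with $Y_k=-\Sigma_k$ and $m$ the \emph{largest} index appearing (after arranging $n_m>0$), this gives $r_0\big({-}\sum n_k[\Sigma_k]\big)=r_0(-\Sigma_m)=\tfrac{1}{24(6m-1)}<r$ immediately --- no cobordism constructions and no additional instanton input.
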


\begin{figure}[htbp]
\begin{center}
\includegraphics[scale= 1]{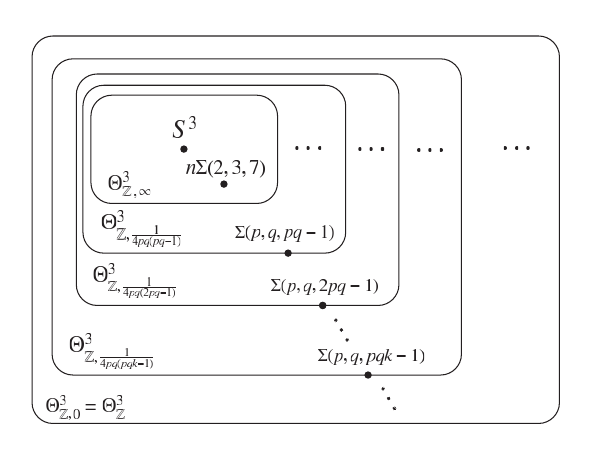}
\caption{A schematic picture of the filtration $\{\Theta^3_{\z,r}\}$.
\label{Fig filt}}
\end{center}
\end{figure}

Figure~\ref{Fig filt} gives a schematic picture of the filtration $\{\Theta^3_{\z, r}\}$.
Since $\Theta^3_{\z,r}$ is a subgroup for any $r \in [0,\infty]$, it is easy to see from the figure that $\{\Sigma(p,q,pqk-1)\}_{k=1}^\infty$ are linear independent in $\Theta^3_\z$. Here, we note that the smallest subgroup $\Theta^3_{\z,\infty}$ is infinitely generated.
In fact, it is proved by Hendricks-Hom-Lidman \cite{HHL21} that 
$\{S^3_{-1}(T_{2,4n+1}) \}_{n =1}^{\infty}$ are linearly independent in $\Theta^3_{\z}$. 
Moreover, it is not hard to see that $S^3_{-1}(T_{2,4n+1})$ bounds both positive definite 4-manifold and negative definite 4-manifold, and hence \Cref{main theorem}(1) gives $r_0(S^3_{-1}(T_{2,4n+1})) = r_0(-S^3_{-1}(T_{2,4n+1})) = \infty$.
Note that any positive knot bounds a null-homologous disk in $B^4 \# k \mathbb{C}P^2$ for sufficiently large $k$ (\cite{CL86}).
Therefore,
we have 
\[
\Theta^3_{\z, \infty}  \supset \Span_\z \{ [S^3_{-1}(T_{2,4n+1})] \}_{n =1}^{\infty}
\cong \z^{\infty},
\]
where $\Span_\z \{ [S^3_{-1}(T_{2,4n+1})] \}_{n =1}^{\infty}$
is the $\z$-linear span of $\{ [S^3_{-1}(T_{2,4n+1})] \}_{n =1}^{\infty}$ in $\Theta^3_\z$.
We post the following fundamental questions about 
the filtration
$\{\Theta^3_{\z,r}\}_{r \in [0,\infty]}$. 

%\begin{ques}
%Is the smallest subgroup $\Theta^{3}_{\z, \infty}$ infinitely generated?
%\end{ques}

\begin{ques}
Which subquotient $\Theta^3_{\z, r'}/\Theta^3_{\z, r}$ is infinitely generated?
\end{ques}

As another approach to studying $\Theta^3_{\z}$, we use the value 
\[
s_\infty(Y):= \sup \{ s \in [-\infty,0] \mid r_s(Y)= \infty \}
\]
to introduce a pseudometric on $\ker(h\colon \Theta^3_\z \to \z)$.
%where $h\colon \Theta^3_\z \to \z$ denotes the Fr{\o}yshov invariant.
The pseudometric induces a metric on the quotient group $\ker h/\Theta^3_{\z, \infty}$.
For more details, see \Cref{metric}.

\subsection{Computations for a hyperbolic 3-manifold}\label{subsec:IntroHyperbolic}
%referee comment (34)に応えるために、本節全体を修正 (9/4 佐藤)
Finally, we discuss the relation between our invariants $\{r_s\}$ and  geometric structures on homology 3-spheres.
While it is proved by Myers \cite{My83} that any homology cobordism class contains a hyperbolic representative, we also know that there exist infinitely many homology cobordism classes containing no Seifert representative, as discussed in \Cref{section 1.2}.
As the next step, it is natural to ask whether $\Theta^3_{\z}$ is generated by Seifert homology 3-spheres or not. Recently, Hendricks, Hom, Stoffregen, and Zemke~\cite{HHSZ20} proved that the homology cobordism class $[S^3_1(T^*_{6,7}\#T^*_{6,7}\#T_{6,13}\#T^*_{2,3:2,5})]$ is not contained in $\Theta^3_S$, where $T_{2,3:2,5}$ is the $(2,5)$-cable of $T_{2,3}$ and $\Theta^3_S$ denotes the subgroup of $\Theta^3_{\z}$ generated by Seifert homology 3-spheres.
Namely, they proved $\Theta^3_\z \supsetneq \Theta^3_S$.

Here, we mention that $S^3_1(T^*_{6,7}\#T^*_{6,7}\#T_{6,13}\#T^*_{2,3:2,5})$ is not Seifert but a graph manifold.
(The proof will be given in \Cref{sec:HHSZ20}.)
Hence, the following question still remains open.

%While the homology cobordism classes of Seifert homology 3-spheres have been studied well, the understanding of the other part is still undeveloped.
%For instance, 
%Recently, Hendricks, Hom, Stoffregen, and Zemke~\cite{HHSZ20} proved that $\Theta^3_\z$ is not generated by Seifert homology 3-spheres.
%It will be proven in \Cref{sec:HHSZ20} that the example used in \cite{HHSZ20} is a graph manifold.
%Hence, it is natural to ask the following question.

\begin{ques}
\label{ques:graph}
Is the group $\Theta^3_\z$ generated by graph homology 3-spheres? 
\end{ques} 

Let $\Theta^3_G$ denote the subgroup of $\Theta^3_\z$ generated by all graph homology 3-spheres, and then \Cref{ques:graph} is equivalent to whether the equality 
$\Theta^3_{\z} = \Theta^3_G$ holds or not.
Here we note that critical values of the $\SU(2)$-Chern-Simons functional of graph 3-manifolds are rational (\cite{A94}), and hence
the image $r_s(\Theta^3_G)$ is included in $\q_{>0} \cup \{\infty\}$ for any $s \in [-\infty, 0]$.
Therefore, we have the following proposition.

\begin{prop}
\label{prop:graph}
If there exists a homology $3$-sphere $Y$ and $s \in [-\infty,0]$ such that $r_s(Y)$ is finite and irrational, then $[Y] \notin \Theta^3_G$.
\end{prop}

From the viewpoint of \Cref{prop:graph},
%To give an answer to this question, 
we try to calculate $\{r_s\}$ for the $1/2$-surgery along the knot $5_2^\ast$,
where $5_2^\ast$ is the mirror of the knot $5_2$ in Rolfsen's knot table. 
Note that $5_2$ is $K_2$ as a twist knot, 
$S^3_1(5_2^*) \cong -\Sigma(2,3,11)$ and that $S^3_{1/2}(5_2^\ast)$ is a hyperbolic 3-manifold (see \cite{MY01}).
These facts imply that the value $r_s(S^3_{1/2}(5_2^*))$ is finite and might be irrational. 
Moreover, using the computer, we have the following result.
%as a calculation experiment

\begin{thm}\label{approx}
The numerical approximation
 \[
 r_s(S^3_{1/2}(5_2^\ast)) \approx 
 0.0017648904\ 7864885113\ 0739625897\ 0947779330\ 4925308209 
 \]
holds for any $s \in [-\infty, 0]$,
where its error is at most $10^{-50}$.
\end{thm}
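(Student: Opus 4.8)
The plan is to reduce the computation of $r_s(S^3_{1/2}(5_2^\ast))$ to an explicit finite problem about the $\SU(2)$-Chern-Simons functional and then carry out that problem numerically with rigorous error control. By \Cref{main theorem}(2), whenever $r_s(Y)$ is finite it equals a critical value of the Chern-Simons functional of $Y$, and by \Cref{main theorem}(3) the function $s \mapsto r_s(Y)$ is monotone. Since $5_2^\ast = K_2$ is a twist knot with $S^3_1(5_2^\ast) \cong -\Sigma(2,3,11)$ and $h(-\Sigma(2,3,11)) < 0$, one checks (using the surgery/negative-definite-cobordism inequality of \Cref{main theorem}(1) applied to the standard cobordism from $S^3_{1/2}(5_2^\ast)$ to $S^3_{1}(5_2^\ast)=-\Sigma(2,3,11)$, together with \Cref{Froyshov}) that $r_s(S^3_{1/2}(5_2^\ast))$ is finite for all $s$, and in fact that the relevant value does not depend on $s$ because the smallest positive critical value already obstructs $[\theta]$ over the whole range $s\in[-\infty,0]$. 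So the first step is to pin down which critical value is the answer: it will be the lowest positive critical value of $\cs$ at which the obstruction class $[\theta]$ survives, and the monotonicity plus the cobordism inequality squeeze $r_s$ to that single number independently of $s$.

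Next I would describe the flat moduli space of $Y:=S^3_{1/2}(5_2^\ast)$ concretely. The fundamental group $\pi_1(Y)$ is obtained from the two-bridge presentation of $5_2^\ast$ by adding the $1/2$-surgery relation, giving a two-generator one-relator group; its $\SU(2)$ character variety is cut out by a single polynomial equation (essentially the $A$-polynomial specialized along the surgery slope, or equivalently the trace identities of a two-generator representation). The irreducible flat connections correspond to the isolated real solutions of this system, and for each such representation $\rho$ the Chern-Simons value $\cs(\rho)\in\R/\z$ is computed from the standard surgery formula for Chern-Simons invariants (for instance Kirk-Klassen's formula, or the formula expressing $\cs$ of a surgered manifold in terms of the holonomy along the knot and the framing). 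The upshot is a finite explicit list of algebraic numbers $\{\cs(\rho_j)\}$, each a root of an explicit polynomial with rational coefficients.

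The third step is the numerical part: solve the defining polynomial system for $Y$ to high precision using interval arithmetic (or a certified root-finder such as a Newton-Kantorovich / Krawczyk-operator verification), obtain each $\cs(\rho_j)$ as a verified enclosure, lift these to genuine real numbers in $(0,\infty)$ (choosing representatives in the correct sheet of $\R/\z$ dictated by the construction of $r_s$), and identify the smallest one at which $[\theta]\ne 0$. For the manifold at hand this smallest relevant value turns out to be $\approx 0.00176489\ldots$, and running the certified computation to $50+$ digits yields the stated decimal with the claimed error bound $10^{-50}$. I would then remark that the non-vanishing of $[\theta]$ at that level can be read off because there is no flat connection (hence no instanton) with smaller positive relative Chern-Simons energy that could kill the class, so the first candidate critical value is automatically the supremum defining $r_s$.

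The main obstacle I anticipate is not the numerics but the \emph{bookkeeping of the obstruction class}: one must be certain that $[\theta]$ genuinely survives up to—and fails exactly at—the claimed critical value, rather than vanishing earlier for a subtler reason (a nontrivial differential in the filtered complex, or a cancellation among several flat connections at nearby energies). Handling this requires understanding the filtered instanton chain complex of $Y$ near its lowest critical levels well enough to see that the relevant portion of $I^1_{[s,r]}(Y)$ behaves as a single surviving generator; this is where the hyperbolicity and the smallness of the first critical value actually help, since it isolates that generator. Once that structural point is settled, the remainder is a finite, certifiable computation, and the error estimate follows from standard interval-arithmetic bounds on the polynomial root-finding and on the Chern-Simons surgery formula.
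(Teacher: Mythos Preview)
Your overall strategy---enumerate the irreducible $\SU(2)$-representations of $\pi_1(S^3_{1/2}(5_2^\ast))$, compute their Chern-Simons values via the Kirk-Klassen formula using the Riley polynomial of the two-bridge knot, and then identify $r_s$ among the resulting finite list---matches the paper's approach. However, the step you flag as the ``main obstacle,'' namely checking directly in the filtered instanton complex that $[\theta]$ survives exactly up to the smallest positive critical value, is precisely what the paper avoids, and you should too. The paper's identification is a pure squeeze: the simply connected negative definite cobordism $-W_1$ from \Cref{1/n r_s} gives the \emph{strict} inequality $r_s(S^3_{1/2}(5_2^\ast)) < r_s(S^3_1(5_2^\ast)) = \frac{1}{4\cdot 2\cdot 3\cdot 11} \approx 0.00379$ for every $s$, while $r_s>0$ always. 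Among the eight critical values computed, exactly one lies in $(0,0.00379)$, so that value must be $r_s$ for all $s$; no inspection of the obstruction class or the filtered differential is needed. You allude to a squeeze but then revert to worrying about the chain complex---you should instead make the strict upper bound explicit and observe that it leaves a unique candidate.

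A second point you omit is how the paper certifies that the eight representations found are \emph{all} of them: one checks $H^1(Y;\su_{\Ad\rho})=0$ for each candidate (so the flat moduli space is non-degenerate near them) and then compares against the Casson invariant $\lambda(S^3_{-1/2}(5_2))=-4$, which forces exactly $2|{-4}|=8$ irreducible conjugacy classes. Without this, your enumeration is not rigorously complete and an unlisted critical value below $0.00176\ldots$ would invalidate the conclusion.
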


It is an open problem whether there exists a 3-manifold whose
$\SU(2)$-Chern-Simons functional has an irrational critical value.
Note that the decimal in \Cref{approx} has
no repetition. Therefore, we have the following conjecture.

\begin{conj}
\label{irrational}
The value $r_s(S^3_{1/2}(5_2^\ast))$ is an irrational number. 
\end{conj}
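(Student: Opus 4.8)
The plan is to reduce \Cref{irrational} to the irrationality of a single Chern--Simons critical value and then study that number by combining the explicit $\SU(2)$ character variety of $Y:=S^3_{1/2}(5_2^\ast)$ with the Kirk--Klassen surgery formula. By \Cref{main theorem}(2), the real number $r_s(Y)\in(0,\infty)$ is a critical value of the $\SU(2)$ Chern--Simons functional of $Y$, and the certified computation behind \Cref{approx} should in fact identify which one: $r_s(Y)\equiv \cs(\alpha_0)\pmod{\z}$ for a specific irreducible flat $\SU(2)$ connection $\alpha_0$ on $Y$, namely the one whose Chern--Simons value lies closest to $0.00176\ldots$. Since $\cs(\alpha_0)$ is only defined up to an integer, irrationality of $\cs(\alpha_0)$ in $\R/\z$ is equivalent to irrationality of the real number $r_s(Y)$, so it suffices to prove $\cs(\alpha_0)\notin\q$. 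First I would present $\pi_1(Y)$ as a $2$-generator, $2$-relator group by adjoining the surgery relation $\mu\lambda^2=1$ (for the peripheral system of $5_2^\ast$) to the $2$-bridge presentation of $\pi_1(S^3\setminus 5_2^\ast)$. Because $5_2^\ast$ is a $2$-bridge twist knot, its irreducible $\SU(2)$ representations are governed by a Riley polynomial with integer coefficients, and the irreducible $\SU(2)$ character variety of $Y$ is the finite set cut out by that polynomial together with the surgery equation; hence the holonomy data of $\alpha_0$ --- traces of the meridian, the longitude, and their products --- are algebraic numbers, $\alpha_0$ is an isolated point of the character variety, and it is non-degenerate as a critical point.

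Next I would evaluate $\cs(\alpha_0)$ in closed form using the Kirk--Klassen formula for the change of the Chern--Simons invariant under Dehn filling: $\cs(\alpha_0)$ is a rational correction term plus the Kirk--Klassen path integral over the pillowcase associated to a path from the trivial representation to $\alpha_0|_{\partial}$ that can be taken inside the (explicitly computable) character-variety curve of the exterior of $5_2^\ast$. Carrying out this integral expresses $\cs(\alpha_0)$ through the algebraic holonomy coordinates of $\alpha_0$, possibly together with finitely many inverse-trigonometric quantities $\tfrac1\pi\arccos(a)$ with $a$ algebraic, coming from the pillowcase portion of the path. There are then two cases. If such a term with $\tfrac1\pi\arccos(a)\notin\q$ survives with a nonzero algebraic coefficient while the remaining terms are algebraic, then $\cs(\alpha_0)$ is transcendental, hence irrational: $e^{i\arccos(a)}$ is algebraic, being a root of $z^2-2az+1$, so by the Gelfond--Schneider theorem $\tfrac1\pi\arccos(a)$ cannot be algebraic and irrational, and being non-rational it is transcendental; finally $ct+d$ is transcendental whenever $t$ is transcendental and $c,d\in\overline{\q}$ with $c\neq0$. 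If instead all such terms cancel and $\cs(\alpha_0)$ is an algebraic number, then I would compute its minimal polynomial $p(x)\in\z[x]$ exactly --- for instance via resultants of the Riley polynomial, the surgery equation, and the polynomial relation defining $\cs$ --- and check $\deg p\ge 2$; by the rational root theorem this is the finite verification that none of the rationals $\pm(\text{divisor of }p(0))/(\text{divisor of leading coefficient of }p)$ lies within $10^{-50}$ of the value in \Cref{approx}.

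The main obstacle is the middle step: choosing the correct lift of the path and the correct branch of the integrand in the Kirk--Klassen formula, and then deciding \emph{symbolically}, not numerically, whether the resulting closed form contains a surviving transcendental term or collapses to an algebraic number. In the algebraic case one must additionally carry out genuine symbolic arithmetic over the trace field of $5_2$ to obtain an honest minimal polynomial, and a separate interval-arithmetic argument --- parallel to the one underlying \Cref{approx} --- is needed to certify that $\alpha_0$, rather than a nearby critical point, is the flat connection realizing $r_s(Y)$. One should not expect to bypass the Chern--Simons computation: \Cref{irrational} would settle the stated open problem on the existence of irrational $\SU(2)$ Chern--Simons critical values, and the ``no repetition'' observation after \Cref{approx} is by itself only heuristic, since a rational number with a sufficiently large denominator can agree with its decimal expansion through $50$ places without yet becoming periodic.
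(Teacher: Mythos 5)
The statement you are asked to prove is a \emph{Conjecture} in the paper, not a theorem: the authors give only heuristic evidence (the approximate value in Theorem~\ref{approx} shows no repetition through 50 decimal digits) and explicitly flag the underlying question --- whether any 3-manifold has an irrational $\SU(2)$ Chern--Simons critical value --- as open. So there is no proof in the paper to compare your attempt against, and your own final paragraph already concedes that what you have written is a research program rather than a proof. That candour is appropriate, but it also means the ``proposal'' does not establish the claimed result.

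Beyond the gaps you flag, the central step has a structural problem. The Kirk--Klassen contribution $2\int_{s_0}^{s_1}\beta(s)\alpha'(s)\,ds$ is a period of the multivalued $1$-form $\log v\,d\log u$ along an algebraic curve in the pillowcase (here $u=e^{2\pi i\alpha}$, $v=e^{2\pi i\beta}$ trace out the restriction of the character variety of $E(5_2^\ast)$ to the boundary torus). There is no reason such a period should decompose as an algebraic number plus a $\overline{\q}$-linear combination of terms $\tfrac1\pi\arccos(a)$ with $a\in\overline{\q}$; generically one obtains a Rogers-dilogarithm--type quantity, whose irrationality is itself wide open. Your dichotomy (``either a $\tfrac1\pi\arccos$ term survives, or the value is algebraic and I finish with the rational root theorem'') is therefore not exhaustive, and the genuinely hard case is precisely the one you omit. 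Moreover, even in the favourable branch of the dichotomy, proving that the transcendental contribution has a nonzero algebraic coefficient with no cancellation against the polynomial term $n(\beta(s_1)^2-\beta(s_0)^2)$ and the other paths' contributions is not a routine symbolic check --- it is essentially a restatement of the irrationality question. The ``algebraic case'' fallback is likewise circular, since producing a minimal polynomial for $\cs(\alpha_0)$ presupposes that the transcendence question has already been resolved in the negative. In short, the reduction to a critical value is sound and matches the paper's computational framework, but the transcendence machinery (Gelfond--Schneider applied to $\tfrac1\pi\arccos$) is aimed at the wrong class of numbers, and no actual proof of irrationality is in sight here any more than it is in the paper.
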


If \Cref{irrational} is true, then it follows from \Cref{prop:graph} that 
$[S^3_{1/2}(5_2^*)]$ is not contained in $\Theta^3_{G}$.

%Let $\Theta^3_G$ denote the subgroup of $\Theta^3_\z$ generated by all graph homology 3-spheres.
%It is known that critical values of the $\SU(2)$-Chern-Simons functional of graph 3-manifolds are rational (\cite{A94}).
%This fact implies that the image $r_s(\Theta^3_G)$ is included in $\q_{>0} \cup \{\infty\}$ for any $s \in [-\infty, 0]$.
%Thus we have the following proposition.

%\begin{prop}
%If \Cref{irrational} is true,
%then $[S^3_{1/2}(5_2^*)]$
%is not contained in $\Theta^3_{G}$.
%\end{prop}

%%
\subsection*{Organization}
The paper is organized as follows.
In Section~\ref{Review of filtered instanton Floer homology}, we give a review of filtered instanton homology.
In Section~\ref{The invariant $r_s$}, we introduce the invariants $r_s$ using notions of Section~\ref{Review of filtered instanton Floer homology}, and establish several basic properties of $r_s$.
In particular, \Cref{main theorem} will be proved in this section.
Section~\ref{Comparison with Daemi's invariants} is devoted to discussing the relation between $r_s$ and Daemi's $\Gamma_Y(k)$.
In Section~\ref{Applications}, we prove all assertions stated in Section~\ref{section 1.2}.
In Section~\ref{sec:Additional_structures}, we discuss additional structures on $\Theta^3_\z$ and $\ker h$ by using $r_s$.
In Section~\ref{Calculations}, we explain how to compute an approximate value of $r_s(S^3_{1/2} (5_2^\ast))$.

\subsection*{Acknowledgments}
The authors would like to express their deep gratitude to Aliakbar Daemi for discussing the invariants $\Gamma_Y(k)$.
They also would like to thank Jennifer Hom, Min Hoon Kim, JungHwan Park, and Yoshihiro Fukumoto for their useful comments.
The first author was supported by Iwanami Fujukai Foundation and the third author was supported by the Program for Leading Graduate Schools, MEXT, Japan and RIKEN iTHEMS
Program.
Also, this work was supported by JSPS KAKENHI Grant Numbers JP20K14317, JP18J00808, 17J04364.
Finally, the authors wish to express their thanks to the referee for many helpful suggestions improving the previous version.

%The first author was supported by Iwanami Fujukai Foundation. The second author was supported by JSPS KAKENHI Grant Number 18J00808. The third author was supported by JSPS KAKENHI Grant Number 17J04364 and the Program for Leading Graduate Schools, MEXT, Japan.

  %%%%%%%%%%%%%%%%%%%%%%
\section{Review of filtered instanton Floer homology} \label{Review of filtered instanton Floer homology}
Throughout this paper, all manifolds are assumed to be smooth, compact, orientable and oriented, and diffeomorphisms are orientation-preserving unless otherwise stated.
In this section, we review the definition of the filtered instanton Floer homology. For the instanton Floer homology, see \cite{Fl88} and \cite{Do02}. For the filtered version of instanton Floer homology, see \cite{FS92}.
\subsection{Preliminaries}
\subsubsection{Chern-Simons functional}
For a homology 3-sphere $Y$, we denote the product $\SU(2)$ bundle by $P_Y$,
and the product connection on $P_Y$ by $\theta$.   
In addition, we denote
\begin{itemize}
\item the set of $\SU(2)$-connections on $P_Y$ by $\A(Y)$, 
\item the set of $\SU(2)$-flat connections on $P_Y$ by $\Af(Y)$,
\item $\tmB(Y):= \A(Y) /\map_0(Y, \SU(2))$, 
\item $\tR(Y):=\Af(Y) /\map_0(Y, \SU(2))$, and
\item $R(Y):= \Af(Y)/\map(Y, \SU(2))$,
\end{itemize}
where $\map(Y, \SU(2))$ (resp.\ $\map_0(Y,\SU(2))$) is the set of smooth functions
(resp.\ smooth functions of mapping degree $0$),
and the right action of $\map(Y,\SU(2))$ on $\A(Y)$ is given by 
$a\cdot g := g^{-1} dg + g^{-1} ag$. Note that the action preserves the flatness of $a$ for any $g$.
Also, we write $\tmB^*(Y)$, $\tR^*(Y)$ and $R^*(Y)$ respectively for the subsets of $\tmB(Y)$, $\tR(Y)$ and $R(Y)$ whose stabilizers are constants in $\{\pm I_2\}$.
The elements in $\tmB^*(Y)$ and $\tR^*(Y)$ are called \emph{irreducible connections}.
When the stabilizer of an $\SU(2)$-connection is larger than $\{\pm I_2\}$, it is said to be \emph{reducible}.
The \emph{Chern-Simons functional on $\A(Y)$} is the map $\cs_Y \colon \A(Y) \to \R$ defined by 
\[
\cs_Y(a):=\frac{1}{8\pi^2}\int_Y \Tr(a\wedge da +\frac{2}{3}a\wedge a\wedge a).
\]
It is known that $\cs_Y(a \cdot g) -\cs_Y(a)= \deg(g)$ holds for $g \in \map(Y,\SU(2))$, where $\deg(g)$ is the mapping degree of $g$.
Therefore, $\cs_Y(a \cdot g)= \cs_Y(a)$ for any $g \in \map_0(Y, \SU(2))$, 
and hence $\cs_Y$ descends to a map $\tmB(Y)\ri \R$.
We denote it by the same notation $\cs_Y$.  
Moreover, we use the notations $\Lambda_Y$ and $\Lambda^*_Y$ as $\cs_Y({\tR(Y)})$ and $\cs_Y({\tR^*(Y)})$, respectively.
Note that the set $\Lambda_Y$ is locally finite, that is, $[m,m+1]\cap \Lambda_Y$ is a finite set for any $m \in \R$.
For example, one can see $\Lambda_{S^3} = \z$ and $\Lambda^*_{S^3}= \emptyset$.
Set
\[
\R_Y := \R \setminus \Lambda_Y
\]
for any oriented homology $3$-sphere $Y$.

\subsubsection{Perturbations of $\cs_Y$}%Subsection Perturbations

Roughly speaking, the instanton Floer homology of $Y$ is
the Morse homology associated to $\cs_Y \colon \tmB^*(Y) \to \R$, where the set of critical points is $\tR^*(Y)$. However, $\tR^*(Y)$ does not satisfy non-degeneracy in general, and hence 
we need to perturb $\cs_Y$
so that $\tR^*(Y)$ becomes non-degenerate.
In this paper, we use several classes of perturbations of $\cs_Y$ introduced in \cite[Section~(1b)]{Fl88} and \cite[Section~5.5.1]{BD95}.

For any $d \in \z_{>0}$ and fixed $l \gg 2$, define the set of orientation-preserving embeddings of $d$ solid tori into $Y$; 
\[
\F_d:=  \left\{ (f_i \colon S^1\times D^2 \hookrightarrow Y )_{1\leq i \leq d} \right\},
\]
and denote by $C^{l}(\SU(2)^d,\R)_{\ad}$ 
the set of adjoint invariant $C^l$ functions on $\SU(2)^d$.
Then, the \emph{set of perturbations} is defined by
\[
\mathcal{P}(Y):= \bigcup_{d \in \n}\F_d\times C^{l}(\SU(2)^d,\R)_{\ad}.
\]

Fix a 2-form $d\mathcal{S}$ on $D^2$ supported in the interior of $D^2$ with $\int_{D^2}d\mathcal{S}=1$.
Then, for any $\pi = (f,h)\in \mathcal{P}(Y)$, 
we can define the \emph{$\pi$-perturbed Chern-Simons functional} $\cs_{Y,\pi} \colon\widetilde{\B}^*(Y) \ri \R$ by
\begin{align}\label{def of per}
\cs_{Y,\pi}(a)= \cs_Y(a)
+ \int_{x \in D^2} h(\hol_{f_1(-,x)}(a), \dots, \hol_{f_d(-,x)}(a)) d\mathcal{S},
\end{align}
where $\hol_{f_i(-,x)}(a)$ is the holonomy around the loop $t \mapsto f_i(t,x)$ for each $i \in \{1,\dots,d\}$. 
We denote $\|h\|_{C^l}$ by $\|\pi \|$ and the second term of the right-hand side in \eqref{def of per} by $ h_f$.

%%%
\subsubsection{Gradient of $\cs_{Y, \pi}$}\label{grad1}% Gradient of \cs_{Y, \pi} 

We next consider the gradient of $\cs_{Y, \pi}$. 
Fix a Riemannian metric $g_Y$ on $Y$.
For $i \in \{1, \ldots, d\}$, let $\iota_i \colon \SU(2) \to \SU(2)^d$ denote the $i$-th inclusion, and set $h_i := h \circ \iota_i \colon \SU(2) \to \R $.
Then, identifying $\su$ with its dual by the Killing form, 
we can regard the derivative $h'_i$ as a map $h'_i \colon \SU(2) \to \su$. 
%$h_i' \colon \aut P_Y\ri \ad P_Y$が誘導される説明も添えたい↑
%ひとまずここまで書き換えた

Using the value of the holonomy around each loop $\{f_i(s,x) \mid s\in S^1\}$, we obtain a section $\hol_{f_i(s,x)}(a)$ of the bundle $\aut P_Y$ over $\im f_i$. Sending the section $\hol_{f_i(s,x)}(a)$ by the bundle map induced by $h_i' \colon \aut P_Y \to \ad P_Y$, we obtain a section $h_i'(\hol_{f_i(s,x)}(a))$ of $\ad P_Y$ over $\im f_i$.

We now describe the gradient-line equation of $\cs_{Y,\pi}$ with respect to $L^2$-metric:
\begin{align}\label{grad}
 \frac{\partial}{\partial t} a_t=\grad_a \cs_{Y,\pi} = *_{g_Y}\left( F(a_t)+\sum_{1\leq i \leq d} h'_i(\hol(a_t)_{f_i(s,x)})(f_i)_*{\pr}_2^*d\mathcal{S} \right),
\end{align}
where $\pr_2$ is the second projection $\pr_2 \colon S^1\times D^2 \ri D^2$, $*_{g_Y}$ is the Hodge star operator and $F(a)$ denotes the curvature of a connection $a$.
We denote $\pr_2^*d\mathcal{S}$ by $\eta$.
We set
\[
\widetilde{R}(Y)_\pi:= \left\{a \in \widetilde{\B}(Y) \Biggm |F(a)+\sum_{1\leq i \leq d} h'_i(\hol(a)_{f_i(s,x)})(f_i)_*\eta=0 \right\}, %野崎
 \]
 and 
 \[
 \widetilde{R}^*(Y)_\pi:= \widetilde{R}(Y)_\pi \cap \widetilde{\B}^*(Y).
 \]
The solutions of \eqref{grad} correspond to connections $A$ over $Y\times \R$ which satisfy an equation:
\begin{align}\label{pASD}
F^+(A)+ \pi(A)^+=0,
\end{align}
where
\begin{itemize}
\item the 2-form $\pi(A)$ is given by 
\[
\sum_{1\leq i \leq d} h'_i(\hol(A)_{\tilde{f}_i(t,x,s)}){(\tilde{f}_i)}_* (\pr_1^* \eta),
\]
\item the map $\pr_1$ is a projection map from $(S^1\times D^2) \times \R$ to $S^1\times D^2$,
\item the notation $+$ is $\frac{1}{2}(1+*)$ where $*$ is the Hodge star operator with respect to the product metric on $Y\times \R$ and 
\item the map $\tilde{f}_i\colon  S^1\times D^2\times \R \ri Y\times \R$ is $f_i\times \id$. 
\end{itemize}

We introduce the spaces of trajectories $M^Y(a,b)_\pi$ for given $a,b \in \widetilde{R}^*(Y)_\pi$. Fix a positive integer $q\geq3$. Let $A_{a,b}$ be an $\SU(2)$-connection on $Y \times \R$ satisfying $A_{a,b}|_{Y\times (-\infty,1]}=p^*a$ and $A_{a,b}|_{Y\times [1,\infty)}=p^*b$ where $p$ is the projection $Y\times \R \ri Y$.
We define $M^Y(a,b)_\pi$ by
\begin{align}\label{*}
M^Y(a,b)_\pi:=\left\{A_{a,b}+c  \Bigm| c \in \Omega^1(Y\times \R)\otimes \su_{L^2_q}\text{ with } \eqref{pASD} \right\}/ \G(a,b),
\end{align}
where the gauge group $\G(a,b)$ is given by
\begin{align*}%\label{gauge group}
\G(a,b):=\left\{ g \in \aut(P_{Y\times \R})\subset {\End(\mathbb{C}^2)}_{L^2_{q+1,\text{loc}}} \Bigm| g^* A_{a,b} - A_{a,b} \in L^2_q \right\}. 
\end{align*}
Here the space $L^2_{q+1,\text{loc}}$ consists of the sections which are $L^2_{q+1}$ on each compact set in $Y\times \R$ and $g^* A_{a,b}$ denotes the pull-back of the connection $A_{a,b}$ by $g$.  
The group $\G(a,b)$ acts on $\left\{A_{a,b}+c \Bigm| c \in \Omega^1(Y\times \R)\otimes \su_{L^2_q}\text{ with }\eqref{pASD} \right\}$ via the pull-backs of connections. 
Since 
\[
 \|g^* A_{a,b} - A_{a,b}   \|_{L^2_q(Y \times [n,n+1]) } \to 0 \text{ as $n \to \pm \infty$}, 
 \]
 $g$ lies in the stabilizer $\{\pm1\}$ of $a$ and $b$ asymptotically on both ends respectively. When we define $M^Y(a,\theta)_{\pi,\delta}$, we use the $L^2_{q,\delta}$-norm in instead of $L^2_q$-norm. The definition of $L^2_{q,\delta}$-norm is given later in \eqref{weighted}. 
The space $\R$ has an action on $M^Y(a,b)_\pi$ by the translation.
\subsubsection{Classes of perturbations} 
\label{classes of perturbations}
We also use several classes of the perturbations.
If the cohomology groups defined by the complex given in \cite[(12)]{SaWe08} satisfies $H^i_{\pi,a}=0$ for all $[a] \in \widetilde{R}(Y)_\pi \setminus \{ [g^*\theta] \mid g \in \map (Y,\SU(2)) \}$ for a given $\pi$, we call $\pi$ a \emph{non-degenerate perturbation}. 
If $\pi$ satisfies the following conditions, we call $\pi$ a \emph{regular perturbation} for a fixed small number $\delta>0$ and $g_Y$. 
\begin{itemize}
\item The linearization of the left-hand side of \eqref{pASD}
\[
d^+_A+d\pi^+_A \colon \Om^1(Y\times \R)\otimes \su_{L^2_q}\ri \Om^+(Y\times \R)\otimes \su_{L^2_{q-1}}
\]
 is surjective for $a,b\in \wt{R}^*(Y)_\pi$ and $[A] \in M^Y(a,b)_\pi$. 
 \item The linearization of  the left-hand side of \eqref{pASD}
 \[
d^+_A+d\pi^+_A \colon  \Om^1(Y\times \R)\otimes \su_{L^2_{q,\delta}}\ri \Om^+(Y\times \R)\otimes \su_{L^2_{q-1,\delta}}
\]
is surjective for $a\in \wt{R}^*(Y)_\pi$ and $[A] \in M^Y(a,\theta)_\pi$. 
 \end{itemize}
  The norms are given by 
\[
 \|f\|^2_{L^2_q}:=\sum_{0\leq j \leq q} \int_{Y\times \R} |\nabla^j_{A_{a,b}}f|^2
\]
and
\begin{align}\label{weighted}
 \|f\|^2_{L^2_{q,\delta}}:=\sum_{0\leq j \leq q} \int_{Y\times \R}e^{\delta \tilde{\tau}} |\nabla^j_{A_{a}}f|^2
\end{align}
for $f \in \Om^i(Y\times \R) \otimes \su$ with compact support, where
 \begin{itemize}
 \item $A_{a,b}$ and $A_{a,\theta}$ are fixed connections as above, 
 \item $|-|$ is the product metric on $Y\times \R$, 
 \item the positive integer $q$ is grater than $2$ and 
 \item $\tilde{\tau}\colon Y\times \R \ri \R$ is a smooth function satisfying $\tilde{\tau}(y,t)=t$ if $t>1$ and $\tilde{\tau}(y,t)=0$ if $t<-1$.
 \end{itemize}
 Here the spaces  $M^Y(a,b)_\pi$ and $M^Y(a,\theta)_{\pi,\delta}$ are given in \eqref{*} in Section~\ref{grad1}. 
 
Next, we will introduce a class of small perturbations which we actually use. In order to explain this, we follow a method introduced in \cite{FS92}. 
\begin{defn}\label{epsilon perturbation} Let $Y$ be a homology 3-sphere and $g$ be a Riemannian metric on $Y$.
For $\epsilon>0$, we define a class of perturbations $\mathcal{P}^\epsilon(Y,g)$ as the subset of $\mathcal{P} (Y)$ consisting of elements $ \pi=(f,h)$ which satisfy 
\begin{enumerate} 
 \item \label{eps}$| h_f(a) | < \epsilon \text{ for all } a \in \widetilde{\B}(Y)$ and 
 \item \label{eps2}$\| \grad_g h_f(a) \|_{L^4}< \frac{\epsilon}{2}, \| \grad_g h_f(a) \|_{L^2}< \frac{\epsilon}{2}$  for all $a \in \widetilde{\B}(Y)$.
\end{enumerate}
\end{defn}
If necessary, for a non-degenerate regular perturbation $\pi=(f, h)$, we can assume $h$ is smooth (see \cite[Section~8]{SaWe08}).

For $r,s \in \R_{Y} \cup \{-\infty\}$ and a fixed Riemannian metric $g$, we define a class of perturbations $\mathcal{P}(Y,r,s,g)$ in the following way. 
Let $\{R_\alpha\}$ be the connected components of ${R}^*(Y)$. Let $U_\al$ be a neighborhood of $R_\al$ in $\B(Y)$ with respect to the $C^\infty$-topology such that $U_\al \cap U_\beta = \emptyset$ if $\al \neq \beta$ and $\{U_\al\}$ is a covering of $R^*(Y)$.  We take all lifts of $U_\al$ with respect to $\text{pr}:\widetilde{\B}_Y \to \B_Y$. Since $\map (Y,\SU(2)) / \map_0 (Y,\SU(2))$ is isomorphic to $\z$, we denote all lifts by $\{U_\al^i\}_{i \in \z}$.
 In addition, we impose the following conditions on $U_\al^i$.
\begin{itemize}
\item If $a \in U^i_\al$,  $|\cs(a)-\cs(R_\al)| <  \min\left\{ \frac{ d(r, \Lambda_Y)}{8}, \frac{ d(s, \Lambda_Y)}{8}\right\}$, where $d(r, \Lambda_Y)$ is given by
\[
d(r, \Lambda_Y) := \min\{| r - a | \in \R_{>0} \mid a \in \Lambda_Y\}.
\]
\item $U^i_\al$ has no reducible connections.
\end{itemize}
Note that, for any element $\rho \in \wt{R}(Y)$, we have unique $\al$ and $i\in \z$ such that $\rho \in U_\al^i$. 
 
By the Uhlenbeck compactness theorem, we can take a sufficiently small real number $\epsilon_1(Y,g, \{U_\al\})>0$ satisfying the following condition:
\begin{align}\label{nbd}
\text{ If }a \in \B^*(Y) \text{ and }\|F(a)\|_{L^2} \leq \epsilon_1(Y,g, \{U_\al\}) \text{, then } a \in U_\al \text{ for some $\al$}.
\end{align}

\begin{defn}Now we take the supremum value
\[
\epsilon_1(Y,g) := \frac{1}{2}\sup_{ \{U_\al\}}  \epsilon_1(Y,g, \{U_\al\}) ,
\]
where $\{U_\al\}$ runs over all coverings of $\{R_\al \}$ given as above method.
\end{defn}

We also use the notation  $\lambda_Y:= \min \{ |a-b| \mid a,b \in \Lambda_Y \text{ with }a\neq b  \}$. 
Then we define a class of perturbations which we will use later. 

\begin{defn}
For a given $r\in \R_{Y}$, $s \in [-\infty, \infty)$ and metric $g$,  
we define
\[
\epsilon(Y,r,s,g):= \begin{cases}  
\min \{ \epsilon_1(Y,g), \frac{ d(s, \Lambda_Y)}{8}, \frac{ d(r, \Lambda_Y)}{8},\frac{ \lambda_Y }{32} \} & \text{if $s \in \R_Y$,} \\ 
\min \{ \epsilon_1(Y,g), \frac{ d(r, \Lambda_Y)}{8},\frac{ \lambda_Y }{32} \} & \text{if $s \in \Lambda_Y$} \\
 \end{cases}
\]
and
\[
\mathcal{P}(Y,r,s,g):= 
\mathcal{P}^{\epsilon(Y,r,s,g)} (Y,g) \subset \mathcal{P}(Y). 
\]
\end{defn}

By the use of $\mathcal{P}(Y,r,s,g)$, we have the following fundamental properties of the values of the perturbed Chern-Simons functional.

\begin{lem}\label{keylemma}
Given $r \in \R_Y$, $s \in \R$, and $\pi \in \mathcal{P}(Y,r,s,g)$, for any $a \in \wt{R}_\pi(Y)$ one has
\begin{enumerate}
 \item $|\cs_\pi(a)-r| > \frac{3}{4}d(r,\Lambda_Y)$,
 \item $|\cs_\pi(a)-s| > \frac{3}{4}d(s,\Lambda_Y)$ if $s \in \R_Y$,
 \item $|\cs_\pi(a)-s+\frac{1}{2}\lambda_Y| > \frac{3}{4}d(s-\lambda_Y,\Lambda_Y)$ if $s \in \Lambda_Y$.
\end{enumerate}
\end{lem}

\begin{proof}
We only show (1).
Due to the choice of perturbations, for each $a \in \wt{R}_\pi(Y)$ one can find $\rho \in \wt{R}(Y)$ satisfying $a \in U_\rho$, which leads to
\begin{align*}
 |\cs_\pi(a)-r| &\geq |\cs(\rho)-r|-|\cs_\pi(a)-\cs(a)|-|\cs(a)-\cs(\rho)| \\
 &>  d(r,\Lambda_Y)-\frac{1}{8}d(r,\Lambda_Y)-\frac{1}{8}d(r,\Lambda_Y)
 = \frac{3}{4}d(r,\Lambda_Y).
\end{align*}
The proofs of (2) and (3) are essentially the same as that of (1).
\end{proof}

%%%%%

%%%%%%%%%%%%%%%%%%%
\subsection{Instanton Floer homology}%Instanton Floer homology
In this subsection, we give the definition of the filtration of the instanton Floer (co)homology by using the technique in \cite{FS92}. First, we give the definition of $\z$-graded instanton Floer homology. 
Let $Y$ be a homology $S^3$ and fix a Riemannian metric $g_Y$ on $Y$. Fix a non-degenerate regular perturbation $\pi \in \mathcal{P}(Y)$.
Roughly speaking, the instanton Floer homology is infinite dimensional Morse homology with respect to 
\begin{align}\label{R-valued}
\cs_{Y,\pi} \colon \widetilde{\B}^*(Y) \ri \R.
\end{align}

Floer defined $\ind\colon  \widetilde{R}^*(Y)_\pi \ri \z$, called the Floer index. 
The (co)chains of the instanton Floer homology are defined by
\[
CI_i(Y):= \z \left\{ a \in \widetilde{R}^*(Y)_\pi \Bigm| \ind(a)=i \right\} \ (CI^i(Y):= \Hom(CI_i(Y),\z)).
\]
The (co)boundary maps $\partial \colon CI_i(Y) \ri CI_{i-1}(Y)$ $(\delta \colon CI^i(Y)\ri CI^{i+1}(Y))$ are given by
\[
\partial (a) := \sum_{b \in \widetilde{R}^*(Y)_\pi,\, \ind(b)=i-1}\# (M^Y(a,b)_\pi/\R) b\ (\delta:=\partial^*),
\] 
where $M^Y(a,b)_\pi$ is the space of trajectories of $\cs_{Y,\pi} $ from $a$ to $b$.  
\begin{rem}
Originally, the instanton Floer homology is modeled on infinite dimensional Morse homology with respect to the functional 
 \begin{align}\label{S-valued}
\cs_{Y,\pi} \colon \B^*(Y) : =\A^*(Y) / \map (Y, \SU(2) )  \ri S^1. 
\end{align}
If we use $ \B(Y)$, the Floer indices take values in $\z/8\z$. For our purpose, we will use the $\R$-valued Chern-Simons functional, that is, we consider \eqref{R-valued} instead of \eqref{S-valued}. On $\widetilde{\B}^*(Y)$, the Floer indices take values in $\z$. So, we obtain a $\z$-graded chain complex. The original instanton chain group was given by 
\[
C_i(Y) :=  \z \left\{ a \in \widetilde{R}^*(Y)_\pi / \z  \Bigm| \ind(a)=i   \right\} 
\]
for each $i \in \z/8\z$.
The following isomorphism gives a relation between the original instanton Floer homology and $\z$-graded instanton Floer homology of $Y$: 
\[
H_\ast(CI_j (Y), \partial )  \xrightarrow{\cong} H_\ast(C_i(Y), \partial )   
\]
for $i\in \z/ 8\z$,  $j \in \z$ with $j \equiv i \mod 8$. 
\end{rem}

Here, we need to give orientations of $M^Y(a,b)_\pi/\R$.  We review how to give orientations of $M^Y(a,b)_\pi/\R$. For a given homology $3$-sphere $Y$, a non-degenerate perturbation $\pi$, $a \in \widetilde{R}^*_\pi (Y)$ and any oriented compact 4-manifold $X$ bounded by $-Y$, we define a configuration space
\begin{align}\label{configuration for X}
\B (a, X):= \{ A_{a} + c\ |\ c \in \Om^1(X^*) \otimes \su _{L^2_q}  \} /\G (a,X^*), 
\end{align}
where $X^\ast$ denotes $X \cup Y \times [0, \infty)$ with a product Riemann metric on the end and $\G (a,X^*)$ is the gauge group defined analogously to the cylindrical case as above.
Our convention of the orientations is the same as that in \cite[Section~5.4]{Do02}.
If we choose a connection $a$ as the product connection $\theta$, we need to use the weighted Sobolev norm \eqref{weighted} to define the space $\B (\theta, X)$.  
When we give orientations of the spaces $M^Y(a,b)_\pi/\R$, we use real line bundles 
\begin{align}\label{detline}
\mathbb{L}_a \to \B (a, X)
\end{align}
for $a \in \widetilde{R}^*(Y)_\pi $ which is called the \emph{determinant line bundle}.
This bundle is defined as a determinant line bundle of a family of operators $d_A^*+d_A^+$ parametrized by $A \in \B (a, X)$. For the details, see \cite[Section~5.4]{Do02}. 
In \cite[Section~5.4]{Do02}, it is shown that the bundle $\mathbb{L}_a \to \B (a, X)$ is trivial.
Define
\[
\mathbb{L}_X := \bigwedge^{\text{max}} (H^0(X; \R ) \oplus  H^1(X; \R ) \oplus H^+(X; \R )).
\]
If we fix an orientation of the orientation bundle 
\begin{align}\label{lambda ax} 
\lambda_{a, X}:=\mathbb{L}_a \otimes \mathbb{L}_X
\end{align} associated with $a$, one can define an orientation of $M^Y(a,b)_\pi$ by the following way.
By the gluing argument of the connections and the operators, one can obtain a continuous map
\[
\mathfrak{gl} \colon M^Y(a,b)_\pi \times \B(b,X) \to \B(a,X).
\]
Furthermore, we obtain a bundle isomorphism whose restriction to the fiber over $(A,B) \in M^Y(a,b)_\pi \times \B(b,X)$ is given by
\[
\widetilde{\mathfrak{gl} } \colon (\Det( TM^Y(a,b)_\pi) \otimes \mathbb{L}_b)|_{(A,B)}  \to  \mathfrak{gl}^* \mathbb{L}_a|_{(A,B)},
\]
where $TM^Y(a,b)_\pi$ is the tangent bundle of $M^Y(a,b)_\pi$.
We have two orientations of $M^Y(a,b)_\pi$: an orientation of $\Det( TM^Y(a,b)_\pi)$ so that $\widetilde{\mathfrak{gl}}$ is an orientation-preserving map and one coming from the $\R$-translation.
The consistency of these orientations gives a sign of the differential. 
This definition does not depend on the choice of $(A, B)$ and bump functions which are used to construct the map $\mathfrak{gl}$. Moreover, one can see that $\partial^2=0$ holds as in the case of Morse homology for finite dimensional manifolds.

The instanton Floer (co)homology $I_*(Y)$ (resp.\ $I^*(Y)$) is defined by 
\[
I_*(Y):= \ker \partial  / \im \partial \ (\text{resp.\ $I^*(Y):= \ker \delta/\im \delta$}).
\]
If we take another data of perturbations, Riemannian metric and orientations of $\lambda_{a, X}$, then the corresponding  chain complexes are chain homotopy equivalent to each other. 
Therefore the isomorphism class of the groups $CI_*(Y)$ and $CI^*(Y)$ are well-defined. 

%%%%%%%%%%%%%%%%%%%%%%%%%%%%%%
\subsection{Filtered instanton Floer homology}\label{filter}
In this section, we introduce filtered instanton Floer homology which refines Fintushel-Stern's Floer homology introduced in \cite{FS92}.

We recall $\Lambda_Y = \cs_Y(\widetilde{R}(Y))$, $\Lambda^*_Y = \cs_Y(\widetilde{R}^*(Y))$ and $\R_Y = \R \setminus \Lambda_Y$.
For $r \in \R_Y$, we define the filtered instanton homology $I^{[s,r]}_*(Y) (I^*_{[s,r]}(Y))$ using $\epsilon$-perturbations.

\begin{defn}\label{defofIr*}
We fix $s \in [-\infty, 0]$.
For a given $r \in \R_Y$, metric $g$ on $Y$, a non-degenerate regular perturbation $\pi \in \mathcal{P}(Y,r,s,g)$ and orientations on line bundles $\lambda_{a, X}$,
the (co)chains of the filtered instanton Floer (co)homologies are defined by
\[ 
CI^{[s,r]}_i(Y, \pi):=\begin{cases} 
 \z \left\{ [a] \in \widetilde{R}^*(Y)_\pi \Bigm| \ind(a)=i,\  s<\cs_{Y,\pi}(a)<r \right\} & \text{if $s \in \R_{Y}$,} \vspace{0.5ex}\\
 \z \left\{ [a] \in \widetilde{R}^*(Y)_\pi \Bigm| \ind(a)=i,\  s- \frac{\lambda_Y}{2} <\cs_{Y,\pi}(a)<r \right\} & \text{if $s \in \Lambda_{Y}$} \\
\end{cases} 
\]
and
\[
CI^i_{[s,r]}(Y, \pi):= \Hom(CI_i^{[s,r]}(Y, \pi),\z), 
\]
 where $\lambda_Y:= \min \{ |a-b| \mid a\neq b , a,b \in \Lambda_Y\}$. 
The (co)boundary maps 
\[
\partial^{[s,r]} \colon CI_i^{[s,r]}(Y, \pi) \ri CI_{i-1}^{[s,r]}(Y, \pi) \ (\text{resp.\ $\delta^r \colon CI^i_{[s,r]}(Y)\ri CI^{i+1}_{[s,r]} (Y)$})
\]
are given by the restriction of $\partial$ to $CI_i^{[s,r]}(Y)$ (resp.\ $\delta^{[s,r]}:=(\partial^{[s,r]})^*$). 
\end{defn}

Then, one can see $(\partial^{[s,r]})^2=0$.

\begin{defn}
The \emph{filtered instanton Floer \textup{(}co\textup{)}homology} $I^{[s,r]}_*(Y)$ (resp.\ $I_{[s,r]}^*(Y)$) is defined by 
\begin{align*}
I^{[s,r]}_*(Y):= \ker \partial^{[s,r]}  / \im \partial^{[s,r]} \ (\text{resp.\ $I^\ast_{[s,r]}(Y):= \ker \delta^{[s,r]} / \im \delta^{[s,r]}$}).
\end{align*}
\end{defn}

Although the isomorphism classes of $CI^{[s,r]}_i(Y, \pi)$ depends on the choice of $\pi$, the chain homotopy type is an invariant of $Y$.
Thus, we omit the notion $\pi$ for the Floer (co)homology groups.
The following lemma provides well-definedness of our invariants $I^{[s,r]}_*(Y)$ and $I^*_{[s,r]}(Y)$.

\begin{lem}
Fix $s \in [-\infty,0]$, $r \in \R_Y$ with $ s\leq 0 \leq r$, two Riemannian metric $g$ and $g'$ on $Y$, non-degenerate regular perturbations $\pi$, $\pi'$ in $\mathcal{P}(Y,r,s,g)$ and orientations of orientation bundles for $\widetilde{R}^*(Y)_\pi$ and $\widetilde{R}^*(Y)_{\pi'}$ respectively.
If we choose two elements $\pi$ and $\pi'$ in $\mathcal{P}(Y,r,s,g)$ and $\mathcal{P}(Y,r,s,g')$, then there exists a chain homotopy equivalence between $CI^{[s,r]}_i(Y,\pi)$ and $CI^{[s,r]}_i(Y,\pi')$, where $CI^{[s,r]}_i(Y,\pi)$ \textup{(}resp.\ $CI^{[s,r]}_i(Y,\pi')$\textup{)} is the instanton chain complex with respect to $\pi$ \textup{(}resp.\ $\pi'$\textup{)}.
\end{lem}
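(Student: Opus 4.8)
The plan is to prove the invariance statement by a standard continuation-map argument adapted to the filtered setting, the key point being that the perturbation classes $\mathcal{P}(Y,r,s,g)$ are small enough (by \Cref{keylemma}) to keep all critical values strictly inside the window $(s,r)$ or $(s-\lambda_Y/2,r)$, so that the usual chain homotopies respect the filtration. First I would reduce to two cases that can be composed: (a) fixing the metric $g$ and varying the perturbation from $\pi$ to $\pi'$ within $\mathcal{P}(Y,r,s,g)$, and (b) varying the metric from $g$ to $g'$ together with the perturbation. In each case I would choose a generic path $(g_t,\pi_t)_{t\in[0,1]}$ with endpoints the given data, arranging that $\pi_t\in\mathcal{P}(Y,r,s,g_t)$ for all $t$; this is possible because the defining conditions in \Cref{epsilon perturbation} (the $C^0$ and gradient bounds by $\epsilon(Y,r,s,g)$) are open and because $\epsilon_1(Y,g)$ varies controllably in $g$, so one can take the path inside the intersection of the relevant perturbation classes, possibly after rescaling $h$ by a small constant.

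Next I would build the continuation chain map $\Phi\colon CI_\ast(Y,\pi)\to CI_\ast(Y,\pi')$ in the usual way, by counting solutions of the parametrized ASD equation on $Y\times\R$ associated to the path $(g_t,\pi_t)$, interpolating the metrics and perturbations on a compact neck and taking the index-$0$ part of the moduli space. The crucial filtered observation is an energy/Chern-Simons inequality: any such continuation trajectory from $a$ to $b$ satisfies $\cs_{Y,\pi'}(b)\le\cs_{Y,\pi}(a)+C$ where $C$ can be made smaller than any prescribed fraction of $d(r,\Lambda_Y)$ and $d(s,\Lambda_Y)$ by choosing the path of perturbations small and the interpolation region appropriately; combined with \Cref{keylemma}, which forces every generator of $CI^{[s,r]}_\ast$ to have Chern-Simons value in an interval of radius $\tfrac34 d(r,\Lambda_Y)$ (resp.\ $d(s,\Lambda_Y)$) away from the endpoints $r,s$, this guarantees that if $a$ lies in the window then so does $b$. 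Hence $\Phi$ restricts to a map $\Phi^{[s,r]}\colon CI^{[s,r]}_\ast(Y,\pi)\to CI^{[s,r]}_\ast(Y,\pi')$. The same argument applied to the reverse path gives $\Psi^{[s,r]}$, and the standard chain homotopies $\Phi\circ\Psi\simeq\id$, $\Psi\circ\Phi\simeq\id$ (counting index-$(-1)$ parametrized moduli spaces over a two-parameter family) again preserve the window by the same energy bound, so $\Phi^{[s,r]}$ is a chain homotopy equivalence. Orientations of the determinant line bundles are matched along the path by the usual coherent-orientation transport as in \cite[Section~5.4]{Do02}, which contributes only overall signs and does not interfere with the filtration.

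The main obstacle I expect is the bookkeeping around the boundary cases where $s\in\Lambda_Y$ (so the window is $(s-\lambda_Y/2,r)$) and, more seriously, the behaviour of the reducible connection $\theta$: when $s=-\infty$ or when trajectories to $\theta$ enter, one must use the weighted norm $L^2_{q,\delta}$ and the space $M^Y(a,\theta)_{\pi,\delta}$, and verify that the continuation maps and homotopies can be set up compatibly with these weights while still obeying the same Chern-Simons bound. The other delicate point is ensuring the path $(g_t,\pi_t)$ can simultaneously be made generic (so all relevant parametrized moduli spaces are cut out transversally) and stay inside $\mathcal{P}(Y,r,s,g_t)$; this is where one invokes that the regular, non-degenerate perturbations form a residual subset and that $\mathcal{P}(Y,r,s,g)$ has nonempty interior, so a generic small path exists. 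Once these technical points are handled, the conclusion follows exactly as in the unfiltered Floer theory, and the chain homotopy type of $CI^{[s,r]}_\ast(Y)$ is therefore independent of $g$, $\pi$, and the choice of orientations.
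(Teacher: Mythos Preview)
Your proposal is essentially correct and follows the same continuation-map strategy as the paper: interpolate the metrics and perturbations on $Y\times\R$, count index-zero moduli spaces to get chain maps in both directions, and use the standard homotopies to show they are mutually inverse up to chain homotopy. Your explicit appeal to \Cref{keylemma} to explain why the filtration window $[s,r]$ is preserved is exactly the mechanism at work, and you spell this out more fully than the paper does.

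There is one minor simplification you are missing. You propose arranging a \emph{path} $(g_t,\pi_t)$ with $\pi_t\in\mathcal{P}(Y,r,s,g_t)$ for every $t$, and you correctly flag this as delicate because $\epsilon_1(Y,g_t)$ may vary with $t$. The paper sidesteps this entirely: rather than demanding the interpolating perturbation lie in the $\epsilon$-class slice-by-slice, it only imposes the single smallness condition
\[
\|\pi_\#(A)\|_{L^2(Y\times[-1,1])} < \min\{\epsilon(Y,r,s,g),\ \epsilon(Y,r,s,g')\}
\]
on the compact interpolation region. This is enough to bound the Chern--Simons defect incurred when crossing the neck, and combined with \Cref{keylemma} at the two ends it forces any continuation trajectory starting in the window to end in the window. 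So the ``delicate point'' you anticipate simply does not arise in the paper's setup, and your reduction to cases (a) and (b) is unnecessary---one interpolation handles both at once.
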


\begin{proof}
Fix the following data: 
\begin{itemize}
\item Fix a Riemannian metric $g_\#$ on $Y \times \R$ which coincides with $g+ dt^2$ on $Y \times (-\infty, -1]$ and $g'+ dt^2$ on $Y \times [1,\infty)$.
\item Fix a regular perturbation $\pi_\#$ on $Y \times \R$ which coincides with $\pi$ on $Y \times (-\infty, -1]$ and $\pi'$ on $Y \times [1,\infty)$ such that 
\[
\| \pi_\# (A) \|_{L^2 (Y\times [-1,1])} < \min \{\epsilon(Y, r,s,g), \epsilon (Y, r,s,g')\}.
\]
(In \Cref{classes of perturbations}, we gave the definition of regular perturbations for the product perturbations.
In general case, we also define regular perturbations using a certain surjectivity condition.
For more details, see \cite{Do02}.)
\end{itemize}
Then, by considering the moduli space $M^{Y} (a,b)_{\pi_\#}$ under the assumption $\ind (a)-\ind(b)=0$, we obtain  compact 0-dimensional manifolds.
The orientation comes from in the same as the case of cobordism maps which we will introduce in \Cref{Cobordism maps}. 
Thus, we have a map 
\[
\mu_{\pi,\pi'} \colon CI^{[s,r]}_i(Y,\pi) \to CI^{[s,r]}_i(Y,\pi')
\]
defined by 
\[
\displaystyle a \mapsto \sum_{b:\,\ind (a)-\ind(b)=0} \#M^Y(a,b)_{\pi_\#}.
\]
Similarly, we have a map $\mu_{\pi',\pi} \colon CI^{[s,r]}_i(Y,\pi') \to CI^{[s,r]}_i(Y,\pi)$. One can check that the maps $\mu_{\pi,\pi'}$  and $\mu_{\pi',\pi}$ are chain maps.
Moreover, $\mu_{\pi',\pi} \mu_{\pi,\pi'}$ and $\mu_{\pi,\pi'} \mu_{\pi',\pi}$  are chain homotopic to the identity.
This is the same argument as in \cite{FS92}.
This completes the proof. 
\end{proof}

\begin{lem}\label{filtration} 
For $r,r' \in \R_Y$, $s, s' \in \R$ with $s\leq s' \leq 0 \leq r\leq r'$ , then there exists a chain map 
\[
i_{[s,r]}^{[s',r']} \colon  CI^{[s,r]}_i(Y) \to CI^{[s',r']}_i(Y).
\]
The map $i_{[s,r]}^{[s',r']} $ satisfies the following conditions: 
\begin{enumerate} 
\item The chain homotopy class of the map $i_{[s,r]}^{[s',r']} $ does not depend on the choice of additional data.
\item If we take three positive numbers $(r, r', r'')$, $(s, s',s'')$ with $s\leq s' \leq s'' \leq 0 \leq r \leq r'\leq r''$, then 
\[
i^{[s,r]}_{[s',r']}\circ i^{[s',r']}_{[s'',r'']}= i^{[s,r]}_{[s'',r'']}
\]
 holds as induced maps on cohomologies, where $i^{[s,r]}_{[s',r']}$, $i^{[s',r']}_{[s'',r'']}$ and $i^{[s,r]}_{[s'',r'']}$ are duals of $i_{[s,r]}^{[s',r']}$, $i_{[s',r']}^{[s'',r'']}$ and $i_{[s,r]}^{[s'',r'']}$.
\item If $[r, r'], [s,s']  \subset  \R \setminus \Lambda^*_Y$ , then the map  $i_{[s,r]}^{[s',r']} $ gives a chain homotopy equivalence. 
\end{enumerate}
\end{lem}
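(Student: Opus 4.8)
The plan is to realize each filtered complex $CI^{[s,r]}_\ast(Y,\pi)$ as a subquotient of the full $\z$-graded instanton complex and to take for $i_{[s,r]}^{[s',r']}$ the tautological map between these subquotients. First I would fix a metric $g$ and a single non-degenerate regular perturbation $\pi$ with $\|\pi\|$ smaller than each of the finitely many thresholds $\epsilon(Y,r,s,g)$, $\epsilon(Y,r,s',g)$, $\epsilon(Y,r',s,g)$, $\epsilon(Y,r',s',g)$, so that this one $\pi$ simultaneously computes all four filtered complexes attached to the corners of the rectangle $[s,s']\times[r,r']$. Write $\tilde s$ for $s$ when $s\in\R_Y$ and for the regular value $s-\lambda_Y/2$ when $s\in\Lambda_Y$, and let $C^{<t}_\ast$ be the span of the generators $[a]\in\widetilde R^\ast(Y)_\pi$ with $\cs_{Y,\pi}(a)<t$. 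Since every trajectory counted by $\partial$ strictly decreases $\cs_{Y,\pi}$ — the energy identity gives $\cs_{Y,\pi}(a)-\cs_{Y,\pi}(b)=\const\cdot\|F^-(A)\|_{L^2}^2\ge 0$, with equality only for constant trajectories — each $C^{<t}_\ast$ is a subcomplex, so $CI^{[s,r]}_\ast(Y,\pi)=C^{<r}_\ast/C^{<\tilde s}_\ast$ is a well-defined subquotient complex matching \Cref{defofIr*}. (I would spell out the case $s,s'\in\R_Y$; the cases with $s$ or $s'$ in $\Lambda_Y$ are handled identically after the definitional shift.)

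Given this, I would define $i_{[s,r]}^{[s',r']}$ as the composite of the inclusion $C^{<r}_\ast/C^{<\tilde s}_\ast\hookrightarrow C^{<r'}_\ast/C^{<\tilde s}_\ast$ coming from $r\le r'$ with the projection $C^{<r'}_\ast/C^{<\tilde s}_\ast\twoheadrightarrow C^{<r'}_\ast/C^{<\tilde s'}_\ast$ coming from $\tilde s\le\tilde s'$; concretely it sends a generator $[a]$ to itself when $\tilde s'<\cs_{Y,\pi}(a)<r'$ and to $0$ otherwise, and it is a chain map precisely because $\partial$ lowers $\cs_{Y,\pi}$ (so no boundary term is accidentally cut off by passing from $\tilde s$ to $\tilde s'$). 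The relation $i_{[s',r']}^{[s'',r'']}\circ i_{[s,r]}^{[s',r']}=i_{[s,r]}^{[s'',r'']}$ then holds already on the chain level, as a composition of subquotient maps, and dualizing yields property (2) for the induced maps on cohomology.

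For property (1), I would invoke the continuation maps $\mu_{\pi,\pi'}$ produced in the preceding lemma for two sets of auxiliary data $(g,\pi,\mathrm{or})$ and $(g',\pi',\mathrm{or}')$. The point is that $\mu_{\pi,\pi'}$ is again defined by counting solutions on $Y\times\R$ of the perturbed ASD equation for a cobordism perturbation $\pi_\#$ whose $L^2$-size on $Y\times[-1,1]$ is controlled; the corresponding energy identity bounds how far the perturbed Chern--Simons value can drop along such a solution, so $\mu_{\pi,\pi'}$ maps $C^{<t}_\ast$ into $C^{<t}_\ast$ for each relevant $t$, hence descends to the subquotients and commutes with the tautological maps $i$. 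Running the chain-homotopy argument of the preceding lemma inside these filtered subquotients then shows that the chain homotopy class of $i_{[s,r]}^{[s',r']}$ is independent of $g$, $\pi$ and the orientations. Finally, for property (3): if $[r,r']$ and $[s,s']$ are disjoint from $\Lambda^\ast_Y$, then by the estimates behind \Cref{keylemma}, after shrinking $\pi$ no perturbed irreducible critical point has $\cs_{Y,\pi}$-value in $[r,r']$ or in $[\tilde s,\tilde s']$, so $C^{<r}_\ast=C^{<r'}_\ast$ and $C^{<\tilde s}_\ast=C^{<\tilde s'}_\ast$; thus $i_{[s,r]}^{[s',r']}$ is literally the identity and in particular a chain homotopy equivalence. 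I expect the main obstacle to be property (1): one must verify that the continuation maps, and the chain homotopies between their composites and the identity, genuinely respect the energy filtration, which is exactly where the smallness of the cobordism perturbation and a uniform energy estimate are essential; the existence of the maps together with properties (2) and (3) are then comparatively formal.
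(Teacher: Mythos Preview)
Your construction and the arguments for (1) and (2) are essentially the paper's: fix one $\pi$ in the intersection $\mathcal{P}(Y,r,s,g)\cap\mathcal{P}(Y,r',s',g)$, define the map by $a\mapsto a$ if $a$ survives in the target and $a\mapsto 0$ otherwise, and use the continuation maps $\mu_{\pi,\pi'}$ for independence of data. Your subquotient presentation is a clean way to package this.

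For (3) there is a small gap. You appeal to ``the estimates behind \Cref{keylemma}'', but that lemma controls perturbed critical values relative to $\Lambda_Y$, not $\Lambda^*_Y$, whereas the hypothesis of (3) only excludes $\Lambda^*_Y$ from $[r,r']$ and $[s,s']$; in particular $[r,r']$ may contain an integer (a reducible critical value), so \Cref{keylemma} alone does not rule out perturbed critical points there. The paper closes this gap with a direct compactness argument: take $\pi_n\to 0$, and if some $b_n\in\widetilde R(Y)_{\pi_n}$ has $\cs_{\pi_n}(b_n)$ in the gap region, pass to a limit $b_\infty$ with $F(b_\infty)=0$ and $\cs(b_\infty)\in[s,s']\cup[r,r']$; since the reducible $\theta$ is isolated among critical points for small $\pi_n$, the limit $b_\infty$ must be irreducible, contradicting $[r,r'],[s,s']\subset\R\setminus\Lambda^*_Y$. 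Once you insert this step (which is really the same Uhlenbeck argument you have in mind, with the extra observation that the limit is irreducible), your proof of (3) goes through and the map is indeed the identity for sufficiently small $\pi$.
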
 

\begin{proof}
First, we see the construction of $i_{[s,r]}^{[s',r']} $.
One can take a non-degenerate regular perturbation $\pi$ satisfying $\pi \in \mathcal{P} (Y,r,s,g) \cap \mathcal{P} (Y,r',s',g)$.
This give a natural map $i_{[s,r]}^{[s',r']} \colon CI^{[s,r]}_i(Y,\pi) \to CI^{[s',r']}_i(Y,\pi)$ by considering 
\[
i_{[s,r]}^{[s',r']}(a) :=  
\begin{cases} 
 a & \text{if $a \in CI^{[s',r']}_i(Y,\pi)$,}\\ 
 0 & \text{otherwise.} \\
\end{cases} 
\]
This gives a chain map.
The proof of (1) is similar to the proof of independence of choices of $\pi$.
The proof of (2) is obvious, because we can take a non-degenerate regular perturbation 
\[
 \pi \in  \mathcal{P} (Y,r,s,g) \cap  \mathcal{P} (Y,r',s' ,g) \cap  \mathcal{P} (Y,r'',s'',g).
\]
Here we give the proof of (3).
Suppose that $[r, r'], [s,s']  \subset  \R \setminus \Lambda^*_Y$.
We take a sequence of non-degenerate regular perturbations $\{\pi_n\} \subset \mathcal{P} (Y,r,s,g) \cap \mathcal{P} (Y,r',s',g)$ such that $\|\pi_n\| \to 0$.
We show that the following maps are bijective for sufficiently large $n$:
\[
i_n \colon \left\{ a\in \widetilde{R}(Y)_{\pi_n} \Bigm|  s<\cs_{\pi_n}(a)<r \right\} \to \left\{ a\in \widetilde{R}(Y)_{\pi_n} \Bigm|  s'<\cs_{\pi_n}(a)<r' \right\}.
\]

Suppose there is a sequence $\{ n_k\}$ of positive integers such that $n_k \to \infty$ as $k \to \infty$ and $i_{n_k}$ is not bijective for any $k \in \z_{>0}$. Then we can take a sequence $\{b_k\}$ with $s <\cs_{\pi_k}(b_k)<s'$ and  $r<\cs_{\pi_k}(b_k)<r'$. 
Using Uhlenbeck's compactness theorem, one obtain the bound of norm $\|g_k^*b_k\|_{L^2_k(Y)} \leq C_k$ for some gauge transformations.  By taking a subsequence, we can take a limit connection $b_\infty$.
Since the reducible connection is isolated for a small perturbation $\pi_{n_k}$, we can assume that $b_\infty$ is irreducible.
Since $\|\pi_{n_k}\| \to 0$, $b_\infty$ satisfies $F(b_\infty)=0$ and  $\cs(b_\infty) \in [s,s']\cup [r,r']$.
 This gives the contradiction. 
Therefore, $i_k$ is bijection for sufficiently large $k$. This completes the proof.
\end{proof}

\begin{comment}
The injectivity is trivial. We show the surjectivity. Suppose that there exists $b \in \Set { a\in \widetilde{R}(Y)_\pi |  s<\cs_{\pi}(a) <s' \text{ or } r'<\cs_{\pi}(a)  <r}$. Since $b$ satisfies \[
*F(b )+ \grad_g h_f(b)=0,
\]
we have 
\[
\| F(\al ) \|_{L^4} = \| \grad_g h_f( \al ) \|_{L^4}< \frac{\epsilon(Y,r,s,g)}{2}
\]
due to Definition~\ref{epsilon perturbation}. By the condition  \eqref{nbd}, there exists $\rho \in \widetilde{R}(Y)$ such that $b \in U^i_\rho$ for some $i \in \z$. We also have the inequality
\begin{align}\label{near}
|\cs(b )-\cs(\rho )| < \min \{\epsilon(Y,r,s,g),\epsilon(Y,r',s',g)\}
\end{align}
because $b \in U^i_\rho$. This implies 
\[
|\cs(b )-\cs(\rho )| <  2\epsilon(Y,r,s,g)\leq \frac{1}{4}\min_{t=s,s',r,r'} \{d(t,\Lambda_Y)\}.
\]
\end{comment}

%%%%%
\subsection{Cobordism maps}\label{Cobordism maps}
First, let us fix the convention about oriented cobordisms.
We use the \emph{outward normal first} convention.
For example,
\[
\partial ( Y \times [0,1] ) \cong -  \partial ( [0,1]\times Y ) = \{ 0 \} \times Y \amalg ( - \{ 1 \} \times Y  )
\]
for an oriented 3-manifold $Y$, where the symbol $\cong$ denotes the orientation-preserving diffeomorphism.
In this section, we review the cobordism maps for filtered instanton chain complexes. These maps are already considered in \cite{FS92}.
We fix $s_j \in [-\infty,0]$ for $ 1 \leq j \leq m$ and put $s= \sum_{1 \leq j \leq m} s_j$. Let  $Y^-$ be the finite disjoint union of oriented homology 3-spheres  $Y^-_{j}$ for $ 1 \leq j \leq m$, $Y^+$ an oriented homology sphere and $W$ a negative definite connected cobordism with $\partial W=  Y^+ \amalg (-Y^-)$ and $b_1(W)=0$.  
We assume $s_j \in \R_{Y^-_j}$ for $ 1 \leq j \leq m$.
First, we fix the following data related to $\partial W$: 
\begin{itemize}
\item a Riemannian metric $g$ on $\partial W = Y^+ \amalg -Y^-$ and 
\[
r \in \R_{Y^+ } \cap  (\bigcap_{ \substack{ 1 \leq j \leq m  }}  \R_{Y^- _{j}}).
\]
\item non-degenerate regular perturbations $\pi^+ \in \mathcal{P}(Y^+,r, s, g_{Y^+})$ and $\pi^-_{j} \in \mathcal{P}(Y^-_{j},r, r-s+s_j , g_{Y^-_{j}})$ for $ 1 \leq j \leq m$.
\item For any $a \in R^*(Y^+)_{\pi^+}$ and $b_{j} \in R^*(Y^-_{j})_{\pi^-_{j}}$,  orientations of $\mathbb{L}_{a}$ and $\mathbb{L}_{b_j}$ and $ 1 \leq j \leq m$.
\end{itemize}
Using the above data, one can define filtered Floer chain complexes $(C^{[s,r]}_*(Y^+), \partial^r) $ and $(C^{[s,r]}_*(Y^-_j), \partial^r) $.
Let us denote by $W^*$ the end-cylindrical $4$-manifold given by 
\[
Y^+ \times \R_{\leq 0} \cup W \cup Y^- \times \R_{\geq 0}.
\]
We fix an orientation of $W^*$ which agrees the orientations on $Y^+ \times \R_{\leq 0}$  and $Y^- \times \R_{\geq 0}$ and a Riemannian metric $g_{W^*}$ on $W^*$ which coincides with the product metric of $g$ and the standard metric of $\R$ on $Y^+ \times \R_{\leq 0} \amalg Y^- \times \R_{\geq 0}$.  
  For $\frak{a} \in  R(Y^+)_{\pi^+}$ and $\frak{b}= (b_j ) \in \prod_{  1\leq j \leq m}  R(Y^-_{j})_{\pi^-_{j}}$,  we can define the ASD moduli space ${M} (\frak{a},W^* ,\frak{b})$. The moduli space ${M} (\frak{a},W^* ,\frak{b})$ is given by 
\[
{M} (\frak{a},W^* ,\frak{b}):= \left\{ A_{\frak{a},\frak{b}} + c \Bigm| c \in \Om^1(W^*) \otimes \su _{L^2_q} \ ,  *  \right\} \Bigm/ \G (\frak{a},W^*, \frak{b}),
\]
where the condition $*$ is given by 
\begin{align*}\label{cob}
F^+( A_{\frak{a},\frak{b}} + c)+ \pi_W^+ ( A_{\frak{a},\frak{b}} + c)=0, 
\end{align*}
$A_{\frak{a},\frak{b}}$ is a $\SU(2)$-connection on $W^*$ whose restriction on the ends $Y^+ \times \R_{\leq -1} \cup Y^- \times \R_{\geq 1}$ coincide with the pull-backs of $\frak{a}$ and $\frak{b}$ and the group $\G (\frak{a},W^*, \frak{b})$ is given by similar way as in the product case. 
If we take a limit connection as $\theta$, we use the weighted norm with a small positive weight as in the case of $Y\times \R$. 
The part $\pi_W$  is a perturbation on $W^*$ satisfying the following conditions $(\ast \ast)$: 
\begin{itemize} 
 \item The perturbation $\pi_W$ coincides with $\pi^-_j$ on $Y^-_j\times \R_{\geq 0}$  for any $j$ and $\pi^+$ on $Y^+ \times \R_{\leq 0}$.
 \item For $a \in \Om^1(W)_{L^2_q}$, 
\[
\|\pi_W^+(a)\|_{L^2} 
< \frac{1}{8} \min_{j} 
\left\{ 
\begin{gathered}
d(r,
\Lambda_{Y^-_j}), %\Lambda(Y^-_j)), 
\lambda_{Y^-_j},d(r-s+ s_j ,
\Lambda_{Y^-_j}) %\Lambda(Y^-_j)
),\\
d(r,
\Lambda_{Y^+} %\Lambda(Y^+)
), \lambda_{Y^+}, d(s ,
\Lambda_{Y^+} %\Lambda(Y^+)
), d(s_j, \Lambda_{Y^-_j}) %d(\Lambda_{Y^-_j}, s_j)
\end{gathered}
\right\}.
\]
 \item For any irreducible element $A \in {M} (\frak{a},W^* ,\frak{b})$, 
\[
d_A^+ + d(\pi_W^+)_A \colon \Omega^1 (W^*)\otimes \su_{L^2_q} \to  \Omega^+ (W^*)\otimes \su_{L^2_{q-1}}
\]
is surjective, where $d(\pi_W^+)_A$ is the linearization of $\pi_W^+ $. If $\frak{a}$ or $\frak{b}$ contains the reducible connection $\theta$, we need to consider the weighted norm as in the case of $Y\times \R$. 
\end{itemize} 

Now we explain how to give an orientation of ${M} (\frak{a},W^* ,\frak{b})$.  Let $X^-_j$ and $X^+$ be compact oriented 4-manifolds with $\partial (X^-_{j} )= Y^-_j$ with $\partial (X^+ )= Y^+$.
Then we obtain a continuous map 
\[
\frak{gl} \colon {M} (\frak{a},W^* ,\frak{b}) \times   \prod_{j=1}^m  \B (b_j,  Y^-_j ) \to \B\Bigl( a, W \cup   \bigcup_{j=1}^m  X^-_j \Bigr)
\]
by gluing connections using cut-off functions. 
This gives a bundle map 
\[
\widetilde{\frak{gl}} \colon \Det T {M} (\frak{a},W^* ,\frak{b})  \otimes  \bigotimes_{j=1}^m \mathbb{L}_{b_j } \to \frak{gl}^*   \mathbb{L}_{a}.
\]
We fix the orientation of $\Det T {M} (\frak{a},W^* ,\frak{b})$ so that $\widetilde{\frak{gl}}$ is orientation-preserving with respect to induced orientations from orientations of orientation bundles.
Then by counting the $0$-dimensional part of ${M} (\frak{a},W^* , \frak{b})$, we get a map 
\begin{align*}
CW^{[s,r]}_i \colon  CI^{[s,r]}_{i}(Y^+) \ri  \bigoplus_{ \substack{ \sum_j l_{j} =i, \\ 0\leq l_j \leq  i }} \bigotimes_{j=1}^m CI^{[s,r]}_{l_{j}}(Y^-_{j})
\end{align*}
for $i \in \z$.
In this paper, we use only the cases of $m=1$ and $m=2$. In particular, for the case of $m=2$, we also use the following map
\[
\widetilde{CW}^{[s,r]}_i \colon  CI^{[s,r]}_{i}(Y^+) \ri CI^{[s,r]}_{i}(Y^-_{1}) \oplus CI^{[s,r]}_{i}(Y^-_{2}) 
\]
defined via the $0$-dimensional moduli spaces ${M} (\frak{a},W^* , (b, \theta) )$ and ${M} (\frak{a},W^* , (\theta, b) )$.
(We use the weighted norm here.)

The following is the key lemma of this paper.
Roughly speaking, this lemma implies that the cobordism maps are filtered.

\begin{lem}\label{useful}
The following facts hold. 
\begin{enumerate}
\item Suppose $m=1$. Let $r \in \R_{Y^+ } \cap   \R_{Y^- _{1}}$, $s \in [-\infty, r)$, $\pi^+  \in \mathcal{P} (Y^+, r,s, g)$ and $\pi^-_1 \in \mathcal{P} (Y^-_1, r, s, g^-_{1}) $.
Let $\frak{a} = a  \in  R(Y^+)_{\pi^+}$ and $\frak{b}= b \in  R(Y^-_1)_{\pi^-_{1}}$. Suppose that 
$M(\frak{a},W^* ,\frak{b}) \neq \emptyset$ for some perturbation $\pi_W$ with the condition $(\ast \ast)$ and $a \in C^{[s,r ]}_* (Y^+)$.
Then, $
\cs_{\pi^-_1} (b) < r$
holds. 
\item Suppose $m=2$. Let $r \in \R_{Y^+ } \cap  \R_{Y_1^-} \cap \R_{Y_2^-}$ and $s, s_1 , s_2 \in [-\infty, r)$ with $s=s_1+s_2$, $r-s_1 \in  \R_{Y^- _{2}}$ and $r-s_2 \in  \R_{Y^- _{1}}$ and choose perturbations $\pi^+  \in \mathcal{P} (Y^+, r,r', g)$ , $\pi^-_1 \in \mathcal{P} (Y^-_1, r-s_2 , s_1, g^-_{1}) $ and $\pi^-_1 \in \mathcal{P} (Y^-_2, r-s_1 , s_2, g^-_{2}) $. 
Let  $a  \in  R(Y^+)_{\pi^+}$ and $\frak{b}= (b_1, b_2 ) \in R(Y^-_1)_{\pi^-_1} \times R(Y^-_2)_{\pi^-_2}$. If
$M(\frak{a},W^* ,\frak{b}) \neq \emptyset$ for some perturbation $\pi_W$ with the condition $(\ast \ast)$, $ s< \cs_{\pi^+} (a)< r $ and $s_1<\cs_{\pi_1^-} (b_1) < r-s_2$, then $
\cs_{\pi^-_2} (b_2) < r-s_1$ holds. 

\item Under the same assumption as in \Cref{keylemma}\textup{(2)}, the following fact holds.
Let $a  \in  R(Y^+)_{\pi^+}$ and $b_2 \in  R(Y^-_2)_{\pi^-_2}$. Suppose that 
$M({a},W^* ,(\theta, b_2)) \neq \emptyset$ for some perturbation $\pi_W$ with the condition $(\ast \ast)$ and $ s< \cs_{\pi^+} (a)< r $.
Then, $
\cs_{\pi^-_2} (b_2) < r-s_1$
holds. 
 \item Under the same assumption as in \Cref{keylemma}\textup{(2)}, we additionally assume $s_1+s_2 \in \R_{Y^+}$.
Let $a  \in  R(Y^+)_{\pi^+}$ and $\frak{b}= (b_1, b_2 ) \in R(Y^-_1)_{\pi^-_1}
 \times R(Y^-_2)_{\pi^-_2}$. If
$M(\frak{a},W^* ,\frak{b}) \neq \emptyset$ for some perturbation $\pi_W$ with the condition $(\ast \ast)$, $b_1 \in C^{[s_1,r-s_2]}(Y^-_1)$ and $b_2 \in C^{[s_2,r-s_1]}(Y^-_2)$, then $
\cs_{\pi^+ } (a) > s_1+s_2$
holds. 
 \item Under the same assumption as in \Cref{keylemma}\textup{(2)}, we additionally assume $s_1+s_2 \in \R_{Y^+}$.
Let $a  \in  R(Y^+)_{\pi^+}$ and $b_2 \in R(Y^-_2)_{\pi^-_2}$.
If $M({a},W^* ,(\theta, b_2)) \neq \emptyset$ for some perturbation $\pi_W$ with the condition $(\ast \ast)$ and $b_2 \in C^{[s_2,r-s_1]}(Y^-_2)$, then $
\cs_{\pi^+ } (a) > s_1+s_2$
holds. 
\end{enumerate}
\end{lem}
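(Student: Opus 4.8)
The plan is to reduce all five parts to a single Chern--Weil energy estimate on the cylindrical-end $4$-manifold $W^*$ and then read off each inequality by bookkeeping with the gap constants of \Cref{keylemma}. For $A\in M(\frak{a},W^*,\frak{b})$ the topological energy formula reads
\[
\frac{1}{8\pi^2}\int_{W^*}\Tr(F_A\wedge F_A)=\cs_{\pi^+}(\frak{a})-\sum_{j}\cs_{\pi^-_j}(b_j)+\nu(W),
\]
where $\nu(W)$ is the relative second Chern number carried by the fixed reference connection $A_{\frak{a},\frak{b}}$ over the compact piece $W$. Since $A$ satisfies $F^+(A)+\pi_W^+(A)=0$, the left-hand side equals $\tfrac{1}{8\pi^2}\bigl(\|F^-_A\|_{L^2}^2-\|\pi_W^+(A)\|_{L^2}^2\bigr)\geq-\tfrac{1}{8\pi^2}\|\pi_W^+(A)\|_{L^2}^2=:-\epsilon_0$, and by condition $(\ast\ast)$ the error $\epsilon_0$ can be taken as small as we like compared with all of the Chern--Simons gaps $d(r,\Lambda_{Y^+})$, $d(r,\Lambda_{Y^-_j})$, $d(r-s+s_j,\Lambda_{Y^-_j})$, $d(s,\Lambda_{Y^+})$, $\lambda_{Y^+}$, $\lambda_{Y^-_j}$ built into the perturbation classes. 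This yields the master inequality
\[
\cs_{\pi^+}(\frak{a})-\sum_{j}\cs_{\pi^-_j}(b_j)\ \geq\ -\nu(W)-\epsilon_0 .
\]

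The only point at which the hypotheses on $W$ are genuinely used is to control $\nu(W)$, and this is where I expect the real work to lie. I would argue, exactly as in Fintushel--Stern \cite{FS92} via Donaldson's diagonalization constraint \cite{Do83}, that since $W$ is negative definite with $b_1(W)=0$ the reference $A_{\frak{a},\frak{b}}$ may be (and is) normalized so that $\nu(W)\leq 0$; informally, a negative definite cobordism cannot lower the total Chern--Simons value beyond the normalized charge, and once $b_1(W)=0$ no gauge transformation in $\G(\frak{a},W^*,\frak{b})$ alters $\nu(W)$. (The index--$0$ normalization of the moduli space in the definition of the chain map, together with the value of $-\tfrac32(\chi(W)+\sigma(W))$ for a negative definite homology cobordism, is what makes the minimal admissible $\nu(W)$ land at $0$.) Granting $\nu(W)\leq 0$, the master inequality collapses to $\cs_{\pi^+}(\frak{a})\geq\sum_j\cs_{\pi^-_j}(b_j)-\epsilon_0$: the total Chern--Simons value of the outgoing limits cannot exceed that of the incoming limit beyond the negligible $\epsilon_0$.

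From here each assertion is bookkeeping. For (1) ($m=1$): $\cs_{\pi^-_1}(b)\leq\cs_{\pi^+}(a)+\epsilon_0<r+\epsilon_0$, and since $r\in\R_{Y^-_1}$ and $\pi^-_1\in\mathcal P(Y^-_1,r,s,g^-_1)$, \Cref{keylemma}(1) forbids $\cs_{\pi^-_1}(b)$ from lying in $\bigl(r-\tfrac34 d(r,\Lambda_{Y^-_1}),\,r+\tfrac34 d(r,\Lambda_{Y^-_1})\bigr)$, so $\cs_{\pi^-_1}(b)<r-\tfrac34 d(r,\Lambda_{Y^-_1})<r$. For (2): the master inequality gives $\cs_{\pi^-_2}(b_2)\leq\cs_{\pi^+}(a)-\cs_{\pi^-_1}(b_1)+\epsilon_0<r-s_1+\epsilon_0$ from the hypotheses $\cs_{\pi^+}(a)<r$ and $\cs_{\pi^-_1}(b_1)>s_1$, and this is upgraded to $\cs_{\pi^-_2}(b_2)<r-s_1$ via the gap estimate for $\pi^-_2\in\mathcal P(Y^-_2,r-s_1,s_2,g^-_2)$ and $r-s_1\in\R_{Y^-_2}$. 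For (4): feeding in $\cs_{\pi^-_1}(b_1)>s_1+\tfrac34 d(s_1,\Lambda_{Y^-_1})$ and $\cs_{\pi^-_2}(b_2)>s_2+\tfrac34 d(s_2,\Lambda_{Y^-_2})$ (the strict forms of \Cref{keylemma}) gives $\cs_{\pi^+}(a)>s_1+s_2+\tfrac34\bigl(d(s_1,\Lambda_{Y^-_1})+d(s_2,\Lambda_{Y^-_2})\bigr)-\epsilon_0>s_1+s_2$, using $s_1+s_2\in\R_{Y^+}$. Parts (3) and (5) are the degenerate cases of (2) and (4) in which one limit equals the reducible $\theta$: one runs the identical argument on $M(a,W^*,(\theta,b_2))$ using the weighted $L^2_{q,\delta}$-norm on the $Y^-_1$-end (as in the definition of $M^Y(a,\theta)_{\pi,\delta}$), uses $\cs_{\pi^-_1}(\theta)=0$, and invokes the $s\in\Lambda_Y$ branch of \Cref{keylemma}(2) to absorb the shift by $\tfrac12\lambda_Y$. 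Thus the genuine obstacle is isolated in the single claim $\nu(W)\leq 0$, to be proved by Uhlenbeck compactness together with the Chern--Weil identity exactly as in \cite{FS92}; everything else follows mechanically from the explicit constants $\epsilon(Y,r,s,g)$ and \Cref{keylemma}.
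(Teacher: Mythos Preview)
Your overall shape is right and matches the paper: derive a single energy/Chern--Weil inequality on $W^*$ showing that the perturbed Chern--Simons value on the incoming end dominates the sum on the outgoing ends up to a tiny error, then upgrade to the desired strict inequalities using the gap estimates of \Cref{keylemma}. The paper does exactly this, splitting the integral into the cylindrical ends (monotonicity of the perturbed flow) and the compact piece $W$ (Chern--Weil plus the perturbed ASD equation), and the bookkeeping for (1)--(5) is essentially as you describe.

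Where you go astray is the term $\nu(W)$. In this $\z$-graded setup you are working on $\wt{\B}$: the limits $a,b_j$ are specific lifts, the reference $A_{\frak a,\frak b}$ is a fixed connection on the trivial bundle over $W^*$, and $c\in L^2_q$ cannot change the topological type. Hence the Chern--Weil identity on the compact piece reads exactly
\[
\cs(A_-)-\cs(A_+)=\frac{1}{8\pi^2}\int_{W}\Tr\bigl(F(A)\wedge F(A)\bigr),
\]
with no additive integer. (Your formula also mixes perturbed $\cs_\pi$ with the unperturbed curvature integral; the paper handles this by bounding $|\cs_\pi-\cs|$ separately via condition $(\ast\ast)$.) The right-hand side is then bounded directly from $F^+(A)=-\pi_W^+(A)$:
\[
\frac{1}{8\pi^2}\int_W\Tr(F\wedge F)=\frac{1}{8\pi^2}\bigl(\|\pi_W^+(A)\|_{L^2(W)}^2-\|F^-(A)\|_{L^2(W)}^2\bigr)\le \frac{1}{8\pi^2}\|\pi_W^+(A)\|_{L^2(W)}^2,
\]
which is small by $(\ast\ast)$. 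No appeal to Donaldson's diagonalization, to $-\tfrac32(\chi+\sigma)$, or to negative definiteness enters this estimate; the negative definiteness hypothesis on $W$ is part of the ambient setup of Section~\ref{Cobordism maps} (to control reducibles and make the chain map well defined), not an ingredient of this energy bound. So the ``single claim $\nu(W)\le 0$'' that you flag as the genuine obstacle is a phantom: there is nothing to prove there, and invoking \cite{Do83,FS92} for it is misplaced. Once you delete $\nu(W)$ and keep track of perturbed versus unperturbed $\cs$ as the paper does, your argument goes through and coincides with the paper's.
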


\begin{proof}
First, let us show (1).
By \Cref{keylemma}, we have \begin{align*}
|r- \cs_{ \pi^-_1 }(b) | > \frac{3}{4} d(r, \Lambda_{Y^-} ) .
\end{align*}
If $r- \cs_{ \pi^-_1 }(b) > \frac{3}{4} d(r, \Lambda_{Y^-} )$, then this is the conclusion. We assume 
\begin{align}\label{csv3}
r- \cs_{ \pi^-_1 }(b) < -\frac{3}{4} d(r, \Lambda_{Y^-} ).
\end{align}
Let $A$ be an element in $M(a,W^*, b)$. 
We set $A_+= A|_{\partial (Y^+\times \R_{\leq 0})}$ and $A_- =  A|_{\partial (Y^-\times \R_{\geq 0})}$.
Since $A$ is a flow of grad $\cs_{\pi^+}$ on $Y^+\times \R_{\leq 0}$, 
\[
\cs_{\pi^+} (a)\geq \cs_{\pi^+} (A_+).
\]
We also have $\cs_{\pi^-_1} (A_-) \geq  \cs_{\pi^-_1}(b)$ by the same argument. 
Moreover, we see
\begin{align*}
& \cs_{\pi_1^-}(A_-)-\cs_{\pi^+}(A_+)  \\
& = (\cs_{\pi^-_1} (A_-) - \cs(A_-))  - (\cs_{\pi^+}(A_+)- \cs(A_+)) + \cs(A_-)-\cs(A_+) \\
& \leq 
 \max \left\{ \frac{1}{4}d(r,\Lambda_{Y^+}) ,\frac{1}{4}d(r,\Lambda_{Y^-})\right\}
- \frac{1}{8\pi^2} \int_{W} \Tr F(A) \wedge F(A) \\
\end{align*}
Here, the second term is bounded by  $\frac{1}{8}\min \{d(r,\Lambda_{Y^+}),d(r, \Lambda_{Y^-_1})\} $ because
\begin{align*}
 - \frac{1}{8\pi^2} \int_{W} \Tr F(A) \wedge F(A)
& = -\frac{1}{8\pi^2} \int_W \Tr (F^+(A)+F^-(A)) \wedge (F^+(A) +F^-(A)) \\
& = - \frac{1}{8\pi^2} \int_W \left(
\Tr  \pi^+_W(A)\wedge \pi^+_W(A)  \right)  +   \frac{1}{8\pi^2}  \int_W \Tr F^-(A) \wedge * F^-(A) \\ 
& \leq \frac{1}{8}\min \{d(r,\Lambda_{Y^+}), d(r, \Lambda_{Y^-})\} -\frac{1}{8\pi^2} \| F^-(A) \|^2_{L^2(W)} 
\end{align*}
by the choice of $\pi_W$. 
 Therefore, we have  
 \begin{align*}
 \cs_{\pi_1^-}(A_-)-\cs_{\pi^+}(A_+) \leq  \frac{3}{8}\max \{d(r,\Lambda_{Y^+}), d(r, \Lambda_{Y^-_1})\}.
\end{align*}
On the other hand, we have 
\begin{align*}
\cs_{\pi^+}(a)  <  -\frac{3}{4} d(r,\Lambda_{Y^+}) +r 
\end{align*}
by \Cref{keylemma}. 
Now we have 
\begin{align}\label{csv2}
\cs_{\pi^-_1}(b) < r -  \frac{3}{4} d(r, \Lambda_{Y^+} ) + \frac{3}{8} \max \{d(r,\Lambda_{Y^+}), d(r, \Lambda_{Y^-_1})\}.
\end{align}
Combining \eqref{csv3} and \eqref{csv2}, we get
\[
0 < -\frac{3}{4} d(r, \Lambda_{Y^+} ) -\frac{3}{4} d(r, \Lambda_{Y^-_1} )+ \frac{3}{8}\max\{  d(r, \Lambda_{Y^+} ) , d(r, \Lambda_{Y^-_1} )\}
\]
This gives a contradiction. 

Next, we show (2).
Using \Cref{keylemma}, we have
\begin{align*}
|r-s_1- \cs_{ \pi^-_2 }(b_2) | > \frac{3}{4} d(r-s_1, \Lambda_{Y^-_2} ) .
\end{align*}
As the above discussion, we assume 
\begin{align*}
r-s_1- \cs_{ \pi^-_2 }(b_2)  <- \frac{3}{4} d(r-s_1, \Lambda_{Y^-_2} ) .
\end{align*}
Suppose $A \in M(\frak{a},W^* ,\frak{b})$,  $A_+ = A|_{ Y^+ \times \R_{\leq 0} }$, $A_-^1 = A|_{ Y^-_1 \times \R_{\geq 0} }$ and $A_-^2 = A|_{ Y^-_2 \times \R_{\geq 0} }$.
Then we have 
\begin{align*} 
& -r + \frac{3}{4} d(r, \Lambda_{Y^+}) +s_1 +  \frac{3}{4} d(s_1, \Lambda_{Y^-_1}) + \cs_{ \pi^-_2} (b_2) \\
& \leq -\cs_{\pi^+} (A_+) + \cs_{\pi^-_1}(A_-^1) + \cs_{\pi_2^-} (A_-^2) \\
& \leq \frac{1}{2} \max \{ d (r,\Lambda_{Y^+}) , d (s_1, \Lambda_{Y^-_1}) , d (r-s_1 ,  \Lambda_{Y^-_2}) \} 
\end{align*}
by the same argument.
These give a contradiction.

 Let us show (3). 
 By using \Cref{keylemma}, we have \begin{align*}
|r-s_1- \cs_{ \pi^-_2 }(b_2) | > \frac{3}{4} d(r-s_1, \Lambda_{Y^-_2} ) .
\end{align*}
As the above discussion, we assume 
\begin{align*}
r-s_1- \cs_{ \pi^-_2 }(b_2)  <- \frac{3}{4} d(r-s_1, \Lambda_{Y^-_2} ) .
\end{align*}
By a similar discussion, since $\cs_{\pi^-_1} (\theta) =0$, we have
\begin{align*} 
& -r + \frac{3}{4} d(r, \Lambda_{Y^+}) +s_1 +  \frac{3}{4} d(s_1, \Lambda_{Y^-_1}) + \cs_{ \pi^-_2} (b_2) \\
& \leq \frac{1}{2} \max \{ d (r,\Lambda_{Y^+}) , d (s_1, \Lambda_{Y^-_1}) , d (r-s_1, \Lambda_{Y^-_2}) \} .
\end{align*}
This gives a contradiction.

 Next, we show (4). 
 By using \Cref{keylemma}, we have 
\begin{align*} 
| \cs_{\pi^+} (a)-s | > \frac{3}{4} d(s,\Lambda_{Y^+} ) .
\end{align*}
As the above discussion, we assume 
\begin{align*} 
 cs_{\pi^+} (a)-s <- \frac{3}{4} d(s,\Lambda_{Y^+} ) .
\end{align*} For $A \in M(\frak{a},W^* ,\frak{b})$,  $A_+ = A|_{ Y^+ \times \R_{\leq 0} }$, $A_-^1 = A|_{ Y^-_1 \times \R_{\geq 0} }$ and $A_-^2 = A|_{ Y^-_2 \times \R_{\geq 0} }$, one has 
\begin{align*} 
 s_1 + \frac{3}{4} d(s_1, \Lambda_{Y^-_1}) +s_2 +  \frac{3}{4} d(s_2, \Lambda_{Y^-_2}) + \cs_{ \pi^+} (a) 
 \leq \frac{1}{2} \max \{ d (s, \Lambda_{Y^+}) , d (s_1, \Lambda_{Y^-_1}) , d (s_2,  \Lambda_{Y^-_2}) \} .
\end{align*}
These give a contradiction. 
The proof of (5) is similar to (4). 
\end{proof}

Fintushel-Stern showed the following lemma.
This is a corollary of \Cref{useful} under the assumption $m=1$. 
\begin{lem} %\label{chain map}
For $r \in \R_{Y^+} \cap \R_{Y^- }$, 
\[
 CW^{[s,r]}\partial_{[s,r]}^{Y^+}=\partial_{[s,r]}^{Y^-} CW^{[s,r]} .
\]
\end{lem}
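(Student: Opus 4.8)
The plan is to verify that $CW^{[s,r]}$ is a chain map by the standard boundary-degeneration argument for instanton cobordism maps, now constrained to the filtered subcomplexes. First I would fix $m=1$, write $W^\ast$ for the cylindrical-end manifold with ends $Y^+ \times \R_{\le 0}$ and $Y^- \times \R_{\ge 0}$, fix the data (metric $g_{W^\ast}$, perturbation $\pi_W$ with condition $(\ast\ast)$, orientations) used to define the map in \eqref{chain map}, and consider for $a \in \wt{R}^\ast(Y^+)_{\pi^+}$ with $\ind(a) = i$ and $b \in \wt{R}^\ast(Y^-)_{\pi^-}$ with $\ind(b) = i-1$ the $1$-dimensional moduli space $M(a, W^\ast, b)$. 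Gluing theory for the cylindrical-end ASD equation (as in \cite{Do02}, adapted exactly as Fintushel-Stern did in \cite{FS92}) shows that $M(a, W^\ast, b)$ is a $1$-manifold whose ends are identified with
\[
\coprod_{\ind(c) = i-1} \bigl(M^{Y^+}(a,c)_{\pi^+}/\R\bigr) \times M(c, W^\ast, b)
\ \amalg\
\coprod_{\ind(c') = i} M(a, W^\ast, c') \times \bigl(M^{Y^-}(c', b)_{\pi^-}/\R\bigr).
\]
Counting boundary points with sign (the orientation being the one fixed via $\widetilde{\frak{gl}}$, which is compatible with the orientation conventions on $M^Y(a,c)_\pi/\R$ and on the cobordism moduli spaces) yields $0 = CW^{[s,r]}\partial^{Y^+}_{[s,r]}(a) - \partial^{Y^-}_{[s,r]} CW^{[s,r]}(a)$ for each generator $a$, which is the desired identity; the only subtlety beyond the unfiltered case is that there are no extra boundary contributions from bubbling at interior points or from sliding off to the reducible $\theta$, because $W$ is negative definite with $b_1(W)=0$ so that the relevant moduli spaces containing $\theta$ have the wrong dimension (this is the standard dimension count from \cite{FS92}).

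The new ingredient, compared with the unfiltered statement, is that everything must stay inside the \emph{filtered} complexes $C^{[s,r]}_\ast(Y^\pm)$. Concretely, I would invoke \Cref{useful}(1): if $a \in C^{[s,r]}_\ast(Y^+)$ (so $s < \cs_{\pi^+}(a) < r$) and $M(a, W^\ast, b) \ne \emptyset$, then $\cs_{\pi^-}(b) < r$, and moreover the trajectory/cobordism energy estimates combined with \Cref{keylemma}(1) applied to $Y^-$ also force $\cs_{\pi^-}(b) > s$ (here one uses that $\pi^-_1 \in \mathcal{P}(Y^-_1, r, s, g^-_1)$ is chosen with the same $s$). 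Hence every intermediate generator $c$ appearing in the boundary formula again lies in the filtered range, so that $CW^{[s,r]}$ indeed lands in $C^{[s,r]}_\ast(Y^-)$ and $\partial^{Y^\pm}_{[s,r]}$ can be replaced by the restricted boundary maps without changing the count. In other words, the filtered chain-map identity is the unfiltered one with all terms that would leave the window $[s,r]$ already known to vanish by \Cref{useful}.

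The main obstacle I anticipate is purely bookkeeping of orientations and of the compactness/gluing package: one must check that the identifications of the ends of the $1$-dimensional moduli space are orientation-compatible with the sign conventions used to define $\partial$ (via $\widetilde{\mathfrak{gl}}$ in Section~\ref{Cobordism maps}) and with those used to define $CW^{[s,r]}$ (via $\widetilde{\frak{gl}}$), so that the two boundary contributions appear with opposite signs. Since this is exactly the verification carried out by Fintushel-Stern in \cite{FS92} in the unfiltered setting and the filtration changes nothing about the local structure of the moduli spaces, I would not reproduce it in detail but rather cite \cite{FS92} and \cite{Do02}, noting only that the filtered truncation is consistent by \Cref{useful}(1). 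Thus the proof is: assemble the $1$-dimensional cobordism moduli spaces, read off their boundaries, observe by \Cref{useful}(1) and \Cref{keylemma}(1) that all terms stay in the filtered window, and conclude $CW^{[s,r]}\partial^{Y^+}_{[s,r]} = \partial^{Y^-}_{[s,r]} CW^{[s,r]}$.
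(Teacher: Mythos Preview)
Your proposal is correct and is exactly the argument the paper intends: the paper's own proof is a single sentence stating that the lemma is a corollary of \Cref{useful} under the assumption $m=1$, with the unfiltered chain-map identity attributed to Fintushel--Stern \cite{FS92}. You have simply unpacked that sentence---the standard one-dimensional moduli-space boundary count from \cite{FS92,Do02}, together with \Cref{useful}(1) (and its symmetric analogue for the lower threshold $s$, argued exactly as in the proof of \Cref{useful}) to confine all intermediate critical points to the window $[s,r]$.
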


We denote the induced map of $CW^{[s,r]}$ (resp.\ $CW_ {[s,r]}$) on instanton Floer (co)homology by $IW^{[s,r]}$ (resp.\ $IW_ {[s,r]}$).

\subsection{Obstruction class}
In this section, we give a refinement of $D_1$ appeared in Donaldson's book \cite{Do02}, which counts gradient flows of Chern-Simons functional between irreducible critical points and the product connection. We define a filtered version of $ D_1$.

Let $Y$ be an oriented homology sphere. 
For $r \in \R_Y \cap [0, \infty]$ and $s \in [-\infty, 0]$,
we now define the invariant in $I^1_{[s,r]}(Y)$. A version of this invariant is defined in the third author's paper \cite{Ma18}. 

\begin{defn}\label{defiofthetar}
We set a homomorphism 
$
\theta^{[s,r]}_Y \colon CI^{[s,r]} _1(Y)\ri \z$
 by
\begin{align}
\theta^{[s,r]}_Y([a]):= \# (M^Y(a,\theta)_{\pi,\delta}/\R).
\end{align}
As in \cite[Section~3.3.1]{Do02} and \cite[Section~2.1]{Fr02}, we use the weighted norm $L^2_{q,\delta}$ in \eqref{weighted} for $M^Y(a,\theta)_{\pi,\delta}$ to use Fredholm theory. (In \cite[Section~3.3.1]{Do02} and \cite[Section~2.1]{Fr02}, $\theta$ is written by $D_1$ or $\delta$.) By the same discussion for the proof of $(\delta^{[s,r]})^2=0$, we can show $\delta^{[s,r]} (\theta^{[s,r]}_Y)=0$. Therefore it defines the class $[\theta^{[s,r]}_Y] \in I^1_{[s,r]}(Y)$. 
\end{defn}

The class $[\theta^{[s,r]}_Y]$ does not depend on the small perturbation and the metric. The proof is similar to the proof for the original one $[\theta]$.

\begin{lem} \label{func}
Let $Y_1$ and $Y_2$ be oriented homology spheres. Suppose that there is an oriented negative definite cobordism $W$  with $H^1(W;\R)=0$ and $\partial W= Y_1 \amalg (-Y_2)$. 
For $r \in \R_{Y_1} \cap \R_{Y_2}$, the equality
\[
IW_{[s,r]} [\theta^{[s,r]}_{Y_2}] = c(W) [\theta^{[s,r]}_{Y_1}]
\]
 holds, where $c(W):= \# H_1(W;\z) $.
\end{lem}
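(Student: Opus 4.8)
The plan is to prove \Cref{func} by a standard neck-stretching (gluing) argument, mimicking the proof that the cobordism map $CW^{[s,r]}$ is a chain map (\Cref{chain map}), but now tracking the moduli spaces of ASD connections on $W^*$ that limit to the product connection $\theta$ on the $Y_2$-end. First I would set up the relevant $1$-dimensional moduli space: for $a \in R^*(Y_1)_{\pi_1}$ with $\ind(a) = 1$, consider $M(a, W^*, \theta)$ with the weighted norm $L^2_{q,\delta}$ near the $\theta$-end, as in \Cref{defiofthetar}. Since $W$ is negative definite with $b_1(W)=0$, the reducibles on $W^*$ with limits $a$ (irreducible) and $\theta$ are controlled, and the expected dimension of $M(a,W^*,\theta)$ equals $\ind(a) = 1$; after dividing by the $\R$-translation on the $Y_2$-end one gets a $0$-dimensional piece plus a $1$-dimensional piece whose ends we must analyze.

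The key step is the compactification: the ends of the $1$-dimensional component of $M(a, W^*, \theta)$ (before quotienting appropriately) are described by broken trajectories, which split into three types — (i) a trajectory in $M^{Y_1}(a,b)_{\pi_1}/\R$ with $\ind(b)=0$ followed by an element of $M(b,W^*,\theta)$, (ii) an element of $M(a,W^*,b_2)$ with $\ind(b_2)=1$ followed by a trajectory in $M^{Y_2}(b_2,\theta)_{\pi_2,\delta}/\R$, and (iii) an element of $M(a, W^*, \theta)$ followed by a trajectory on the $Y_2$-end from $\theta$ to $\theta$ — but this last contributes the count $\#H_1(W,\z)$ coming from the reducible contributions on $W$ exactly as in Donaldson's computation of $D_1$ on a negative definite cobordism (this is where negative definiteness and $H^1(W,\R)=0$ enter, guaranteeing the reducible gluing parameter count is $|H_1(W,\z)|$). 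Here I would use \Cref{useful}(1), which guarantees the filtration is respected, so all the intermediate critical points $b, b_2$ lie in the correct windows and the count stays within $CI^{[s,r]}$. Summing the signed count of boundary points to zero over all such ends yields, after identifying terms: $\partial^{[s,r]}$-image terms plus $\theta^{[s,r]}_{Y_1}$-type terms on one side, and $CW_{[s,r]}$ composed with $\theta^{[s,r]}_{Y_2}$ plus $c(W)\cdot\theta^{[s,r]}_{Y_1}$ on the other, giving the desired chain-level identity $CW_{[s,r]}\circ\theta^{[s,r]}_{Y_2} = c(W)\cdot\theta^{[s,r]}_{Y_1} + (\text{chain homotopy terms})$, hence the claimed equality on cohomology.

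The main obstacle I expect is the careful bookkeeping of orientations and the reducible contribution. Specifically, showing that the broken-trajectory end of type (iii) contributes precisely the factor $c(W) = \#H_1(W,\z)$ with the correct sign requires a local analysis of the moduli space near the reducible connection on $W^*$ extending $\theta$ — this is exactly Donaldson's argument for the obstruction $D_1$ on a negative definite $4$-manifold, where the $H^1(W,\R)=0$ hypothesis kills the obstruction bundle and the finite group $H_1(W,\z)$ indexes the flat $U(1)$-reducibles that can be glued. One must also verify that the weighted-norm Fredholm setup on $W^*$ is compatible with the one on $Y_2\times\R$ used to define $\theta^{[s,r]}_{Y_2}$, so that the gluing map and its determinant-line version $\widetilde{\frak{gl}}$ behave as in \Cref{Cobordism maps}; this is routine but tedious. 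Finally, the independence of $[\theta^{[s,r]}_Y]$ from perturbation and metric (already asserted after \Cref{defiofthetar}) lets us pass freely to convenient generic data so that all the moduli spaces above are cut out transversally, making the dimension count and compactness valid.
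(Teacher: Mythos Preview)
Your overall strategy matches the paper's: analyze the ends of the $1$-dimensional moduli space $M(a,W^*,\theta)$ for $a$ with $\ind(a)=1$, identify three types of broken configurations, and read off a chain-level identity. Your types (i) and (ii) are correct and coincide with two of the paper's three end-types.

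However, your type (iii) is wrong, and this is where the factor $c(W)$ actually comes from. You describe (iii) as ``an element of $M(a,W^*,\theta)$ followed by a trajectory on the $Y_2$-end from $\theta$ to $\theta$.'' A nonconstant trajectory on $Y_2\times\R$ from $\theta$ to $\theta$ has positive energy and index at least $8$, so it cannot occur as an end of a $1$-dimensional moduli space; the constant trajectory contributes nothing new. The correct third end-type is a break on the \emph{$Y_1$-side} at the reducible:
\[
M^{Y_1}(a,\theta)_{\pi_1,\delta}/\R \ \times\ M(\theta,W^*,\theta).
\]
The first factor is exactly $\theta^{[s,r]}_{Y_1}(a)$, and the second factor is the $0$-dimensional moduli of (perturbed) ASD connections on $W^*$ limiting to $\theta$ on both ends. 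It is this second factor that equals $c(W)=\#H_1(W,\z)$: the reducibles on $W^*$ are indexed by flat $U(1)$-connections, and Donaldson's argument shows they are transversally cut out with the same sign. This is how the identity
\[
\delta^{[s,r]}(n^{[s,r]})(a) + c(W)\,\theta^{[s,r]}_{Y_1}(a) = CW_{[s,r]}\theta^{[s,r]}_{Y_2}(a)
\]
arises. Also note that $M(a,W^*,\theta)$ carries no $\R$-action (the cobordism $W^*$ is not a cylinder), so there is no quotient step; its expected dimension is already $1$. Once you replace your (iii) by the correct $Y_1$-side breaking and drop the spurious quotient, the rest of your argument (use of \Cref{useful}(1) for the filtration, and passage to cohomology) goes through exactly as in the paper.
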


\begin{proof}
We first consider the case of $H^1(W;\z)=0$.
First we fix Riemannian metrics $g_i$ on $Y_i$, non-degenerate regular perturbations $\pi_i \in \mathcal{P} (Y_i, r, g_i)$ for  $i=1,2$ and orientations of $\lambda_{a, X}$ for every critical point $a$. To consider the cobordism map induced by $W$, we fix the perturbation $\pi_W$ satisfying the condition $(\ast \ast)$.
For fixed $a \in \wt{R}(Y_1)_{\pi_1}$ satisfying $\cs_{\pi_1}(a) < r$ and $\ind (a)=1$, we consider the end-cylindrical manifold $W^\ast$ and the moduli space $M(a,W^\ast,\theta)$ as above.
We can choose a perturbation $\pi_W$ such that $M(a,W^\ast,\theta)$ has a structure of a 1-manifold.
There is a natural orientation on $M(a,W^\ast,\theta)$ induced by orientations of $\lambda_{a, X}$.  
We now describe the end of a compactification of $M(a,W^\ast,\theta)$.

By dimension counting and instanton gluing, we have two maps $\mathfrak{gl}_1$  from
\[
\left(\bigcup_{b \in \wt{R}^\ast(Y_1)_{\pi_1},\, \ind(b)=0 } M^{Y_1}(a,b)_{\pi_1}/\R\times M(b,W^\ast,\theta)  \cup M^{Y_1}(a,\theta)/\R\times M(\theta,W^\ast,\theta)  \right) \times (0, \infty) \]
to $M(a,W^\ast,\theta)$ and
\[
\mathfrak{gl}_2 \colon \left( \bigcup_{c \in \wt{R}^\ast(Y_2)_{\pi_2},\, \ind(c)=1 } M(a,W^\ast, c) \times M^{Y_2}(c,\theta)_{\pi_2}/\R  \right)   \times (-\infty, 0)\to  M(a,W^\ast,\theta).
\]
These are diffeomorphisms onto their images.
Also, the complement of the union of their images is compact.

\begin{claim}\label{orientations of moduli}
By the definition of the orientations of $M^{Y_1}(a,b)_{\pi_1}/\R $, $M^{Y_1}(a,\theta)_{\pi_1}/\R$, and $M^{Y_2}(c,\theta)_{\pi_2}/\R$, the maps $\mathfrak{gl}_1$ and $\mathfrak{gl}_2$ are orientation-preserving.
%one can construct $\mathfrak{gl}_1$ and $\mathfrak{gl}_2$ as diffeomorphisms on to their images with respect to the induced orientations of their domain and codomain from those of the bundles $\lambda_{a,X}$.
%\begin{align*}
% \displaystyle
% \mathfrak{gl}: \left(\bigcup_{b \in \wt{R}^\ast(Y_1)_{\pi_1},\, \ind(b)=0 } M^{Y_1}(a,b)_{\pi_1}/\R\times M(b,W^\ast,\theta)  \cup M^{Y_1}(a,\theta)/\R\times M(\theta,W^\ast,\theta)  \right) \times (0, \infty)  \cup   \\
%  \displaystyle
%\bigcup_{c \in \wt{R}^\ast(Y_2)_{\pi_2},\, \ind(c)=1 } M(a,W^\ast, c) \times M^{Y_2}(c,\theta)_{\pi_2}/\R  \times (-\infty, 0)  \to  M(a,W^\ast,\theta). 
% \end{align*}
\end{claim}

More details about orientations of the moduli spaces are explained in the proof of Claim~\ref{orientations of moduli} written after the proof of \Cref{func}.
Using the maps $\mathfrak{gl}_1$ and $\mathfrak{gl}_2$, one can compactify $M(a,W^\ast,\theta)$.
The end of the compactified moduli space is the disjoint union of three types of oriented points: 
\begin{itemize}
 \item $\displaystyle
 \bigcup_{b \in \wt{R}^\ast(Y_1)_{\pi_1},\, \ind(b)=0 } M^{Y_1}(a,b)_{\pi_1}/\R\times M(b,W^\ast,\theta)$,
 \item $\displaystyle
 M^{Y_1}(a,\theta)/\R\times M(\theta,W^\ast,\theta)$,
 \item $\displaystyle
 -\bigcup_{c \in \wt{R}^\ast(Y_2)_{\pi_2},\, \ind(c)=1 } M(a,W^\ast, c) \times M^{Y_2}(c,\theta)_{\pi_2}/\R$.
\end{itemize}
Here we follow the convention of orientations in \cite[Section~5.4]{Do02}. 
Since the first homology of $W^\ast$ is equal to zero and the formal dimension of $M(\theta, W^\ast, \theta)$ is $-3$, there is no reducible connection except for $\theta$ in $M(\theta, W^\ast, \theta)$.  So the space $M(\theta, W^\ast, \theta)$ has just one point. When the space $M^Y(a,b)_\pi/\R$ is non-empty, the inequality $\cs_{\pi_1} (a)>\cs_{\pi_2} (b)$ holds. Therefore, the first case can be written by 
\[
\bigcup_{b \in \wt{R}^\ast(Y_1)_{\pi_1}, \ind(b)=0,\, \cs_{\pi_1}(b)<r } M^{Y_1}(a,b)_{\pi_1}/ \R\times M(b, W^\ast, \theta).
\]
If we can write the third case as 
\[
- \bigcup_{c \in \wt{R}^\ast(Y_2)_{\pi_2}, \ind(c)=1,\, \cs_{\pi_2} (c) <r} M(a,W^\ast, c) \times M(c,\theta)_{\pi_2}/\R ,
\]
these countings imply that 
\begin{align} \label{important}
 \delta^r ( n^r)(a)  + \theta^{[s,r]}_{Y_1}(a)= CW^r \theta^{[s,r]}_{Y_2}(a)
\end{align}
for any $a \in CI^{[s,r]} _1(Y_1)$.
We will show if $M(a,W^\ast, c)$ is non-empty and $\cs_{\pi_1}(a)<r$ then $\cs_{\pi_2} (c)<r$.
By the use of \Cref{useful}, we get 
\[
\cs_{\pi_2}(c)<r.
\]
Next, we see the case of $H^1(W;\R)=0$.
We need to consider the transversality of the moduli space $M(\theta ,W^\ast, \theta)$.
As explained in \cite[(2.16)]{D18} and \cite{Do87}, %As in the explaining of the equations (2.16) in \cite{D18} and \cite{Do87}
there are two types of reducible flat connections and we can take a perturbation $\pi_W$ so that $M(\theta ,W^\ast, \theta)$ is finite set.
Moreover, Donaldson \cite{Do87} showed that the orientations of the points are the same.
One can count the points and see that $\# M(\theta ,W^\ast, \theta) = \#H_1(W;\z)$.
Using the above perturbation, we have
\[
 \delta^r ( n^r)(a)  + c(W) \theta^{[s,r]}_{Y_1}(a)= CW^r \theta^{[s,r]}_{Y_2}(a),
\]
where $c(W)= \#H_1(W;\z)$.
\end{proof}

\begin{proof}[Proof of \Cref{orientations of moduli}]
We discuss the orientations of the moduli spaces.
Let us consider the restriction maps
\begin{align*}
%\mathfrak{gl}_1 \colon 
& M^{Y_1}(a,b)_{\pi_1}/\R\times M(b,W^\ast,\theta)  \times  (0, \infty) \to M(a,W^\ast,\theta), \\
%\mathfrak{gl}_2 \colon 
& M(a,W^\ast, c) \times M^{Y_2}(c,\theta)_{\pi_2}/\R    \times (-\infty, 0) \to M(a,W^\ast,\theta)
\end{align*}
of $\mathfrak{gl}_1$ and $\mathfrak{gl}_2$, respectively, where $b \in \wt{R}^\ast(Y_1)_{\pi_1}$ with $\ind(b)=0$ and $c \in \wt{R}^\ast(Y_2)_{\pi_2}$ with $\ind(c)=1$.
%preserve the induced orientations, 
%{\color{red}
%We put 
%\[
%\mathfrak{gl}_1 :=  \mathfrak{gl}|_{M^{Y_1}(a,b)_{\pi_1}/\R\times M(b,W^\ast,\theta)  \times  (0, \infty)}  \text{ and } \mathfrak{gl}_2 :=  \mathfrak{gl}|_{M(a,W^\ast, c) \times M^{Y_2}(c,\theta)_{\pi_2}/\R    \times (-\infty, 0)}. 
%\]
%}
We just give a sketch of the proof for these maps being orientation-preserving.
Note that the case $b= \theta$ is shown similarly.

\begin{enumerate}
\renewcommand{\labelenumi}{(\roman{enumi})}
 \item
We first introduce the configuration spaces $\B^{Y_1} (a, b)$, $\B(b, W^*, \theta)$, $\B(a, W^*, c )$ and $\B^{Y_2} (c, \theta)$ containing the moduli spaces $M^{Y_1} (a, b)$, $M(b, W^*, \theta)$, $M(a, W^*, c )$ and $M^{Y_2} (c, \theta)$, respectively, which are defined similarly to \eqref{configuration for X}. %by the similar way as in the case of
 \item
As introduced in \eqref{detline}, using the sliced and linearized ASD map $d^*_A + d_A^+ + d\pi^+_A$, we have the determinant line bundles $\mathbb{L} (a, b)$, $\mathbb{L} (b, W^*, \theta)$, $\mathbb{L} (a, W^*, c )$ and $\mathbb{L}(c, \theta)$ over $\B^{Y_1} (a, b)$, $\B(b, W^*, \theta)$, $\B(a, W^*, c )$ and $\B^{Y_2} (c, \theta)$, respectively.
Let $\Lambda^{Y_1} (a, b)$, $\Lambda (b, W^*, \theta)$, $\Lambda (a, W^*, c )$ and $\Lambda^{Y_2}  (c, \theta)$ denote the sets of orientations of $\mathbb{L} (a, b)$, $\mathbb{L} (b, W^*, \theta)$, $\mathbb{L} (a, W^*, c )$ and $\mathbb{L}  (c, \theta)$, respectively.
 \item
The (homotopy classes of) pregluing maps
\begin{align*}
& \B^{Y_1} (a, b) \times \B(b, W^*, \theta) \to \B(a, W^*, \theta), \\ 
& \B(a, W^*, c ) \times \B^{Y_2} (c, \theta) \to \B(a, W^*, \theta) 
\end{align*} 
give identifications 
\begin{align*} 
& i_1\colon \Lambda^{Y_1} (a,b) \times_{\z_2} \Lambda (b, W^*, \theta)  \to \Lambda (a, W^*, \theta), \\
& i_2\colon \Lambda (a, W^*, c )  \times_{\z_2}  \Lambda^{Y_2} (c, \theta) \to \Lambda (a, W^*, \theta) , 
\end{align*} 
where the $\z_2$-actions on the set of orientations are non-trivial.
See \cite[Proposition~5.11]{Do02} for details of the construction.
A similar argument can be found in \cite[Section~20.3]{KrMr07}.

 \item
We now fix orientations of the bundle $\lambda_{a, X_1}$ defined in \eqref{lambda ax} for critical points $a$ of the perturbed Chern-Simons functional of $Y_1$, where $X_1$ is a compact $4$-manifold bounded by $-Y_1$. 
Note that we have the canonical homology orientation of $W$ since $b_1(W)=0$ and $b^+(W)=0$.
Using these data, we associate an element of $\Lambda (a, W^*, \theta)$ with each critical point $a$ by the following discussion:
An excision argument about determinant line bundles similar to \cite[Proposition~5.11, p.~132]{Do02} gives an identification
\begin{align}\label{ori map pres}
& \{\text{The set of orientations of $\lambda_{a, X_1}$}\} \times_{\z_2} \Lambda (a, W^*, \theta) \\
&\hspace{5em} \to
\{\text{The set of orientations of $\lambda_{\theta, X_1\cup_{Y_2} W}$}\}. \notag
\end{align} 
Here we used the canonical homology orientation of $W$.
By the definition of  $\lambda_{a, X}$, there is a canonical orientation on $\lambda_{\theta, X_1\cup_{Y_2}  W}$.
For the orientation of $\lambda_{a, X_1}$ in (iv), one has the corresponding orientation in $\Lambda (a, W^*, \theta)$ via \eqref{ori map pres} and the canonical orientation on $\lambda_{\theta, X_1\cup_{Y_2}  W}$.
We fix these orientations in $\Lambda (a, W^*, \theta)$ for all $a$.
We also take an element in $\Lambda^{Y_1} (a,b)$ which is compatible with fixed elements in $\Lambda (b, W^*, \theta)$ and $\Lambda (a, W^*, \theta)$ under $i_1$.
Here, for a sufficiently small $T_1$, one can consider a gluing map 
%With respect to these orientations, the partially defined gluing map 
%\[
%\mathfrak{g}_1\colon  M^{Y_1}(a,b)_{\pi_1} \times M(b,W^\ast,\theta) \to M(a,W^\ast,\theta)
%\]
\begin{align*}
 \mathfrak{g}'_1\colon  \{ [A_1] \in M^{Y_1}(a,b)_{\pi_1} \mid c(A_1)< T_1\}  \times M(b,W^\ast,\theta) \to M(a,W^\ast,\theta),
\end{align*}
where $c(A)$ denotes the center of the density function $\| F(A ) + \pi (A) \|_{Y\times \{r\}}$ ($r \in \R $) for a finite (perturbed) energy $\SU(2)$-connection $A$ on $Y\times \R$.

Let us explain the construction of $\mathfrak{g}_1'$, which implies that $\mathfrak{g}'_1$ is orientation-preserving.
%For an $\SU(2)$-connection $A$ on $Y_1 \times \R$, define $c(A)$ to be the center of mass of the real line whose density is given by $\| F(A ) + \pi (A) \|_{Y\times \{r\}} $ for $r \in \R $.
For $([A_1], [A_2]) \in M^{Y_1}(a,b)_{\pi_1} \times M(b,W^\ast,\theta)$ with $c(A_1)\ll 0$, we first fix representatives $(A_1, A_2)$ of $([A_1], [A_2])$ so that the exponential decay estimates \cite[Proposition~4.3]{Do02} are satisfied.
In particular, under the assumption $c(A_1) \ll 0$, there exist positive constants $C_k$ and $\delta$ such that
\[
\|A_1|_{Y_1 \times [s,s+1]}  -p^*b \|_{L^2_k (Y_1 \times [s,s+1])} \leq C_k e^{\delta  ( c(A_1)-s)}
\]
holds for $c(A_1) \leq s \leq 0$, where $p$ denotes the projection $Y_1\times \R \to Y_1$.
Similarly, there exist $C'_k>0$ and $\delta'>0$ such that
\[
\| A_2|_{Y_1 \times [s,s+1] } - p^*b  \|_{L^2_k (Y_1 \times [s,s+1])} \leq C'_k e^{\delta' s}
\]
holds for $s \leq -1$.
We now define an $\SU(2)$-connection $\psi (A_1, A_2)$ on $W^*$ called a pregluing by 
\[
\psi (A_1, A_2) =
\begin{cases}
 A_1 & \text{on $Y_1 \times \left(-\infty, \frac{3}{4} c(A_1)\right)$,} \\
A_1\psi_{c(A_1)}(\textendash) + (1-\psi_{c(A_1)}(\textendash))p^*b & \text{on $Y_1 \times \left(\frac{3}{4}c(A_1)-1, \frac{1}{2}c(A_1) +\frac{1}{2}\right)$,} \\ 
(1- \psi_{c(A_1)}(\textendash) ) p^*b + A_2 \psi_{c(A_1)}(\textendash) & \text{on $Y_1 \times \left(\frac{1}{2}c(A_1)-\frac{1}{2}, \frac{1}{4}c(A_1) +1\right)$,}     \\
A_2 & \text{on $Y_1 \times \left(\frac{1}{4}c(A_1) , 0\right] \cup W\cup Y_2 \times [0,\infty )$.}
\end{cases} 
\]
Here $\psi_{s}\colon \left(\frac{3}{4}s -1, \frac{1}{4}s +1\right) \to \R$ is a smooth cut-off function satisfying
\[\psi_{s}(t) =  
\begin{cases}
 1 & \text{if $t \in \left(\frac{3}{4}s -1 , \frac{3}{4}s\right) \cup \left(\frac{1}{4}s, \frac{1}{4}s+1\right)$,} \\
 0 & \text{if $t \in \left(\frac{1}{2}s-\frac{1}{2}, \frac{1}{2}s+\frac{1}{2} \right)$}
\end{cases} 
\]
and $|d \psi _s|  \leq  c/|s|$ for $s \ll 0 $, where $c>0$ is a constant independent of $s$.
It follows from the exponential decay estimates that there exist $\delta''>0$ and $C_k'' >0$ such that
\[
\|F^+(\psi (A_1, A_2)) + \pi^+( \psi (A_1, A_2))  \|_{L^2_{k-1}(W^*)} \leq C''_k e^{\delta'' c(A_1)}.
\]
We perturb the connection $\psi (A_1, A_2)$ to obtain a solution by the following argument based on \cite[Proof of Theorem~9.1, p.~854]{SaWe08}.
First, for the above $(A_1, A_2)$ with $c(A_1)\ll 0$, we can prove that the operator
\[
\mathcal{D}_{\psi (A_1, A_2)} :=  d^* _{\psi (A_1, A_2)}+ d^+_{\psi (A_1, A_2)} + d\pi^+_{\psi (A_1, A_2)} 
\]
has a right inverse $Q_{\psi (A_1, A_2)}$ with uniform bounds stated in \cite[(112) and (113)]{SaWe08}.
%Note that $Q_{\psi (A_1, A_2)} $ can be taken for any pair $(A_1, A_2)$ with $c(A_1) \ll 0$ because of estimates \cite[(111), (222)]{SaWe08}. 
Then, we apply the implicit function theorem for the perturbed ASD equation $F^+ (\psi (A_1, A_2)+a ) + \pi^+ (\psi (A_1, A_2)+a )=0$ with a slice $d^*_{\psi (A_1, A_2)} (a)=0$ and obtain a solution $a (A_1, A_2)$ to these equations.
For a sufficiently small $T_1$, we define the gluing map 
\begin{align*}
 \mathfrak{g}'_1 \colon  \{ [A_1] \in M^{Y_1}(a,b)_{\pi_1} \mid c(A_1)< T_1\}  \times M(b,W^\ast,\theta) \to M(a,W^\ast,\theta)
\end{align*}
by
\[
 \mathfrak{g}_1' ([A_1], [A_2]) = [\psi (A_1, A_2) + a (A_1, A_2)].
\]
This map is orientation-preserving by its construction.
%Moreover, by the same argument in \cite[Proof of Theorem 9.1]{SaWe08} , the limit of $\mathfrak{g}_1$ with respect to $c(A_1) \to -\infty $ corresponds to the broken trajectory $([A_1], [A_2])\in M^{Y_1}(a,b)_{\pi_1} /\R \times M(b, W^* , \theta )$. 
Similarly, under suitable orientations, the map 
\[
 \mathfrak{g}'_2 \colon M(a,W^\ast, c) \times \{ [A_2] \in M^{Y_2}(c,\theta)_{\pi_2} \mid c(A_2)> T_2\} \to M(a,W^\ast,\theta) 
\]
is defined and orientation-preserving for a sufficiently large $T_2$.

 \item
Now, we orient the moduli spaces $M^{Y_1}(a,b)_{\pi_1}/\R$ and $M^{Y_2}(c,\theta)_{\pi_2}/\R$ so that 
\begin{align}\label{how to give orientations}
(M^{Y_1}(a,b)_{\pi_1}/\R ) \times \R = M^{Y_1}(a,b)_{\pi_1}
\text{ and }
(M^{Y_2}(c,\theta)_{\pi_2}/\R )  \times \R = M^{Y_2}(c,\theta)_{\pi_2}
\end{align}
hold as oriented ($\R$-equivariant) manifolds, where the orientations of $M^{Y_1}(a,b)_{\pi_1}$ and $M^{Y_2}(c,\theta)_{\pi_2}$ are those in (iv).
We now fix the convention of the $\R$-action as $c( s \cdot A ) = c(A) -s$ for $s \in \R$.
Here, we identify $\{ [A_1] \in M^{Y_1}(a,b)_{\pi_1} \mid c(A_1)< T_1\}$ with $(M^{Y_1}(a,b)_{\pi_1}/\R)\times(-T_1,\infty)$ by sending $[A_1] \mapsto ([A_1]/\R, -c(A_1))$, which is orientation-preserving due to the fact $c(r \cdot A) = c(A) -r$ for all $r \in \R$.
Also, one has a similar identification between $\{ [A_2] \in M^{Y_2}(a,b)_{\pi_2} \mid c(A_2)> T_2\}$ and $(M^{Y_2}(a,b)_{\pi_2}/\R)\times(-\infty, -T_2)$.
%We regard $(M^{Y_1}(a,b)_{\pi_1}/\R ) \times (0, \infty)$ and $(M^{Y_2}(c,\theta)_{\pi_2}/\R ) \times (-\infty, 0) $ as open submanifolds of $ M^{Y_1}(a,b)_{\pi_1}$ and $M^{Y_2}(c,\theta)_{\pi_2}$ via \eqref{how to give orientations}, respectively. 
% \item
%By just restricting $\mathfrak{g}_1$ and $\mathfrak{g}_2$ to $(M^{Y_1}(a,b)_{\pi_1}/\R ) \times (-T'_1, \infty)\times M(b,W^\ast,\theta)$ and $M(a,W^\ast, c) \times (M^{Y_2}(c,\theta)_{\pi_2}/\R ) \times (-\infty, -T'_2) $ for sufficiently small $T'_1$ and large $T_2'$, respectively, w
By these identifications, $\mathfrak{g}'_1$ and $\mathfrak{g}'_2$ induce orientation-preserving maps
\begin{align*}
& \mathfrak{g}_1\colon M^{Y_1}(a,b)_{\pi_1}/\R\times (-T_1, \infty) \times M(b,W^\ast,\theta)   \to M(a,W^\ast,\theta),\\
& \mathfrak{g}_2\colon M(a,W^\ast, c) \times M^{Y_2}(c,\theta)_{\pi_2}/\R    \times (-\infty, -T_2) \to M(a,W^\ast,\theta) 
\end{align*}
with respect to the orientations fixed in (iv) and (v).
Since $\dim M(b,W^\ast,\theta)=0$, we also have an identification
\[
M^{Y_1}(a,b)_{\pi_1}/\R \times M(b,W^\ast,\theta) \times (-T_1, \infty)  =M^{Y_1}(a,b)_{\pi_1}/\R\times (-T_1, \infty) \times M(b,W^\ast,\theta)
\]
as oriented manifolds.
Thus, $ \mathfrak{g}_1$ induces an orientation-preserving map
\[
 \mathfrak{g}_1\colon M^{Y_1}(a,b)_{\pi_1}/\R\times (-T_1, \infty) \times M(b,W^\ast,\theta)   \to M(a,W^\ast,\theta) . 
\]
\end{enumerate}
This completes the sketch of the proof.
\end{proof}

The following property of the class $\theta^{[s,r]}_Y$ is useful to study the invariants $\{r_s\}$. 
\begin{lem}\label{inclusion}
For $s, s' \in \R_Y$ and $r, r' \in \R$ with $s \leq s' \leq 0 \leq r \leq r'$, the inequality
\[
i^{[s,r]}_{[s',r']}[ \theta^{[s',r']}_Y] = [\theta^{[s,r]}_Y]
\]
 holds. 
\end{lem}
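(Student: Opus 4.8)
The plan is to do the whole computation with a single perturbation admissible for both windows, so that $i^{[s,r]}_{[s',r']}$ becomes a tautological window-change map on cochains. First I would pick a non-degenerate regular perturbation $\pi\in\mathcal{P}(Y,r,s,g)\cap\mathcal{P}(Y,r',s',g)$; this intersection is nonempty by the argument in the proof of \Cref{filtration}. For such a $\pi$ the complexes $CI^{[s,r]}_*(Y,\pi)$ and $CI^{[s',r']}_*(Y,\pi)$ are built from the same set $\widetilde{R}^*(Y)_\pi$ of critical points, differing only in which generators are retained, and the cochain map representing $i^{[s,r]}_{[s',r']}$ is the obvious restriction-and-extend-by-zero map: on an index-one generator $[a]$ it returns the original value when $\cs_{Y,\pi}(a)\in(s',r)$ and returns $0$ when $\cs_{Y,\pi}(a)\in(s,s']$ (the extra generators coming from the enlarged left end). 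By \Cref{filtration}(1) and the invariance of $[\theta^{[s,r]}_Y]$ under the auxiliary data (recorded just after \Cref{defiofthetar}), it then suffices to check the claimed equality for this $\pi$, already at the cochain level.

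Next I would compare the two cochains on generators. By \Cref{defiofthetar}, for any index-one generator $[a]$ both $\theta^{[s',r']}_Y$ and $\theta^{[s,r]}_Y$ take the value $\#(M^Y(a,\theta)_{\pi,\delta}/\R)$, a count depending only on $a$, $\theta$, $\pi$, $\delta$ and not on the filtration window. Hence $i^{[s,r]}_{[s',r']}(\theta^{[s',r']}_Y)$ and $\theta^{[s,r]}_Y$ coincide automatically on every generator with $\cs_{Y,\pi}(a)\in(s',r)$, and the entire content of the lemma reduces to showing that $\theta^{[s,r]}_Y([a])=0$ for every index-one generator $[a]$ with $\cs_{Y,\pi}(a)\in(s,s']$, i.e.\ precisely the generators on which $i^{[s,r]}_{[s',r']}(\theta^{[s',r']}_Y)$ already vanishes by construction.

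For this I would use the energy inequality underlying the filtered complex: whenever $M^Y(a,b)_\pi$ is nonempty one has $\cs_{Y,\pi}(a)>\cs_{Y,\pi}(b)$ (see the proof of \Cref{func}), and the same estimate applied to a trajectory in $M^Y(a,\theta)_{\pi,\delta}$, which is necessarily nonconstant since $a$ is irreducible while $\theta$ is reducible, gives $\cs_{Y,\pi}(a)>\cs_{Y,\pi}(\theta)$. On the other hand $\pi\in\mathcal{P}(Y,r',s',g)=\mathcal{P}^{\epsilon(Y,r',s',g)}(Y,g)$ and $\cs_Y(\theta)=0$, so \Cref{epsilon perturbation} together with $0\in\Lambda_Y$ gives $|\cs_{Y,\pi}(\theta)|=|h_f(\theta)|<\epsilon(Y,r',s',g)\leq d(s',\Lambda_Y)/8\leq|s'|/8$; since $s'<0$ (it lies in $\R_Y$ and is $\leq 0$) this yields $\cs_{Y,\pi}(\theta)>s'/8>s'$. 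A generator with $\cs_{Y,\pi}(a)\leq s'$ therefore cannot satisfy $\cs_{Y,\pi}(a)>\cs_{Y,\pi}(\theta)$, so $M^Y(a,\theta)_{\pi,\delta}=\emptyset$ and $\theta^{[s,r]}_Y([a])=0$, as required. The two cochains then agree, and passing to cohomology gives $i^{[s,r]}_{[s',r']}[\theta^{[s',r']}_Y]=[\theta^{[s,r]}_Y]$. The one genuinely substantive step is this last one — localizing the support of $\theta^{[s,r]}_Y$ near Chern--Simons level $0$, hence inside the window $(s',r)$ common to the two filtrations; everything else is bookkeeping of filtered instanton complexes in the style of \cite{FS92}.
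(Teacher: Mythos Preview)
Your proof is correct and takes essentially the same approach the paper's one-line proof (``follows from the construction of $i^{[s,r]}_{[s',r']}$'') gestures at, with the details made explicit. The only substantive step beyond unwinding definitions---showing that $\theta^{[s,r]}_Y$ vanishes on generators with $\cs_{Y,\pi}(a)\le s'$---is handled correctly via the gradient-flow energy inequality and the perturbation bound $|h_f(\theta)|<\epsilon(Y,r',s',g)\le d(s',\Lambda_Y)/8\le |s'|/8$.
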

\begin{proof}
This property follows from the construction of  $i^{[s,r]}_{[s',r']}$ in \Cref{filtration}.
\end{proof}

%%%%%%%%%%%%%Section 3
\section{The invariant $r_s$}\label{The invariant $r_s$}
\subsection{Definition and invariance}
We now introduce a family of invariants of an oriented homology 3-sphere $Y$. The definition of our invariants uses the birth-death property of our obstruction class $[\theta^{[s,r]}_Y]$ given in the previous section.

Before introducing our invariant $r_s(Y)$, we need to show the following lemma.

\begin{lem}\label{fundamental lemma}
Let $R$ be a commutative ring with $1$.
For any homology $3$-sphere $Y$, 
\[
\Set { r\in (0, \infty] | 0=[\theta^{[s,r]}_Y \otimes \id_R]  \in I^1_{[s,r]} (Y;R)  } \neq \emptyset
\]
holds for any $s \in [-\infty, 0]$.
\end{lem}

\begin{proof}
Suppose that 
\[
\Set { r\in (0, \infty] | 0=[\theta^{[s,r]}_Y \otimes \id_R]  \in I^1_{[s,r]} (Y;R)  } = \emptyset.
\]
Then there exists a sequence $r_n \subset \R_Y$ with $0 \neq [ \theta^{[s,r_n]}_{Y}] \in I^1_{[s,r_n]}(Y) $ and $0<r_n \to 0$. We take a sequence of non-degenerate regular perturbations $\pi_n$ in $\mathcal{P}(Y,g,s,r_n) $ satisfying the following conditions: 
\begin{itemize} 
\item $\|\pi_n\|\to 0$. 
\item There exists a small neighborhood $U$ of $[\theta] \in \widetilde{\B}^*(Y) $ such that $(h_n)_f |_{U} =0$, where $\pi_n=(f, h_n)$. 
\end{itemize}
Note that we can assume the second condition because of
\[
\ker (* d_\theta \colon \ker d^*_{\theta}  \subset \Om^1_Y \otimes \mathfrak{su}(2)  \to \ker d^*_{\theta} )= H^1 (Y; \R) \otimes \mathfrak{su}(2) = \{0\}.
\]
Since $0 \neq [ \theta^{[s,r_n]}_{Y}]$, one can take a sequence $a_n \in \widetilde{R}(Y)_{\pi_n}$ such that $M^Y (a_n, \theta)_{\pi_n}$ is non-empty for all $n$ and $\cs_{\pi_n} (a_n) \to 0$.
Because of the choice of perturbations $\pi_n$, we have $a_n \notin U$ for each $n$.  We take a sequence $A_n$ in $M^Y (a_n, \theta)_{\pi_n}$.
Moreover there is no bubble because the dimension of moduli spaces $M^Y (a_n, \theta)_{\pi_n}$ is $1$.
Since $\{A_n\}$ has bounded energy and $\ind (a_n)=1$, there exists a sequence of real numbers $\{s_j\}$, subsequence $\{A_{n_j}\}$ of $\{A_n\}$ and gauge transformations $\{g_j\}$ on $Y \times \R$ such that $g_j^*T_{s_j}^* A_{n_j}$ converges to $A_\infty$ on $Y \times \R$, where $T_{s_j}$ is translation map on $Y\times \R$.
We denote the limit connection of $A_\infty$ by $a_\infty$.
One can see that $[a_\infty] \neq [\theta]$ because of the condition $a_n \notin U$.

On the other hand, we have $\cs(a_\infty) = 0$.
This implies that $A_\infty$ becomes a flat connection on $Y\times \R$.
However, $\lim_{  t \to \infty}  A_\infty|_{Y \times \{t\}} \cong \theta$ holds.
Since the connection $\theta$ is isolated in $\widetilde{R}(Y)$, we have $[a_\infty] = [\theta]$.
This gives a contradiction.
\end{proof}

\begin{defn}\label{def:r_s}
For $s \in [-\infty, 0]$ and a commutative ring $R$ with $1$ and an oriented homology $3$-sphere $Y$, we define
\[
r^{R}_s(Y) :=  \sup\left\{ r\in (0, \infty] \Bigm| 0=[\theta^{[s,r]}_Y \otimes \id_R] \in I^1_{[s,r]} (Y;R) \right\}, 
\]
where $\id_R$ is the identity map on $R$.
\end{defn}
%The same proof of \cref{fundamental lemma} enable us to show $r^{R}_s(Y)>0$ for any $s \in [-\infty, 0]$ and any $R$.
We often abbreviate $\theta^{[s,r]}_Y \otimes \id_R$ to $\theta^{[s,r]}_Y$.
By definition, it follows that $r^R_s(Y)$ is invariant under orientation-preserving diffeomorphisms of $Y$.
In addition, the inequality $r^{\z}_s(Y) \leq r^{\q}_s(Y)$ obviously holds.
We focus on $r^\q_s (Y)$ in the most part of this paper, and hence we denote
$r^\q_s (Y)$ simply by $r_s(Y)$.

Non-triviality of $r_s$ implies the following:

\begin{thm} Suppose that $r_s(Y) < \infty$, then for any metric $g$ on $Y$, there exists a solution $A$ to the ASD equation on $Y \times \R$ with
\[
\frac{1}{8\pi^2} \| F(A) \| ^2_{L^2} = r_s(Y).
\]
\end{thm}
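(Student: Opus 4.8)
The plan is to unwind the definition of $r_s(Y)$ and extract a sequence of trajectories whose energies converge to $r_s(Y)$, then apply Uhlenbeck-style compactness to produce a genuine solution on the cylinder. First I would fix a metric $g$ on $Y$ and, by the definition of $r_s(Y)=r_s^\q(Y)$ as a supremum, choose a sequence $r_n \nearrow r_s(Y)$ with $r_n \in \R_Y$ and $r_n$ slightly above $r_s(Y)$ — more precisely, since $r_s(Y)<\infty$ means the class $[\theta^{[s,r]}_Y]$ becomes nonzero for $r$ just past $r_s(Y)$, I would pick $r_n \searrow r_s(Y)$ with $[\theta^{[s,r_n]}_Y]\neq 0$ in $I^1_{[s,r_n]}(Y;\q)$. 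Nonvanishing of $[\theta^{[s,r_n]}_Y]$ means that for a suitable non-degenerate regular perturbation $\pi_n \in \mathcal{P}(Y,r_n,s,g)$ with $\|\pi_n\|\to 0$, the homomorphism $\theta^{[s,r_n]}_Y$ is not a coboundary; in particular there must exist some critical point $a_n \in \wt R^*(Y)_{\pi_n}$ with $\ind(a_n)=1$, $s<\cs_{Y,\pi_n}(a_n)<r_n$, and $M^Y(a_n,\theta)_{\pi_n,\delta}/\R \neq \emptyset$, for otherwise $\theta^{[s,r_n]}_Y$ would be identically zero. (If $\theta^{[s,r_n]}_Y$ were zero as a map, the class would certainly vanish.)

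Next I would pick $A_n \in M^Y(a_n,\theta)_{\pi_n,\delta}$. Its Yang-Mills energy is $\tfrac{1}{8\pi^2}\|F(A_n)\|^2_{L^2(Y\times\R)} = \cs_{Y,\pi_n}(a_n) - \cs_{Y,\pi_n}(\theta)$ up to the perturbation correction, which lies in the interval $(s,r_n)$ and, more importantly, by \Cref{keylemma} is pinned to within $O(\|\pi_n\|) + O(d(r_n,\Lambda_Y))$ of a genuine critical value $\cs_Y(\rho_n)$ of the unperturbed Chern-Simons functional. Using property (2) of \Cref{main theorem} (that $r_s(Y)$ is itself a critical value of $\cs_Y$) together with the fact that $\Lambda_Y$ is locally finite, I would argue that for large $n$ the relevant critical value must be $r_s(Y)$ itself, so the energies $E_n := \tfrac{1}{8\pi^2}\|F(A_n)\|^2_{L^2}$ converge to $r_s(Y)$. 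Then apply Uhlenbeck compactness / Floer's broken-trajectory compactness to the sequence $(A_n)$: after gauge transformations and passing to a subsequence, $A_n$ converges (modulo bubbling and breaking) to a broken trajectory from some flat connection $a_\infty$ to $\theta$, with total energy $r_s(Y)$. The limiting object is an honest ASD connection (w.r.t.\ the unperturbed equation, since $\|\pi_n\|\to 0$) on $Y\times\R$; concatenating the pieces of the broken trajectory and, if a bubble forms, incorporating it, gives a single ASD connection $A$ on $Y\times\R$ with $\tfrac{1}{8\pi^2}\|F(A)\|^2_{L^2}=r_s(Y)$.

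The main obstacle I anticipate is bookkeeping the energy through the compactness argument: a priori the limit could be a broken flow line with several intermediate flat connections, or energy could escape into a bubble at a point of $Y\times\R$, and one must be sure the total energy is exactly $r_s(Y)$ rather than some smaller critical value, and that after gluing/bubbling one genuinely lands on a single connected ASD configuration on the cylinder (not, say, an ASD connection on $S^4$ plus a lower-energy flow). This is precisely where property (2) of \Cref{main theorem} and the structure of $\Lambda_Y$ do the work: the admissible energies form a discrete set, $r_s(Y)$ is the relevant one by construction, and any splitting of energy would produce a nonzero obstruction class at a strictly smaller level $r'<r_s(Y)$, contradicting $[\theta^{[s,r']}_Y]=0$ for $r'<r_s(Y)$ (which is exactly what the supremum in Definition~\ref{def:r_s} guarantees, via \Cref{inclusion}). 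So the argument is essentially: nonvanishing at every level $>r_s(Y)$ forces a trajectory of energy $\to r_s(Y)$, vanishing below $r_s(Y)$ forbids energy loss, and compactness then delivers the connection. A secondary technical point is handling the weighted norm $L^2_{q,\delta}$ near the reducible end $\theta$ so that the limiting connection still has the right decay and finite energy, but this is standard Floer-theoretic Fredholm theory as in \cite{Do02}.
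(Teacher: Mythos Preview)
Your overall strategy is exactly the paper's: take $r_n\searrow r_s(Y)$ with $[\theta^{[s,r_n]}_Y]\neq 0$, pick non-degenerate regular perturbations $\pi_n\in\mathcal{P}(Y,r_n,s,g)$ with $\|\pi_n\|\to 0$, extract trajectories $A_n\in M^Y(a_n,\theta)_{\pi_n}$ with $\cs_{\pi_n}(a_n)\to r_s(Y)$, and pass to a limit by compactness. The gap is in how you resolve that compactness step. Your proposal to ``concatenate the pieces of the broken trajectory and, if a bubble forms, incorporate it'' is not a valid operation: a genuinely broken ASD limit on $Y\times\R$ is not a single smooth ASD connection, and there is no gluing procedure that turns it into one while remaining a solution. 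Your backup argument---that a splitting of energy through an intermediate flat connection $b$ would force $[\theta^{[s,r']}_Y]\neq 0$ for some $r'<r_s(Y)$---also fails as stated: the existence of a trajectory from (a perturbation of) $b$ to $\theta$ only tells you that the \emph{cochain} $\theta^{[s,r']}_Y$ takes a nonzero value on one generator, which says nothing about its cohomology class. A nonzero cochain can perfectly well be a coboundary, and indeed this is exactly what $[\theta^{[s,r']}_Y]=0$ means.

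The paper avoids all of this by a direct dimension count. Since $\ind(a_n)=1$, each moduli space $M^Y(a_n,\theta)_{\pi_n}$ is one-dimensional (zero-dimensional modulo the $\R$-action). A bubble on $Y\times\R$ costs index $8$, and breaking through an intermediate irreducible critical point $c$ would require both $M^Y(a_n,c)$ and $M^Y(c,\theta)$ to have dimension $\geq 1$, forcing $1-\ind(c)\geq 1$ and $\ind(c)\geq 1$ simultaneously. Neither is possible. Hence, after translating by suitable $T_{s_j}$ and passing to a subsequence, the $A_n$ converge on $Y\times\R$ to a single unbroken connection $A_\infty$ satisfying the unperturbed ASD equation (since $\|\pi_n\|\to 0$), with no energy lost at the ends, and $\tfrac{1}{8\pi^2}\|F(A_\infty)\|^2_{L^2}=\lim_n\cs_{\pi_n}(a_n)=r_s(Y)$. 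Replace the concatenation paragraph with this index argument and your proof goes through.
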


\begin{proof}Suppose that $r_s(Y) <\infty $ for some $s$.  We put $r=r_s(Y)$ and take a sequence $\epsilon_n$ with $0<\epsilon_n \to 0$ and a sequence of regular non-degenerate perturbations $\pi_n \in \mathcal{P}(Y,g,r+ \epsilon_n,s )$ with $\|\pi_n \|\to 0$.
Since $0\neq[\theta^{\epsilon_n +r}_s]$, we have a sequence $a_n \in \widetilde{R}(Y)_{\pi_n}$ such that $M^Y(a_n, \theta)_{\pi_n}$ is non-empty for all $n$ and $\cs_{\pi_n} (a_n) \to r$.
We take elements $A_n$ in $M^Y (a_n, \theta)_{\pi_n}$ for each $n$.
There is no bubble because the dimension of moduli spaces $M^Y(a_n, \theta)_{\pi_n}$ is $1$.
Since $\ind (a_n)=1$, by the gluing argument, we can conclude that there exists a sequence of real numbers $s_j$, subsequence $\{A_{n_j}\}$ of $\{A_n\}$ and gauge transformations $\{g_j\}$ on $Y \times \R$ such that $\{g_j^*T_{s_j}^* A_{n_j}\}$ converges $A_\infty$ on $Y \times \R$, where $T_{s_j}$ is translation map on $Y\times \R$.
We can see 
\[
\frac{1}{8\pi^2} \| F(A_\infty) \| ^2= \lim_{n\to \infty} \cs_{\pi_n}(a_n) = r = r_s(Y).
\]
Moreover, $A_\infty$ satisfies $F^+(A_\infty)=0$. This completes the proof.
\end{proof}

In the following, we state fundamental properties of $r^{R}_s$.
\begin{lem}[\Cref{main theorem}(2)]
\label{value}
For any $s \in [-\infty, 0]$ and a homology $3$-sphere $Y$, 
we have
$r^R_s(Y) \in \Lambda^*_Y \cup\{\infty\}$.
\end{lem}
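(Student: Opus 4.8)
The plan is to show that if $r := r^R_s(Y)$ is finite, then $r$ must be a critical value of the Chern-Simons functional coming from an \emph{irreducible} flat connection. Suppose to the contrary that $r \in \R_Y$, i.e.\ $r$ is \emph{not} in $\Lambda_Y$ at all. Then there is a genuine gap: $d(r,\Lambda_Y)>0$, and by \Cref{filtration}(3) the inclusion-induced maps $i^{[s,r']}_{[s,r'']}$ are chain homotopy equivalences for $r',r''$ in a small interval $[r-\eta,r+\eta] \subset \R\setminus\Lambda^*_Y$ around $r$ (shrinking $\eta$ so this interval avoids $\Lambda^*_Y$, which is possible since $\Lambda^*_Y \subset \Lambda_Y$ is closed and locally finite). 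Combined with \Cref{inclusion}, which says $i^{[s,r']}_{[s,r'']}[\theta^{[s,r'']}_Y] = [\theta^{[s,r']}_Y]$, we get that the class $[\theta^{[s,r']}_Y]$ either vanishes for all $r' \in [r-\eta,r+\eta]$ or is nonzero for all such $r'$. This contradicts the definition of $r$ as the supremum of the set where the class vanishes: either $r$ is not an upper bound (if the class vanishes slightly past $r$), or $r$ is not approached from below (if the class is already nonzero slightly before $r$). Hence $r \in \Lambda_Y$.

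Next I would upgrade "$r \in \Lambda_Y$" to "$r \in \Lambda^*_Y$", i.e.\ rule out the possibility that $r$ is a critical value only of the reducible connection $\theta$. But $\Lambda_Y \setminus \Lambda^*_Y \subseteq \Lambda_{S^3}$-type values: more precisely, $\cs_Y(\theta)=0$ and the reducible critical values of $\cs_Y$ on $\widetilde{R}(Y)$ are exactly the integers (images of $\theta$ under the gauge group $\map(Y,\SU(2))$, which shifts $\cs_Y$ by the mapping degree). Since $r \in [0,\infty)$ and $r$ is a supremum of a set of values where the obstruction class vanishes, and since $r_s(S^3)=\infty$ suggests the integer values cause no obstruction, I expect the argument is: near any non-critical-for-irreducibles value (in particular near a purely reducible value like a positive integer $n$), the same chain-homotopy-equivalence argument via \Cref{filtration}(3) applies — because the hypothesis there is $[r,r'],[s,s'] \subset \R \setminus \Lambda^*_Y$, which only excludes \emph{irreducible} critical values. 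So the same contradiction shows $r \notin \Lambda_Y \setminus \Lambda^*_Y$, and therefore $r \in \Lambda^*_Y$.

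The main obstacle I anticipate is the careful bookkeeping at the boundary of the defining supremum: one must choose $\eta$ small enough that the whole interval $[r-\eta,r+\eta]$ avoids $\Lambda^*_Y$ except possibly at $r$ itself, handle whether $r$ belongs to the sup-set or not, and make sure the perturbation classes $\mathcal{P}(Y,r',s,g)$ can be chosen compatibly for a range of $r'$ so that \Cref{filtration} genuinely applies (this requires $s,r'$ to both lie in $\R_Y$, or the $s \in \Lambda_Y$ variant). A secondary technical point is that $r^R_s$ is defined as a sup over $r \in [0,\infty]$ including values in $\Lambda_Y$, where $I^1_{[s,r]}(Y)$ was only defined for $r \in \R_Y$; one should interpret the sup correctly (the supremum of a subset of $\R_Y$, or note that the definition of $\theta^{[s,r]}_Y$ and the filtration extend appropriately), so that the argument above about "a small interval around $r$" makes sense. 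Once these points are pinned down, the proof is a direct application of \Cref{filtration}(3) and \Cref{inclusion}.
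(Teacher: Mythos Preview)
Your approach is correct and matches the paper's: the proof there is literally the one-liner ``By using Lemmas~\ref{inclusion} and \ref{filtration}, we obtain the conclusion,'' and you have correctly unpacked what this means. Note that your two-step structure is unnecessary: since \Cref{filtration}(3) already requires only $[r',r''] \subset \R\setminus\Lambda^*_Y$ (not $\R\setminus\Lambda_Y$), the argument in your second paragraph directly yields $r \in \Lambda^*_Y$ in one step.
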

\begin{proof}
By using Lemmas~\ref{inclusion} and \ref{filtration}, we obtain the conclusion.
\end{proof}
In the case of $S^3$, note that $\Lambda^*_{S^3}=\emptyset$.
Therefore, by \Cref{value},
we have $r^R_s(S^3) = \infty$ for any $s$.

\begin{lem}[\Cref{main theorem}(3)]
\label{different s}
Let $s\leq s'$ be non-positive numbers.
Then, for any homology $3$-sphere $Y$, we have $ r^R_{s'}(Y) \leq r^R_s(Y)$ holds.
\end{lem}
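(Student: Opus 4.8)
The plan is to unwind the definitions and use the monotonicity of the obstruction classes under the filtration inclusion maps. Recall that by Definition~\ref{def:r_s}, $r^R_{s'}(Y)$ is the supremum of those $r\in[0,\infty]$ for which $[\theta^{[s',r]}_Y]=0$ in $I^1_{[s',r]}(Y;R)$. So it suffices to show that whenever $[\theta^{[s',r]}_Y]=0$ for some $r\in\R_Y$ with $r\geq 0$, then also $[\theta^{[s,r]}_Y]=0$. Since $s\leq s'\leq 0\leq r$ (and we may assume $s,s'\in\R_Y$, handling the $\Lambda_Y$ endpoints by the shifting convention as in Definition~\ref{defofIr*}), \Cref{filtration} provides a chain map $i^{[s',r]}_{[s,r]}$ — more precisely, on cohomology the map $i^{[s,r]}_{[s',r]}\colon I^1_{[s,r]}(Y)\to I^1_{[s',r]}(Y)$ runs in the direction of enlarging the interval, so we actually want the map going the other way. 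Let me re-read: \Cref{filtration} gives $i_{[s,r]}^{[s',r']}\colon CI^{[s,r]}_i(Y)\to CI^{[s',r']}_i(Y)$ for $s\leq s'\leq 0\leq r\leq r'$; dualizing gives $i^{[s,r]}_{[s',r']}$ on cohomology. So with $r=r'$ fixed and the pair of lower endpoints $s\leq s'$, we get a chain map $CI^{[s,s\text{-interval}]}\to CI^{[s',r]}$ and hence a cohomology map $i^{[s,r]}_{[s',r]}\colon I^1_{[s',r]}(Y)\to I^1_{[s,r]}(Y)$.

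The key step is then \Cref{inclusion}, which states precisely that $i^{[s,r]}_{[s',r']}[\theta^{[s',r']}_Y]=[\theta^{[s,r]}_Y]$ for $s\leq s'\leq 0\leq r\leq r'$; specializing to $r'=r$ gives $i^{[s,r]}_{[s',r]}[\theta^{[s',r]}_Y]=[\theta^{[s,r]}_Y]$. Therefore, if $[\theta^{[s',r]}_Y]=0$ in $I^1_{[s',r]}(Y;R)$, applying the (linear) map $i^{[s,r]}_{[s',r]}$ yields $[\theta^{[s,r]}_Y]=0$ in $I^1_{[s,r]}(Y;R)$. Hence every $r$ in the sup defining $r^R_{s'}(Y)$ is also in the sup defining $r^R_s(Y)$, so $r^R_{s'}(Y)\leq r^R_s(Y)$, which is the claim. (One should note that \Cref{inclusion} is stated for $\z$-coefficients but the same construction applies after $\otimes\,\id_R$, since tensoring is functorial and the chain map $i^{[s,r]}_{[s',r]}$ is defined over $\z$.)

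The main obstacle, and really the only subtlety, is bookkeeping at the endpoints: when $s$ or $s'$ lies in $\Lambda_Y$, the chain group $CI^{[s,r]}_i(Y,\pi)$ uses the shifted cutoff $s-\lambda_Y/2$ instead of $s$, so one must check that \Cref{filtration} and \Cref{inclusion} are being invoked with compatible conventions; but both lemmas are already stated to accommodate this, so this is routine. A second minor point is the case $s=-\infty$ or $r=\infty$, where one argues by taking the relevant direct/inverse limit or simply notes that the defining set for the sup is the union over finite $r$, so the inequality for all finite parameters suffices. I would write the proof in two or three sentences: quote \Cref{filtration} for the chain map, quote \Cref{inclusion} for the identification of obstruction classes, and conclude by the sup comparison.
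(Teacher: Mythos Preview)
Your proof is correct and follows exactly the same route as the paper: the paper's proof is a one-line invocation of Lemmas~\ref{filtration} and~\ref{inclusion}, and you have spelled out precisely that argument. Your care with the direction of the maps and the endpoint conventions is warranted but, as you note, already handled by those lemmas.
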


\begin{proof}
This is also a corollary of Lemmas~\ref{inclusion} and \ref{filtration}.
\end{proof}

Using \Cref{inclusion}, %\Cref{fundamental lemma},
we have the following lemma. 

\begin{lem}
\label{r_0 and theta^r}
For any $s \in [-\infty, 0]$ and $r \in \R^{>0}_Y \cup \{\infty\} $,  if $r < r^{R}_s(Y)$, then $[\theta^{[s,r]}_Y]=0$, where $\R^{>0}_Y := \R_Y \cap (0,\infty)$.
\end{lem}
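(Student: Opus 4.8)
The plan is to unwind the definition of $r^R_s(Y)$ as a supremum and combine it with the monotonicity statement for the inclusion-induced maps in \Cref{inclusion} and \Cref{filtration}, together with the positivity result \Cref{fundamental lemma} to rule out the degenerate endpoint behavior at $r=0$. First I would note that by \Cref{fundamental lemma} we have $r^R_s(Y)>0$ (for $R=\q$; for a general ring the statement of the lemma should be read with the convention that one still has $r^R_s(Y)\ge 0$, and the only case requiring care is when $r^R_s(Y)=0$, which by \Cref{fundamental lemma} does not occur for $\q$ and is what forces the hypothesis $r<r^R_s(Y)$ to be nonvacuous). So fix $s\in[-\infty,0]$ and $r\in\R^{>0}_Y\cup\{\infty\}$ with $r<r^R_s(Y)$.

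By the definition of the supremum, there exists $r'\in[0,\infty]$ with $r<r'\le r^R_s(Y)$ (or $r'$ arbitrarily close to $r^R_s(Y)$, or equal to it if the sup is attained) such that $[\theta^{[s,r']}_Y]=0\in I^1_{[s,r']}(Y;R)$; moreover, since $\Lambda_Y$ is locally finite, we may perturb $r'$ slightly if necessary to arrange $r'\in\R_Y$ while keeping $r<r'$ and $[\theta^{[s,r']}_Y]=0$ (using \Cref{filtration}(3): the chain homotopy type of $CI^{[s,r]}_*(Y)$, and hence the class $[\theta^{[s,r]}_Y]$ via \Cref{inclusion}, is locally constant in $r$ on the complement of $\Lambda^*_Y$, and one can choose such a regular value below the next element of $\Lambda^*_Y$). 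Now the key step: by \Cref{inclusion}, the inclusion-induced map $i^{[s,r]}_{[s,r']}\colon I^1_{[s,r']}(Y)\to I^1_{[s,r]}(Y)$ satisfies
\[
i^{[s,r]}_{[s,r']}[\theta^{[s,r']}_Y]=[\theta^{[s,r]}_Y].
\]
Since the left-hand class is zero, we conclude $[\theta^{[s,r]}_Y]=0$, as desired. (If $r=\infty$ then $r<r^R_s(Y)$ forces $r^R_s(Y)=\infty$ and the sup is attained at $\infty$, so $[\theta^{[s,\infty]}_Y]=0$ directly.)

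The only genuine subtlety — and the step I expect to be the main obstacle to state cleanly — is the passage from "$r<r^R_s(Y)$" to "there is an admissible $r'\in\R_Y$ with $r<r'$ and $[\theta^{[s,r']}_Y]=0$": one must make sure that the set $\{r\in[0,\infty]\mid [\theta^{[s,r]}_Y]=0\}$ whose supremum defines $r^R_s(Y)$ is, modulo the discrete set $\Lambda^*_Y$, "downward closed" in the relevant sense, which is exactly what \Cref{inclusion} combined with \Cref{filtration}(3) provides; the positivity \Cref{fundamental lemma} is what guarantees this set is nonempty and has positive supremum so that the hypothesis $r<r^R_s(Y)$ with $r>0$ can actually be met. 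Everything else is a formal manipulation of the supremum and functoriality of the inclusion maps, with no new analysis required.
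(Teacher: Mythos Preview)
Your proposal is correct and follows essentially the same approach as the paper: pick $r'\in\R^{>0}_Y$ with $r<r'\le r^R_s(Y)$ and $[\theta^{[s,r']}_Y]=0$, then apply \Cref{inclusion} to push the vanishing down to level $r$. The paper's proof is just a terser version of what you wrote; your extra discussion of perturbing $r'$ into $\R_Y$ via \Cref{filtration}(3) is fine but not strictly needed (the defining set for the supremum already lives in $\R_Y$), and note that the case $r=\infty$ is simply vacuous since $\infty<r^R_s(Y)$ cannot hold.
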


\begin{proof}
%By \Cref{fundamental lemma}, 
%the inequality $r^{R}_s(Y)>0$ holds,
%and 
By the definition of $r_s^R(Y)$, we can take $r' \in \R^{>0}_Y$ such that $[\theta^{[s,r']}_Y]=0$ and $r<r'\leq r^R_s(Y)$.
Therefore, it follows from \Cref{inclusion} that
\[
[\theta^{[s,r]}_Y]= i_{ [s,r']}^{[s,r]}([\theta^{[s,r']}_Y])=0.
\]
\end{proof}

Now we show an important property of $r_s$.

\begin{thm}%[\text{Property 1 in \Cref{main theorem}}]
\label{general cobneq}
Fix a commutative ring $R$ with $1$.
Let $Y_1$ and $Y_2$ be oriented homology $3$-spheres. 
Suppose that there is an oriented negative definite cobordism $W$ with $H^1(W;\R)=0$ and $\partial W = Y_1 \amalg -Y_2$. 
If $c(W)= \# H_1(W;\z)$ is invertible in $ R$, then the inequality
\[
 r^R_s(Y_2) \leq r^R_s(Y_1) 
\]
holds for any $s \in [-\infty,0]$.
Moreover, if $r^R_s(Y_2) = r^R_s(Y_1)< \infty$, then there exist irreducible $\SU(2)$-representations $\rho_1$ and $\rho_2$ of $\pi_1(Y_1)$ and $\pi_1(Y_2)$ respectively which extend to one of $\pi_1(W)$.
\end{thm}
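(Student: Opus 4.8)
The plan is to derive the inequality from \Cref{func} and \Cref{r_0 and theta^r} using that $c(W)$ is a unit in $R$, and then, in the equality case, to produce the two representations by an Uhlenbeck-compactness argument on the cylindrical-end cobordism $W^\ast=Y_1\times\R_{\leq 0}\cup W\cup Y_2\times\R_{\geq 0}$. For the inequality, fix $s\in[-\infty,0]$ and let $r\in\R^{>0}_{Y_1}\cap\R^{>0}_{Y_2}$ with $r<r^R_s(Y_2)$. By \Cref{r_0 and theta^r}, $[\theta^{[s,r]}_{Y_2}]=0$ in $I^1_{[s,r]}(Y_2;R)$, so \Cref{func} gives $c(W)\,[\theta^{[s,r]}_{Y_1}]=IW_{[s,r]}[\theta^{[s,r]}_{Y_2}]=0$, whence $[\theta^{[s,r]}_{Y_1}]=0$ and $r\leq r^R_s(Y_1)$. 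Since $\Lambda_{Y_1}$ and $\Lambda_{Y_2}$ are locally finite, such $r$ are dense in $(0,r^R_s(Y_2))$, and taking the supremum yields $r^R_s(Y_2)\leq r^R_s(Y_1)$ (the case $r^R_s(Y_2)=0$ being trivial).

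Now suppose $r_0:=r^R_s(Y_1)=r^R_s(Y_2)<\infty$. Then $r_0\in\Lambda^*_{Y_1}\cap\Lambda^*_{Y_2}$ by \Cref{value} and $r_0>0$ by \Cref{fundamental lemma}. Choose $r_n\downarrow r_0$ in $\R_{Y_1}\cap\R_{Y_2}$, a Riemannian metric on $W^\ast$, and non-degenerate regular perturbations $\pi^{(n)}_1$, $\pi^{(n)}_2$, $\pi^{(n)}_W$ on $Y_1$, $Y_2$, $W^\ast$ satisfying condition $(\ast\ast)$, all of norm tending to $0$. The classes $[\theta^{[s,r_n]}_{Y_1}]$ and $[\theta^{[s,r_n]}_{Y_2}]$ are then nonzero, so the cochain-level identity behind \Cref{func}, namely $c(W)\,\theta^{[s,r_n]}_{Y_1}-CW^{[s,r_n]}\theta^{[s,r_n]}_{Y_2}\in\im\delta^{[s,r_n]}$, evaluated on a cycle witnessing $[\theta^{[s,r_n]}_{Y_1}]\neq 0$, yields index-$1$ critical points $a_n\in\widetilde{R}^*(Y_1)_{\pi^{(n)}_1}$ and $b_n\in\widetilde{R}^*(Y_2)_{\pi^{(n)}_2}$ together with a nonempty $0$-dimensional moduli space $M(a_n,W^\ast,b_n)$. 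A filtration argument — using that $[\theta^{[s,r]}_{Y_i}]$ vanishes for $r<r_0$ by \Cref{r_0 and theta^r}, together with the interaction of $CW^{[s,r_n]}$ with the Chern--Simons filtration in \Cref{useful} — shows that $a_n$ and $b_n$ can be chosen with $\cs_{\pi^{(n)}_1}(a_n)\to r_0$ and $\cs_{\pi^{(n)}_2}(b_n)\to r_0$.

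Finally, pick $A_n\in M(a_n,W^\ast,b_n)$. Since the $\SU(2)$-bundle over $W^\ast$ is trivial (and, $W$ being a negative definite cobordism between homology spheres, the relevant component carries zero instanton number), the perturbed Yang--Mills energy of $A_n$ equals $\cs_{\pi^{(n)}_1}(a_n)-\cs_{\pi^{(n)}_2}(b_n)$ up to an error controlled by $\|\pi^{(n)}_W\|$, hence tends to $r_0-r_0=0$; in particular $\|F(A_n)\|_{L^2}\to 0$. Therefore no bubbling occurs and no energy escapes into the ends, so Uhlenbeck compactness together with exponential decay on the cylindrical ends gives, after gauge transformations and passing to a subsequence, a flat connection $A_\infty$ on $W^\ast$ whose restrictions to the two ends are flat connections $\rho_1$ on $Y_1$ and $\rho_2$ on $Y_2$ with $\cs(\rho_1)=\cs(\rho_2)=r_0$. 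Since $r_0>0$ and the only reducible flat connection on a homology $3$-sphere is the isolated product connection, which has Chern--Simons value $0$, both $\rho_1$ and $\rho_2$ are irreducible; and since $W^\ast$ deformation retracts onto $W$, the flat connection $A_\infty$ is exactly a representation $\pi_1(W)\to\SU(2)$ restricting to $\rho_1$ on $\pi_1(Y_1)$ and $\rho_2$ on $\pi_1(Y_2)$, which is the required conclusion. The main obstacle is this final compactness step on the non-compact manifold $W^\ast$: one must rule out bubbling and loss of energy into the cylindrical ends and confirm that the limiting flat connections on $Y_1$ and $Y_2$ sit at the common critical level $r_0$; the identification in the preceding paragraph of the obstructing index-$1$ critical points near level $r_0$ on both ends, though modelled on \Cref{useful}, also requires care.
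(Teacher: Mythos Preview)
Your proof is correct and follows essentially the same approach as the paper's. For the inequality you argue the contrapositive of what the paper does (you take $r<r^R_s(Y_2)$ and push vanishing of $[\theta]$ from $Y_2$ to $Y_1$, while the paper takes $r>r^R_s(Y_1)$ and pushes nonvanishing from $Y_1$ to $Y_2$), and for the equality case both you and the paper run the same Uhlenbeck-compactness argument on $W^\ast$ with perturbations going to zero, the paper being equally terse about why $a_n$ and $b_n$ can be chosen with $\cs\to r_0$.
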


\begin{proof}
Suppose that $r^R_s(Y_1) < \infty$.
For $\epsilon >0$ satisfying $\epsilon +r_s^R(Y_1) \notin \Lambda_{Y_2}$, %\Lambda (Y_2)
by Lemma~\ref{func}, we get 
\[
IW^{\epsilon +r_s^R(Y_1) }_s [\theta^{[s, \epsilon +r_s^R(Y_1)] }_{Y_2}] = c(W) \theta^{[s, \epsilon +r_s^R(Y_1)] }_{Y_1}.
\]
Since $[\theta_s^{\epsilon +r_s^R(Y_1) }({Y_1})  ]\neq 0$ for any $\epsilon >0$ and $c(W)$ is invertible,  we have $r^R_s(Y_2) \leq r^R_s(Y_1)+ \epsilon$. This implies the conclusion. 

Suppose that $r:= r^R_s(Y_2) = r^R_s(Y_1)$ for some $s$.  Fix Riemannian metrics $g_1$ and $g_2$ on $Y_1$ and $Y_2$. We take a sequence $r<r_n \to r$ with $r_n \in \R_{Y_1} \cap \R_{Y_2}$, the classes $[\theta^{[s,r_n]}_{Y_1}] \neq 0$ and  $[\theta^{[s,r_n]}_{Y_2} ]\neq 0$.  Then we have sequences of regular perturbations on $Y_1$ and $Y_2$ denoted by $\{\pi^1_n\} \subset \mathcal{P}(Y_1, g_1,  r_n)$ and $\{\pi^2_n\} \subset \mathcal{P}(Y_2, g_2,  r_n) $ satisfying 
\[
0 \neq [\theta^{[s,r_n]}_{Y_1}] \in  CI^1_{[s,r_n]} (Y_1) \text{ and }  0 \neq  [\theta^{[s,r_n]}_{Y_2}] \in  CI^1_{[s,r_n]} (Y_2)  .
\]
Moreover, one can take critical points $a_n$ and $b_n$ of $\{\pi^1_n\}$ and $\{\pi^2_n\}$ and regular perturbations $\pi^n_W$ on  $W^*$  satisfying the following conditions 
\begin{itemize}
\item $\cs_{\pi^1_n} (a_n) \to r \text{, } \cs_{\pi^2_n} (b_n) \to r $, 
\item $\| \pi^i _n \|\to 0$ for $i=1$ and $2$, 
\item 
$\| \pi_W^n \|_{C^1} \to 0 $ and
\item $\theta_Y ( a_n) \neq 0$ and $\theta_Y ( b_n) \neq 0$ .
\end{itemize}
For all such data, by using \eqref{important}, we take $a_n$ and $b_n$ satisfying
\[
M(a_n ,W^* , b_n) \neq \emptyset . 
\]
Now we choose an element $A_n$ in $M(a_n, W^*,b_n) $ for each $n$. Since we take regular perturbations, the dimension of $M(a_n, W^*,b_n) $ is $0$. Since $\{A_n\}$ has bounded energy and there is no sliding end sequence by gluing argument. One can take a subsequence $\{A_{n_j}\}$ and gauge transformations $\{g_j\}$ such that $\{g_j^*A_{n_j}\}$ converges on $W^*$. 
We write the limit by $A_\infty$.
By the second condition, we can see that the limit points $a_\infty$ and $b_\infty$ are flat connections. 
Moreover, $\cs(a_\infty)= \cs(b_\infty)=r$ and $\|F(A_\infty)\|^2_{L^2(Y\times \R)}= 0$ hold.
Since we can take perturbations so that the reducible flat connections of $Y_1$ and $Y_2$ are isolated, we see that %showed in \Cref{fundamental lemma}, 
$a_\infty$ and $b_\infty$ are irreducible flat connections.
Therefore, $A_\infty$ determines some irreducible flat connection on $W$.
This gives a homomorphism $\rho(A_\infty) \colon \pi_1(W) \to \SU(2)$.
\end{proof}

This result gives the following conclusion. 

\begin{cor}
\label{invariance}
The invariants $r^R_s$ are homology cobordism invariants. 
\end{cor}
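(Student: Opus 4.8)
The plan is to deduce \Cref{invariance} directly from \Cref{general cobneq} by the standard argument that a homology cobordism gives definite cobordisms in both directions with trivial first homology. Concretely, suppose $Y_1$ and $Y_2$ are homology cobordant, so there is a cobordism $W$ with $\partial W = Y_1 \amalg -Y_2$ and $H_*(W,Y_i;\z)=0$ for $i=1,2$. First I would record that such a $W$ is in particular negative definite (indeed its intersection form is trivial, $b_2(W)=0$, so it is vacuously both positive and negative definite), that $H^1(W,\R)=0$ since $H_1(W,\z)\cong H_1(Y_i,\z)=0$, and that $c(W)=\#H_1(W,\z)=1$, which is invertible in any commutative ring $R$ with $1$. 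Reversing the cobordism (reading $W$ from $Y_2$ to $Y_1$, i.e.\ using $-W$ with $\partial(-W) = Y_2 \amalg -Y_1$) gives a cobordism with the same properties.

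Applying \Cref{general cobneq} to $W$ (viewed as a cobordism from $Y_1$ to $Y_2$) yields $r^R_s(Y_2) \le r^R_s(Y_1)$ for every $s \in [-\infty,0]$; applying it to the reversed cobordism yields $r^R_s(Y_1) \le r^R_s(Y_2)$. Combining the two inequalities gives $r^R_s(Y_1) = r^R_s(Y_2)$ for all $s$, which is precisely the assertion that $r^R_s$ is a homology cobordism invariant. Since this holds for every commutative ring $R$ with $1$, in particular it holds for $R=\q$, giving the invariance of $r_s = r^\q_s$.

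I do not anticipate a genuine obstacle here: the only points requiring care are the bookkeeping facts that a homology cobordism is negative definite with $H^1(W,\R)=0$ and $c(W)=1$, all of which are immediate from the definition of homology cobordism, and the observation that $1$ is invertible in $R$. Thus the proof is a short formal consequence of \Cref{general cobneq}, applied once in each direction. I would write it in two or three sentences, perhaps as follows.

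\begin{proof}
Let $Y_1$ and $Y_2$ be homology cobordant oriented homology $3$-spheres, and let $W$ be a homology cobordism with $\partial W = Y_1 \amalg -Y_2$. Then $H_1(W,\z)=0$, so $H^1(W,\R)=0$ and $c(W)=\#H_1(W,\z)=1$ is invertible in $R$; moreover $b_2(W)=0$, so $W$ is negative definite. Applying \Cref{general cobneq} to $W$ gives $r^R_s(Y_2)\le r^R_s(Y_1)$, and applying it to the reversed cobordism $-W$ (with $\partial(-W)=Y_2\amalg -Y_1$), which also satisfies the hypotheses, gives $r^R_s(Y_1)\le r^R_s(Y_2)$. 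Hence $r^R_s(Y_1)=r^R_s(Y_2)$ for every $s\in[-\infty,0]$, and in particular $r_s=r^\q_s$ is a homology cobordism invariant.
\end{proof}
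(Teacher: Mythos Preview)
Your proof is correct and is exactly the argument the paper intends: the corollary is stated without proof immediately after \Cref{general cobneq}, and the implicit reasoning is precisely the one you give—a homology cobordism has $b_2=0$ (hence is negative definite), $H^1(W,\R)=0$, and $c(W)=1$, so \Cref{general cobneq} applies in both directions to yield equality.
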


In addition, we also have the following corollary.

\begin{cor}
\label{pi1=1}
If there exists a negative definite simply connected cobordism
with boundary $Y_1 \amalg -Y_2$ and $r^R_s(Y_1)< \infty$,  
then the strict inequality 
\[
r_s^R(Y_2) < r_s^R(Y_1) 
\] 
holds. 
\end{cor}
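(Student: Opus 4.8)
The plan is to deduce \Cref{pi1=1} directly from \Cref{general cobneq}. First I would observe that a simply connected cobordism $W$ automatically satisfies the hypotheses of \Cref{general cobneq}: since $\pi_1(W)=1$ we have $H_1(W;\z)=0$, hence $H^1(W;\R)=0$ and $c(W)=\#H_1(W;\z)=1$, which is invertible in every commutative ring $R$ with $1$. Therefore \Cref{general cobneq} applies and already gives the non-strict inequality $r^R_s(Y_2)\le r^R_s(Y_1)$ for all $s\in[-\infty,0]$.

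Next I would argue by contradiction to upgrade this to a strict inequality. Suppose $r^R_s(Y_2)=r^R_s(Y_1)$; by hypothesis this common value is finite, so the ``moreover'' clause of \Cref{general cobneq} produces an irreducible $\SU(2)$-representation $\rho_1$ of $\pi_1(Y_1)$ which extends a representation of $\pi_1(W)$. But $\pi_1(W)=1$, so the only representation of $\pi_1(W)$ is the trivial one, and any representation of $\pi_1(Y_1)$ that extends it must itself be trivial, in particular reducible. This contradicts the irreducibility of $\rho_1$. Hence the equality cannot occur, and since we already know $r^R_s(Y_2)\le r^R_s(Y_1)<\infty$, we conclude $r_s^R(Y_2)<r_s^R(Y_1)$.

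I do not expect any serious obstacle here: the entire analytic content has been placed in \Cref{general cobneq}, and the corollary reduces to checking that ``simply connected'' forces $c(W)=1$ and kills the extension-of-representations alternative. The only point requiring a word of care is the identification of ``$\rho_1$ extends a representation of $\pi_1(W)$'' with ``$\rho_1$ is pulled back along the map $\pi_1(Y_1)\to\pi_1(W)$ induced by inclusion'', so that $\pi_1(W)=1$ indeed forces $\rho_1$ to be trivial; this is immediate from the way the limiting flat connection $A_\infty$ on $W$ constructed in the proof of \Cref{general cobneq} restricts to $Y_1$.
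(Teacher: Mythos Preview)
Your argument is correct and is exactly the intended one: the paper does not spell out a proof of \Cref{pi1=1} but states it as an immediate corollary of \Cref{general cobneq}, and your deduction---that $\pi_1(W)=1$ forces $c(W)=1$ and makes the ``moreover'' clause impossible because the only representation of the trivial group restricts to the trivial (hence reducible) representation on $\pi_1(Y_1)$---is precisely the reasoning that justifies the word ``corollary''.
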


Also by \Cref{general cobneq}, for the case of $r_s=r^{\q}_s$, we have the following.

\begin{thm}[\Cref{main theorem}(1)]
\label{cobneq}
Let $Y_1$ and $Y_2$ be oriented homology $3$-spheres. 
Suppose that there is an oriented negative definite cobordism $W$ with $\partial W = Y_1 \amalg -Y_2$. 
Then the inequality
\[
 r_s(Y_2) \leq r_s(Y_1) \]
holds for any $s \in [-\infty,0]$.
Moreover, if $H^1(W; \R)=0$ and $r_s(Y_2) = r_s(Y_1)< \infty$, then there exist irreducible $\SU(2)$-representations $\rho_1$ and $\rho_2$ of $\pi_1(Y)$ and $\pi_2(Y)$ respectively which extend the same representation of $\pi_1(W)$.  
\end{thm}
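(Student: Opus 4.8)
The plan is to deduce \Cref{cobneq} from \Cref{general cobneq} applied with $R=\q$. Recall that $r_s=r^\q_s$ by definition. For the \emph{moreover} part nothing further is needed: the hypothesis $H^1(W;\R)=0$ means that $H_1(W;\z)$ is finite, so $c(W)=\#H_1(W;\z)$ is a positive integer and hence invertible in $\q$; thus \Cref{general cobneq} with $R=\q$ gives directly both $r_s(Y_2)\le r_s(Y_1)$ and, when $r_s(Y_2)=r_s(Y_1)<\infty$, the existence of irreducible $\SU(2)$-representations of $\pi_1(Y_1)$ and $\pi_1(Y_2)$ extending a common representation of $\pi_1(W)$. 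So the only real content is the first inequality in the case $H^1(W;\R)\neq 0$, i.e.\ $b_1(W)>0$, and for this I would first modify $W$, without changing its boundary, into a negative definite cobordism with $b_1=0$.

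The modification is surgery on loops. Assuming $b_1(W)>0$, I would choose a smoothly embedded circle $\gamma$ in the interior of $W$ whose homology class is primitive of infinite order in $H_1(W;\z)$; such a $\gamma$ exists because any class in $H_1$ of a $4$-manifold is represented by an embedded loop (generic position, since $2\cdot 1+1<4$), and its normal bundle is trivial because oriented rank-$3$ bundles over $S^1$ are trivial. I then set
\[
W':=\big(W\setminus \mathrm{int}\,\nu(\gamma)\big)\cup_{S^1\times S^2}\big(D^2\times S^2\big),
\]
so that $\partial W'=\partial W=Y_1\amalg -Y_2$ and $b_1(W')=b_1(W)-1$. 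I claim $W'$ is again negative definite. Since its boundary components are homology $3$-spheres, the intersection form on $H_2(W';\z)$ is unimodular, so it is enough to check $b_2(W')=b_2(W)$ and $\sigma(W')=\sigma(W)$. Novikov additivity of the signature together with $\sigma(D^2\times S^2)=\sigma(S^1\times D^3)=0$ gives $\sigma(W')=\sigma(W)$. For the second Betti number I would compare Euler characteristics: the surgery yields $\chi(W')=\chi(W)+2$, while for any compact connected oriented $4$-manifold whose boundary is a disjoint union of two rational homology $3$-spheres one has $b_3=1+b_1$ and hence $\chi=1-b_1+b_2-(1+b_1)=b_2-2b_1$; applying this to both $W$ and $W'$ and using $b_1(W')=b_1(W)-1$ forces $b_2(W')=b_2(W)$. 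As $W$ is negative definite, $\sigma(W)=-b_2(W)$, so $\sigma(W')=-b_2(W')$ and, by unimodularity, $W'$ is negative definite.

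Iterating this $b_1(W)$ times produces a negative definite cobordism $W_0$ with $\partial W_0=Y_1\amalg -Y_2$ and $H^1(W_0;\R)=0$. Then $c(W_0)=\#H_1(W_0;\z)$ is a positive integer, hence invertible in $\q$, and \Cref{general cobneq} with $R=\q$ yields $r_s(Y_2)=r^\q_s(Y_2)\le r^\q_s(Y_1)=r_s(Y_1)$ for every $s\in[-\infty,0]$. (If $W$ is disconnected, then $Y_1$ and $-Y_2$ separately bound the components of $W$; after removing a ball one obtains negative definite cobordisms from $Y_i$ to $S^3$, and the inequality reduces to the connected case together with $r_s(S^3)=\infty$.) The step I expect to be the main obstacle is verifying that the surgery preserves negative definiteness: a priori the glued-in $D^2\times S^2$ contributes a $2$-sphere of self-intersection $0$, which would destroy definiteness if it were a genuinely new free generator of $H_2$. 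The Euler-characteristic computation above is exactly what rules this out, and it is here that the hypothesis that the boundary pieces are homology $3$-spheres (which gives $b_3=1+b_1$) is used in an essential way.
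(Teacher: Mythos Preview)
Your argument is correct and follows the same route as the paper: reduce to the case $H^1(W;\R)=0$ by surgery on circles, then apply \Cref{general cobneq} with $R=\q$. The paper simply cites \cite[Section~6]{Ka95} for the reduction (``without loss of generality, we may assume that $H_1(W;\R)=0$''), whereas you have written out the surgery step and the verification that negative definiteness is preserved; your Euler-characteristic/signature bookkeeping is exactly the standard justification behind that citation.
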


\begin{proof}
By surgering out loops representing the free part of $H_1(W;\z)$,
%As shown in \cite[Section 6]{Ka95}, 
without loss of generality, we may assume that $H_1(W; \R)=0$.
Then, $c(W)= \# H_1(W;\z)$ is invertible in $\q$, and hence
\Cref{general cobneq} gives
\[
 r_s(Y_2) \leq r_s(Y_1). \]
The last-half assertion of \Cref{cobneq} directly follows from \Cref{general cobneq}.
\end{proof}

\begin{cor}
\label{obstruct bounding}
If a homology $3$-sphere $Y$ bounds a negative definite 4-manifold,
then for any $s \in [-\infty, 0]$, we have 
$$
r_s(Y)= \infty.
$$
\end{cor}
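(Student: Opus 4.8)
The plan is to deduce this immediately from the negative definite cobordism inequality \Cref{cobneq} (that is, \Cref{main theorem}(1)), together with the fact that $r_s(S^3)=\infty$.

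First I would record that $r_s(S^3)=\infty$ for every $s\in[-\infty,0]$: by \Cref{value} the value $r^R_s(S^3)$ lies in $\Lambda^*_{S^3}\cup\{\infty\}$, and $\Lambda^*_{S^3}=\emptyset$, so necessarily $r_s(S^3)=\infty$.

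Now suppose $Y=\partial X$ for some negative definite $4$-manifold $X$. Since $Y$ is connected we may replace $X$ by the connected component of $X$ containing $Y$, so we may assume $X$ is connected; its intersection form remains negative definite. Removing an open ball from the interior of $X$ yields a compact connected $4$-manifold $W:=X\setminus\operatorname{int}(B^4)$ with $\partial W=Y\amalg(-S^3)$, using the orientation-reversing self-diffeomorphism $-S^3\cong S^3$. Because $H_2(W)\cong H_2(X)$ with the same intersection form, $W$ is a negative definite (connected) cobordism from $Y$ to $S^3$. Applying \Cref{cobneq} with $Y_1=Y$ and $Y_2=S^3$ then gives $r_s(S^3)\leq r_s(Y)$ for all $s\in[-\infty,0]$; combining this with $r_s(S^3)=\infty$ forces $r_s(Y)=\infty$.

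I expect essentially no hard step here: the entire content sits in \Cref{cobneq}. The only points requiring care are the routine bookkeeping --- that the punctured manifold $W$ is connected with unchanged (hence still negative definite) intersection form, and that the orientation conventions place $S^3$ on the side making the inequality run as $r_s(S^3)\leq r_s(Y)$ rather than the vacuous $r_s(Y)\leq r_s(S^3)=\infty$. One could also note that running the same argument through \Cref{general cobneq} shows $r^R_s(Y)=\infty$ for every coefficient ring $R$ in which $c(W)=\#H_1(W,\z)$ is invertible.
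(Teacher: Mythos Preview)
Your argument is correct and matches the paper's proof essentially line for line: the paper also deletes an open ball from $X$ to obtain a negative definite cobordism $W$ with $\partial W = Y \amalg -S^3$, applies \Cref{cobneq} to get $r_s(Y)\geq r_s(S^3)$, and uses $r_s(S^3)=\infty$. Your extra remarks on connectedness and on the general coefficient ring $R$ are fine but not needed for the corollary as stated.
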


\begin{proof}
Suppose that $Y$ bounds a negative definite 4-manifold $X$, and
let $W$ denote $X$ with open 4-ball deleted.
Then $W$ is a negative definite 4-manifold with 
$\partial W = Y \amalg -S^3$. 
Therefore, by \Cref{cobneq}, we have
$$
r_s(Y) \geq r_s(S^3) = \infty
$$
for any $s \in [-\infty, 0]$.
\end{proof}

%%%%%%%%%%3.2
%%%%%%%%%%3.2
\subsection{Connected sum formula}

The aim of this subsection is to
prove the following connected sum formula for $r_s$.

\begin{thm}[\Cref{main theorem}(4)]
\label{conn}
Let $s,s_1,s_2 \in (-\infty, 0] $ with $s=s_1+s_2$.
For any homology $3$-spheres $Y_1$ and $Y_2$,
we have the inequality 
\[
r_s(Y_1 \# Y_2 ) \geq \min \{ r_{s_1}(Y_1)+s_2 , r_{s_2}(Y_2) +s_1\}. 
\]
\end{thm}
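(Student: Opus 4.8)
The plan is to reduce the connected sum inequality to an application of \Cref{useful}(4) (and its reducible analogue (5)) via the standard negative definite cobordism between $Y_1 \amalg Y_2$ and $Y_1 \# Y_2$. Recall that there is a connected cobordism $W$ from $Y_1 \amalg Y_2$ to $Y_1 \# Y_2$, obtained by attaching a single $1$-handle; it is simply connected with $b_2(W)=0$, hence both negative definite (vacuously) and has $H^1(W;\R)=0$. Reversing orientation, $-W$ is a negative definite cobordism with $\partial(-W) = Y_1 \# Y_2 \amalg (-Y_1) \amalg (-Y_2)$, which is exactly the $m=2$ situation set up in \Cref{Cobordism maps}. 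The key point is that the associated cobordism map $\widetilde{CW}^{[s,r]}$ and the obstruction-class argument of \Cref{func} combine to give a relation on $[\theta^{[s,r]}]$'s, and the filtration-respecting statements in \Cref{useful}(4),(5) pin down the Chern–Simons levels involved.

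First I would fix $r_1 < r_{s_1}(Y_1)$, $r_2 < r_{s_2}(Y_2)$ and set $r := \min\{r_1+s_2,\ r_2+s_1\}$, chosen generically so that $r \in \R_{Y_1\#Y_2}$, $r-s_2 \in \R_{Y_1}$, $r-s_1 \in \R_{Y_2}$, and $s=s_1+s_2 \in \R_{Y_1\#Y_2}$; by \Cref{r_0 and theta^r} we then have $[\theta^{[s_1,r-s_2]}_{Y_1}]=0$ and $[\theta^{[s_2,r-s_1]}_{Y_2}]=0$ (note $r-s_2 \le r_1 < r_{s_1}(Y_1)$ and similarly for $Y_2$). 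Next I would run the count-the-ends-of-$M(\mathfrak a, (-W)^\ast, \mathfrak b)$ argument exactly as in the proof of \Cref{func}, but now for the $m=2$ cobordism with limits $(b_1,\theta)$ and $(\theta,b_2)$ on the outgoing ends; the reducible piece $M(\theta,(-W)^\ast,(\theta,\theta))$ contributes $c(-W)=\#H_1(-W;\Z)=1$ since $-W$ is simply connected. This yields, for $a \in CI_1^{[s,r]}(Y_1\#Y_2)$, a chain-level identity of the schematic form
\[
\delta^{[s,r]}(n^{[s,r]})(a) + \theta^{[s,r]}_{Y_1\#Y_2}(a) = \big(\text{terms built from } \widetilde{CW}^{[s,r]} \text{ and } \theta^{[s_1,r-s_2]}_{Y_1},\ \theta^{[s_2,r-s_1]}_{Y_2}\big)(a),
\]
where \Cref{useful}(4),(5) guarantee that the right-hand side genuinely lands in the filtered complexes $CI^{[s_1,r-s_2]}(Y_1)$ and $CI^{[s_2,r-s_1]}(Y_2)$ — i.e. the outputs $b_1,b_2$ satisfy $\cs(b_i)$ in the correct windows, and (using (5)) the input satisfies $\cs_{\pi^+}(a) > s_1+s_2 = s$, so $a$ does lie in $CI_1^{[s,r]}(Y_1\#Y_2)$ as required. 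Since the right-hand side is a sum of compositions of chain maps with the null-homologous cocycles $\theta^{[s_1,r-s_2]}_{Y_1}$ and $\theta^{[s_2,r-s_1]}_{Y_2}$, it is a coboundary, and therefore $[\theta^{[s,r]}_{Y_1\#Y_2}]=0$ in $I^1_{[s,r]}(Y_1\#Y_2)$. As $r_1 \uparrow r_{s_1}(Y_1)$ and $r_2 \uparrow r_{s_2}(Y_2)$ this forces $r_s(Y_1\#Y_2) \ge \min\{r_{s_1}(Y_1)+s_2,\ r_{s_2}(Y_2)+s_1\}$.

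The main obstacle I anticipate is the bookkeeping of Chern–Simons windows and perturbation classes: one must choose $\pi^+ \in \mathcal{P}(Y_1\#Y_2, r, s, g)$, $\pi^-_1 \in \mathcal{P}(Y_1, r-s_2, s_1, g^-_1)$, $\pi^-_2 \in \mathcal{P}(Y_2, r-s_1, s_2, g^-_2)$, and a perturbation $\pi_W$ on $(-W)^\ast$ satisfying condition $(\ast\ast)$ with all the required smallness bounds, and then verify that the ends of the $1$-dimensional moduli space $M(a,(-W)^\ast,\theta)$ decompose into precisely the three families (interior breaking on $Y_1\#Y_2$, the reducible-on-$(-W)$ term, and breaking on $Y_1$ resp. $Y_2$) with the filtration constraints from \Cref{useful} ensuring no stratum escapes the chosen windows. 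The reducible analysis on $(-W)^\ast$ — that $M(\theta,(-W)^\ast,(\theta,\theta))$ is a single, correctly-oriented point because $-W$ is simply connected, exactly as in \cite{Do87} and as invoked in \Cref{func} — is routine here since $c(-W)=1$, which is why the simply-connected $1$-handle cobordism is the right choice and no invertibility hypothesis on $c(W)$ is needed.
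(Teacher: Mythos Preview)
Your overall strategy matches the paper's: use the $1$-handle cobordism $W$ from $Y_1 \amalg Y_2$ to $Y_1 \# Y_2$, count ends of the $1$-dimensional moduli space $M(a, W^*, (\theta, \theta))$ for $a \in CI_1^{[s,r]}(Y_1\#Y_2)$, and deduce a chain-level relation expressing $\theta^{[s,r]}_{Y_1\#Y_2}$ in terms of $\theta^{[s_1,r-s_2]}_{Y_1}$ and $\theta^{[s_2,r-s_1]}_{Y_2}$ (this is \Cref{conn theta} in the paper). The reducible analysis on $W^*$ is also exactly as you describe.

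The gap is in your assertion that the right-hand side is a coboundary because it is ``a sum of compositions of chain maps with the null-homologous cocycles $\theta_{Y_i}$.'' The two components of $\widetilde{CW}$ are \emph{not} chain maps from $(CI_*^{[s,r]}(Y_1\#Y_2),\partial)$ to $(CI_*^{[s_i,\,r-s_j]}(Y_i),\partial)$. Counting ends of the $1$-dimensional moduli $M(a, W^*, (b_1,\theta))$ (the paper's \Cref{conn lem}(2)) gives
\[
p_1 CW_0 \circ \partial_{Y_1\#Y_2} \;-\; \partial_{Y_1} \circ p_1 CW_1 \;=\; \big(0,\ 1\otimes \theta_{Y_2}\big) \circ p_0 CW_1,
\]
where $p_0 CW$ is the cobordism map with \emph{irreducible} limits on both $Y_1$ and $Y_2$, landing in $CI(Y_1)\otimes CI(Y_2)$. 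This cross term obstructs your conclusion: if $\theta_{Y_1}=f_1\circ\partial_{Y_1}$, then $\theta_{Y_1}\circ p_1 CW_1$ differs from a coboundary by $(0,\,f_1\otimes\theta_{Y_2})\circ p_0 CW_1$, and symmetrically for $Y_2$. The paper's fix (\Cref{conn coboundary}) is to bring in the tensor component $p_0 CW$ as well, assemble the full target $C_*^{[s,r]}$ with the differential $\partial_C$ that incorporates these $\theta$-cross terms (\Cref{conn diff}), and construct the explicit primitive $f'=(-f_1\otimes f_2,\ f_1,\ f_2)$ whose $\partial_C$-coboundary is $(0,\theta_{Y_1},\theta_{Y_2})$; the first component $-f_1\otimes f_2$ is precisely what cancels the two correction terms via \Cref{conn lem}(1). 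You never invoke the moduli spaces $M(a, W^*, (b_1,b_2))$ with both outgoing limits irreducible, and without them the argument does not close.

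A minor point: your citations of \Cref{useful}(4),(5) are misplaced. Those give the lower bound $\cs_{\pi^+}(a)>s$ on the $Y_1\#Y_2$ side (used for intermediate breakings on $Y_1\#Y_2$, as in the Claims inside \Cref{conn lem}); for bounding the outputs on $Y_i$ in the correct windows one needs \Cref{useful}(2),(3).
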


Before starting the proof, let us fix several additional data to define filtered instanton Floer homology. %準備
Fix $s, s_1, s_2 \in (-\infty, 0]$ with $s=s_1+s_2$
and homology 3-spheres $Y_1$ and $Y_2$.
Take $r \in \R^{>0}_{Y_1 \# Y_2}$
such that $r-s_2 \in \R^{>0}_{Y_1}$ and $r-s_1 \in \R^{>0}_{Y_2}$.
Fix Riemannian metrics $g_i$ on $Y_i$ (resp.\ $g_{\#}$ on $Y_1 \# Y_2$), non-degenerate regular perturbations $\pi_i \in \mathcal{P}(Y_i,r-s_j, s_i,g_i)$ for $\{i,j\}=\{1,2\}$ (resp.\ a non-degenerate regular perturbation $\pi_{\#} \in \mathcal{P}(Y_1 \# Y_2,r,s,g_{\#})$) and orientations on line bundles $\lambda_{a, X}$ with respect to $\pi_1$ , $\pi_2$ and $\pi_\#$. 
Here, we first suppose that $s_1 \in \R_{Y_1}$, $s_2 \in \R_{Y_2}$ and $s \in \R_{Y_1\# Y_2}$. 
Next, let us consider a cobordism $W$ with 
$\partial W = (Y_1 \# Y_2) \amalg -(Y_1 \amalg Y_2)$,
which consists of only a single 1-handle. 
Define $\q$-vector spaces $C^{[s,r]}_i$ $(i=0, 1)$ as
$$
C^{[s,r]}_0:=
\begin{array}{c}
CI^{[s_1,r-s_2]}_{0}(Y_1) \otimes_{\q} CI^{[s_2,r-s_1]}_0(Y_2)\\
\oplus\\
CI^{[s_1,r-s_2]}_{0}(Y_1)\\
\oplus\\
CI^{[s_2,r-s_1]}_{0}(Y_2)\\
\end{array}
$$
and
$$
C^{[s,r]}_1:=
\begin{array}{c}
(CI^{[s_1,r-s_2]}_{1}(Y_1) \otimes_{\q} CI^{[s_2,r-s_1]}_0(Y_2)) 
\oplus (CI^{[s_1,r-s_2]}_{0}(Y_1) \otimes_{\q} CI^{[s_2,r-s_1]}_1(Y_2))\\
\oplus\\
CI^{[s_1,r-s_2]}_{1}(Y_1)\\
\oplus\\
CI^{[s_2,r-s_1]}_{1}(Y_2)\\
\end{array}.
$$
By the discussion of Section~\ref{Cobordism maps}, the above initial data give the maps
\[
CW^{[s,r]}_i \oplus \widetilde{CW}^{[s,r]}_i  \colon CI^{[s,r]}_i(Y_1 \# Y_2) \to C^{[s,r]}_i
\]
 for $i= 0$ and $i=1$.
We denote $\text{pr}_j \circ  CW^{[s,r]}_i \oplus \widetilde{CW}^{[s,r]}_i$ by $p_jCW_ i^{[s,r]}$, where $\text{pr}_j$ is the projection to the $j$-th component of $C^{[s,r]}_i$ for $j \in \{1,2,3 \}$.
The following lemma is a key to prove the connected sum inequality.

\begin{lem}%連結和補題
\label{conn lem}
Suppose that $s_1 \in \R_{Y_1}$, $s_2 \in \R_{Y_2}$ and $s \in \R_{Y_1\# Y_2}$.
The homomorphisms $CW^{[s,r]}_0$ and $CW^{[s,r]}_1$ satisfy the following equalities: 
\begin{enumerate}
\item
$p_1  CW^{[s,r]}_0 \circ \partial^{[s,r]}_{Y_1 \# Y_2}-
\left( \partial^{[s_1,r-s_2]}_{Y_1} \otimes 1,
1 \otimes \partial^{[s_2,r-s_1]}_{Y_2} \right) \circ p_1CW^{[s,r]}_1 =0$,
\item
$p_2  CW^{[s,r]}_0 \circ \partial^{[s,r]}_{Y_1 \# Y_2}- \partial^{[s_1,r-s_2]}_{Y_1}
\circ p_2CW^{[s,r]}_1 - \left(0, 1 \otimes  \theta^{[s_2,r-s_1]}_{Y_2} \right) 
\circ p_1 CW^{[s,r]}_1 =0$, and
\item
$p_3  CW^{[s,r]}_0 \circ \partial^{[s,r]}_{Y_1 \# Y_2}- \partial^{[s_2,r-s_1]}_{Y_2} 
\circ p_3CW^{[s,r]}_1 -
\left( \theta^{[s_1,r-s_2]}_{Y_1} \otimes 1, 0\right) \circ p_1 CW^{[s,r]}_1 =0$.
\end{enumerate}
\end{lem}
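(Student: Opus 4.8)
## Proof proposal for Lemma~\ref{conn lem}

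\textbf{Overall strategy.} The three identities are the statement that the cobordism map $CW^{[s,r]}$ associated with the single $1$-handle cobordism $W$ (together with its $\theta$-coupled variant $\widetilde{CW}^{[s,r]}$) is a chain map, once one keeps careful track of which configurations can appear as boundary points of the relevant $1$-dimensional moduli spaces. The plan is the standard one: for a critical point $\frak a \in CI^{[s,r]}_1(Y_1\#Y_2)$ and an appropriate target critical point of index $0$, consider the compactified $1$-dimensional moduli space $M(\frak a, W^*, \frak b)$ (and its $\theta$-variants on the outgoing ends), enumerate the ends of this $1$-manifold via gluing theory, and equate the signed count of ends with zero. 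The three bullets correspond to the three projections $p_0,p_1,p_2$, i.e.\ to whether the outgoing limit lies in $\widetilde R^*(Y_1)_{\pi_1}\times\widetilde R^*(Y_2)_{\pi_2}$, in $\widetilde R^*(Y_1)_{\pi_1}\times\{\theta\}$, or in $\{\theta\}\times\widetilde R^*(Y_2)_{\pi_2}$.

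\textbf{Key steps.} First I would fix, as in Section~\ref{cobordism map}, the perturbation data: $\pi_1\in\mathcal P(Y_1,r-s_2,s_1,g_1)$, $\pi_2\in\mathcal P(Y_2,r-s_1,s_2,g_2)$, $\pi_\#\in\mathcal P(Y_1\#Y_2,r,s,g_\#)$ and a regular perturbation $\pi_W$ on $W^*$ satisfying $(\ast\ast)$, chosen so that all $0$- and $1$-dimensional moduli spaces over $W^*$ are cut out transversally and so that the $\theta$-ended moduli spaces use the weighted norm \eqref{weighted}. Since $W$ is built from a single $1$-handle, $W$ is negative definite (in fact $b_2(W)=0$) with $H^1(W;\R)=0$ and $c(W)=\#H_1(W,\z)=1$; in particular there is a unique reducible $\theta$ over $W^*$ contributing with sign $+1$, exactly as in Lemma~\ref{func}. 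Next, for $\frak a$ of index $1$ I would list the ends of the compactified $M(\frak a,W^*,\frak b)$ for each type of index-$0$ target $\frak b$: the ends are (i) broken trajectories with a single index-drop bubbling on the incoming $Y_1\#Y_2$ end, glued to an index-$0$ $W^*$-trajectory — this produces the $CW^{[s,r]}_0\circ\partial^{[s,r]}_{Y_1\#Y_2}$ term; (ii) broken configurations where the $W^*$-part has index $1$ and a single trajectory breaks off on one of the outgoing ends $Y_1\times\R_{\ge0}$ or $Y_2\times\R_{\ge0}$ — this produces the $\partial^{[s_i,r-s_j]}_{Y_i}$ terms on each factor; and (iii) when the outgoing limit on one of the two ends degenerates to $\theta$, a factorization through the $\theta$-ended moduli space composed with a $\theta_{Y_j}^{[s_j,r-s_i]}$ contribution on the other end — this is precisely the $\left(0,1\otimes\theta^{[s_2,r-s_1]}_{Y_2}\right)$ and $\left(\theta^{[s_1,r-s_2]}_{Y_1}\otimes 1,0\right)$ correction terms. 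Matching these three families of ends for the three projections gives the three displayed identities.

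\textbf{The crucial filtration input.} The one genuinely nonformal point — and the reason the statement is phrased with these particular filtration windows $[s_1,r-s_2]$, $[s_2,r-s_1]$, $[s,r]$ — is that we must know no end of the moduli space ``escapes'' the prescribed Chern--Simons windows, so that all the maps land in the claimed filtered subcomplexes and no extra boundary contributions arise. This is exactly what Lemma~\ref{useful}(2),(3),(4),(5) provides: if $M(\frak a,W^*,(b_1,b_2))\ne\emptyset$ with $\frak a$ in the window $s<\cs_{\pi^+}(\frak a)<r$ and $b_1$ in $[s_1,r-s_2]$, then automatically $\cs_{\pi_2^-}(b_2)<r-s_1$, and symmetrically with the roles of $Y_1,Y_2$ exchanged, while Lemma~\ref{useful}(4),(5) give the lower bounds $\cs_{\pi^+}(\frak a)>s_1+s_2$ needed for the outgoing-to-incoming direction. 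For the $\theta$-coupled moduli spaces one invokes Lemma~\ref{useful}(3),(5) in the same way. I expect this bookkeeping — checking that every one of the finitely many end-types respects the correct window, in all three projections simultaneously — to be the main obstacle; everything else (transversality, the sign/orientation argument identifying $\widetilde{\mathfrak{gl}}$-compatible orientations, and the ``count the ends of a $1$-manifold equals $0$'' principle) is the standard machinery already set up in Sections~\ref{Cobordism maps} and recalled from \cite{FS92,Do02}. Finally, I would remark that the assumption $s_1\in\R_{Y_1}$, $s_2\in\R_{Y_2}$, $s\in\R_{Y_1\#Y_2}$ made in the statement is exactly what makes the filtered complexes $CI^{[s_i,r-s_j]}_*(Y_i)$ and $CI^{[s,r]}_*(Y_1\#Y_2)$ well-defined without the $\lambda_Y/2$ shift, so no case distinction is needed here; the general case is deferred to the proof of Theorem~\ref{conn} itself.
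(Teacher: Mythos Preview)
Your proposal is correct and matches the paper's proof: one analyses the ends of the $1$-dimensional moduli spaces $M(a,W^*,(b_1,b_2))$, $M(a,W^*,(b_1,\theta_{Y_2}))$, $M(a,W^*,(\theta_{Y_1},b_2))$ via gluing, and the filtration bounds on the intermediate critical points are supplied exactly by Lemma~\ref{useful}(2)--(5), as you indicate. One small correction: your remark about the unique reducible $\theta$ over $W^*$ and $c(W)=1$ is not used in this lemma---no moduli space with both outgoing limits at $\theta$ appears in the end analysis here---and that count enters only later, in the proof of Lemma~\ref{conn theta}.
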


\begin{proof}
First, let us prove (1).
For generators $[a] \in CI^{[s,r]}_1(Y_1 \# Y_2)$
and $[b_1] \otimes [b_2] \in CI^{[s_1,r-s_2]}_0(Y_1) \otimes_{\q} CI^{[s_2,r-s_1]}_0(Y_2)$,
we see that the moduli space $M(a, W^* , b_1 \amalg b_2)$
has a structure of oriented manifold whose dimension $1$ whose orientation is induced by the orientations of line bundles $\lambda_{a, X}$.
Moreover, by the gluing argument, we obtain the gluing map $\mathfrak{gl}$ from the union of
\begin{align} \label{11}
\left( \bigcup_{\substack{ [c] \in \tR^*(Y_1\# Y_2)_{\pi_{\#}},\, \ind (c)=0, \\ \cs_{\pi_\#} ([c]) < \cs_{\pi_\#} ([a]) }}
 M^{Y_1 \# Y_2} (a,c)_{\pi_\# }  /\R\times M(c,W^*,b_1 \amalg b_2) \right) \times (0, \infty), 
\end{align}
\begin{align} \label{22}
\left( \bigcup_{\substack{ [d] \in \tR^*(Y_1)_{\pi_1},\, \ind (d)=1,\\ \cs_{\pi_1} ([d]) >\cs_{\pi_1} ([b_1]) }}
 M(a ,W^*,d \amalg b_2)  \times M^{Y_1}(d,b_1)_{\pi_1 } /\R   \right)  \times  (-\infty, 0 ),
\end{align}
and
\begin{align}\label{33}
\left(  \bigcup_{ \substack{ [e] \in \tR^*(Y_2)_{\pi_2},\, \ind (e)=1,\\ \cs_{\pi_2} ([e]) >\cs_{\pi_2} ([b_2]) }}
 M(a ,W^*,b_1 \amalg e)  \times M^{Y_2}(e,b_2)_{\pi_2 } /\R  \right)   \times (-\infty, 0 )
\end{align}
to $M(a, W^* , b_1 \amalg b_2)$.
%\begin{align*}
%& \displaystyle
% \mathfrak{gl}: \displaystyle
%\bigcup_{
%\substack{ [c] \in \tR^*(Y_1\# Y_2)_{\pi_{\#}},\, \ind (c)=0, \\ \cs_{\pi_\#} ([c]) < \cs_{\pi_\#} ([a])  } } 
% M^{Y_1 \# Y_2} (a,c)_{\pi_\# }  /\R\times M(c,W^*,b_1 \amalg b_2) \times (0, \infty)  \cup  \\
%&\big{(}\displaystyle
%\bigcup_{\substack{ [d] \in \tR^*(Y_1)_{\pi_1},\, \ind (d)=1,\\ \cs_{\pi_1} ([d]) >\cs_{\pi_1} ([b_1]) }} M(a ,W^*,d \amalg b_2)  \times M^{Y_1}(d,b_1)_{\pi_1 } /\R   \cup   \\
%&\displaystyle
%\bigcup_{ \substack{ [e] \in \tR^*(Y_2)_{\pi_2},\, \ind (e)=1,\\ \cs_{\pi_2} ([e]) >\cs_{\pi_2} ([b_2]) }} M(a ,W^*,b_1 \amalg e)  \times M^{Y_2}(e,b_2)_{\pi_2 } /\R \big{)}   \times (-\infty, 0 )  \to  M(a, W^* , b_1 \amalg b_2) . 
% \end{align*}
On the first two components \eqref{11} and \eqref{22}, we can check that $\mathfrak{gl}$ is orientation-preserving as in the case of $Y_2=\emptyset$.
For the third component \eqref{33}, in general, $\mathfrak{gl} $ changes the orientation by $(-1)^{\ind (b_1) }$.
It follows from a standard calculation of index bundles via gluing argument.
In our situation, since $\ind (b_1) =0$, $\mathfrak{gl}$ is orientation-preserving.
So, the oriented boundaries of the compactification of $M(a, W^* , b_1 \amalg b_2)$ are given as follows.
\begin{itemize} 
\item $\displaystyle
\bigcup_{
\substack{ [c] \in \tR^*(Y_1\# Y_2)_{\pi_{\#}},\, \ind (c)=0, \\ \cs_{\pi_\#} ([c]) < \cs_{\pi_\#} ([a])  } } 
 M^{Y_1 \# Y_2} (a,c)_{\pi_\# }  /\R\times M(c,W^*,b_1 \amalg 
b_2),
$
\item $-
\displaystyle
\bigcup_{\substack{ [d] \in \tR^*(Y_1)_{\pi_1},\, \ind (d)=1,\\ \cs_{\pi_1} ([d]) >\cs_{\pi_1} ([b_1]) }} M(a ,W^*,d \amalg b_2)  \times M^{Y_1}(d,b_1)_{\pi_1 } /\R,
$
\item 
$- 
\displaystyle
\bigcup_{ \substack{ [e] \in \tR^*(Y_2)_{\pi_2},\, \ind (e)=1,\\ \cs_{\pi_2} ([e]) >\cs_{\pi_2} ([b_2]) }} M(a ,W^*,b_1 \amalg e)  \times M^{Y_2}(e,b_2)_{\pi_2 } /\R. 
$
\end{itemize}

\begin{claim}
The following inequalities hold. 
\begin{itemize} 
\item $\cs_{\pi_\#} ([c]) > s$,
\item $\cs_{\pi_1} ([d]) <r-s_2 $, and 
\item $\cs_{\pi_2} ([e]) <r-s_1$.
\end{itemize}
\end{claim}

\begin{proof}
This is just a corollary of \Cref{useful}.
\end{proof}

By using this lemma, we can regard the above $[c]$, $[d]$ and $[e]$
as $[c] \in CI^{[s,r]}_0(Y_1 \# Y_2)$,   
$[d] \in CI^{[s_1,r-s_2]}_1(Y_1)$ and $[e] \in CI^{[s_2,r-s_1]}_1(Y_2)$
respectively.
Thus, we have
\begin{align*}
& p_1  CW^{[s,r]}_0 \circ \partial^{[s,r]}_{Y_1 \# Y_2}([a])
-
\left( \partial^{[s_1,r-s_2]}_{Y_1} \otimes 1, 0\right)
\circ p_1CW^{[s,r]}_1([a]) \\
& \hspace{15em} - \left(0,  1 \otimes \partial^{[s_2,r-s_1]}_{Y_2} \right)
\circ p_1CW^{[s,r]}_1([a])
=0.
\end{align*}
Next, we prove (2).
For generators $[a]\in CI^{[s,r]}_1(Y_1 \# Y_2)$ and $[b_1] \in CI^{[s_1,r-s_2]}_0(Y_1)$, consider $M(a, W^* , b_1 \amalg \theta_{Y_2})$ as an oriented $1$-manifold.
Then its ends are the following:
\begin{itemize} 
\item $\displaystyle
\bigcup_{[c] \in \tR^*(Y_1\# Y_2)_{\pi_\#},\, \ind (c)=0 } M^{Y_1\#Y_2}(a,c)_{\pi_\# } /\R\times M(c,W^*,b_1 \amalg \theta_{Y_2}),
$
\item $\displaystyle
-\bigcup_{ [d] \in \tR^*(Y_1)_{\pi_1},\, \ind (d)=1 } M(a ,W^*,d \amalg \theta_{Y_2})  \times M^{Y_1}(d,b_1)_{\pi_1 } /\R, 
$
\item $\displaystyle
-\bigcup_{ [e] \in \tR^*(Y_2)_{\pi_2},\, \ind (e)=1 } M(a ,W^*,b_1 \amalg e)  \times M^{Y_2}(e,\theta_{Y_2})_{\pi_2 } /\R. 
$
\end{itemize}
We need to show the following claim.

\begin{claim}
The following inequalities hold. 
\begin{itemize} 
\item $\cs_{\pi_\#} ([c]) > s$,
\item $\cs_{\pi_1} ([d]) <r-s_2 $ and 
\item $\cs_{\pi_2} ([e]) <r-s_1$.
\end{itemize}
\end{claim}

\begin{proof}
This is also a corollary of \Cref{useful}. 
\end{proof}

Hence, we have
\begin{align*}
& p_2  CW^{[s,r]}_0 \circ \partial^{[s,r]}_{Y_1 \# Y_2}([a]) - \partial^{[s_1,r-s_2]}_{Y_1}
\circ p_2CW_1([a]) \\
&\hspace{8em} - \left(0, 1 \otimes  \theta^{[s_2,r-s_1]}_{Y_2}\right) \circ p_1 CW^{[s,r]}_1(a) =0.
\end{align*}
By the same argument, the third assertion follows from
considering the 1-dimensional moduli space
$M(a, W^*, \theta_{Y_1} \amalg b_2)$.

Since the proof of (3) is essentially the same as that of (2), we omit it.
\end{proof}

%differential on C_*

Next, we define a homomorphism 
$\partial_C \colon C^{[s,r]}_1 \to C^{[s,r]}_0$ by
\[
\partial_C=
\begin{bmatrix}
\left( \partial^{[s_1,r-s_2]}_{Y_1} \otimes 1,  1 \otimes \partial^{[s_2,r-s_1]}_{Y_2} \right)
& 0 & 0 \vspace{1mm}\\
\left( 0, 1 \otimes  \theta^{[s_2,r-s_1]}_{Y_2} \right) & \partial^{[s_1,r-s_2]}_{Y_1} 
& 0 \vspace{1mm}\\
\left(  \theta^{[s_1,r-s_2]}_{Y_1} \otimes 1, 0 \right) & 0 & \partial^{[s_2,r-s_1]}_{Y_2}\\
\end{bmatrix}.
\]

\begin{lem}For $s_1 \in \R_{Y_1}$, $s_2 \in \R_{Y_2}$ and $s \in \R_{Y_1\# Y_2}$, we have the equality
\label{conn diff}
\[
\partial_{C} \circ CW^{[s,r]}_1 = CW^{[s,r]}_0 \circ \partial^{[s,r]}_{Y_1\# Y_2}.
\]
\end{lem}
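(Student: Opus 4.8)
The plan is to reduce the claimed chain-map identity $\partial_C \circ CW^{[s,r]}_1 = CW^{[s,r]}_0 \circ \partial^{[s,r]}_{Y_1 \# Y_2}$ to the three component-wise identities already established in \Cref{conn lem}. Writing $CW^{[s,r]}_i$ in its three block components $p_0 CW^{[s,r]}_i$, $p_1 CW^{[s,r]}_i$, $p_2 CW^{[s,r]}_i$ (with target summands the tensor part, the $Y_1$-part, and the $Y_2$-part of $C^{[s,r]}_i$ respectively), the matrix $\partial_C$ is lower triangular, so composing it with $CW^{[s,r]}_1$ and reading off each of the three output slots produces exactly the three left-hand sides appearing in \Cref{conn lem}(1), (2), (3). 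Concretely, the $p_0$-slot of $\partial_C \circ CW^{[s,r]}_1$ is $\bigl(\partial^{[s_1,r-s_2]}_{Y_1} \otimes 1,\, 1 \otimes \partial^{[s_2,r-s_1]}_{Y_2}\bigr) \circ p_0 CW^{[s,r]}_1$, the $p_1$-slot is $\bigl(0,\, 1 \otimes \theta^{[s_2,r-s_1]}_{Y_2}\bigr) \circ p_0 CW^{[s,r]}_1 + \partial^{[s_1,r-s_2]}_{Y_1} \circ p_1 CW^{[s,r]}_1$, and the $p_2$-slot is $\bigl(\theta^{[s_1,r-s_2]}_{Y_1} \otimes 1,\, 0\bigr) \circ p_0 CW^{[s,r]}_1 + \partial^{[s_2,r-s_1]}_{Y_2} \circ p_2 CW^{[s,r]}_1$. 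These are precisely $p_j CW^{[s,r]}_0 \circ \partial^{[s,r]}_{Y_1 \# Y_2}$ for $j=0,1,2$ by the three identities of \Cref{conn lem}, so the lemma follows by comparing slots.

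The only substantive geometric input therefore lives inside \Cref{conn lem}, whose proof was given just above by the standard broken-trajectory / gluing analysis of the ends of the one-dimensional moduli spaces $M(a, W^*, b_1 \amalg b_2)$, $M(a, W^*, b_1 \amalg \theta_{Y_2})$, and $M(a, W^*, \theta_{Y_1} \amalg b_2)$, together with \Cref{useful} to guarantee that every intermediate critical point lands inside the relevant filtration window. What remains for the present lemma is purely bookkeeping: I would (i) recall the block decomposition of $CW^{[s,r]}_i$ and of $\partial_C$; (ii) carry out the matrix multiplication $\partial_C \circ CW^{[s,r]}_1$ slot by slot; (iii) invoke \Cref{conn lem}(1)--(3) to identify each slot with the corresponding slot of $CW^{[s,r]}_0 \circ \partial^{[s,r]}_{Y_1 \# Y_2}$; and (iv) conclude equality of the two composites. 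No new moduli-space count is needed.

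The main (and only) obstacle is ensuring the index and filtration-degree bookkeeping is consistent: one must check that $\partial_C$ has been defined so that its $(j,k)$-block really matches the term subtracted in \Cref{conn lem}(k), including the placement of zeros in the pairs $(0, 1\otimes\theta_{Y_2})$ and $(\theta_{Y_1}\otimes 1, 0)$, and that all the critical points produced in the boundary analysis indeed satisfy the Chern--Simons bounds ($\cs_{\pi_\#}([c]) > s$, $\cs_{\pi_1}([d]) < r - s_2$, $\cs_{\pi_2}([e]) < r - s_1$) so that they define classes in the filtered complexes appearing as sources/targets of the maps in $\partial_C$ — but this last point is exactly what the two Claims inside the proof of \Cref{conn lem} supplied via \Cref{useful}. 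Hence the proof is short:

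\begin{proof}
By definition, $CW^{[s,r]}_i = (p_0CW^{[s,r]}_i,\, p_1CW^{[s,r]}_i,\, p_2CW^{[s,r]}_i)$ with respect to the decomposition of $C^{[s,r]}_i$ into its three summands. Multiplying out $\partial_C \circ CW^{[s,r]}_1$ using the displayed matrix for $\partial_C$ and comparing the resulting three components with \Cref{conn lem}(1), (2) and (3) shows that each component of $\partial_C \circ CW^{[s,r]}_1$ agrees with the corresponding component of $CW^{[s,r]}_0 \circ \partial^{[s,r]}_{Y_1\# Y_2}$. Therefore $\partial_C \circ CW^{[s,r]}_1 = CW^{[s,r]}_0 \circ \partial^{[s,r]}_{Y_1\# Y_2}$.
\end{proof}
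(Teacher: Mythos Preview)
Your proposal is correct and follows essentially the same approach as the paper: both proofs expand $\partial_C \circ CW^{[s,r]}_1$ component-wise using the block form of $\partial_C$ and then invoke the three identities of \Cref{conn lem} to match each slot with the corresponding component of $CW^{[s,r]}_0 \circ \partial^{[s,r]}_{Y_1\# Y_2}$. No new ingredient is needed beyond this bookkeeping.
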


\begin{proof}
By Lemma~\ref{conn lem}, we have
\begin{align*}
\partial_{C} \circ CW^{[s,r]}_1
&=
\begin{bmatrix}
\left(\partial^{[s_1,r-s_2]}_{Y_1} \otimes 1, 1 \otimes \partial^{[s_2,r-s_1]}_{Y_2} \right)
\circ p_1CW^{[s,r]}_1
\vspace{1mm}\\
\left( 0, 1 \otimes  \theta^{[s_2,r-s_1]}_{Y_2} \right)\circ p_1CW^{[s,r]}_1 +\partial^{[s_1,r-s_2]}_{Y_1} \circ p_2CW^{[s,r]}_1
 \vspace{1mm}\\
\left( \theta^{[s_1,r-s_2]}_{Y_1} \otimes 1, 0 \right) \circ p_1CW^{[s,r]}_1
\partial^{[s_2,r-s_1]}_{Y_2} \circ p_3CW^{[s,r]}_1\\
\end{bmatrix} \\
&=
\begin{bmatrix}
p_1CW^{[s,r]}_0 \circ \partial^{[s,r]}_{Y_1 \# Y_2} \\
p_2CW^{[s,r]}_0 \circ \partial^{[s,r]}_{Y_1 \# Y_2} \\
p_3CW^{[s,r]}_0 \circ \partial^{[s,r]}_{Y_1 \# Y_2} 
\end{bmatrix}
= CW^{[s,r]}_0 \circ \partial^{[s,r]}_{Y_1 \# Y_2}.
\end{align*}
\end{proof}

%Relationship of theta^r s

\begin{lem}%conn theta
\label{conn theta}For $s_1 \in \R_{Y_1}$, $s_2 \in \R_{Y_2}$ and $s \in \R_{Y_1\# Y_2}$, there exists a cochain $f \in CI_{[s,r]}^{0}(Y_1 \# Y_2)$
such that
$$
\theta^{[s,r]}_{Y_1 \# Y_2} +  f \circ \partial^{[s,r]}_{Y_1 \# Y_2}
- 
\left( 0,  \theta^{[s_1,r-s_2]}_{Y_1},   \theta^{[s_2,r-s_1]}_{Y_2} \right) \circ CW^{[s,r]}_1 = 0.
$$
\end{lem}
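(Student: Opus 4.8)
The plan is to run the chain-level argument of \Cref{func}, but now for the connected-sum cobordism $W$ (the single $1$-handle cobordism with $\partial W=(Y_1\#Y_2)\amalg-(Y_1\amalg Y_2)$ fixed above), capping \emph{both} outgoing ends by the product connection and using the weighted norm $L^2_{q,\delta}$ there, exactly as in \Cref{defiofthetar} and in the definition of $\widetilde{CW}^{[s,r]}$. Note that $b_1(W)=0$ and $b_2(W)=0$, so $W$ is (vacuously) negative definite and \Cref{useful} applies with $Y^+=Y_1\#Y_2$, $Y^-=Y_1\amalg Y_2$, $m=2$. For a generator $[a']\in CI^{[s,r]}_0(Y_1\#Y_2)$ I define
\[
f([a']):=\#\bigl(M(a',W^*,\theta_{Y_1}\amalg\theta_{Y_2})\bigr),
\]
where the moduli space on the right is cut out using $L^2_{q,\delta}$ on each end $Y_i\times\R_{\geq 0}$; since $\ind(a')=0$ this is generically a compact oriented $0$-manifold (the index count is the one underlying $\widetilde{CW}^{[s,r]}_0$), so $f$ is a well-defined element of $CI_{[s,r]}^0(Y_1\#Y_2)$.

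Next I would analyze the compact $1$-manifold $M(a,W^*,\theta_{Y_1}\amalg\theta_{Y_2})$ for a generator $[a]\in CI^{[s,r]}_1(Y_1\#Y_2)$. By the standard dimension counting and gluing argument (interior bubbling is excluded for dimension reasons, as it would produce a moduli space of negative formal dimension), its oriented boundary is a disjoint union of four families. The first is $\bigcup_{\ind(a')=0}M^{Y_1\#Y_2}(a,a')_{\pi_\#}/\R\times M(a',W^*,\theta_{Y_1}\amalg\theta_{Y_2})$, contributing $f\circ\partial^{[s,r]}_{Y_1\#Y_2}(a)$. The second is $\bigcup_{\ind(d)=1}M(a,W^*,d\amalg\theta_{Y_2})\times M^{Y_1}(d,\theta_{Y_1})_{\pi_1}/\R$, contributing $\theta^{[s_1,r-s_2]}_{Y_1}(p_1CW^{[s,r]}_1(a))$. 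The third is $\bigcup_{\ind(e)=1}M(a,W^*,\theta_{Y_1}\amalg e)\times M^{Y_2}(e,\theta_{Y_2})_{\pi_2}/\R$, contributing $\theta^{[s_2,r-s_1]}_{Y_2}(p_2CW^{[s,r]}_1(a))$. The fourth is $M^{Y_1\#Y_2}(a,\theta_{Y_1\#Y_2})_{\pi_\#,\delta}/\R\times M(\theta_{Y_1\#Y_2},W^*,\theta_{Y_1}\amalg\theta_{Y_2})$, contributing $\theta^{[s,r]}_{Y_1\#Y_2}(a)\cdot\#M(\theta_{Y_1\#Y_2},W^*,\theta_{Y_1}\amalg\theta_{Y_2})$. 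Exactly as in the proof of \Cref{conn lem}, \Cref{useful} guarantees that the breaking points automatically satisfy $\cs_{\pi_\#}(a')>s$, $\cs_{\pi_1}(d)<r-s_2$ and $\cs_{\pi_2}(e)<r-s_1$, so $a'$, $d$, $e$ are genuine generators of $CI^{[s,r]}_0(Y_1\#Y_2)$, $CI^{[s_1,r-s_2]}_1(Y_1)$ and $CI^{[s_2,r-s_1]}_1(Y_2)$; in particular the $d$- and $e$-sums above are precisely the summands of $p_1CW^{[s,r]}_1$ and $p_2CW^{[s,r]}_1$ that come from $\widetilde{CW}^{[s,r]}_1$, while the tensor component $p_0CW^{[s,r]}_1$ never occurs, which is why it is annihilated by $(0,\theta^{[s_1,r-s_2]}_{Y_1},\theta^{[s_2,r-s_1]}_{Y_2})$.

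The main obstacle is the fourth term: one must show that $M(\theta_{Y_1\#Y_2},W^*,\theta_{Y_1}\amalg\theta_{Y_2})$ is a single point carrying the positive orientation. This is the precise analogue of the identity $\#M(\theta,W^*,\theta)=\#H_1(W;\z)$ used in \Cref{func}: the formal dimension of the moduli space of ASD connections on $W^*$ asymptotic to the product connection at all three ends is $0$, and since $H_1(W;\z)=0$ there is no reducible other than $\theta$, which one excludes along the lines of the discussion of \cite[(2.16)]{D18} and \cite{Do87} invoked in the proof of \Cref{func}; that the unique point $\theta$ is counted with sign $+1$ follows as in \cite{Do87}. Granting this, equating the signed count of the boundary of the $1$-manifold to zero yields, for every generator $a$,
\[
\theta^{[s,r]}_{Y_1\#Y_2}(a)+f\circ\partial^{[s,r]}_{Y_1\#Y_2}(a)-\bigl(0,\theta^{[s_1,r-s_2]}_{Y_1},\theta^{[s_2,r-s_1]}_{Y_2}\bigr)\circ CW^{[s,r]}_1(a)=0,
\]
which is the asserted equality of cochains. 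The only remaining point is the bookkeeping of the signs of the four boundary contributions in terms of the chosen orientations of the determinant line bundles $\mathbb{L}_a$; this is carried out exactly as in the proofs of \Cref{func} and \Cref{conn lem}.
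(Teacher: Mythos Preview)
Your proposal is correct and follows essentially the same approach as the paper: define $f$ by counting the $0$-dimensional moduli spaces $M(b,W^*,\theta_{Y_1}\amalg\theta_{Y_2})$, analyze the ends of the $1$-dimensional moduli space $M(a,W^*,\theta_{Y_1}\amalg\theta_{Y_2})$ into the same four pieces, and use that $M(\theta_{Y_1\#Y_2},W^*,\theta_{Y_1}\amalg\theta_{Y_2})$ is a single point. The paper's proof is terser---it states the single-point fact simply as a consequence of $Y_1,Y_2$ being homology spheres and omits the explicit filtration checks you carry out via \Cref{useful}---but the argument is the same.
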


\begin{proof}
For a generator $[a] \in CI^{[s,r]}_1(Y_1 \# Y_2)$, 
consider $M(a, W^*, \theta_{Y_1} \amalg \theta_{Y_2})$ as an oriented 1-manifold, and then its ends are the following:
\begin{itemize}
 \item $\displaystyle
 M^{Y_1 \# Y_2}(a,\theta_{Y_1 \# Y_2})_{\pi_\# } /\R\times M(\theta_{Y_1 \# Y_2}, W^*,\theta_{Y_1} \amalg \theta_{Y_2})$,
 \item $\displaystyle
 \bigcup_{[b] \in \tR^*(Y_1\# Y_2)_{\pi_\#},\, \ind (b)=0 } M^{Y_1 \# Y_2}(a,b)_{\pi_\# } /\R\times M(b,W^*,\theta_{Y_1} \amalg \theta_{Y_2})$,
 \item $\displaystyle
 -\bigcup_{ [c] \in \tR^*(Y_1)_{\pi_1},\, \ind (c)=1} M(a ,W^*,c \amalg \theta_{Y_2}) \times M^{Y_1}(c,\theta_{Y_1})_{\pi_1 } /\R$,
 \item $\displaystyle
 -\bigcup_{[d] \in \tR^*(Y_2)_{\pi_2},\, \ind (d)=1} M(a ,W^*,\theta_{Y_1} \amalg d)  \times M^{Y_2}(d,\theta_{Y_2})_{\pi_2 } /\R$.
\end{itemize}
Since $Y_1$ and $Y_2$ are homology spheres, we see that 
$M(\theta_{Y_1 \# Y_2},W^*,\theta_{Y_1} \amalg \theta_{Y_2})$ has just one point.
Thus, defining a homomorphism $f\colon CI^{[s,r]}_0(Y_1 \# Y_2) \to \q$
by
$$[b] \mapsto \#(M(b, W^*, \theta_{Y_1} \amalg \theta_{Y_2})),$$
we have
$$ 
\theta^{[s,r]}_{Y_1 \# Y_2} ([a]) + f \circ \partial^{[s,r]}_{Y_1 \# Y_2}([a])
-  \theta^{[s_1,r-s_2]}_{Y_1} \circ p_2 CW_1([a]) 
-   \theta^{[s_2,r-s_1]}_{Y_2} \circ p_3 CW_1([a])=0.
$$
This completes the proof. 
\end{proof}

\begin{thm}%conn coboundary
\label{conn coboundary}Let $s_1 \in \R_{Y_1}$, $s_2 \in \R_{Y_2}$ and $s \in \R_{Y_1\# Y_2}$. 
If $ \theta^{[s_1,r-s_2]}_{Y_1}$ and $ \theta^{[s_2,r-s_1]}_{Y_2}$
are coboundaries, then $\theta^{[s,r]}_Y(Y_1 \# Y_2)$ is also a coboundary.
\end{thm}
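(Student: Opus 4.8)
The plan is to reduce the statement to an algebraic fact about the twisted complex $(C^{[s,r]}_\ast, \partial_C)$ introduced just before \Cref{conn diff}, combined with the chain-map relation of \Cref{conn diff} and the explicit formula of \Cref{conn theta}. Throughout, abbreviate $\partial_\# := \partial^{[s,r]}_{Y_1\# Y_2}$, $\theta_\# := \theta^{[s,r]}_{Y_1\# Y_2}$, $\partial_1 := \partial^{[s_1,r-s_2]}_{Y_1}$, $\partial_2 := \partial^{[s_2,r-s_1]}_{Y_2}$, $\theta_1 := \theta^{[s_1,r-s_2]}_{Y_1}$ and $\theta_2 := \theta^{[s_2,r-s_1]}_{Y_2}$. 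First I would unwind the hypothesis and fix cochains $g_1 \in CI^0_{[s_1,r-s_2]}(Y_1)$ and $g_2 \in CI^0_{[s_2,r-s_1]}(Y_2)$ with $\theta_i = \delta^{[s,r]} g_i = g_i\circ\partial_i$ for $i=1,2$.

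The heart of the argument is to exhibit a primitive, with respect to $\partial_C$, of the homomorphism $(0,\theta_1,\theta_2)\colon C^{[s,r]}_1 \to \q$. I claim the correct choice is
\[
H := \bigl(-\,g_1\otimes g_2,\ g_1,\ g_2\bigr)\colon C^{[s,r]}_0 \longrightarrow \q,
\]
namely $-(g_1\otimes g_2)$ on $CI^{[s_1,r-s_2]}_0(Y_1)\otimes_\q CI^{[s_2,r-s_1]}_0(Y_2)$, $g_1$ on $CI^{[s_1,r-s_2]}_0(Y_1)$, and $g_2$ on $CI^{[s_2,r-s_1]}_0(Y_2)$. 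Expanding $H\circ\partial_C$ from the matrix of $\partial_C$: the diagonal blocks $\partial_1,\partial_2$ turn $g_1,g_2$ into $\theta_1,\theta_2$ on the last two summands; the off-diagonal blocks $(0,1\otimes\theta_2)$ and $(\theta_1\otimes 1,0)$ contribute $(g_1\otimes\theta_2)$ and $(\theta_1\otimes g_2)$ on the tensor summand; and the corner term $-(g_1\otimes g_2)$ composed with the top block $(\partial_1\otimes 1,\,1\otimes\partial_2)$ contributes $-(\theta_1\otimes g_2)-(g_1\otimes\theta_2)$, after rewriting $\theta_i = g_i\circ\partial_i$ and using $(g_i\circ\partial_i)\otimes g_j = (g_i\otimes g_j)\circ(\partial_i\otimes\id)$. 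The contributions on the tensor summand cancel in pairs, leaving $H\circ\partial_C = (0,\theta_1,\theta_2)$.

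To conclude, \Cref{conn theta} provides $f\in CI^0_{[s,r]}(Y_1\#Y_2)$ with $\theta_\# = -\,f\circ\partial_\# + (0,\theta_1,\theta_2)\circ CW^{[s,r]}_1$. Substituting the identity just proved and then the chain-map relation $\partial_C\circ CW^{[s,r]}_1 = CW^{[s,r]}_0\circ\partial_\#$ of \Cref{conn diff}, I obtain
\[
\theta_\# = -\,f\circ\partial_\# + H\circ\partial_C\circ CW^{[s,r]}_1 = \bigl(H\circ CW^{[s,r]}_0 - f\bigr)\circ\partial_\# = \delta^{[s,r]}\bigl(H\circ CW^{[s,r]}_0 - f\bigr),
\]
so $\theta^{[s,r]}_{Y_1\#Y_2}$ is a coboundary. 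The only genuinely delicate point is the bookkeeping in the middle step: tracking which tensor factor each $\partial$ or $\theta$ acts on, and verifying that the corner cochain $-(g_1\otimes g_2)$ exactly absorbs the two $\theta$-twisted off-diagonal terms of $\partial_C$. Everything else is formal, the filtration and energy estimates that make the cobordism maps well behaved having already been extracted in \Cref{conn lem}, \Cref{conn diff} and \Cref{conn theta} from \Cref{useful}.
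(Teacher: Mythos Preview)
Your proof is correct and follows essentially the same approach as the paper: you define the same primitive $H=(-g_1\otimes g_2,\,g_1,\,g_2)$ (which the paper calls $f'$ with $f_i$ in place of $g_i$), verify $H\circ\partial_C=(0,\theta_1,\theta_2)$ by the identical cancellation, and then combine \Cref{conn theta} and \Cref{conn diff} exactly as the paper does to conclude.
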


\begin{proof}
Suppose that a $0$-cochain $f_i \in CI_{[s_i,r-s+s_i]}^0(Y_i)$ satisfies 
$f_i \circ \partial^{[s_i, r-s+s_i]}_{Y_i}=  \theta^{[s_i,r-s+s_i]}_{Y_i}$ for each $i \in \{1,2\}$.
Then, we have a homomorphism 
\[
f' : = \left( -f_1 \otimes f_2, 
f_1, 
f_2 \right) \colon C_0^{[s,r]} \to \q
\]
and the equalities
\begin{align*}
f' \circ \partial_C
&= \left(- f_1 \otimes f_2, 
 f_1, 
 f_2 \right) \circ \partial_C \\
&= 
\left( \ast, 
 f_1 \circ \partial^{[s_1,r-s_2]}_{Y_1}, 
 f_2 \circ \partial^{[s_2,r-s_1]}_{Y_2} \right) \\
&=
\left( \ast, 
  \theta^{[s_1,r-s_2]}_{Y_1}, 
  \theta^{[s_2,r-s_1]}_{Y_2} \right),
\end{align*}
where
\[
\ast = 
-\left( 
 (f_1 \circ \partial^{[s_1,r-s_2]}_{Y_1}) \otimes f_2,
 f_1 \otimes (f_2 \circ \partial^{[s_2,r-s_1]}_{Y_2})
\right)
+\left(0, f_1 \otimes  \theta^{[s_2,r-s_1]}_{Y_2} \right)
+ \left( \theta^{[s_1,r-s_2]}_{Y_1} \otimes f_2, 0 \right)
=0.
\]
Therefore, combining it with Lemmas~\ref{conn diff} and \ref{conn theta}, we have
\begin{align*}
\theta^{[s,r]}_Y(Y_1 \# Y_2) 
&=
-  f \circ \partial^{[s,r]}_{Y_1 \# Y_2}
+ \left(0,  \theta^{[s_1,r-s_2]}_{Y_1},   \theta^{[s_2,r-s_1]}_{Y_2} \right)
\circ CW^{[s,r]}_1 \\
&=
-  f \circ \partial^{[s,r]}_{Y_1 \# Y_2}
+ f' \circ \partial_{C}
\circ CW^{[s,r]}_1 \\
&=
(- f + f' \circ CW^{[s,r]}_0) \circ \partial^{[s,r]}_{Y_1 \# Y_2}.
\end{align*}

\end{proof}

\def\proofname{Proof of \Cref{conn}}

\begin{proof}
First, we suppose that $s_1 \in \R_{Y_1}$, $s_2 \in \R_{Y_2}$ and $s \in \R_{Y_1\# Y_2}$.
Without loss of generality, we may require that $0< r_{s_1}(Y_1)+s_2 \leq r_{s_2}(Y_2)+s_1$.
Assume $r_s(Y_1 \# Y_2)< r_{s_1}(Y_1)+s_2$.
Then, there exists $r \in \R^{>0}_{Y_1 \# Y_2}$
such that $r_s(Y_1 \# Y_2)< r< r_{s_1}(Y_1)+s_2$, $r-s_2 \in \R_{Y_1}$ and $r-s_1\in \R_{Y_2}$.
Lemma~\ref{r_0 and theta^r} implies that 
\[
[ \theta^{[s_1,r-s_2]}_{Y_1}]=0 \text{ and } [ \theta^{[s_2,r-s_1]}_{Y_2}]=0.
\]
Therefore, by Theorem~\ref{conn coboundary},
we have $[\theta^{[s,r]}_{Y_1 \# Y_2}]=0$.
This contradicts to the assertion $r_s(Y_1 \# Y_2)< r$,
and hence we have $r_s(Y_1 \# Y_2) \geq r_{s_1}(Y_1)+s_2$.

Here, we give the case of $s_1 \in \Lambda_{Y_1}$, $s_2 \in \Lambda_{Y_2}$ or $s \in \Lambda_{Y_1\# Y_2}$.
Let $\{s^n_i\}_{n \in \z_{>0}}$ be sequences for $i=1$ and $2$ such that $s^n_1 \to s_1$, $s^n_2 \to s_2$, $s^n_1 \leq s_1$, $s^n_2 \leq s_2$, $s_1^n +s_2^n$ is a regular value of $Y_1\# Y_2$, and $s_i^n$ is a regular value of $Y_i$ for $i=1$ and $2$. Since the choices of $\{s^n_i\}_{n \in \z_{>0}}$ for $i=1$ and $2$, we have 
\[
r_{s_1^n +s_2^n} (Y_1 \#Y_2) \geq \min \{ r_{s_1^n} (Y_1) +s_2^n, r_{s_2^n}(Y_2) +s_1^n\} .
\]
For a sufficiently large $n$, we have $r_{s_1^n +s_2^n} (Y_1 \#Y_2)= r_s(Y_1 \#Y_2)$, $r_{s_1^n} (Y_1)= r_{s_1}(Y_1)$ and $r_{s_2^n} (Y_2)= r_{s_2}(Y_2)$.
This completes the proof. 
\end{proof}

\def\proofname{Proof}

\begin{rem}
As described in \cite[Section 9.4]{Sc15}, we have Fukaya's translation
of $CI_*(Y_1 \# Y_2)$ into a certain combination of $CI_*(Y_1)$ and $CI_*(Y_2)$.
From this viewpoint, we can extend $C^{[s,r]}_0$ and $C^{[s,r]}_1$ to a chain complex
$C_*$ such that we have a ``projection"
$$
CI_*(Y_1 \# Y_2) \twoheadrightarrow C_*.
$$
We guess that an alternative proof of Theorem~\ref{conn}
is obtained from a filtered version $C^{[s,r]}_*$ of $C_*$,
and such a proof would be more natural.
However, establishing $C^{[s,r]}_*$
requires too many extra arguments to prove the theorem, and so we extract
a small part of $C^{[s,r]}_*$.
\end{rem}

%%%%%%%%%%%%%%%%%%%%%%%%%%%%
\section{Comparison with Daemi's invariants} \label{Comparison with Daemi's invariants}
In this section, we compare our invariants $r_s(Y)$ with Daemi's invariants $\Gamma_Y(k)$.

In \cite{D18}, Daemi constructed a family of real-valued homology cobordism invariants 
$\{\Gamma_Y(k)\}_{k \in \z}$, which has the following properties:
\begin{itemize} 
\item Let $Y_1$ and $Y_2$ be homology $3$-spheres,
and $W$ a negative definite cobordism with $\partial W= Y_1 \amalg -Y_2$
and $b_1(W)=0$. 
Then there exists a constant $\eta(W)\geq 0$ such that  
\[
 \Gamma_{Y_1 } (k) \leq 
\begin{cases}
 \Gamma_{Y_2} (k) - \eta(W) & \text{if $k >0$,} \\
 \max \{\Gamma_{Y_2} (k) - \eta(W), 0 \} & \text{if $k\leq 0$.}
\end{cases}
\]
Moreover, the constant $\eta(W)$ is positive unless 
there exists an $\SU(2)$-representation of $\pi_1(W)$ whose restrictions to
both $\pi_1(Y_1)$ and $\pi_1(Y_2)$ are non-trivial.

\item The inequalities
\[
\dots  \leq  \Gamma_Y(-1) \leq  \Gamma_Y(0) \leq \Gamma_Y(1) \leq \cdots
\]
hold. 
\item $\Gamma_{Y} (k)$ is finite 
if and only if $2h(Y) \geq k$, where $h(Y)$ is the Fr{\o}yshov invariant of $Y$.
\end{itemize} 

In this section, we prove that our invariant $r_{- \infty}(Y)$ coincides with $\Gamma_{-Y}(1)$.
\setcounter{section}{1}
\setcounter{thm}{3}
\begin{thm}
For any $Y$, the equality
  \[ 
  r_{-\infty}  (Y) =   \Gamma_{-Y}(1)
  \]
  holds. 
\end{thm}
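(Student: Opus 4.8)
The plan is to match the two invariants by identifying the quantities they measure on the instanton side. Recall that Daemi's $\Gamma_{-Y}(1)$ is built from the (eta-perturbed) instanton chain complex of $-Y$ together with a distinguished class tied to the reducible $\theta$, and $\Gamma_{-Y}(1)$ is the critical value threshold past which a certain cocycle (the ``$D_1$-type'' obstruction of $-Y$, in Daemi's $\mathcal I$-graded setup with the grading-$1$ piece) becomes a coboundary. On our side, $r_{-\infty}(Y)=\sup\{r\mid [\theta^{[-\infty,r]}_Y]=0\in I^1_{[-\infty,r]}(Y)\}$, where $\theta^{[-\infty,r]}_Y$ counts trajectories from irreducible critical points of index $1$ to $\theta$ on $Y\times\R$, and the filtration is by the (perturbed) Chern--Simons value, with no lower cutoff. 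The key observation is the orientation-reversal dictionary: flow lines on $Y\times\R$ from $a$ (index $1$) to $\theta$, weighted by Chern--Simons value $\cs_Y(a)\in(0,r)$, correspond under $t\mapsto -t$ to flow lines on $(-Y)\times\R$ from $\theta$ to $a$, with the Floer index and the Chern--Simons functional negated; Daemi's convention for $-Y$ is precisely set up so that these become the relevant objects at the same numerical level $r$. So the first step is to write down this correspondence carefully, tracking (i) the index shift under reversal, (ii) the sign of $\cs$, and (iii) the behavior of the weighted $L^2_{q,\delta}$-norms used near $\theta$, which are symmetric under reversal.

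Second, I would reconcile the two perturbation frameworks. Daemi uses holonomy (eta-type) perturbations as in \cite{Fl88, BD95}; our $\mathcal P(Y,r,s,g)$ is a class of such perturbations with explicit $C^l$-smallness controlling the displacement of critical values, and \Cref{keylemma} guarantees no perturbed critical value crosses a given $r\in\R_Y$. The point is that both constructions produce, for $r$ outside the critical set and for sufficiently small perturbation, the same filtered subcomplex (spanned by irreducible critical points with perturbed $\cs<r$) together with the same distinguished cochain counting flows to $\theta$; this is a direct consequence of \Cref{filtration}(3) (chain homotopy invariance under shrinking perturbations when $[r',r]\cap\Lambda^*_Y=\emptyset$) plus the analogous invariance statement implicit in \cite{D18}. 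Concretely, one shows: for $r\in\R_Y$, the pair $\bigl(CI^{[-\infty,r]}_*(Y),\theta^{[-\infty,r]}_Y\bigr)$ is chain-isomorphic, after reversing orientation, to the length-$r$ truncation of Daemi's complex for $-Y$ together with his grading-$1$ obstruction cocycle.

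Third, once the identification of cochain-level data is in place, the equality of the two real numbers is formal: $[\theta^{[-\infty,r]}_Y]=0$ in $I^1_{[-\infty,r]}(Y)$ if and only if Daemi's obstruction is a coboundary in the corresponding truncated complex for $-Y$, i.e.\ if and only if $r>\Gamma_{-Y}(1)$ (or $\ge$, handled by the same $\sup$/limiting argument as in \Cref{value} via \Cref{inclusion}). Taking suprema gives $r_{-\infty}(Y)=\Gamma_{-Y}(1)$. The case $r_{-\infty}(Y)=\infty \iff \Gamma_{-Y}(1)=\infty$ then also falls out, consistently with $2h(-Y)\ge 1 \iff h(Y)\le 0$ being \emph{not} quite the right statement — in fact the finiteness criteria must be compared directly, and this is exactly what yields \Cref{Froyshov}.

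The main obstacle I expect is the bookkeeping in the first two steps: pinning down that Daemi's grading normalization, his choice of reference connection and spectral flow convention for $-Y$, and his eta-perturbation class all line up with ours so that the ``same $r$'' really means the same filtration level on both sides — in particular that the orientation-reversal map does not introduce a hidden shift in the Chern--Simons value (e.g.\ by an integer coming from the reducible) or a sign on the mod-$2$ count that would spoil the coboundary comparison over $\q$. Verifying the determinant-line/orientation compatibility under $t\mapsto -t$, and that $M^Y(a,\theta)_{\pi,\delta}$ with the weighted norm corresponds to Daemi's moduli space on $(-Y)\times\R_{\le 0}$ glued to a ball, is the technical heart; everything after that is essentially a translation of definitions.
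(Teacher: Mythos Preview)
Your proposal has a genuine gap in step~3, and the orientation-reversal machinery in step~1 is a detour the paper avoids entirely.

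The core issue is your characterization of $\Gamma_{-Y}(1)$. You describe it as ``the critical value threshold past which a certain cocycle becomes a coboundary'' in a truncated complex, and then assert in step~3 that the equality with $r_{-\infty}(Y)$ is ``formal'' once cochain data are identified. But that threshold description is essentially the definition of $r_{-\infty}(Y)$, not of $\Gamma_{-Y}(1)$. Daemi's invariant is defined over the Novikov ring $\Lambda=\{\sum q_i\lambda^{r_i}\}$: one takes the chain complex $C^\Lambda_*(Y)$ with differential weighted by $\lambda^{\mathcal E(A)}$, the map $D_1\colon C^\Lambda_1(Y)\to\Lambda$, and sets
\[
\Gamma_{-Y}(1)=\lim_{\|\pi\|\to 0}\ \inf_{\substack{\alpha\in C^\Lambda_1(Y),\ d^\Lambda\alpha=0\\ D_1(\alpha)\neq 0}}\bigl\{\mdeg(D_1(\alpha))-\mdeg(\alpha)\bigr\}.
\]
There is no truncation here. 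The entire content of the theorem is the translation between this $\mdeg$-infimum over $\Lambda$-cycles and the $\sup$ over $r$ with $[\theta^{[-\infty,r]}_Y]=0$. The paper carries this out via three explicit lemmas: a cycle $\alpha=\sum [a_k]\otimes x_k$ in $CI^{[-\infty,r]}_1(Y)\otimes\q$ corresponds to the $\Lambda$-cycle $\widetilde\alpha=\sum [a_k]\otimes x_k\lambda^{-\cs(a_k)}$; under this correspondence $\theta^{[-\infty,r]}_Y(\alpha)\neq 0$ iff $\theta^\Lambda(\widetilde\alpha)\neq 0$, with $\mdeg(\theta^\Lambda(\widetilde\alpha))-\mdeg(\widetilde\alpha)=\max_k\cs_\pi(a_k)$; and conversely, from an arbitrary $\Lambda$-cycle $\hat\alpha$ one extracts the coefficients of a single $\lambda$-power to produce a $\q$-cycle with no worse $\mdeg$-gap. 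These lemmas are elementary once stated but are not ``formal''---in particular the extraction step (going from a $\Lambda$-cycle, which may involve infinitely many powers of $\lambda$, back to a finite $\q$-cycle in the filtered complex) is where the two inequalities actually meet. Your step~3 assumes this is already done.

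Separately, the orientation-reversal dictionary in your step~1 is unnecessary. The paper observes (see the conventions table) that Daemi's sign of $\cs$ differs from theirs, so that $\Gamma_{-Y}(1)$, written in the paper's conventions, is already an expression in the chain complex of $Y$---no geometric passage to $(-Y)\times\R$, no tracking of index shifts or weighted-norm symmetry under $t\mapsto -t$, is needed. All the concerns you list as ``the technical heart'' (determinant-line compatibility, hidden integer shifts from the reducible, etc.) simply do not arise in the paper's approach.
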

\setcounter{section}{4}
\setcounter{thm}{0}
Since $r_s(Y) \leq r_{-\infty}(Y)$ for any $s \in \R_{<0}$, 
several facts and calculations for $r_s$ immediately follow from 
the study of $\Gamma_Y(1)$ in \cite{D18}. We also discuss them in this section.

%%%%%%%%%%%%%%%%
\subsection{Review of Daemi's $\Gamma_Y(1)$}
Here, we need to compare our notations with %Daemi
\cite{D18}. The instanton Floer chain complex depends on the choice of several conventions. For example, our convention of the sign of Chern-Simons functional is different from Daemi's one (see Table~\ref{convention}).
In this section, we consider a fixed homology 3-sphere $Y$. 

\begin{table}[h]
\centering
\begin{tabular}{c|c|c|c}
\hline
 & sign of $\cs$ %& Fr\o yshov invariant  
& Cylinder & Gradient   \\ \hline
\rule{0mm}{4mm}
\cite{D18} & $-$  %& $h(\Sigma(2,3,5)) =1$ 
&  $\R \times Y$ & downward \\ \hline
\rule{0mm}{4mm}
 ours  & $+$  %& $h(\Sigma(2,3,5)) =-1$  
&   $Y \times \R$  & downward \\ \hline 
\end{tabular}
\vspace{0.5em}
\caption{Conventions.}
\label{convention}
\end{table}
%\vspace{-1em}

First we introduce the coefficient ring $\Lambda$ given by 
\[
\Lambda := \Set { \sum_{i=1}^{\infty} q_i \lambda^{r_i} | q_i \in \mathbb{Q},\ r_i \in \R,\ \lim_{i\to \infty } r_i =\infty } ,
\]
where $\lambda$ is a formal variable.
We have an evaluating function $\mdeg\colon  \Lambda  \to \R$ defined by 
\[
 \mdeg( \sum_{i=1}^{\infty} q_i \lambda^{r_i} ) := \min_{i \in \z_{>0}} \{r_i \mid q_i \neq 0\}.
 \] 
 Fix a non-degenerate regular perturbation $\pi$ and orientations of $\lambda_{a, X}$.
Then a $\z/8$-graded chain complex $C^{\Lambda}_*(Y)$ over $\Lambda$ is defined by
  \[
C^{\Lambda}_i(Y):= C_i(Y) \otimes_{\mathbb{Z}} \Lambda = \Lambda \{ [a] \in R^* (Y)_\pi \mid \ind (a)=i \},
  \]
with differential
\[
d^{\Lambda} ([a]) 
:= \sum_{\ind (a) -\ind(b) \equiv 1 \bmod 8}  \# (M^Y([a],[b])_\pi/\R) 
\cdot \lambda^{\mathcal{E}(A)} [b],
\]
where  $[A] \in M^Y([a],[b])_\pi$, $\mathcal{E}(A) :=\frac{1}{8\pi^2} \int_{Y \times \R} \Tr ( (F(A) + \pi(A)) \wedge (F(A)+\pi(A)) )$ and $M^Y([a],[b])_\pi$ denotes $M^Y(a, b)_\pi$ for some representatives $a$ and $b$ respectively of $[a]$ and $[b]$ satisfying $\ind (a) -\ind (b)= 1$.
{Note that we use $\wt{R}^\ast(Y)_\pi$ as a generating set of the Floer chain complex.
On the other hand, in Daemi's formulation, the chain group is generated by ${R}^\ast(Y)_\pi$.}
In our notation, $a$, $b$ denote elements of $\wt{R}^\ast(Y)_\pi$ and let $[a]$ and $[b]$ denote the images of $a$ and $b$ in ${R}^\ast(Y)_\pi$.

\begin{rem}In Daemi's formulation, $C_*(Y)$ and $C^{\Lambda}_i(Y)$ are regarded as the $\z/8\z$-graded chain complexes. 
\end{rem}
We extend the function $\mdeg$ to a function on $C^{\Lambda}_*$ by
$$
\mdeg(\sum_{1 \leq k \leq n} \eta_k [a_k]) = \min_{1 \leq k \leq n} \{ \mdeg(\eta_k) \}.
$$
In addition, we define the map $D_1 \colon C^{\Lambda}_1(Y) \to \Lambda$ by
$$
D_1([a]) = (\# M^Y([a],[\theta])_\pi/\R) \cdot \lambda^{\mathcal{E}(A)},
$$
where $M^Y([a],[\theta])_\pi$ denotes $M^Y(a, \theta^i)_\pi$ for some lifts $a$ and $\theta^i$ respectively of $[a]$ and $[\theta]$ satisfying $\ind (a)  -\ind (\theta^i)= 1$, and $A \in M^Y([a],[\theta])_\pi$.
Now, in our conventions, 
$\Gamma_{-Y}(1)$ is described as
$$
\Gamma_{-Y}(1) = \lim_{\|\pi \| \to 0}  \left(
\inf_{\substack{\alpha \in C_1^{\Lambda}(Y),\, d^{\Lambda}(\alpha)=0 \\ D_1 (\alpha) \neq 0 }}
\left\{
\mdeg(D_1 (\alpha))- \mdeg (\alpha)   
\right\}
\right).
$$

%%%%
\subsection{Translating $\Gamma_Y(1)$ into $\z$-grading}
Following the construction of $C^{\Lambda}_*$, we can define 
a $\z$-graded chain complex
 \[
  CI^\Lambda_i(Y):= CI_i(Y) \otimes_{\mathbb{Q}} \Lambda = \Lambda \Set{ a \in \widetilde{R}^* (Y)_\pi  | \ind (a)=i }
  \]
with differential
\begin{align*}
\partial^\Lambda (a) 
&:= \sum_{\ind (a) -\ind(b)=1, A \in M^Y(a,b)_\pi}  \# (M^Y(a,b)_\pi/\R) 
\cdot \lambda^{\mathcal{E}(A)} b \\
&= \sum_{\ind (a) -\ind(b)=1}  \# (M^Y(a,b)_\pi/\R) \cdot \lambda^{ \cs_{\pi}(a) - \cs_{\pi}(b)  } b .
\end{align*}
We can also define the map $\theta^\Lambda \colon CI^\Lambda_1(Y) \to \Lambda$ by
$$
\theta^\Lambda(a) 
= (\# M^Y(a,\theta)_\pi/\R) \cdot \lambda^{\mathcal{E}(A)}
= \theta^{[-\infty, \infty]}_Y(a) \lambda^{\cs_{\pi}(a)}.
$$
We define a $\z$-graded version of $\Gamma_{-Y}(1)$ by
$$
\widetilde{\Gamma}_{-Y}(1) = \lim_{\|\pi\| \to 0}  \left(
\inf_{\substack{\alpha \in CI_1^{\Lambda}(Y), \partial^{\Lambda}(\alpha)=0 \\ \theta^\Lambda (\alpha) \neq 0 }}
\left\{
\mdeg(\theta^\Lambda (\alpha))- \mdeg (\alpha)   
\right\}
\right).
$$

\begin{lem}
$
\widetilde{\Gamma}_{-Y}(1)= \Gamma_{-Y}(1).
$
\end{lem}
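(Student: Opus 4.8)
The plan is to show that the $\z$-graded complex $CI^\Lambda_\ast(Y)$ carries exactly the same ``$\mdeg$ vs.\ $\mdeg$'' data as the $\z/8\z$-graded complex $C^\Lambda_\ast(Y)$, so that the two infima defining $\widetilde\Gamma_{-Y}(1)$ and $\Gamma_{-Y}(1)$ coincide before taking $\lim_{\|\pi\|\to 0}$. First I would recall the deck-transformation action: $\map(Y,\SU(2))/\map_0(Y,\SU(2))\cong\z$ acts on $\wt R^\ast(Y)_\pi$ by $a\mapsto a\cdot g^n$, lowering $\ind$ by $8n$ and $\cs_\pi$ by $n$ (since $\cs_Y(a\cdot g)-\cs_Y(a)=\deg g$), and the quotient is $R^\ast(Y)_\pi$. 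Over $\Lambda$ this $\z$-action is realized by multiplication by $\lambda$: sending $[a]\in CI^\Lambda_i(Y)$ to $\lambda\cdot[a\cdot g]\in CI^\Lambda_{i-8}(Y)$ gives a chain isomorphism that intertwines $\partial^\Lambda$ and shifts $\mdeg$ by exactly $1$ on both sides, because $\mathcal E(A)=\cs_\pi(a)-\cs_\pi(b)$ is translation-covariant and the energies labelling the edges of $M^Y([a],[b])_\pi$ in Daemi's complex are precisely the $\cs_\pi$-differences of representative lifts. Concretely, $C^\Lambda_i(Y)=CI^\Lambda_j(Y)/(\lambda\cdot(-)\cdot g - (-))$ for $j\equiv i\bmod 8$, i.e.\ $C^\Lambda_\ast(Y)$ is the coinvariants (equivalently, after inverting $\lambda$, the invariants) of this $\z[\lambda,\lambda^{-1}]$-action, and this identification carries $\partial^\Lambda\mapsto d^\Lambda$, $\theta^\Lambda\mapsto D_1$.

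Next I would match the two infima term by term. Given a cycle $\alpha\in CI^\Lambda_1(Y)$ with $\partial^\Lambda\alpha=0$ and $\theta^\Lambda(\alpha)\neq0$, its image $\bar\alpha\in C^\Lambda_1(Y)$ is a cycle for $d^\Lambda$ with $D_1(\bar\alpha)\neq0$, and under the identification above one has $\mdeg(D_1(\bar\alpha))-\mdeg(\bar\alpha)=\mdeg(\theta^\Lambda(\alpha))-\mdeg(\alpha)$; indeed both $\mdeg$'s are invariant under the simultaneous substitution $[a]\rightsquigarrow\lambda[a\cdot g]$ since $\mdeg(\lambda\cdot\eta[a\cdot g])=\mdeg(\eta)+1$ while $\mathcal E$ drops by $1$ when the target is correspondingly shifted, so the difference is unchanged. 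Conversely, any $d^\Lambda$-cycle $\beta\in C^\Lambda_1(Y)$ with $D_1(\beta)\neq0$ lifts (non-uniquely, up to $\lambda$-powers) to a $\partial^\Lambda$-cycle $\alpha\in CI^\Lambda_1(Y)$ with $\theta^\Lambda(\alpha)\neq0$ and the same value of $\mdeg(D_1)-\mdeg$, because choosing a lift amounts to choosing a representative in each $\z$-orbit, which only rescales both $\mdeg$ terms by the same $\lambda$-power. Hence the two infima over $\pi$ agree, and taking $\|\pi\|\to 0$ on both sides yields $\widetilde\Gamma_{-Y}(1)=\Gamma_{-Y}(1)$.

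A couple of bookkeeping points need care. One must check that the $\z/8\z$-grading used by Daemi and our $\z$-grading are linked exactly as in the Remark of Section~\ref{Review of filtered instanton Floer homology} (namely $H_\ast(CI_j(Y))\cong H_\ast(C_i(Y))$ for $j\equiv i\bmod 8$), and that this isomorphism is compatible with the $\Lambda$-coefficients and with $\lambda$-multiplication; this is where the convention table (sign of $\cs$, cylinder $Y\times\R$ vs.\ $\R\times Y$, downward gradient) must be used to verify that ``energy'' on Daemi's side equals $\cs_\pi(a)-\cs_\pi(b)$ on ours with no sign discrepancy, so that $\lambda$-exponents literally match. I would also note that $D_1$ and $\theta^\Lambda$ are defined with the same weighted-norm moduli space $M^Y(a,\theta)_{\pi,\delta}$ and the same orientation conventions, so $\theta^\Lambda([a])=\theta^{[-\infty,\infty]}_Y([a])\,\lambda^{\cs_\pi(a)}$ really is the pullback of $D_1$.

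The main obstacle I expect is the second, ``routine-looking'' point: pinning down that the $\lambda$-exponents on the two sides agree on the nose rather than up to a global or orbit-dependent shift. The subtlety is that $M^Y([a],[b])_\pi$ in Daemi's formulation records a well-defined energy $\mathcal E(A)$ for the \emph{chosen representatives} with $\ind(a)-\ind(b)=1$, whereas in the $\z$-graded complex the energy is genuinely $\cs_\pi(a)-\cs_\pi(b)$ for honest lifts; reconciling these requires showing that the map $\alpha\mapsto\bar\alpha$ can be normalized so that representative choices are coherent across all generators appearing in $\alpha$, and that the quantity $\mdeg(\theta^\Lambda(\alpha))-\mdeg(\alpha)$ is independent of this normalization. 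Once that invariance is established the rest is formal.
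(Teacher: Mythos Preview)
Your approach is correct and is essentially the same idea as the paper's (very terse) proof: pass along the quotient map $\widetilde R^*(Y)_\pi \to R^*(Y)_\pi$ and check that the relevant $\mdeg$'s are preserved. The paper simply asserts that the induced map $\psi_i\colon CI^\Lambda_i(Y)\to C^\Lambda_i(Y)$ is a chain homotopy equivalence preserving $\mdeg$ and $\mdeg\circ\theta^\Lambda$, and stops there.

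Where you make things harder than necessary is in the coinvariants picture and the ``obstacle'' you flag. For a \emph{fixed} integer $i$ (in particular $i=1$), the map $\psi_i$ is not merely a quotient: each class $[a]\in R^*(Y)_\pi$ with $\ind([a])\equiv i\bmod 8$ has a \emph{unique} lift $a\in\widetilde R^*(Y)_\pi$ with $\ind(a)=i$, so $\psi_i$ is a bijection on generators and hence a $\Lambda$-module isomorphism. With this canonical lift there is no orbit-dependent ambiguity: for $a$ with $\ind(a)=1$ the moduli space defining $D_1([a])$ uses exactly the pair $(a,\theta)$ with $\ind(a)-\ind(\theta)=1$, and then $\mathcal E(A)=\cs_\pi(a)-\cs_\pi(\theta)=\cs_\pi(a)$, so $D_1(\psi_1(a))=\theta^\Lambda(a)$ on the nose, not merely up to a $\lambda$-shift. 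Likewise $\psi_0\circ\partial^\Lambda=d^\Lambda\circ\psi_1$ because both differentials count the same $M^Y(a,b)_\pi$ with $\ind(a)-\ind(b)=1$ and weight by the same $\lambda^{\cs_\pi(a)-\cs_\pi(b)}$. Thus cycles match bijectively, $\mdeg(\alpha)=\mdeg(\psi_1(\alpha))$ and $\mdeg(\theta^\Lambda(\alpha))=\mdeg(D_1(\psi_1(\alpha)))$ exactly, and the two infima agree before the limit. Your discussion of normalizing representatives coherently across generators is therefore unnecessary once you work at a single integer grading.
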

\begin{proof}
The maps $\psi_i \colon CI_i^\Lambda(Y) \to C_{i}^\Lambda(Y)$
$( 0 \leq i \leq 7)$ induced by $\widetilde{R}(Y)_\pi \to R(Y)_\pi$ are 
$\Lambda$-linear isomorphisms such that
\begin{itemize}
\item $d^{\Lambda} \circ \psi_i = \psi_{i-1} \circ \partial^{\Lambda}$ for each $1 \leq i \leq 7$,
\item $D_1 \circ \psi_1 = \theta^{\Lambda}$, and
\item $\mdeg$ is preserved by the $\psi_i$.
\end{itemize}
These imply that the infimum in the definition of $\widetilde{\Gamma}_{-Y}(1)$ coincides
with that of $\Gamma_{-Y}(1)$ for each $\pi$, and hence 
$\widetilde{\Gamma}_{-Y}(1)= \Gamma_{-Y}(1).$
\end{proof}

%%%%%
\subsection{Proof of \Cref{compare}}
In this subsection, we fix orientations of the line bundles $\lambda_{a, X}$ for non-degenerate regular perturbations. 

\begin{lem}
\label{lambda cycle}
Let $\alpha =\sum_{1 \leq k \leq n} a_k \otimes x_k$
be a chain of $CI^{[-\infty,r]}_1(Y) \otimes \q$. 
Then 
$\alpha$ is a cycle if and only if
$\widetilde{\alpha}$
is a cycle of $CI^\Lambda_1(Y)$, where $\widetilde{\alpha}:=\sum_{1 \leq k \leq n} a_k \otimes x_k \lambda^{-\cs(a_k)}$. 
\end{lem}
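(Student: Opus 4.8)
The plan is to relate the differential $\partial^{[-\infty,r]}$ of the filtered complex to the $\Lambda$-linear differential $\partial^\Lambda$ via the substitution $[a_k]\mapsto [a_k]\otimes\lambda^{-\cs(a_k)}$. The key observation is that the matrix coefficient of $\partial^{[-\infty,r]}$ from $[a_k]$ to a generator $[b]$ with $\ind(b)=0$ counts $\#(M^Y(a_k,b)_\pi/\R)$, which is exactly the \emph{integer} appearing in the coefficient $\#(M^Y(a_k,b)_\pi/\R)\cdot\lambda^{\cs_\pi(a_k)-\cs_\pi(b)}$ of $\partial^\Lambda$. So after multiplying the generator $[a_k]$ by $\lambda^{-\cs(a_k)}$ and the generator $[b]$ by $\lambda^{-\cs(b)}$, the monomials match up: $\lambda^{-\cs(a_k)}\cdot\lambda^{\cs(a_k)-\cs(b)}=\lambda^{-\cs(b)}$. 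First I would make this bookkeeping precise.

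More concretely, fix a non-degenerate regular perturbation $\pi\in\mathcal P(Y,r,-\infty,g)$ and write $\partial^{[-\infty,r]}([a_k])=\sum_{b}\, n_{k,b}\,[b]$, where $n_{k,b}=\#(M^Y(a_k,b)_\pi/\R)\in\z$ and the sum runs over $b\in\widetilde R^*(Y)_\pi$ with $\ind(b)=0$ and $\cs_\pi(b)<r$ (these are precisely the generators of $CI^{[-\infty,r]}_0(Y)$; recall $s=-\infty$ imposes no lower bound). Then for $\alpha=\sum_k[a_k]\otimes x_k$ one has $\partial^{[-\infty,r]}(\alpha)=\sum_b\big(\sum_k n_{k,b}x_k\big)[b]$, so $\alpha$ is a cycle iff $\sum_k n_{k,b}x_k=0$ for every such $b$. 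On the other hand, by the displayed formula for $\partial^\Lambda$ in the excerpt, $\partial^\Lambda(\widetilde\alpha)=\sum_b\Big(\sum_k n_{k,b}\,x_k\,\lambda^{-\cs(a_k)}\lambda^{\cs(a_k)-\cs(b)}\Big)[b]=\sum_b\Big(\big(\sum_k n_{k,b}x_k\big)\lambda^{-\cs(b)}\Big)[b]$. Since the $\lambda^{-\cs(b)}$ are nonzero and since $\Lambda$ has no zero divisors affecting this (the coefficients $\sum_k n_{k,b}x_k$ are rational numbers and $\lambda^{-\cs(b)}$ is a unit), $\partial^\Lambda(\widetilde\alpha)=0$ iff every coefficient vanishes iff $\alpha$ is a cycle. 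This gives the equivalence.

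The one genuine subtlety — and the step I expect to be the main obstacle — is checking that the target generators $[b]$ occurring in $\partial^{[-\infty,r]}(\alpha)$ are exactly the ones occurring in $\partial^\Lambda(\widetilde\alpha)$, i.e.\ that no extra terms appear from trajectories landing in generators $b$ with $\cs_\pi(b)\geq r$ (which are killed in the filtered complex but not in $CI^\Lambda$). This is handled by the monotonicity of $\cs_\pi$ along trajectories: if $M^Y(a_k,b)_\pi/\R\neq\emptyset$ then $\cs_\pi(b)<\cs_\pi(a_k)<r$ by \Cref{keylemma} (and the fact that $a_k$ is a generator of the filtered complex, so $\cs_\pi(a_k)<r$). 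Hence every $b$ appearing in $\partial^\Lambda(\widetilde\alpha)$ automatically satisfies $\cs_\pi(b)<r$ and is a legitimate generator of $CI^{[-\infty,r]}_0(Y)$; no discrepancy arises. Assembling these observations — the coefficient-matching computation together with the filtration-compatibility of trajectory counts — completes the proof; I would present it in essentially that order, keeping the energy/Chern-Simons bookkeeping explicit since that is where all the content lies.
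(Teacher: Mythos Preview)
Your proof is correct and follows essentially the same approach as the paper: compute the coefficient of each generator $[b]$ in $\partial^{[-\infty,r]}(\alpha)$ and in $\partial^\Lambda(\widetilde\alpha)$, and observe that they differ by the nonzero factor $\lambda^{-\cs_\pi(b)}$. Your explicit treatment of the filtration compatibility (that any $b$ contributing to $\partial^\Lambda(\widetilde\alpha)$ must satisfy $\cs_\pi(b)<\cs_\pi(a_k)<r$ by monotonicity along trajectories) is a point the paper leaves implicit, but it is exactly the right check.
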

\begin{proof}
For any generator $b \otimes 1 \in CI^{[-\infty,r]}_0(Y) \otimes \q$,
the coefficient of $b \otimes 1$ in 
$\partial^{[-\infty, r]}(\alpha) \in CI^{[-\infty, r]}_1(Y) \otimes \q$ is
$$
\sum_{1 \leq k \leq n} \# ( M^Y(a_k, b)_{\pi}/\R ) \cdot x_k,
$$
while the coefficient of $b \otimes 1$ in $\partial^\Lambda(\widetilde{\alpha})$ is
$$
\sum_{1 \leq k \leq n} \# ( M^Y(a_k, b)_{\pi}/\R ) \cdot 
\lambda^{\cs_{\pi}(a_k) - \cs_{\pi}(b)} \cdot x_k \lambda^{- \cs_{\pi}(a_k)} 
$$
$$
=\left(\sum_{1 \leq k \leq n} \# ( M^Y(a_k, b)_{\pi}/\R ) \cdot 
 x_k \right) \cdot \lambda^{ - \cs_{\pi}(b)}.
$$
This completes the proof.
\end{proof}

\begin{lem}
\label{lambda theta}
For a chain $\alpha=\sum_{1\leq k \leq n}a_k \otimes x_k$ 
of $CI^{[-\infty,r]}_1(Y) \otimes \q$,
we have
$\theta^{[-\infty,r]}_{Y}(\alpha)\neq0$ if and only if 
$\theta^\Lambda(\widetilde{\alpha}) \neq 0$. 
Moreover, if  
$\theta^{[-\infty,r]}_Y(\alpha) \neq 0$, then 
for a number $k' \in \{1, \ldots, n\}$ with $x_{k'}\neq0$ and $\cs_{\pi}(a_{k'})= 
\max\{\cs_{\pi}(a_k)\mid x_k \neq 0\}$,
we have
$$
\mdeg(\theta^\Lambda(\widetilde{\alpha}))-\mdeg(\widetilde{\alpha})
= \cs_{\pi}(a_{k'})<r.
$$
\end{lem}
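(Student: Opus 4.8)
The plan is to prove Lemma~\ref{lambda theta} by directly unwinding the definitions of $\theta^{[-\infty,r]}_Y$, $\theta^\Lambda$, and $\mdeg$, using the simple relationship $\widetilde\alpha = \sum_k [a_k]\otimes x_k\lambda^{-\cs_\pi(a_k)}$ together with the fact (recorded just above, in the displayed formula for $\theta^\Lambda$) that $\theta^\Lambda([a]) = \theta^{[-\infty,\infty]}_Y([a])\,\lambda^{\cs_\pi(a)}$. First I would compute $\theta^\Lambda(\widetilde\alpha)$ explicitly: by $\Lambda$-linearity,
\[
\theta^\Lambda(\widetilde\alpha) = \sum_{1\le k\le n} x_k\lambda^{-\cs_\pi(a_k)}\,\theta^\Lambda([a_k]) = \sum_{1\le k\le n} x_k\,\theta^{[-\infty,\infty]}_Y([a_k])\,\lambda^{0} = \left(\sum_{1\le k\le n} x_k\,\theta^{[-\infty,\infty]}_Y([a_k])\right)\cdot 1 \in \Lambda.
\]
Since $r\in\R_Y\cap(0,\infty]$ and each $[a_k]$ is a generator of $CI^{[-\infty,r]}_1(Y)$, the count $\theta^{[-\infty,\infty]}_Y([a_k])$ equals $\theta^{[-\infty,r]}_Y([a_k])$ (the trajectory to $\theta$ from an $a_k$ with $\cs_\pi(a_k)<r$ only decreases the Chern–Simons value, so it stays inside the window). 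Hence $\theta^\Lambda(\widetilde\alpha) = \theta^{[-\infty,r]}_Y(\alpha)\cdot\lambda^0$, which is a nonzero scalar multiple of $1$ exactly when $\theta^{[-\infty,r]}_Y(\alpha)\neq0$; this gives the first ``if and only if.''

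For the ``moreover'' clause I would compute the two evaluations of $\mdeg$. From the display above, $\mdeg(\theta^\Lambda(\widetilde\alpha)) = \mdeg(\theta^{[-\infty,r]}_Y(\alpha)\cdot\lambda^0) = 0$ when the coefficient is nonzero. For $\mdeg(\widetilde\alpha)$, recall $\mdeg(\sum_k\eta_k[a_k]) = \min_k\mdeg(\eta_k)$; here $\eta_k = x_k\lambda^{-\cs_\pi(a_k)}$, so $\mdeg(\eta_k) = -\cs_\pi(a_k)$ whenever $x_k\neq0$, and therefore $\mdeg(\widetilde\alpha) = \min\{-\cs_\pi(a_k)\mid x_k\neq0\} = -\max\{\cs_\pi(a_k)\mid x_k\neq0\} = -\cs_\pi(a_{k'})$. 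Subtracting,
\[
\mdeg(\theta^\Lambda(\widetilde\alpha)) - \mdeg(\widetilde\alpha) = 0 - (-\cs_\pi(a_{k'})) = \cs_\pi(a_{k'}).
\]
Finally, $\cs_\pi(a_{k'})<r$ because $a_{k'}$ is a generator of $CI^{[-\infty,r]}_1(Y)$ and hence satisfies the defining inequality $\cs_{Y,\pi}(a_{k'})<r$ from Definition~\ref{defofIr*}.

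The only point requiring genuine care — and the step I'd treat as the main obstacle — is the identification $\theta^{[-\infty,\infty]}_Y([a_k]) = \theta^{[-\infty,r]}_Y([a_k])$, i.e.\ that passing to the larger interval does not change the count of flow lines from $a_k$ to $\theta$. This is essentially the content of \Cref{inclusion} (compatibility of $[\theta^{[s,r]}_Y]$ with the filtration maps $i^{[s,r]}_{[s',r']}$) applied at the chain level: the counting homomorphism $\theta^{[s,r]}_Y$ is, by \Cref{defiofthetar}, just the restriction of $\theta^{[-\infty,\infty]}_Y$ to the subcomplex generated by critical points with $\cs_\pi<r$, and since $\cs_\pi$ strictly decreases along gradient trajectories of $\cs_{Y,\pi}$, any trajectory from $a_k$ (with $\cs_\pi(a_k)<r$) to $\theta$ (with $\cs_\pi(\theta)=0<r$) automatically lies in the $[-\infty,r]$-window — no trajectory is lost or gained. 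Everything else is bookkeeping with $\Lambda$ and $\mdeg$.
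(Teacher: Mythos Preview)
Your proof is correct and follows essentially the same approach as the paper's (very terse) argument: compute $\theta^\Lambda(\widetilde\alpha)$ explicitly as $\theta^{[-\infty,r]}_Y(\alpha)\cdot\lambda^0$, then read off $\mdeg(\theta^\Lambda(\widetilde\alpha))=0$ and $\mdeg(\widetilde\alpha)=-\cs_\pi(a_{k'})$. The only remark is that the step you flag as the ``main obstacle'' is in fact immediate from Definition~\ref{defiofthetar}: the formula $\theta^{[s,r]}_Y([a]) = \#(M^Y(a,\theta)_{\pi,\delta}/\R)$ does not involve $s$ or $r$ at all, so no argument about trajectories ``staying in the window'' is needed.
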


\begin{proof}
The proof  of the first assertion follows from the same argument as
Lemma~\ref{lambda cycle}.
Moreover, 
it is easy to see that
$$
\mdeg(\widetilde{\alpha})= 
\mdeg(\sum_{1 \leq k \leq n} a_k \otimes x_k \lambda^{-\cs(a_k)})
= -\cs_{\pi}(a_{k'})
$$
and
$
\mdeg(\theta^\Lambda(\widetilde{\alpha}))= 0.
$
\end{proof}

\begin{lem}
\label{lambda replace}
Let $\hat{\alpha} = \sum_{1 \leq k \leq n} a_k \otimes \eta_k$
be a cycle of $CI^\Lambda_1(Y)$
with $d := \mdeg(\theta^\Lambda(\hat{\alpha})) < \infty$,
and $x_k$ the coefficient of $\lambda^{d-\cs_{\pi}(a_k)}$
in $\eta_k$ $(k=1, \ldots, n)$. Then 
$$
\alpha := \sum_{1 \leq k \leq n} a_k \otimes x_k 
$$
is a cycle of $CI_1(Y) \otimes \q$ with $\theta^{[-\infty, \infty]}_Y(\alpha) \neq 0$.
Moreover, for a number $k' \in \{1, \ldots, n\}$ with $x_{k'}\neq0$ and 
$\cs_{\pi}(a_{k'})= \max\{\cs_{\pi}(a_k)\mid x_k \neq 0\}$,
 the cycle $\widetilde{\alpha}$
satisfies
$$
\mdeg (\theta^\Lambda(\widetilde{\alpha}))-\mdeg(\widetilde{\alpha})
= \cs_{\pi}(a_{k'}) \leq \mdeg (\theta^\Lambda(\hat\alpha))
-\mdeg(\hat\alpha).
$$
\end{lem}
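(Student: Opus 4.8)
The strategy is to reverse-engineer the construction from Lemma~\ref{lambda cycle}. There, starting from a cycle $\alpha$ of $CI^{[-\infty,r]}_1(Y)\otimes\q$ with monomial coefficients, we produced the $\Lambda$-cycle $\widetilde\alpha$ by multiplying the $k$-th generator's coefficient by $\lambda^{-\cs_\pi(a_k)}$. Here we are given a general $\Lambda$-cycle $\hat\alpha$, whose coefficients $\eta_k$ are now arbitrary elements of $\Lambda$, and we want to extract from it an honest $\q$-cycle of $CI_1(Y)\otimes\q$. The natural guess is: since $\theta^\Lambda(\hat\alpha)$ has $\mdeg$ equal to $d$, the ``lowest-degree'' part of $\hat\alpha$ that contributes to $\theta^\Lambda$ sits in degree $d$; and because $\theta^\Lambda([a_k])$ is supported in degree exactly $\cs_\pi(a_k)$ (as computed in the definition $\theta^\Lambda([a]) = \theta^{[-\infty,\infty]}_Y([a])\lambda^{\cs_\pi(a)}$), the relevant coefficient of $\eta_k$ is precisely the coefficient $x_k$ of $\lambda^{d-\cs_\pi(a_k)}$. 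So $\alpha:=\sum_k [a_k]\otimes x_k$ is the candidate.

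First I would verify that $\alpha$ is a cycle of $CI_1(Y)\otimes\q$. The differential $\partial^\Lambda$ sends $[a_k]$ to a sum of terms $\#(M^Y(a_k,b)_\pi/\R)\,\lambda^{\cs_\pi(a_k)-\cs_\pi(b)}[b]$. Expanding $\partial^\Lambda(\hat\alpha)=0$ and reading off, for each generator $[b]$, the coefficient of the monomial $\lambda^{d-\cs_\pi(b)}$ in the $[b]$-component: a term $\#(M^Y(a_k,b)_\pi/\R)\,\lambda^{\cs_\pi(a_k)-\cs_\pi(b)}\cdot(\text{coeff of }\lambda^{?}\text{ in }\eta_k)$ contributes to $\lambda^{d-\cs_\pi(b)}$ exactly when the $\eta_k$-coefficient is that of $\lambda^{d-\cs_\pi(a_k)}$, i.e.\ exactly $x_k$. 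Hence the $\lambda^{d-\cs_\pi(b)}$-coefficient of the $[b]$-part of $\partial^\Lambda(\hat\alpha)$ equals $\sum_k \#(M^Y(a_k,b)_\pi/\R)\,x_k$, which is therefore $0$; this says $\partial(\alpha)=0$ in $CI_0(Y)\otimes\q$. Next, $\theta^{[-\infty,\infty]}_Y(\alpha)\neq0$: the coefficient of $\lambda^d$ in $\theta^\Lambda(\hat\alpha)=\sum_k \theta^{[-\infty,\infty]}_Y([a_k])\,\lambda^{\cs_\pi(a_k)}\cdot\eta_k$ is $\sum_k \theta^{[-\infty,\infty]}_Y([a_k])\,x_k = \theta^{[-\infty,\infty]}_Y(\alpha)$, and since $d=\mdeg(\theta^\Lambda(\hat\alpha))<\infty$ this coefficient is nonzero by definition of $\mdeg$.

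It remains to compare the gap quantities. By Lemma~\ref{lambda theta} applied to $\alpha$ (which, once we know $\theta^{[-\infty,\infty]}_Y(\alpha)\neq0$, gives $\mdeg(\theta^\Lambda(\widetilde\alpha))-\mdeg(\widetilde\alpha)=\cs_\pi(a_{k'})$ for the specified index $k'$), the left-hand side equals $\cs_\pi(a_{k'})$. For the inequality $\cs_\pi(a_{k'})\le \mdeg(\theta^\Lambda(\hat\alpha))-\mdeg(\hat\alpha) = d-\mdeg(\hat\alpha)$, the point is that $\mdeg(\hat\alpha)=\min_k\mdeg(\eta_k)\le \mdeg(\eta_{k'})\le d-\cs_\pi(a_{k'})$, the last inequality because $x_{k'}\neq0$ means $\eta_{k'}$ actually contains the monomial $\lambda^{d-\cs_\pi(a_{k'})}$, so its $\mdeg$ is at most $d-\cs_\pi(a_{k'})$. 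Rearranging gives $\cs_\pi(a_{k'})\le d-\mdeg(\hat\alpha)$, as desired. The main obstacle I anticipate is purely bookkeeping: keeping the three degree-shifts ($-\cs_\pi(a_k)$ in $\widetilde\alpha$, $+\cs_\pi(a_k)-\cs_\pi(b)$ in $\partial^\Lambda$, $+\cs_\pi(a_k)$ in $\theta^\Lambda$) straight so that the extraction ``coefficient of $\lambda^{d-\cs_\pi(a_k)}$'' is genuinely compatible with $\partial^\Lambda$ and $\theta^\Lambda$ simultaneously; once the indexing is set up correctly each of the three claims is a one-line reading-off argument, with no analysis involved beyond what Lemmas~\ref{lambda cycle} and~\ref{lambda theta} already supply.
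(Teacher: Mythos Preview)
Your proposal is correct and follows essentially the same approach as the paper's own proof: you extract the $\q$-cycle $\alpha$ by reading off the coefficient of $\lambda^{d-\cs_\pi(b)}$ in the $[b]$-component of $\partial^\Lambda(\hat\alpha)$ to verify $\partial(\alpha)=0$, read off the coefficient of $\lambda^d$ in $\theta^\Lambda(\hat\alpha)$ to get $\theta^{[-\infty,\infty]}_Y(\alpha)\neq 0$, invoke Lemma~\ref{lambda theta} for the equality $\mdeg(\theta^\Lambda(\widetilde\alpha))-\mdeg(\widetilde\alpha)=\cs_\pi(a_{k'})$, and then use $\mdeg(\hat\alpha)\le\mdeg(\eta_{k'})\le d-\cs_\pi(a_{k'})$ for the final inequality. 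The paper does exactly these four steps (in slightly different order), so there is nothing to add.
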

\begin{proof}
The coefficient of
$\lambda^{d}$ in $\theta^\Lambda(\hat\alpha)$ 
is equal to
$$
\sum_{1 \leq k \leq n} 
\# (M^Y(a_k,\theta)_\pi/\R) \cdot x_k,
$$
which coincides 
with $\theta^{[-\infty, \infty]}_Y(\alpha)$.
Moreover, since $\mdeg(\theta^\Lambda(\hat\alpha)) =d$, this value is non-zero.
Hence we have $\theta^{[-\infty, \infty]}_Y(\alpha) \neq 0$. 
In a similar way, we can also verify that for any generator 
$b \otimes 1 \in CI_0(Y) \otimes \q$, the coefficient of $b \otimes 1$ in
$\partial(\alpha)$ is equal to that of $\lambda^{d-\cs_{\pi}(b)}$ in
$\partial^{\Lambda}(\hat\alpha)$, and hence
$\partial(\alpha) = 0$.
Next, it follows from Lemma~\ref{lambda theta}
that $\mdeg (\theta^\Lambda(\widetilde{\alpha}))-\mdeg(\widetilde{\alpha})
= \cs_{\pi}(a_{k'})$.
Moreover, since $x_{k'} \neq 0$ is the coefficient of $\lambda^{d-\cs_{\pi}(a_{k'})}$ in 
$\eta_{k'}$,
we have
\[
\mdeg(\hat\alpha) \leq d- \cs_{\pi}(a_{k'}).
\]
This gives
\[
\mdeg (\theta^\Lambda(\hat\alpha))
-\mdeg(\hat\alpha) =
d-\mdeg(\hat\alpha) \geq \cs_{\pi}(a_{k'}).
\]
\end{proof}

\def\proofname{Proof of \Cref{compare}}
\begin{proof}
Assume that $r_{-\infty}(Y) < \Gamma_{-Y}(1)$,
and then we can take $r \in \R_Y$ with $r_{-\infty}(Y) < r < \Gamma_{-Y}(1)$.
For such $r$ and any sufficiently small perturbation $\pi$, we have $[\theta^{[-\infty,r]}_Y]\neq 0$,
and hence there exists a cycle 
$\alpha =\sum_{1 \leq k \leq n} a_{k} \otimes x_k$
in $CI^{[-\infty,r]}_1(Y) \otimes \q$ 
with $\theta^{[-\infty,r]}_Y(\alpha) \neq 0$.
Therefore, it follows from Lemmas~\ref{lambda cycle}
and \ref{lambda theta}
that there exists a sequence $\{\pi_l\}_{l \in \z_{>0}}$
of perturbations with $\|\pi_l\| \to 0$ ($l \to \infty$)
such that for each $\pi_l$, $CI^\Lambda_1(Y)$ has a cycle $\widetilde{\alpha_l}$
with $\theta^{\Lambda}(\widetilde{\alpha_l}) \neq 0$ and
$\mdeg(\theta^\Lambda(\widetilde{\alpha_l})) -\mdeg(\widetilde{\alpha_l}) <r$.
This gives the inequality
\[
\Gamma_{-Y}(1) = \lim_{\|\pi\| \to 0}  \left(
\inf_{\substack{\alpha \in CI_1^{\Lambda}(Y),\, \partial^{\Lambda}(\alpha)=0, \\ \theta^\Lambda (\alpha) \neq 0 }}
\left\{
\mdeg(\theta^\Lambda (\alpha))- \mdeg (\alpha)   
\right\}
\right) \leq r,
\]
a contradiction. Hence we have $r_{-\infty}(Y) \geq \Gamma_{-Y}(1)$.

Conversely, assume that $\Gamma_{-Y}(1) < r_{-\infty}(Y)$,
and then we can take $r \in \R_Y$ with 
$\Gamma_{-Y}(1) < r < r_{-\infty}(Y)$.
Then, for any sufficiently small perturbation compatible with $r$, 
there exists a cycle $\hat\alpha \in CI^\Lambda_1(Y)$ with $\theta^{\Lambda}(\hat\alpha) \neq 0$ and $\mdeg(\theta^\Lambda (\hat\alpha))- \mdeg (\hat\alpha)<r$.
Then, by Lemma~\ref{lambda replace},
we obtain a cycle $\alpha= \sum_{1 \leq k \leq n} a_k \otimes x_k$
in $CI_1(Y) \otimes \q$ and a number $k' \in \{1, \ldots, n\}$ which satisfies
\begin{enumerate}
\item $x_{k'} \neq 0$ and $\cs_{\pi}(a_{k'}) = \max\{ \cs_{\pi}(a_k) \mid x_k \neq 0\}$,
\item $\theta^{[-\infty, \infty]}_Y(\alpha) \neq 0$, and
\item $\mdeg(\theta^\Lambda(\widetilde{\alpha})) - \mdeg(\widetilde{\alpha}) 
= \cs_{\pi}(a_{k'}) < r$.
\end{enumerate}
Here, (1) and (3) imply that $\alpha \in CI^{[-\infty, r]}_1(Y) \otimes \q$,
and hence (2) implies that $[\theta^{[-\infty,r]}_Y] \neq 0$.
This gives the inequality $r \geq r_{-\infty}(Y)$, a contradiction.
Hence we have $\Gamma_Y(1) \geq r_{-\infty}(Y)$.
\end{proof}
\def\proofname{Proof}

\subsection{Consequences}%%%%
Here, we show corollaries of \Cref{compare}.
\setcounter{section}{1}
\setcounter{thm}{2}
\begin{cor}
The inequality $r_{-\infty}(Y) < \infty$ holds if and only if $h(Y)<0$.
In particular, if $h(Y)<0$, then $r_s(Y)$ has a finite value for any 
$s \in [-\infty, 0]$.
\end{cor}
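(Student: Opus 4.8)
The plan is to deduce this corollary directly from \Cref{compare} together with the third listed property of Daemi's invariants, which asserts that $\Gamma_Y(k)$ is finite if and only if $2h(Y) \geq k$. First I would apply \Cref{compare} to get $r_{-\infty}(Y) = \Gamma_{-Y}(1)$, so that $r_{-\infty}(Y) < \infty$ is equivalent to $\Gamma_{-Y}(1) < \infty$. By Daemi's finiteness criterion with $k=1$, the latter holds if and only if $2h(-Y) \geq 1$, i.e.\ $h(-Y) \geq 1$ since $h$ is integer-valued. Then I would invoke the well-known fact that the Fr\o yshov invariant satisfies $h(-Y) = -h(Y)$, so $h(-Y) \geq 1$ is equivalent to $h(Y) \leq -1$, that is, $h(Y) < 0$. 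This chain of equivalences gives the first assertion.

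For the second assertion, suppose $h(Y) < 0$, so that $r_{-\infty}(Y) < \infty$ by the first part. By \Cref{different s} (that is, \Cref{main theorem}(3)), for any $s \in [-\infty, 0]$ we have $r_s(Y) \leq r_{-\infty}(Y) < \infty$, since $-\infty \leq s$ forces $r_s(Y) \leq r_{-\infty}(Y)$. Hence $r_s(Y)$ is finite for every $s$, which is exactly the ``in particular'' claim. Conversely, if $h(Y) \geq 0$ then $r_{-\infty}(Y) = \infty$, consistent with the statement.

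The only mild subtlety — really the single point that needs a citation rather than being internal to this excerpt — is the behavior of $h$ under orientation reversal, $h(-Y) = -h(Y)$; this is standard for the Fr\o yshov invariant (see \cite{Fr02}) and I would simply cite it. Everything else is a formal consequence of results already established: \Cref{compare}, the recalled properties of $\{\Gamma_Y(k)\}_{k\in\z}$ at the start of this section, and the monotonicity \Cref{different s}. I do not anticipate any real obstacle; the proof is essentially a two-line unwinding of definitions once \Cref{compare} is in hand.
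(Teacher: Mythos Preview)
Your proof is correct and follows essentially the same approach as the paper: the paper's argument simply cites from \cite{D18} that $h(Y)<0 \iff \Gamma_{-Y}(1)<\infty$ and then invokes \Cref{compare}, whereas you unpack that equivalence explicitly via the finiteness criterion $\Gamma_Y(k)<\infty \iff 2h(Y)\geq k$ together with $h(-Y)=-h(Y)$. The ``in particular'' clause via monotonicity (\Cref{different s}) is exactly right, though the paper leaves it implicit.
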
 
\begin{proof}
It is shown in \cite{D18} that  the inequality $h(Y)<0$ holds if and only if $
\Gamma_{-Y}(1)  < \infty$. 
This fact and  \Cref{compare} give the conclusion. 
\end{proof}
Recall that $\Sigma(a_1, a_2, \ldots, a_n)$ denotes the Seifert homology 3-sphere corresponding to a tuple of pairwise coprime integers $(a_1, a_2, \ldots, a_n)$
and $R(a_1, a_2 ,\ldots, a_n)$ is an odd integer introduced by Fintushel-Stern \cite{FS85}.
\begin{cor}
If  $R(a_1, a_2 ,\ldots, a_n)>0$, then for any $s \in [-\infty, 0]$,
the equalities
\[
r_s(-\Sigma(a_1,a_2, \ldots, a_n))= \frac{1}{4a_1a_2\cdots a_n}\ 
\text{and}\ 
r_s(\Sigma(a_1,a_2, \ldots, a_n))= \infty
\]
hold.
\end{cor}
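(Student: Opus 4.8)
The plan is to treat the two displayed equalities separately. For $r_s(\Sigma(a_1,\dots,a_n))=\infty$: with the orientation convention in force, the Seifert homology sphere $\Sigma(a_1,\dots,a_n)$ bounds a negative definite $4$-manifold, namely its canonical negative definite plumbing (the resolution graph of the corresponding Brieskorn singularity when $n=3$, and the star-shaped plumbing with suitably chosen negative weights in general). Hence \Cref{obstruct bounding} immediately gives $r_s(\Sigma(a_1,\dots,a_n))=\infty$ for every $s\in[-\infty,0]$; this half does not use the hypothesis $R(a_1,\dots,a_n)>0$.

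For $r_s(-\Sigma(a_1,\dots,a_n))=\frac1{4a_1\cdots a_n}$, write $a:=a_1\cdots a_n$. First I would evaluate the extreme case $s=-\infty$: by \Cref{compare} we have $r_{-\infty}(-\Sigma(a_1,\dots,a_n))=\Gamma_{\Sigma(a_1,\dots,a_n)}(1)$, and the computation of Daemi \cite{D18}, resting on the Fintushel--Stern invariant $R$ of \cite{FS85}, gives $\Gamma_{\Sigma(a_1,\dots,a_n)}(1)=\frac1{4a}$ whenever $R(a_1,\dots,a_n)>0$. Hence $r_{-\infty}(-\Sigma(a_1,\dots,a_n))=\frac1{4a}<\infty$. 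Since $r_s$ is nonincreasing in $s$ by \Cref{main theorem}(3), this yields the upper bound $r_s(-\Sigma(a_1,\dots,a_n))\le\frac1{4a}$ for all $s\in[-\infty,0]$.

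For the reverse inequality, fix $s\in[-\infty,0]$. By \Cref{fundamental lemma}, $r_s(-\Sigma(a_1,\dots,a_n))>0$, and by the upper bound just obtained it is finite; so \Cref{value} (that is, \Cref{main theorem}(2)) forces it to lie in $\Lambda^*_{-\Sigma(a_1,\dots,a_n)}$, i.e.\ to be a Chern--Simons value of an irreducible flat $\SU(2)$-connection on $-\Sigma(a_1,\dots,a_n)$. I would then invoke the classical computation of Chern--Simons invariants of flat connections on Seifert fibered homology spheres (Fintushel--Stern \cite{FS85}, and also Kirk--Klassen, Auckly, Nicolaescu): every such value lies in $\frac1{4a}\z$ modulo $\z$, so $\Lambda^*_{-\Sigma(a_1,\dots,a_n)}\subset\frac1{4a}\z$ and in particular every positive element of it is $\ge\frac1{4a}$. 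Therefore $r_s(-\Sigma(a_1,\dots,a_n))\ge\frac1{4a}$, and combining with the upper bound, $r_s(-\Sigma(a_1,\dots,a_n))=\frac1{4a}$ for every $s\in[-\infty,0]$.

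The main obstacle is extracting the equality $\Gamma_{\Sigma(a_1,\dots,a_n)}(1)=\frac1{4a}$ from \cite{D18}: this requires carefully matching Daemi's conventions to ours (the opposite sign of $\cs$ and the opposite orientation of the cylinder, cf.\ the table in \Cref{Comparison with Daemi's invariants}) and recognizing that $R(a_1,\dots,a_n)>0$ is precisely the condition under which Daemi's lower bound is realized at the minimal critical level $\frac1{4a}$. A subsidiary point is to ensure that the classical Chern--Simons computation used for the lower bound covers arbitrary $n$ and all pairwise coprime tuples; alternatively, one can sidestep it by arguing about $r_0$ only and combining the monotonicity $r_0\le r_s\le r_{-\infty}$ with a direct $\Gamma$-theoretic sharpness statement.
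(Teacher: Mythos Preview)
Your proposal is correct and follows essentially the same route as the paper: for $\Sigma$ you use the negative definite plumbing together with \Cref{obstruct bounding}, and for $-\Sigma$ you bound $r_s$ from above via monotonicity in $s$ and \Cref{compare} with Daemi's computation of $\Gamma_{\Sigma}(1)$, and from below via \Cref{value} together with the classical fact that the minimal positive Chern--Simons value on a Seifert homology sphere is $\tfrac{1}{4a_1\cdots a_n}$ (the paper cites \cite{Fu90, FS90} for this). The only cosmetic difference is that the paper phrases the lower bound directly as $\min(\Lambda_{-\Sigma}\cap\R_{>0})=\tfrac{1}{4a_1\cdots a_n}$ rather than spelling out \Cref{fundamental lemma} and \Cref{value} separately.
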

\setcounter{section}{4}
\setcounter{thm}{5}

\begin{proof}
By using \Cref{different s} and \Cref{compare}, we obtain 
\begin{align*}
r_s(-\Sigma(a_1, a_2, \ldots, a_n)) 
 &\leq r_{-\infty}(-\Sigma(a_1, a_2, \ldots, a_n)) \\
 &= \Gamma_{\Sigma(a_1, a_2, \ldots, a_n)} (1) = \frac{1}{4a_1 a_2 \cdots a_n}.
\end{align*}
Moreover, it is shown in \cite{Fu90, FS90}
 that 
$$
\min (\Lambda_{-\Sigma(a_1, a_2, \ldots, a_n)} \cap \R_{>0}) =
\frac{1}{4a_1 a_2 \cdots a_n}.$$ 
This gives the first equality in \Cref{general Seifert}. 
The second equality follows from \Cref{obstruct bounding} and the fact that
$\Sigma(a_1, a_2, \ldots, a_n)$ bounds a negative definite 4-manifold.
\end{proof}

\begin{cor}\label{Seifert} For any positive coprime integers $p,q>1$ and 
positive integer $k$,
we have
\[
r_s(-\Sigma(p,q,pqk-1)) = \frac{1}{4pq(pqk-1)} 
\]
and
\[
r_s(\Sigma(p,q,pqk-1)) = \infty.
\]
\end{cor}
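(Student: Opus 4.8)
The plan is to derive \Cref{Seifert} as a direct specialization of \Cref{general Seifert}. The only input needed is the positivity statement $R(p,q,pqk-1)>0$ for all coprime $p,q>1$ and $k\in\z_{>0}$, which is recorded right after \Cref{general Seifert} in the introduction (the fact that $R(p,q,pqk-1)=1$). So first I would invoke that computation of Fintushel--Stern \cite{FS85}, giving $R(p,q,pqk-1)=1>0$.

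Next I would apply \Cref{general Seifert} with $(a_1,a_2,a_3)=(p,q,pqk-1)$: since these three integers are pairwise coprime (here one uses that $\gcd(p,pqk-1)=\gcd(p,-1)=1$ and similarly $\gcd(q,pqk-1)=1$, and $\gcd(p,q)=1$ by hypothesis) and $R(p,q,pqk-1)>0$, the corollary yields, for every $s\in[-\infty,0]$,
\[
r_s(-\Sigma(p,q,pqk-1)) = \frac{1}{4\,p\,q\,(pqk-1)}
\qquad\text{and}\qquad
r_s(\Sigma(p,q,pqk-1)) = \infty.
\]
This is exactly the assertion of \Cref{Seifert}, so nothing further is required.

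There is essentially no obstacle here — the content of \Cref{Seifert} is entirely contained in \Cref{general Seifert} together with the known value of $R$ for this Seifert family. The only point worth a sentence in the write-up is the verification of pairwise coprimality of $(p,q,pqk-1)$, which is immediate. One could also note, as a sanity check consistent with the introduction, that $-\Sigma(p,q,pqk-1)=S^3_{1/k}(T_{p,q})$, but this identification is not needed for the proof.

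\begin{proof}
By \cite{FS85}, we have $R(p,q,pqk-1)=1>0$ for all coprime $p,q>1$ and $k\in\z_{>0}$. Moreover, since $\gcd(p,q)=1$, $\gcd(p,pqk-1)=\gcd(p,-1)=1$ and $\gcd(q,pqk-1)=\gcd(q,-1)=1$, the integers $p$, $q$, $pqk-1$ are pairwise coprime. Hence \Cref{general Seifert} applies with $(a_1,a_2,a_3)=(p,q,pqk-1)$, and gives
\[
r_s(-\Sigma(p,q,pqk-1)) = \frac{1}{4pq(pqk-1)}
\quad\text{and}\quad
r_s(\Sigma(p,q,pqk-1)) = \infty
\]
for any $s\in[-\infty,0]$.
\end{proof}
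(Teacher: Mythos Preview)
Your proof is correct and matches the paper's approach: the paper states this corollary without separate proof immediately after proving \Cref{general Seifert}, relying on the fact recorded in the introduction that $R(p,q,pqk-1)=1$ for coprime $p,q>1$ and $k\in\z_{>0}$. Your explicit verification of pairwise coprimality is a harmless extra detail.
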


%%%%%%%%%%%%%%%%%%%%%%%%%%%%Section 5
\section{Applications}\label{Applications}

In this section, we prove the theorems stated in Section~\ref{section 1.2}.

%%%%%%%%%%5.1
\subsection{Useful lemmas}
We first give several lemmas which are useful for computing $r_0$.

\begin{lem}
\label{addition}
For any homology $3$-spheres $Y_1$ and $Y_2$,
if $r_0(-Y_1)=r_0(-Y_2)= \infty$, then
$$
r_0(Y_1 \# Y_2) = \min\{r_0(Y_1), r_0(Y_2)\}
$$
and 
$$
r_0(-Y_1 \# -Y_2)= \infty.
$$
\end{lem}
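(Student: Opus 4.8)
The plan is to derive Lemma~\ref{addition} directly from the connected sum inequality in \Cref{conn} (i.e.\ \Cref{main theorem}(4)) together with the cobordism inequality \Cref{cobneq} (\Cref{main theorem}(1)), using $s=s_1=s_2=0$ throughout. First I would establish the upper bound $r_0(Y_1\#Y_2)\le\min\{r_0(Y_1),r_0(Y_2)\}$. For this, observe that the connected sum $Y_1\#Y_2$ admits a negative definite (indeed, a handle-attaching) cobordism to $Y_1$: removing a ball from $Y_2$ and using that $r_0(-Y_2)=\infty$ means $Y_2$ bounds a homology ball, so $Y_1\#Y_2$ is homology cobordant to $Y_1$ via a cobordism built from $Y_2\setminus B^3$. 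More precisely, since $r_0(-Y_2)=\infty$, I would invoke that $-Y_2$ cannot be distinguished from $S^3$ by the obstruction class at level $0$; but to get an honest negative definite cobordism I should instead argue as follows: there is a cobordism $W$ from $Y_1\#Y_2$ to $Y_1$ obtained by attaching a single $1$-handle (the standard connected-sum cobordism) capped with a negative definite cobordism from $S^3$ to $-Y_2 \amalg Y_2$ — however the cleanest route is purely formal, staying within the stated properties.

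Here is the formal route I would actually carry out. By \Cref{conn} with $s=s_1=s_2=0$, we have
\[
r_0(Y_1\#Y_2)\ \ge\ \min\{r_0(Y_1),\,r_0(Y_2)\}.
\]
For the reverse inequality, I use that $r_0(-Y_2)=\infty$. Applying \Cref{conn} again to the pair $(Y_1\#Y_2,\,-Y_2)$ with $s=s_1=s_2=0$ gives
\[
r_0\big((Y_1\#Y_2)\#(-Y_2)\big)\ \ge\ \min\{r_0(Y_1\#Y_2),\,r_0(-Y_2)\}\ =\ r_0(Y_1\#Y_2),
\]
since $r_0(-Y_2)=\infty$. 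But $(Y_1\#Y_2)\#(-Y_2)$ is homology cobordant to $Y_1$ (connected sum with $-Y_2$ then $Y_2$ bounds a homology cobordism to $S^3$), so by homology cobordism invariance (\Cref{invariance}), $r_0\big((Y_1\#Y_2)\#(-Y_2)\big)=r_0(Y_1)$. Hence $r_0(Y_1)\ge r_0(Y_1\#Y_2)$, and symmetrically $r_0(Y_2)\ge r_0(Y_1\#Y_2)$, giving $r_0(Y_1\#Y_2)\le\min\{r_0(Y_1),r_0(Y_2)\}$. Combined with the first inequality, this yields the first equality.

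For the second equality $r_0(-Y_1\#-Y_2)=\infty$: by \Cref{obstruct bounding} it suffices to note that $r_0(-Y_i)=\infty$ for $i=1,2$, so $-Y_1\#-Y_2$ should also have $r_0=\infty$. I would argue this via \Cref{conn}: applying it to the pair $(-Y_1,-Y_2)$ with $s=s_1=s_2=0$ gives $r_0(-Y_1\#-Y_2)\ge\min\{r_0(-Y_1),r_0(-Y_2)\}=\infty$, hence $r_0(-Y_1\#-Y_2)=\infty$.

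The main obstacle I anticipate is the homology cobordism bookkeeping in the upper bound argument: one must be careful that ``$r_0(-Y_2)=\infty$'' is being used correctly — it does \emph{not} by itself imply $-Y_2$ bounds a homology ball. The correct and safe substitute is to avoid any such claim and instead apply the connected sum inequality twice together with the trivial fact $r_0(S^3)=\infty$ and $r_0(Y\#(-Y'))$-type manipulations only when legitimately homology cobordant; since $Y_1\#Y_2\#(-Y_2)$ \emph{is} homology cobordant to $Y_1$ (this uses only that $Y_2\#(-Y_2)$ bounds a homology $S^3\times I$, which is standard and independent of $r_0$), the argument goes through. So the only real care needed is to separate the genuinely topological input ($Y\#(-Y)$ is null-homology-cobordant) from the invariant-theoretic inputs (\Cref{conn} and \Cref{invariance}), and the proof is otherwise a two-line application of the connected sum formula in both directions.
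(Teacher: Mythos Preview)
Your formal route is correct and matches the paper's proof essentially line for line: both use \Cref{conn} with $s=s_1=s_2=0$ for the lower bound and for $r_0(-Y_1\#-Y_2)=\infty$, and both obtain the upper bound by applying \Cref{conn} to $(Y_1\#Y_2)\#(-Y_2)$ together with homology cobordism invariance. Your self-correction at the end is exactly right—the only topological input is that $Y_2\#(-Y_2)$ is null-homology-cobordant, not any bounding statement deduced from $r_0$.
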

\begin{proof}
The equality
$r_0(-Y_1 \# -Y_2)= \infty$ and
the inequality $r_0(Y_1 \# Y_2) \geq \min\{r_0(Y_1), r_0(Y_2)\}$
immediately follow from Theorem \ref{conn}.
To prove the inequality $r_0(Y_1 \# Y_2) \leq \min\{r_0(Y_1), r_0(Y_2)\}$,
we first consider $r_0(Y_1\# Y_2 \# -Y_2)$.
Then, by \Cref{invariance} and Theorem \ref{conn},
we have
$$
r_0(Y_1)= r_0(Y_1 \# Y_2 \# -Y_2)
\geq \min \{ r_0(Y_1\# Y_2), r_0(-Y_2)\}.
$$
Here, since $r_0(Y_1\# Y_2) \leq r_0(-Y_2) = \infty$,
we obtain 
$
r_0(Y_1)
\geq r_0(Y_1\# Y_2).
$
Similarly, we have
$
r_0(Y_2)
\geq r_0(Y_1\# Y_2).
$
This completes the proof.
\end{proof}

\begin{cor}
\label{order}
Suppose that a homology $3$-sphere $Y$ satisfies $r_0(Y)< \infty$ and $r_0(-Y)= \infty$.
Then, for any $n \in \z_{>0}$, we have
$$
r_0(n[Y])=r_0(Y) < \infty
$$
and 
$$
r_0(-n[Y]) = \infty.
$$
In particular, $Y$ has infinite order in $\Theta^3_\z$.
\end{cor}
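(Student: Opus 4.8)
The plan is to prove \Cref{order} by induction on $n$, using \Cref{addition} as the engine and \Cref{invariance} (homology cobordism invariance, so that $r_0$ is additive-friendly under connected sum of homology cobordant manifolds) to close the induction. First I would observe that the hypotheses $r_0(Y)<\infty$ and $r_0(-Y)=\infty$ are exactly the shape required to invoke \Cref{addition} with $Y_1=Y_2=Y$: indeed $r_0(-Y_1)=r_0(-Y_2)=\infty$, so \Cref{addition} immediately yields $r_0(2[Y])=\min\{r_0(Y),r_0(Y)\}=r_0(Y)$ and $r_0(-2[Y])=\infty$. This is the base case of the induction (the case $n=1$ being vacuous).

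For the inductive step, suppose $r_0(n[Y])=r_0(Y)<\infty$ and $r_0(-n[Y])=\infty$ for some $n\ge 1$. I would like to apply \Cref{addition} to the pair $Y_1=n[Y]$ and $Y_2=Y$. The needed hypotheses are $r_0(-Y_1)=r_0(-n[Y])=\infty$, which holds by the inductive hypothesis, and $r_0(-Y_2)=r_0(-Y)=\infty$, which holds by assumption. Strictly speaking, \Cref{addition} is stated for honest homology $3$-spheres rather than for the formal sum $n[Y]$; so I would first fix a representative $Y_n$ of the homology cobordism class $n[Y]$ (e.g.\ the $n$-fold connected sum $Y\#\cdots\#Y$), note that $-Y_n$ represents $-n[Y]$, and that $r_0$, being a homology cobordism invariant by \Cref{invariance}, depends only on these classes. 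Then \Cref{addition} applied to $Y_n$ and $Y$ gives
\[
r_0((n+1)[Y]) = r_0(Y_n\#Y) = \min\{r_0(Y_n),r_0(Y)\} = \min\{r_0(Y),r_0(Y)\} = r_0(Y) < \infty
\]
and $r_0(-(n+1)[Y]) = r_0(-Y_n\#-Y) = \infty$. This completes the induction.

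Finally, for the last sentence I would argue that $Y$ has infinite order: if $n[Y]=0$ in $\Theta^3_\z$ for some $n\in\z_{>0}$, then $n[Y]$ is the class of $S^3$, so $r_0(n[Y])=r_0(S^3)=\infty$ (recall $r_s(S^3)=\infty$ by \Cref{value}, since $\Lambda^*_{S^3}=\emptyset$), contradicting $r_0(n[Y])=r_0(Y)<\infty$. Hence $n[Y]\ne 0$ for all $n\in\z_{>0}$, i.e.\ $Y$ has infinite order. The main obstacle here is essentially bookkeeping rather than anything deep: one must be careful that \Cref{addition} is quoted with genuine manifold representatives and that the passage between $n[Y]$ and a chosen representative $Y_n$ is justified purely by homology cobordism invariance; no new analysis is required, since all the moduli-theoretic content is already packaged in \Cref{conn} and \Cref{invariance}.
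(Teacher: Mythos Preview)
Your proof is correct and follows exactly the same approach as the paper, which simply says ``By induction on $n$, this corollary directly follows from \Cref{addition}.'' You have merely written out the details of that induction (and the infinite-order consequence) explicitly.
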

\begin{proof}
By induction on $n$, this corollary directly follows from \Cref{addition}.
\end{proof}

\begin{lem}
\label{subtraction}
For any homology $3$-spheres $Y_1$ and $Y_2$,
if the inequality $r_0(Y_1) < \min\{ r_0(Y_2), r_0(-Y_2)\}$ holds, then
$r_0(Y_1 \# -Y_2) = r_0(Y_1)$. 
\end{lem}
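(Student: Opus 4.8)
The plan is to derive \Cref{subtraction} from the connected sum formula (\Cref{conn}) together with \Cref{addition} and the homology cobordism invariance of $r_0$ (\Cref{invariance}). First I would observe that the two hypotheses $r_0(Y_1) < r_0(Y_2)$ and $r_0(Y_1) < r_0(-Y_2)$ are in play, and the goal is the equality $r_0(Y_1 \# -Y_2) = r_0(Y_1)$; so I will establish the two inequalities separately.

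For the inequality $r_0(Y_1 \# -Y_2) \geq r_0(Y_1)$, I would apply \Cref{conn} with $s = s_1 = s_2 = 0$ to the pair $(Y_1, -Y_2)$, which gives
\[
r_0(Y_1 \# -Y_2) \geq \min\{r_0(Y_1), r_0(-Y_2)\}.
\]
Since $r_0(Y_1) < r_0(-Y_2)$ by hypothesis, the minimum is $r_0(Y_1)$, and this direction follows immediately.

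For the reverse inequality $r_0(Y_1 \# -Y_2) \leq r_0(Y_1)$, the idea is to ``add back'' $Y_2$. Set $Z := Y_1 \# -Y_2$. Then $Z \# Y_2$ is homology cobordant to $Y_1$, so $r_0(Z \# Y_2) = r_0(Y_1)$ by \Cref{invariance}. Applying \Cref{conn} once more to the pair $(Z, Y_2)$ yields
\[
r_0(Y_1) = r_0(Z \# Y_2) \geq \min\{ r_0(Z), r_0(Y_2)\}.
\]
If the minimum on the right were $r_0(Z)$, we would be done. So the main obstacle is ruling out the case $\min\{r_0(Z), r_0(Y_2)\} = r_0(Y_2)$, i.e.\ the case $r_0(Z) \geq r_0(Y_2)$. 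But in that case the displayed inequality gives $r_0(Y_1) \geq r_0(Y_2)$, contradicting the hypothesis $r_0(Y_1) < r_0(Y_2)$. Hence $r_0(Z) < r_0(Y_2)$, the minimum is $r_0(Z)$, and we obtain $r_0(Y_1) \geq r_0(Z) = r_0(Y_1 \# -Y_2)$, as desired. Combining the two inequalities completes the proof. (One subtlety to double-check: applying \Cref{conn} requires $s_1 \in (-\infty,0]$ etc., which is fine for $s_1 = s_2 = 0$, and the invariance statement \Cref{invariance} applies to $r^R_s$ for all $s$; no further hypotheses on $Y_1, Y_2$ are needed, so the argument is essentially a three-line manipulation of the two formulas.)
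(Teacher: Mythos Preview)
Your proof is correct and follows essentially the same approach as the paper: apply the connected sum inequality (\Cref{conn}) once to $Y_1 \# -Y_2$ to get the lower bound, and once to $(Y_1 \# -Y_2) \# Y_2$ together with homology cobordism invariance to get the upper bound, using the strict inequality $r_0(Y_1) < r_0(Y_2)$ to force the minimum to be $r_0(Y_1 \# -Y_2)$. (Your mention of \Cref{addition} in the plan is unnecessary; it is not used.)
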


\begin{proof}
By applying Theorem~\ref{conn} to $Y_1 \# -Y_2$ and
$Y_1 \# -Y_2 \# Y_2$, we have
$$
r_0(Y_1 \# -Y_2) \geq \min \{ r_0(Y_1), r_0(-Y_2) \} = r_0(Y_1)
$$
and
$$
r_0(Y_1)=r_0(Y_1 \# -Y_2 \# Y_2) \geq \min \{ r_0(Y_1 \# - Y_2), r_0(Y_2)\}.
$$
Here, since $r_0(Y_1) < r_0(Y_2)$, we obtain 
$
r_0(Y_1) \geq r_0(Y_1 \# - Y_2).
$
\end{proof}

\begin{thm}
\label{combination}
Suppose that a linear combination
$
\sum^m_{k=1}n_k[Y_k] \in \Theta^3_{\z}
$
satisfies
\begin{itemize}
\item 
$\displaystyle r_0(Y_m) < \min_{1 \leq k <m} \{ r_0(Y_k), r_0(-Y_k) \}$,
\item
$r_0(-Y_m) = \infty$,
and
\item
$n_m>0$.
\end{itemize}
Then $r_0(\sum^m_{k=1}n_k[Y_k])=r_0(Y_m)< \infty$.
\end{thm}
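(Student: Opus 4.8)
The plan is to reduce the statement to a clean induction using the three operational lemmas already at hand: \Cref{addition}, \Cref{subtraction}, and the connected sum inequality \Cref{conn}. The key observation is that the hypothesis packages $Y_m$ as the ``smallest'' summand in the sense that $r_0(Y_m)$ is strictly below $r_0(Y_k)$ and $r_0(-Y_k)$ for every $k<m$, while $r_0(-Y_m)=\infty$; this is exactly the configuration that \Cref{subtraction} and \Cref{combination}'s hypotheses are designed to exploit. So I would split the linear combination as $\sum_{k=1}^m n_k[Y_k] = Z \# n_m[Y_m]$, where $Z := \sum_{k=1}^{m-1} n_k[Y_k]$, and analyze $r_0$ of this connected sum.

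First I would control $Z$. The point is that $r_0(Y_m)$ is strictly smaller than $r_0(\pm Y_k)$ for all $k<m$, so by repeated application of \Cref{conn} to $Z = \sum_{k=1}^{m-1} n_k[Y_k]$ (writing each $n_k[Y_k]$ as an iterated connected sum of $\pm Y_k$), one gets the bound $r_0(Z) > r_0(Y_m)$; indeed $r_0$ of any connected sum of copies of $\pm Y_k$ ($k<m$) is at least $\min_k\{r_0(Y_k), r_0(-Y_k)\} > r_0(Y_m)$, by the $s=s_1=s_2=0$ case of the connected sum inequality, applied inductively. The same argument applied to $-Z$ gives $r_0(-Z) > r_0(Y_m)$ as well. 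So $r_0(Y_m) < \min\{r_0(Z), r_0(-Z)\}$.

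Next I would handle the $n_m[Y_m]$ factor. Since $r_0(Y_m)<\infty$ and $r_0(-Y_m)=\infty$, \Cref{order} gives $r_0(n_m[Y_m]) = r_0(Y_m) < \infty$ for the positive integer $n_m$, and $r_0(-n_m[Y_m]) = \infty$. Now I want to compute $r_0\bigl(Z \# n_m[Y_m]\bigr)$. Write $Y_1' := n_m[Y_m]$ and $Y_2' := -Z$; then $r_0(Y_1') = r_0(Y_m) < r_0(Z) = r_0(-Y_2')$ and $r_0(Y_1') = r_0(Y_m) < r_0(-Z) = r_0(Y_2')$, so the hypothesis $r_0(Y_1') < \min\{r_0(Y_2'), r_0(-Y_2')\}$ of \Cref{subtraction} is satisfied. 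Applying \Cref{subtraction} with this $Y_1'$ and $Y_2'$ yields
\[
r_0\Bigl(\sum_{k=1}^m n_k[Y_k]\Bigr) = r_0\bigl(n_m[Y_m] \# (-Z)\,\#\, Z \cdot 0 \bigr)
\]
— more precisely $r_0(Y_1' \# -Y_2') = r_0(Y_1')$, i.e. $r_0(n_m[Y_m] \# Z) = r_0(n_m[Y_m]) = r_0(Y_m)$, which is the desired conclusion. (The only care needed is that $-Y_2' = Z$, so $Y_1' \# -Y_2' = n_m[Y_m] \# Z = \sum_{k=1}^m n_k[Y_k]$ in $\Theta^3_\z$, using that $\Theta^3_\z$ is abelian and \Cref{invariance}.)

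I expect the main obstacle to be bookkeeping rather than anything deep: one must be careful that the inductive application of \Cref{conn} to $Z$ and to $-Z$ genuinely produces the strict inequalities $r_0(\pm Z) > r_0(Y_m)$, using that each $r_0(\pm Y_k)$ for $k<m$ strictly exceeds $r_0(Y_m)$ and that \Cref{conn} with $s=s_1=s_2=0$ reads $r_0(A\#B)\ge\min\{r_0(A),r_0(B)\}$, together with \Cref{addition} when some $r_0(-Y_k)=\infty$ to avoid degenerate cases. Once $r_0(Y_m) < \min\{r_0(Z), r_0(-Z)\}$ is established and \Cref{order} is invoked for $n_m[Y_m]$, the final step is a single invocation of \Cref{subtraction}, and finiteness $r_0(Y_m)<\infty$ is given by hypothesis.
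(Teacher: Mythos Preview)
Your proposal is correct and follows essentially the same route as the paper: bound $r_0(\pm Z)$ from below via the connected sum inequality, use \Cref{order} to get $r_0(n_m[Y_m])=r_0(Y_m)$, and then apply \Cref{subtraction} once. The only differences are cosmetic (the paper writes $\sum_{k=1}^{m-1}(-n_k)[Y_k]$ for your $-Z$, and does not mention \Cref{addition}, which is indeed unnecessary here since the $s=0$ case of \Cref{conn} already handles all cases).
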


\begin{proof}
By assumption, it follows from \Cref{order}
that $r_0(n_m[Y_m]) = r_0(Y_m)$.
Moreover, \Cref{conn} implies that
$$
\min \Big\{ r_0\big(\sum^{m-1}_{k=1}(-n_k)[Y_k]\big), 
r_0 \big(-\sum^{m-1}_{k=1}(-n_k)[Y_k] \big) \Big\} 
$$
$$
\geq
\min_{1 \leq k < m} \{ r_0(Y_k), r_0(-Y_k) \}
>  r_0(n_m[Y_m]).
$$
Therefore, by \Cref{subtraction}, we have
$$
r_0(\sum^m_{k=1}n_k[Y_k]) = 
r_0\big(n_m [Y_m] - \sum^{m-1}_{k=1}(-n_k)[Y_k] \big) =r_0(Y_m).
$$
\end{proof}

For a homology 3-sphere $Y$, set
\[
\varepsilon_1(Y) := \inf (\Lambda_Y \cap \R_{>0}) 
\text{ and }
\varepsilon_2(Y) := \min \{ \varepsilon_1(Y), \varepsilon_1(-Y) \}.
\]
\Cref{combination} is regarded as a generalization of the following theorem
due to Furuta.

\begin{cor}[\text{\cite[Theorem 6.1]{Fu90}}]
Let $Y_1, \ldots, Y_m$ be homology $3$-spheres with $\varepsilon_2(Y_i)>0$
\textup{(}$i=1,2, \ldots, m$\textup{)}.
Let $Y_0 = \Sigma(a_1, a_2, \ldots, a_n)$ be a Seifert homology $3$-sphere
such that $R(a_1, a_2, \ldots, a_n)>0$ and $a_1 a_2 \cdots a_n > \varepsilon_2(Y_i)^{-1}$
\textup{(}$i=1,2, \ldots, m$\textup{)}.
Then
\[
\z[Y_0] \cap (\z[Y_1]+\z[Y_2]+ \cdots + \z[Y_m])=0 \ \text{in}\ \Theta^3_{\z}.
\]
\end{cor}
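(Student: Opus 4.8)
The statement to prove is Furuta's Theorem 6.1, and I would derive it as a direct corollary of \Cref{combination} together with the computation of $r_0$ for Seifert spheres (\Cref{Seifert} / \Cref{general Seifert}). First I would recall that $\varepsilon_2(Y_i) > 0$ means $\Lambda_{Y_i} \cap \R_{>0}$ and $\Lambda_{-Y_i} \cap \R_{>0}$ are bounded away from $0$; by \Cref{value}, since $r_0(Z) \in \Lambda^*_Z \cup \{\infty\}$ for any homology sphere $Z$, this forces the dichotomy that $r_0(Y_i)$ and $r_0(-Y_i)$ are each either $\infty$ or at least $\varepsilon_2(Y_i)$. Hence $\min\{r_0(Y_i), r_0(-Y_i)\} \geq \varepsilon_2(Y_i)$ for each $i$, where we interpret the minimum as $\infty$ when both are infinite.

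\textbf{Key steps.} Set $Y_0 = \Sigma(a_1,\dots,a_n)$ with $R(a_1,\dots,a_n) > 0$. By \Cref{general Seifert}, we have $r_0(Y_0) = \infty$ and, for the reversed orientation, $r_0(-Y_0) = \frac{1}{4a_1 a_2 \cdots a_n} < \infty$. The hypothesis $a_1 a_2 \cdots a_n > \varepsilon_2(Y_i)^{-1}$ for each $i$ gives $r_0(-Y_0) = \frac{1}{4a_1\cdots a_n} < \frac{1}{a_1 \cdots a_n} < \varepsilon_2(Y_i) \leq \min\{r_0(Y_i), r_0(-Y_i)\}$. Now suppose, for contradiction, that some nonzero relation $n_0[Y_0] = n_1[Y_1] + \cdots + n_m[Y_m]$ holds in $\Theta^3_{\z}$ with $n_0 \neq 0$ (if $n_0 = 0$ the intersection is automatically $0$, so there is nothing to prove). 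Replacing $Y_0$ by $-Y_0$ if necessary, we may assume $n_0 > 0$, and I would rewrite the relation as $n_0[-Y_0] + \sum_{k=1}^m (-n_k)[Y_k] = 0$ — wait, more carefully: I want to apply \Cref{combination} with the distinguished index playing the role of ``$Y_m$'' being $-Y_0$. So consider the class $n_0[-Y_0] + \sum_{k=1}^m(-n_k)[Y_k]$. Hmm, but $[-Y_0] = -[Y_0]$, so this is $-n_0[Y_0] + \sum(-n_k)[Y_k] = -(n_0[Y_0] - \sum(-n_k)[Y_k])$, and the relation says $n_0[Y_0] = \sum n_k[Y_k]$, i.e. $n_0[Y_0] - \sum n_k[Y_k] = 0$; comparing, I instead apply \Cref{combination} directly to the element $\sum_{k=1}^m n_k[Y_k] + n_0[-Y_0]$, which equals $0$ in $\Theta^3_{\z}$. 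This linear combination satisfies the three hypotheses of \Cref{combination} with the distinguished summand being $n_0[-Y_0]$: indeed $r_0(-Y_0) < \min_{1 \leq k \leq m}\{r_0(Y_k), r_0(-Y_k)\}$ by the inequality above, $r_0(-(-Y_0)) = r_0(Y_0) = \infty$, and the coefficient $n_0 > 0$. (If some $Y_k$ coincides with $\pm Y_0$ up to homology cobordism one collects those terms into the $-Y_0$ summand first; and if some $n_k = 0$ those terms are simply dropped.) Then \Cref{combination} yields $r_0\bigl(\sum n_k[Y_k] + n_0[-Y_0]\bigr) = r_0(-Y_0) = \frac{1}{4a_1\cdots a_n} < \infty$. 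But this class is $[S^3]$, and $r_0(S^3) = \infty$ by \Cref{value} since $\Lambda^*_{S^3} = \emptyset$ — a contradiction. Hence $n_0 = 0$, which forces $\sum n_k[Y_k] = 0$, so $\z[Y_0] \cap (\z[Y_1] + \cdots + \z[Y_m]) = 0$.

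\textbf{Main obstacle.} The only delicate point is the bookkeeping needed to put the given relation into the exact normal form required by \Cref{combination}: one must ensure the distinguished summand $n_0[-Y_0]$ has strictly positive coefficient (achieved by flipping the sign of the whole relation, equivalently replacing $Y_0$ by $-Y_0$, which is harmless since $R$ is unchanged) and that its $r_0$-value is \emph{strictly} below all the others (this is where the hypothesis $a_1\cdots a_n > \varepsilon_2(Y_i)^{-1}$ is used, via the factor of $4$ giving us room). One should also handle the degenerate case $n_0 = 0$ separately, where the conclusion is immediate, and note that the possibility that some $Y_i$ is itself homology cobordant to $\pm Y_0$ causes no trouble since such terms merge into the distinguished summand and the strict inequality on the remaining indices is unaffected. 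Everything else is a direct invocation of already-established results.
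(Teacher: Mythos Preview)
Your proof is correct and follows essentially the same route as the paper: use \Cref{value} (together with $r_0>0$) to get $\min\{r_0(Y_i),r_0(-Y_i)\}\geq\varepsilon_2(Y_i)$, invoke \Cref{general Seifert} to compute $r_0(-Y_0)=\tfrac{1}{4a_1\cdots a_n}$ and $r_0(Y_0)=\infty$, and then apply \Cref{combination} to the vanishing linear combination to force the contradiction $r_0(S^3)<\infty$. Your exposition is wordier, and the phrase ``replacing $Y_0$ by $-Y_0$'' is a slightly misleading way to say ``negate the whole relation so that $n_0>0$,'' but the argument is the same (and in fact you are more careful than the paper about the factor of $4$ in $r_0(-Y_0)$).
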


\begin{proof}
Note that since $r_0(Y) \in \Lambda_Y \cap \R_{>0}$ in general, 
we have
$$
\min\{ r_0(Y_i), r_0(-Y_i)\} \geq \varepsilon_2(Y_i) > 
\frac{1}{a_1 a_2 \cdots a_n} = r_0(-Y_0)
$$
for any $i = 1,2, \ldots, m$, where the last equality follows from \Cref{general Seifert}.
Therefore, if we have an equality
$$n_0 [-Y_0] = n_1 [Y_1] + n_2[Y_2] + \cdots + n_m[Y_m]$$
and $n_0 > 0$, then it follows from \Cref{combination} that
$$
\frac{1}{a_1 a_2 \cdots a_n} = r_0(-Y_0) = r_0\big(n_0[-Y_0]-\sum^m_{i=1} n_i [Y_i]\big)
=r_0(S^3) = \infty,
$$
a contradiction.
\end{proof}

\begin{cor}
\label{linear independence}
Let $\{ Y_k\}_{k=1}^{\infty}$ be a sequence of homology $3$-spheres satisfying 
\[
 \infty > r_0(Y_1) > r_0(Y_2) > \cdots 
\]
and
\[
 \infty = r_0(-Y_1) = r_0(-Y_2) = \cdots.
\]
Then, the $Y_k$'s are linearly independent in $\Theta^3_\z$.
\end{cor}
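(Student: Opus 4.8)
The plan is to derive \Cref{linear independence} from \Cref{combination} by a straightforward induction/contradiction argument. Suppose, for contradiction, that the $Y_k$ are linearly dependent in $\Theta^3_\z$; then there is a nontrivial relation $\sum_{k=1}^{m} n_k[Y_k] = 0$ with $n_m \neq 0$ and $m$ chosen minimal (so in particular $n_m \neq 0$). After possibly replacing the relation by its negative, we may assume $n_m > 0$, using that $[-Y] = -[Y]$ in $\Theta^3_\z$.

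Next I would check the hypotheses of \Cref{combination} for this combination. The chain of strict inequalities $r_0(Y_1) > r_0(Y_2) > \cdots > r_0(Y_m)$ together with $r_0(-Y_1) = r_0(-Y_2) = \cdots = \infty$ gives
\[
r_0(Y_m) < \min_{1 \le k < m} \{ r_0(Y_k), r_0(-Y_k) \},
\]
since for $1 \le k < m$ we have $r_0(Y_k) > r_0(Y_m)$ and $r_0(-Y_k) = \infty > r_0(Y_m)$ (note $r_0(Y_m) < \infty$ by hypothesis). We also have $r_0(-Y_m) = \infty$ and $n_m > 0$. Hence \Cref{combination} applies and yields
\[
r_0\Big(\sum_{k=1}^{m} n_k[Y_k]\Big) = r_0(Y_m) < \infty.
\]
But $\sum_{k=1}^{m} n_k[Y_k] = 0 = [S^3]$ in $\Theta^3_\z$, and $r_0$ is a homology cobordism invariant by \Cref{invariance}, so $r_0\big(\sum n_k[Y_k]\big) = r_0(S^3) = \infty$ (using $\Lambda^*_{S^3} = \emptyset$ and \Cref{value}). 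This contradiction completes the proof.

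I do not expect a genuine obstacle here: the content is entirely contained in \Cref{combination} and the invariance of $r_0$, and the only care needed is the bookkeeping to normalize the relation so that the coefficient of the ``top'' term $Y_m$ is positive, and to observe that $m \ge 2$ may be assumed (if $m=1$ the relation $n_1[Y_1]=0$ with $r_0(Y_1)<\infty$ already contradicts invariance directly, or is subsumed by the same argument with the $\min$ over an empty set read as $\infty$). One should also double-check that the indexing in \Cref{combination} matches: there $Y_m$ is the distinguished summand with the strictly smallest $r_0$ and with $r_0(-Y_m)=\infty$, which is exactly our $Y_m$ after the relabeling, so no reindexing is required.
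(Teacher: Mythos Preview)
Your proof is correct and follows essentially the same approach as the paper: normalize the sign so that the top coefficient is positive, verify the hypotheses of \Cref{combination}, and conclude that the combination has $r_0$ equal to $r_0(Y_m) < \infty$, contradicting $r_0(S^3)=\infty$. The paper phrases it as a direct argument (showing any nontrivial combination is nonzero) rather than by contradiction, and works with an arbitrary finite subsequence $Y_{k_1},\dots,Y_{k_m}$, but the substance is identical.
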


\begin{proof}
Let $\sum^{m}_{k=1}n_k[Y_{k}]$ be a linear combination with $n_m \neq 0$.
Without loss of generality, we can assume $n_m > 0$.
Then
$\sum^{m}_{k=1} n_k[Y_{k}]$ satisfies
the hypothesis of \Cref{combination},
and hence 
$$
r_0(\sum^{m}_{k=1} n_k[Y_{k}]) =r_0(Y_m) < \infty = r_0(S^3).
$$
This implies that 
$
\sum^{m}_{k=1} n_k[Y_{k}] \neq 0.
$
\end{proof}

%%%%%%%%%5.2
\subsection{Homology 3-spheres with no definite bounding}
\label{section 5.2}
In this subsection, we prove Theorem~\ref{definite bounding}.

\setcounter{section}{1}
\setcounter{thm}{4}
\begin{thm}
There exist infinitely many homology $3$-spheres $\{Y_k\}_{k=1}^{\infty}$ which cannot bound any definite 4-manifold. Moreover, we can take such $Y_k$ so that the $Y_k$'s are linearly independent in $\Theta^3_\z$.
\end{thm}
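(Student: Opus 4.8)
The plan is to exhibit an explicit family and verify the two required properties—no definite bounding, and linear independence—using only the formal properties of $r_0$ established earlier in the excerpt. As the paper itself indicates, the natural candidate is $Y_k := 2\Sigma(2,3,5) \# (-\Sigma(2,3,6k+5))$ for $k \in \z_{>0}$. The whole argument reduces to computing $r_0(Y_k)$ and $r_0(-Y_k)$ and showing both are finite; finiteness of $r_0$ on $\pm Y_k$ immediately rules out definite bounding via \Cref{obstruct bounding} (a negative definite bounding of $Y$ forces $r_0(Y)=\infty$; applying the same corollary to $-Y$, a positive definite bounding of $Y$ is a negative definite bounding of $-Y$, forcing $r_0(-Y)=\infty$).

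First I would compute the relevant invariants of the building blocks. By \Cref{Seifert}, $r_s(-\Sigma(2,3,5)) = \tfrac{1}{4\cdot 30} = \tfrac{1}{120}$ and $r_s(\Sigma(2,3,5)) = \infty$ for all $s$; likewise $r_s(-\Sigma(2,3,6k+5)) = \tfrac{1}{24(6k+5)}$ and $r_s(\Sigma(2,3,6k+5)) = \infty$. Now apply \Cref{addition}: since $r_0(-\Sigma(2,3,5)) = \tfrac{1}{120} < \infty$ we cannot directly use it, but $r_0\big({-}2\Sigma(2,3,5)\big)$ requires care. Instead note $r_0(-(-\Sigma(2,3,5))) = r_0(\Sigma(2,3,5)) = \infty$, so \Cref{order} with $Y = -\Sigma(2,3,5)$ gives $r_0(-2\Sigma(2,3,5)) = \tfrac{1}{120}$ and $r_0(2\Sigma(2,3,5)) = \infty$. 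Then for $Y_k = 2\Sigma(2,3,5)\#(-\Sigma(2,3,6k+5))$ we want to combine $A := 2\Sigma(2,3,5)$ (with $r_0(A) = \infty$, $r_0(-A) = \tfrac1{120}$) and $B := -\Sigma(2,3,6k+5)$ (with $r_0(B) = \tfrac{1}{24(6k+5)}$, $r_0(-B) = \infty$). For $k \geq 1$ we have $\tfrac{1}{24(6k+5)} \le \tfrac{1}{264} < \tfrac{1}{120}$, so $r_0(B) < \min\{r_0(-A), r_0(A)\}$... but here $r_0(A) = \infty$ while $r_0(-A) = \tfrac1{120}$, and $\tfrac1{24(6k+5)} < \tfrac1{120}$, so the hypotheses of \Cref{subtraction} (applied with $Y_1 = B$, $Y_2 = -A$, noting $Y_k = B \# -(-A)$) are met, giving $r_0(Y_k) = r_0(B) = \tfrac{1}{24(6k+5)} < \infty$. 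For $-Y_k = -A \# \Sigma(2,3,6k+5)$: here $r_0(-B) = \infty$ and $r_0(-(-A)) = \infty$, so \Cref{addition} applies to $-A$ and $-B$... wait, \Cref{addition} needs $r_0$ finite on the negatives; rather I would use \Cref{conn}: $r_0(-Y_k) \ge \min\{r_0(-A) + 0, r_0(\Sigma(2,3,6k+5)) + 0\} = \min\{\tfrac1{120}, \infty\} = \tfrac1{120}$, and for the upper bound consider $-Y_k \# \Sigma(2,3,6k+5)\# \cdots$ type cancellation, or more directly apply \Cref{combination} / \Cref{subtraction}-style reasoning to see $r_0(-Y_k) = \tfrac1{120} < \infty$. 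Either way both $r_0(Y_k)$ and $r_0(-Y_k)$ are finite, so no $Y_k$ bounds a definite $4$-manifold.

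For linear independence I would arrange the $Y_k$ so that \Cref{linear independence} applies. We have $r_0(Y_k) = \tfrac{1}{24(6k+5)}$, which is strictly decreasing in $k$, and $r_0(-Y_k) = \tfrac1{120}$ is \emph{not} infinite, so \Cref{linear independence} does not directly apply. The fix is to rescale: replace $Y_k$ by $Y_k' := 2\Sigma(2,3,5)\#(-\Sigma(2,3,6k+5))$ but prove linear independence directly from \Cref{combination}. Given a nonzero combination $\sum_{i=1}^m n_i [Y_{k_i}]$ with $k_1 < \dots < k_m$ and $n_m \ne 0$, multiply by the sign $\varepsilon$ of $n_m$; since $r_0(Y_{k_m}) = \tfrac1{24(6k_m+5)}$ is the strict minimum of $\{r_0(Y_{k_i}), r_0(-Y_{k_i})\}_{i<m}$ provided $r_0(-Y_{k_i}) = \tfrac1{120} > \tfrac1{24(6k_m+5)}$ (true) and $r_0(Y_{k_i}) > r_0(Y_{k_m})$ (true for $i < m$), the hypotheses of \Cref{combination} are satisfied once we also check $r_0(-Y_{k_m}) = \infty$—but we computed $r_0(-Y_{k_m}) = \tfrac1{120} \ne \infty$, so \Cref{combination} also does not literally apply.

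The main obstacle, and the point requiring the most care, is precisely this: the family $2\Sigma(2,3,5)\#(-\Sigma(2,3,6k+5))$ has \emph{both} $r_0$ and $r_0(-\ \cdot\ )$ finite, which is exactly what makes it interesting for definite bounding but breaks the clean lemmas (\Cref{order}, \Cref{combination}, \Cref{linear independence}) that were tailored to the case $r_0(-Y) = \infty$. The resolution is to prove a strengthened version of \Cref{combination} by hand for this specific situation: iterate the connected-sum inequality \Cref{conn} directly. Given $\varepsilon\sum n_i[Y_{k_i}]$, write it as $n_m[Y_{k_m}] \# (\text{rest})$; by \Cref{order}-type induction (using $r_0(2\Sigma(2,3,5)) = \infty$ to absorb the positive part) one shows $r_0$ of the ``rest'' and its negative are both $\ge \tfrac{1}{24(6k_{m-1}+5)} > r_0(Y_{k_m})$, and then a \Cref{subtraction}-style argument forces $r_0(\varepsilon\sum n_i[Y_{k_i}]) = r_0(Y_{k_m}) < \infty = r_0(S^3)$, so the combination is nonzero in $\Theta^3_\z$. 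I would organize this as a short preliminary lemma (a variant of \Cref{combination} dropping the hypothesis $r_0(-Y_m) = \infty$ in exchange for controlling $r_0$ on the negative of the complementary sum), then apply it to conclude both statements of the theorem. The key numerical input throughout is $\tfrac{1}{24(6k+5)} < \tfrac{1}{120}$ for all $k \ge 1$ and strict monotonicity in $k$, both elementary.
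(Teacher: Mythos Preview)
Your computation of $r_0(Y_k)=\tfrac{1}{24(6k+5)}$ via \Cref{order} and \Cref{subtraction} is correct and matches the paper. The real gap is in the step for $-Y_k$. You write that ``\Cref{combination} / \Cref{subtraction}-style reasoning'' gives $r_0(-Y_k)=\tfrac{1}{120}<\infty$, but neither lemma applies: \Cref{subtraction} with $Y_1=-2\Sigma(2,3,5)$ and $Y_2=-\Sigma(2,3,6k+5)$ would require $\tfrac{1}{120}<\tfrac{1}{24(6k+5)}$, which is false, and \Cref{addition} requires $r_0(-\Sigma(2,3,6k+5))=\infty$, also false. More fundamentally, the connected sum inequality (\Cref{conn}) only produces \emph{lower} bounds for $r_0$; the upper-bound tricks in \Cref{addition}/\Cref{subtraction} all go through a summand whose mirror has $r_0=\infty$, and no decomposition of $-Y_k$ has that feature with the remaining piece having finite $r_0$. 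So from the connected sum machinery alone you cannot conclude $r_0(-Y_k)<\infty$.

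The paper closes this gap by a different mechanism: it computes the Fr{\o}yshov invariant. Using \Cref{Froyshov=1} (which rests on \Cref{twist knots} and Fr{\o}yshov's genus-one estimate, \Cref{genus 1}) one gets $h(\Sigma(2,3,6j-1))=1$ for all $j$, hence $h(Y_k)=2\cdot 1-1=1$; then \Cref{Froyshov} gives $r_{-\infty}(-Y_k)<\infty$, and \Cref{obstruct bounding} rules out a positive definite bounding. An alternative route you could take, if you want to avoid $h$, is a cobordism argument: since $\Sigma(2,3,6k+5)$ bounds a negative definite $4$-manifold, capping it off yields a negative definite cobordism with boundary $(-2\Sigma(2,3,5))\amalg -(-Y_k)$, and then \Cref{cobneq} gives $r_0(-Y_k)\le r_0(-2\Sigma(2,3,5))=\tfrac{1}{120}$. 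Either way, an input beyond the connected sum formula is essential.

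For linear independence your proposed ad hoc variant of \Cref{combination} can be made to work (one only needs $r_0(-Y_{k_i})\ge\tfrac{1}{120}>r_0(Y_{k_m})$, which you do have from \Cref{conn}), but it is unnecessarily heavy. The paper simply observes that $[Y_k]=2[\Sigma(2,3,5)]-[\Sigma(2,3,6k+5)]$ in $\Theta^3_\z$, so a relation $\sum n_k[Y_k]=0$ becomes $2(\sum n_k)[\Sigma(2,3,5)]=\sum n_k[\Sigma(2,3,6k+5)]$, and the classical linear independence of $\{\Sigma(2,3,6j-1)\}$ forces all $n_k=0$.
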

\setcounter{section}{5}
\setcounter{thm}{6}

For any $k \in \z_{>0}$, let $K_k$ be the knot depicted in Figure~\ref{K_k}.
Note that $K_k$ is the 2-bridge knot corresponding to the rational number 
$\frac{2}{4k-1}$. In particular, the first two knots $K_1$ and $K_2$
are the left-handed trefoil $3_1$ and the knot $5_2$ in Rolfsen's knot table \cite{Ro90}
respectively.

\begin{figure}[htbp]
\begin{center}
\hspace{5mm}
\includegraphics[scale= 0.5]{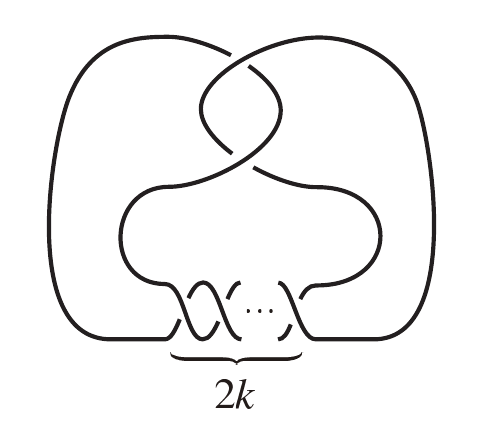}
\vspace{-4mm}
\caption{ The knot $K_k$. \label{K_k}}
\end{center}
\end{figure}

\begin{lem}
\label{twist knots}
For any $k \in \z_{> 0}$, 
we have a diffeomorphism
$$
\Sigma(2,3,6k-1) \cong S^3_{-1}(K_k).
$$
\end{lem}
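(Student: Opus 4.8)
The plan is to verify the diffeomorphism $\Sigma(2,3,6k-1) \cong S^3_{-1}(K_k)$ by a direct Kirby calculus computation, starting from a surgery description of the twist knot $K_k$ and manipulating it into the standard Seifert-fibered surgery description of $\Sigma(2,3,6k-1)$.

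First I would recall that the twist knot $K_k$, being the $2$-bridge knot associated to the fraction $\frac{2}{4k-1}$, admits a well-known surgery presentation: it is obtained from the unknot by $1/k$-surgery (or $-1/k$-surgery, depending on orientation conventions, which must be pinned down by checking $K_1 = 3_1$ and $K_2 = 5_2$ as stated) on a circle that clasps it. Concretely, $K_k$ bounds a once-punctured torus, and the $-1$-surgery on $K_k$ can be rewritten as a two-component surgery diagram: a circle with framing determined by the clasp plus a surgery curve with coefficient $\pm 1/k$ encircling two parallel strands. The standard move here is the "rolfsen twist": a $1/k$ surgery on an unknot encircling some strands, when blown down, adds $k$ full twists to those strands while changing the framings of the strands that pass through it.

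The key steps, in order, are: (1) write down the two-component link diagram for $S^3_{-1}(K_k)$ with one component being the surgery curve of coefficient $\mp 1/k$ and the other carrying framing $-1$; (2) perform a sequence of Rolfsen twists and slam-dunk/blow-down moves to convert this into a linear (or star-shaped) plumbing diagram; (3) recognize the resulting plumbing graph as the star-shaped graph with three legs whose Seifert invariants are those of $\Sigma(2,3,6k-1)$; (4) confirm the orientation is correct by comparing a low-order case, e.g. $k=1$ gives $\Sigma(2,3,5)$ (the Poincaré sphere) which is $S^3_{-1}(3_1)= S^3_{-1}(\text{left trefoil})$, a classical fact, and $k=2$ gives $\Sigma(2,3,11) = S^3_{-1}(5_2)$, consistent with the remark $S^3_1(5_2^\ast) \cong -\Sigma(2,3,11)$ elsewhere in the paper. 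Alternatively, one can invoke the general formula of Moser \cite{Mo71} classifying surgeries on torus knots together with the fact that $-1/k$-surgery on the trefoil, or the appropriate surgery on twist knots, yields small Seifert fibered spaces, but the cleanest route is an explicit Kirby move argument.

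The main obstacle I expect is bookkeeping of orientations and signs: twist knots come in two mirror families, the $-1$-surgery convention interacts with the chirality of $K_k$, and the Seifert invariant $\Sigma(2,3,6k-1)$ has a definite orientation as the boundary of a specific negative-definite plumbing. Getting the sign of the surgery coefficient $\mp 1/k$ right — so that the result is $\Sigma(2,3,6k-1)$ and not $-\Sigma(2,3,6k-1)$ or $\Sigma(2,3,6k+1)$ — requires care; I would resolve this by anchoring to the base cases $k=1,2$ where the identifications are standard and then checking that the Kirby calculus manipulation depends on $k$ only through the linear term $6k$, so matching two values forces the general formula. Everything else is routine handle-slide and blow-down calculus.
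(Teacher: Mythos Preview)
Your proposal is correct and overlaps substantially with the paper's argument, but the paper takes a shortcut you mention only as an alternative. The paper's proof is two lines: it cites the well-known identification $\Sigma(2,3,6k-1) \cong S^3_{-1/k}(3_1)$ (a standard instance of Moser's classification of torus knot surgeries), and then uses a single Rolfsen twist, displayed in a figure, to show $S^3_{-1/k}(K_1) \cong S^3_{-1}(K_k)$. Your step (1)--(2), rewriting $K_k$ as $K_1$ plus a $\pm 1/k$-framed clasping circle and blowing down, is exactly this second move; but rather than then continuing Kirby calculus all the way to the star-shaped plumbing as you propose in (3), the paper simply stops at $S^3_{-1/k}(3_1)$ and invokes the known Seifert identification. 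Your route is more self-contained but longer; the paper's is shorter but relies on an external fact. Your sign-checking strategy via the base cases $k=1,2$ is sound and the paper's conventions are indeed anchored the same way (it notes $K_1=3_1$, $K_2=5_2$, and $S^3_1(5_2^\ast)\cong -\Sigma(2,3,11)$).
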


\begin{proof}
It is well-known that 
$\Sigma(2,3,6k-1) \cong S^3_{-1/k}(3_1)$,
and Figure~\ref{-1 surgery} proves 
$S^3_{-1/k}(3_1)=S^3_{-1/k}(K_1) \cong S^3_{-1}(K_k)$.
\end{proof}

\begin{figure}[htbp]
\begin{center}
\includegraphics[scale= 0.4]{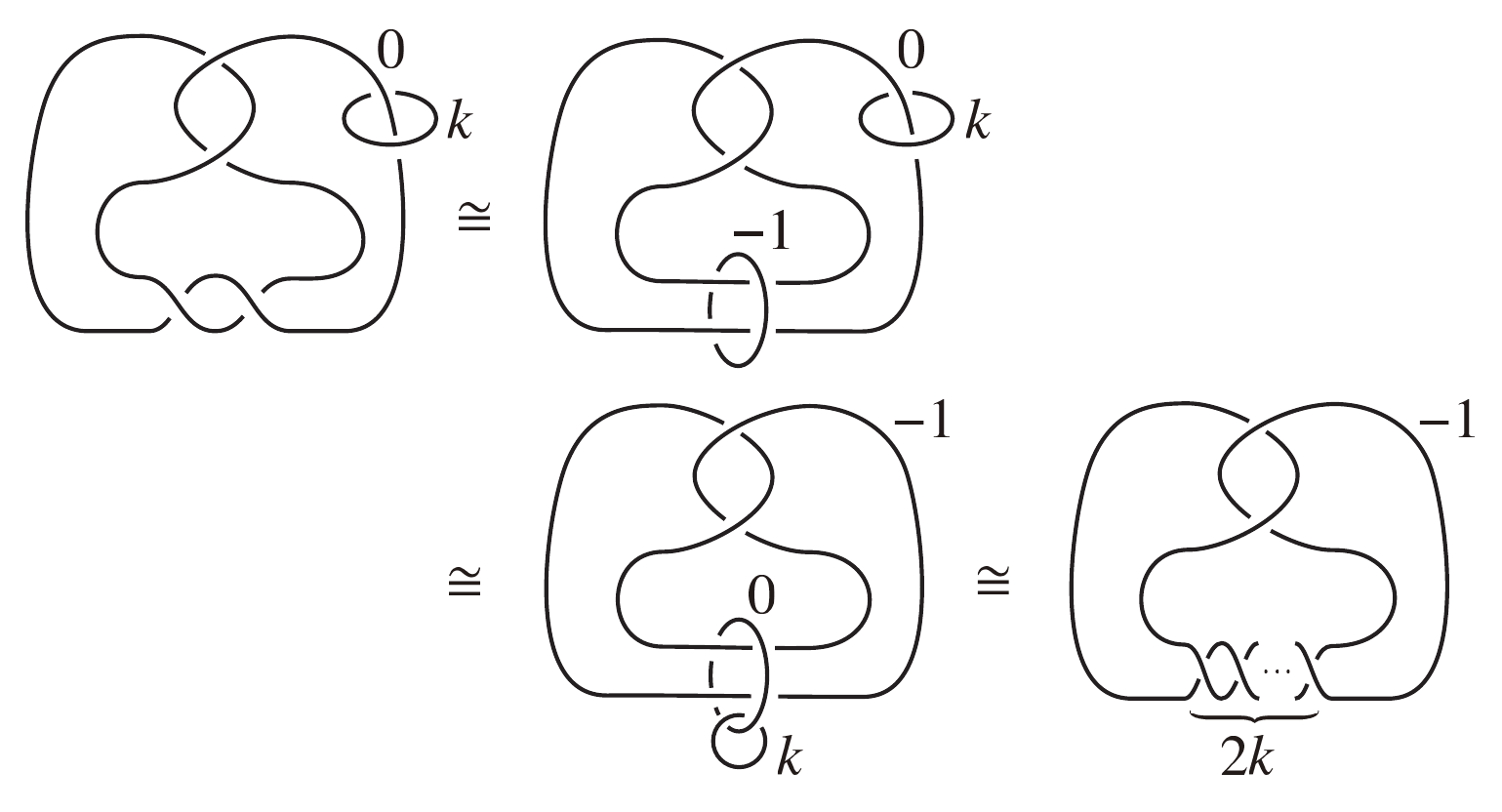}
\vspace{-2mm}
\caption{Kirby calculus for $S^3_{-1/k}(K_1) \cong S^3_{-1}(K_k)$. \label{-1 surgery}}
\end{center}
\end{figure}

While the Fr{\o}yshov invariant $h$ is hard to compute in general, 
we have a nice estimate for $(-1)$-surgeries on genus one knots. 
\begin{lem}[\text{\cite[Lemma 9]{Fr02}}]
\label{genus 1}
For any genus one knot $K$, we have
$$
0 \leq h(S^3_{-1}(K)) \leq 1.
$$ 
\end{lem}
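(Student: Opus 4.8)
\emph{Structure of the argument.} The inequality splits into a lower bound $0\le h(S^3_{-1}(K))$, which needs no genus hypothesis, and an upper bound $h(S^3_{-1}(K))\le 1$, which is where genus one is used; I would treat them separately. Note at the outset that the sharper bound $h\le 0$ is false (take $K=T_{2,3}$, so that $S^3_{-1}(K)=\Sigma(2,3,5)$ has $h=1$), so the genus hypothesis must be leveraged in a way that yields exactly $-1$ as a slack term.

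\emph{Lower bound.} First I would observe that $S^3_{-1}(K)$ bounds the trace $W_{-1}(K)$ of the $(-1)$-framed surgery along $K$, i.e.\ $B^4$ with a single $2$-handle attached along $K$ with framing $-1$. Since $K$ is null-homologous, $W_{-1}(K)$ is simply connected and its intersection form is $\langle -1\rangle$, hence negative definite. By \Cref{obstruct bounding} this gives $r_{-\infty}(S^3_{-1}(K))=\infty$, and then \Cref{Froyshov} forces $h(S^3_{-1}(K))\ge 0$.

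\emph{Upper bound.} Here I would pass to $-S^3_{-1}(K)\cong S^3_{+1}(\overline K)$, where the mirror $\overline K$ is again a genus-$1$ knot. Pushing a genus-$1$ Seifert surface of $\overline K$ into $B^4$ and capping it off with the core of the $+1$-framed $2$-handle exhibits $-S^3_{-1}(K)$ as the boundary of a simply connected $4$-manifold $X$ with $b_2(X)=b_2^+(X)=1$ carrying a closed genus-$1$ surface $\hat\Sigma$ that generates $H_2(X;\z)$ and has $[\hat\Sigma]^2=+1$. The plan is to feed $X$ into Fr{\o}yshov's inequality for $h$ in the form that is sensitive both to $b_2^+$ and to the genera of generating surfaces (the $b_2^+\le 1$ refinement of the negative-definite inequality of \cite{Fr02}): the genus-$1$ surface contributes a $-1$, yielding $h(-S^3_{-1}(K))\ge -1$, i.e.\ $h(S^3_{-1}(K))\le 1$. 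Equivalently, one may blow up $X$ finitely many times, surger $\hat\Sigma$ down to a sphere using the exceptional spheres, and obtain a negative-definite filling of $-S^3_{-1}(K)\#(\text{a sum of }\overline{\mathbb{C}P^2})$ to which the original estimate of \cite{Fr02} applies and again returns the slack $-1$.

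\emph{Main obstacle.} Everything except the upper bound is routine: the topology of the $2$-handle traces, the identification $-S^3_{-1}(K)\cong S^3_{+1}(\overline K)$, and the invocations of \Cref{obstruct bounding} and \Cref{Froyshov}. The crux is to make the upper-bound step precise. In the first route one must pin down the exact form of Fr{\o}yshov's estimate when $b_2^+=1$ and a generating surface of positive square is present, and verify that the genus enters with coefficient exactly $1$; in the blow-up route one must carry out the genus-reduction bookkeeping and check that the resulting filling has the intersection form one needs and is negative definite. In both versions the genus-$1$ hypothesis is the whole non-formal input — it is precisely what separates the true bound $h\le 1$ from the false $h\le 0$ and from a weaker $h\le 2$.
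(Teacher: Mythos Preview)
The paper does not supply its own proof of this lemma; it is cited directly from \cite[Lemma~9]{Fr02} and used as a black box in the proof of \Cref{Froyshov=1}. So there is no argument in the paper to compare your sketch against, and your proposal should be read as an attempt to reprove Fr{\o}yshov's result.

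Your lower bound is correct. Invoking \Cref{obstruct bounding} and \Cref{Froyshov} is a roundabout route---those corollaries themselves rest on \cite{Fr02} and \cite{D18}---but there is no circularity, and the more direct phrasing is simply that the $(-1)$-trace is negative definite, whence $h\ge 0$ by Fr{\o}yshov's original definite-filling inequality.

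For the upper bound, your construction of the positive trace $X$ with a genus-$1$ generator $\hat\Sigma$ of square $+1$ is correct, and you are honest that the remaining step is the crux. But as written it is a genuine gap. You do not state which inequality from \cite{Fr02} you mean by ``the $b_2^+\le 1$ refinement \ldots\ sensitive to the genera of generating surfaces,'' and there is no off-the-shelf formula in that paper that one simply evaluates at $(b_2^+,g)=(1,1)$ to read off $h\ge -1$; Fr{\o}yshov's Lemma~9 \emph{is} that computation, carried out directly with the instanton moduli spaces over the trace. The blow-up alternative is likewise only a pointer: blowing up does not lower the genus of the proper transform of $\hat\Sigma$, so ``surger $\hat\Sigma$ down to a sphere using the exceptional spheres'' needs an explicit construction, after which you would still have to check that the resulting filling is negative definite and that the estimate returns exactly $-1$. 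Either route can presumably be completed, but the work you defer is the entire content of the lemma.
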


\begin{lem}
\label{Froyshov=1}
For any $k \in \z_{> 0}$, 
we have $h(\Sigma(2,3,6k-1))=1$.
\end{lem}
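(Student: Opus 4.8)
The plan is to compute $h(\Sigma(2,3,6k-1))$ by combining the sign of the Fr{\o}yshov invariant, which is accessible through $r_0$, with the a priori bound from \Cref{genus 1}. First I would recall that $K_k$ is a genus one knot, so $\Sigma(2,3,6k-1) \cong S^3_{-1}(K_k)$ by \Cref{twist knots} satisfies $0 \leq h(\Sigma(2,3,6k-1)) \leq 1$ by \Cref{genus 1}. Thus it suffices to rule out $h(\Sigma(2,3,6k-1)) = 0$, i.e. to show $h$ is strictly positive. Equivalently, writing $Y = \Sigma(2,3,6k-1)$, I want $h(Y) > 0$, which by the Fr{\o}yshov sign convention is the same as $h(-Y) < 0$, where $-Y = -\Sigma(2,3,6k-1)$.

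The key input is \Cref{Froyshov} (Corollary~1.3): $h(-Y) < 0$ if and only if $r_{-\infty}(-Y) < \infty$. Now $-Y = -\Sigma(2,3,6k-1)$, and since $(2,3,6k-1)$ are pairwise coprime with $R(2,3,6k-1) = 1 > 0$ (using the stated fact $R(p,q,pqk-1)=1$ with $p=2$, $q=3$), \Cref{general Seifert} (Corollary~1.6) applies and gives
\[
r_{-\infty}(-\Sigma(2,3,6k-1)) = r_s(-\Sigma(2,3,6k-1)) = \frac{1}{4 \cdot 2 \cdot 3 \cdot (6k-1)} = \frac{1}{24(6k-1)} < \infty.
\]
Hence $h(-\Sigma(2,3,6k-1)) < 0$, so $h(\Sigma(2,3,6k-1)) > 0$. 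Wait---I should double-check the sign orientation in \Cref{Froyshov}: it says $r_{-\infty}(Y) < \infty \iff h(Y) < 0$. Applying this with $Y$ replaced by $-\Sigma(2,3,6k-1)$ directly yields $h(-\Sigma(2,3,6k-1)) < 0$, hence $h(\Sigma(2,3,6k-1)) > 0$ since $h$ is a homomorphism (so $h(-Y) = -h(Y)$). Combined with $h(\Sigma(2,3,6k-1)) \leq 1$ from \Cref{genus 1}, and integrality of $h$, this forces $h(\Sigma(2,3,6k-1)) = 1$.

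The argument is essentially a bookkeeping of which orientation appears where, so there is no serious obstacle; the only thing requiring care is the orientation/sign conventions. Specifically I must verify that \Cref{twist knots} gives $S^3_{-1}(K_k) \cong \Sigma(2,3,6k-1)$ with the orientation for which $\Sigma(2,3,6k-1)$ bounds a \emph{negative} definite $4$-manifold (as used implicitly via $R>0$ conventions of Fintushel--Stern), that \Cref{genus 1} is stated for $S^3_{-1}(K)$ with this same orientation, and that the $-Y$ appearing in \Cref{Froyshov} and \Cref{general Seifert} matches. Once these conventions are aligned---which they are, since all are taken from the same sources---the chain $h(\Sigma(2,3,6k-1)) \in \{0,1\}$, $h(-\Sigma(2,3,6k-1)) < 0$, and $h(-Y) = -h(Y)$ pins down the value uniquely.
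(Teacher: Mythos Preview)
Your proof is correct and matches the paper's own argument essentially verbatim: both combine the upper bound $h(\Sigma(2,3,6k-1)) \leq 1$ from \Cref{twist knots} and \Cref{genus 1} with the strict positivity $h(\Sigma(2,3,6k-1)) > 0$ obtained via \Cref{Froyshov} and the finiteness of $r_s(-\Sigma(2,3,6k-1))$ (\Cref{general Seifert} or equivalently \Cref{Seifert}). The only cosmetic difference is that the paper cites \Cref{Seifert} directly, whereas you cite \Cref{general Seifert} and verify $R(2,3,6k-1)=1>0$; this is the same content.
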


\begin{proof}
The inequality $h(\Sigma(2,3,6k-1)) = -h(-\Sigma(2,3,6k-1))>0$ follows from Corollaries~\ref{Froyshov} and \ref{Seifert}.
The inequality $h(\Sigma(2,3,6k-1)) \leq 1$ follows from Lemmas~\ref{twist knots} and \ref{genus 1}
and the fact that $K_k$ has genus one for any $k \in \z_{>0}$.
\end{proof}

Now we prove one of the main theorems.

\def\proofname{Proof of \Cref{definite bounding}}
\begin{proof}
We put $Y_k:= 2\Sigma(2,3,5) \# (-\Sigma(2,3,6k+5))$ for any $k \in\z_{>0}$. 
Then it follows from Corollaries~\ref{Seifert} and \ref{order} and \Cref{subtraction}
that $r_0(Y_k)= \frac{1}{24(6k+5)}< \infty$. This fact and \Cref{obstruct bounding}
imply that $Y_k$ cannot bound any negative definite 4-manifold.

Next, since the invariant $h$ is a group homomorphism, 
\Cref{Froyshov=1} gives 
$h(Y_k)=1$.  
This fact and \Cref{Froyshov} imply
that $r_{-\infty}(-Y_k)< \infty$, and hence
it follows from \Cref{obstruct bounding} 
that $Y_k$ cannot bound any positive definite 4-manifold.

The linear independence of $\{Y_k\}^{\infty}_{k=1}$
follows from that of 
$\{\Sigma(2,3,6k-1)\}_{k=1}^{\infty}$.
\end{proof}
\def\proofname{Proof}

%%%%%%%%%5.3
\subsection{Linear independence of $1/n$-surgeries}
\label{section 5.3}

In this subsection, we prove the theorems stated in Section~\ref{section 1.2.2}.

\setcounter{section}{1}
\setcounter{thm}{7}
\begin{thm} 
For any knot $K$ in $S^3$, if $h(S^3_1(K))<0$, then $\{S^3_{1/n}(K)\}_{n=1}^{\infty}$
are linearly independent in $\Theta^3_{\z}$.
\end{thm}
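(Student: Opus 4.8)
The plan is to reduce the statement to an application of \Cref{linear independence}, for which it suffices to produce, for each $n$, the correct comparison of $r_0$-values. The main ingredients are the negative definite cobordism inequality \Cref{cobneq}, the relationship between $h$ and $r_{-\infty}$ in \Cref{Froyshov}, and the observation that one can pass back and forth between $S^3_1(K)$ and $S^3_{1/n}(K)$ by explicit negative definite cobordisms coming from $2$-handle attachments. First I would recall the standard fact that for any knot $K$ and any $n \geq 1$ there is a negative definite cobordism $W_n$ with $\partial W_n = S^3_{1/n}(K) \amalg -S^3_{1/(n+1)}(K)$ (attach a single $(-1)$-framed $2$-handle along a meridian-type curve; the intersection form is $(-1)$), and also a negative definite cobordism from $S^3_1(K)$ to $S^3_{1/n}(K)$. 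Applying \Cref{cobneq} along these cobordisms yields the chain of inequalities
\[
r_0(S^3_{1/1}(K)) \geq r_0(S^3_{1/2}(K)) \geq \cdots \geq r_0(S^3_{1/n}(K)) \geq \cdots,
\]
and moreover $r_0(S^3_1(K)) \geq r_0(S^3_{1/n}(K))$ for all $n$ (with the roles possibly arranged so the inequalities flow in the direction dictated by the orientation of $W_n$).

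Next I would pin down finiteness. Since $h(S^3_1(K)) < 0$, \Cref{Froyshov} gives $r_{-\infty}(S^3_1(K)) < \infty$, hence $r_0(S^3_1(K)) \leq r_{-\infty}(S^3_1(K)) < \infty$ by \Cref{different s}; combined with the monotonicity above, every $r_0(S^3_{1/n}(K))$ is finite. For the opposite-orientation values, the key point is that $-S^3_{1/n}(K) = S^3_{-1/n}(\overline{K})$ bounds a negative definite $4$-manifold (indeed $S^3_{1/n}(K)$ itself is obtained by surgery on a knot, so $-S^3_{1/n}(K)$ bounds a definite $4$-manifold of the appropriate sign — more precisely, the standard trace of $1/n$-surgery, suitably blown up, provides a negative definite filling of $-S^3_{1/n}(K)$). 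By \Cref{obstruct bounding} this forces $r_0(-S^3_{1/n}(K)) = \infty$ for all $n$.

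The remaining issue — and the step I expect to be the main obstacle — is to upgrade the weak monotonicity $r_0(S^3_{1/n}(K)) \geq r_0(S^3_{1/(n+1)}(K))$ to the \emph{strict} inequalities required by \Cref{linear independence}. For this I would use the simply connected refinement in \Cref{cobneq} / \Cref{pi1=1}: the cobordism $W_n$ obtained from a single $2$-handle attachment along an appropriate curve is simply connected, so as long as $r_0$ of the relevant boundary component is finite (which we have just established), the strict inequality $r_0(S^3_{1/(n+1)}(K)) < r_0(S^3_{1/n}(K))$ follows. One must check that the cobordisms can indeed be taken simply connected and negative definite with the arrows oriented consistently; this is a Kirby-calculus bookkeeping matter analogous to \Cref{twist knots} and Figure~\ref{-1 surgery}. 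Once the strict chain
\[
\infty > r_0(S^3_{1/1}(K)) > r_0(S^3_{1/2}(K)) > \cdots, \qquad \infty = r_0(-S^3_{1/1}(K)) = r_0(-S^3_{1/2}(K)) = \cdots
\]
is in hand, \Cref{linear independence} immediately gives that $\{S^3_{1/n}(K)\}_{n=1}^\infty$ are linearly independent in $\Theta^3_{\z}$, completing the proof.
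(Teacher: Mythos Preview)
Your proposal is correct and follows the same route as the paper: the argument is packaged there as \Cref{1/n r_s} (finiteness of $r_0(S^3_1(K))$ from \Cref{Froyshov}, $r_0(-S^3_{1/n}(K))=\infty$ from a positive definite bounding, and strict monotonicity from an explicit simply connected definite single-$2$-handle cobordism $W_n$), after which \Cref{linear independence} finishes. The only step that is genuinely more than ``Kirby-calculus bookkeeping'' is the simple connectedness of $W_n$---attaching one $2$-handle to $S^3_{1/n}(K)\times I$ does \emph{not} automatically kill $\pi_1$, and the paper verifies it via a Wirtinger-presentation computation (Claim~\ref{W_n pi_1}).
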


\begin{cor}
For any $k \in \z_{>0}$, the homology $3$-spheres $\{S^3_{1/n}(K_k)\}_{n=1}^{\infty}$ are linearly independent in $\Theta^3_{\z}$.
\end{cor}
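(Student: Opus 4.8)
The plan is to deduce this corollary from Theorem~\ref{knot surgery} by choosing a suitable knot $K$ whose $1$-surgery is one of the Brieskorn spheres already under control. Recall from Lemma~\ref{twist knots} that $\Sigma(2,3,6k-1) \cong S^3_{-1}(K_k)$, and hence (reversing orientation) $-\Sigma(2,3,6k-1) \cong S^3_{+1}(K_k^{*})$, where $K_k^{*}$ is the mirror of $K_k$. So the first step is to identify $S^3_1(K_k^{*})$ with $-\Sigma(2,3,6k-1)$ via this mirror/orientation bookkeeping for surgery coefficients (using $S^3_{r}(K^{*}) = -S^3_{-r}(K)$).

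Next I would compute the Fr\o yshov invariant of this manifold. By Lemma~\ref{Froyshov=1} we have $h(\Sigma(2,3,6k-1)) = 1$, and since $h$ changes sign under orientation reversal, $h(-\Sigma(2,3,6k-1)) = -1 < 0$. Thus $h(S^3_1(K_k^{*})) = h(-\Sigma(2,3,6k-1)) = -1 < 0$, so the hypothesis of Theorem~\ref{knot surgery} is verified for the knot $K = K_k^{*}$. Applying Theorem~\ref{knot surgery} directly yields that $\{S^3_{1/n}(K_k^{*})\}_{n=1}^{\infty}$ are linearly independent in $\Theta^3_{\z}$.

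Wait—there is a small subtlety: the corollary as stated writes $S^3_{1/n}(K_k)$, but the intended family (consistent with Corollary~\ref{K_k indep} in the introduction, where the mirrors $K_k^{*}$ appear) is $S^3_{1/n}(K_k^{*})$. I would therefore read the statement with the mirror understood (or, equivalently, note that the displayed corollary in Section~\ref{section 5.3} is the mirrored version \Cref{K_k indep}), and present the proof for $K_k^{*}$ as above. The routine verification that $h(S^3_1(K_k^{*})) < 0$ is the only input beyond citing Theorem~\ref{knot surgery}, and it follows immediately from Lemma~\ref{twist knots}, Lemma~\ref{Froyshov=1}, and the behavior of $h$ under orientation reversal.

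The main obstacle is essentially bookkeeping rather than mathematics: one must be careful about the conventions relating surgery coefficients, mirror images, and orientation reversal (the identity $S^3_{p/q}(K^{*}) = -S^3_{-p/q}(K)$), since a sign error here would misidentify which Brieskorn sphere appears and hence whether $h$ is positive or negative. Once that is pinned down, the corollary is an immediate instance of Theorem~\ref{knot surgery}.
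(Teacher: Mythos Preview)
Your proof is correct and matches the paper's approach: the paper simply notes that the corollary follows immediately from Theorem~\ref{knot surgery} together with Lemmas~\ref{twist knots} and \ref{Froyshov=1}, which is exactly the chain of reasoning you spell out. You are also right that the restated corollary in Section~\ref{section 5.3} omits the mirror star present in the introduction's Corollary~\ref{K_k indep}; the argument indeed applies to $K_k^{*}$, and the paper's one-line justification implicitly uses this.
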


\begin{cor}
For any knot $K$ in $S^3$ and odd integer $q \geq 3$,
the homology $3$-spheres
$\{S^3_{1/n}(K_{2,q})\}_{n=1}^{\infty}$ are linearly independent in $\Theta^3_{\z}$.
\end{cor}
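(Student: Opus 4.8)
\textbf{Proof plan for \Cref{cable indep}.}
The plan is to deduce \Cref{cable indep} from \Cref{knot surgery}, so the only work is to verify that the hypothesis $h(S^3_1(K_{2,q}))<0$ holds for every knot $K$ and every odd $q\ge 3$. First I would compute the effect of $1$-surgery on a cable: there is a standard relation expressing $S^3_1(K_{2,q})$ in terms of a Dehn surgery on $K$ together with a Seifert piece coming from the cabling data. Concretely, $1$-surgery on the $(2,q)$-cable $K_{2,q}$ is obtained by gluing the surgered solid torus to the cable space of $K$; tracking framings shows that $S^3_1(K_{2,q})$ is homology cobordant to (indeed orientation-preservingly diffeomorphic to, after Kirby moves) a manifold built from $S^3_{q/2}(K)$-type surgery and a copy of $\pm\Sigma(2,q,2q\mp1)$, the Brieskorn sphere arising as $\mp1$-surgery on the torus knot $T_{2,q}$ inside the cabling torus.

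The key step is then to invoke additivity and monotonicity of the Fr{\o}yshov invariant $h$ together with its known value on the relevant Brieskorn spheres. Since $h$ is a homomorphism $\Theta^3_\z\to\z$, it suffices to split $[S^3_1(K_{2,q})]$ in $\Theta^3_\z$ as a connected sum involving $-\Sigma(2,3,5)$-type pieces or, more directly, to use that $S^3_1(K_{2,q})$ bounds a negative definite $4$-manifold obtained by attaching a single $2$-handle to the cable knot with framing $+1$ union a plumbing realizing the Seifert part; comparing with $h(\Sigma(2,q,2qn-1))$ from \Cref{Seifert} (which gives the sign of $h$ for these Seifert spheres via \Cref{Froyshov}) yields $h(S^3_1(K_{2,q}))<0$. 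The cleanest route: $S^3_1(K_{2,q})$ is homology cobordant to $S^3_{1/?}(T_{2,q})\#(\text{something bounding a definite }4\text{-manifold})$; since $S^3_1(T_{2,q})=-\Sigma(2,q,2q-1)$ and $\Sigma(2,q,2q-1)$ has $R>0$, \Cref{Froyshov} and \Cref{Seifert} give $h(S^3_1(T_{2,q}))=h(-\Sigma(2,q,2q-1))=-h(\Sigma(2,q,2q-1))<0$, and the extra definite-bounding summand contributes $0$ to $h$. Once $h(S^3_1(K_{2,q}))<0$ is established, \Cref{knot surgery} applies verbatim with $K$ replaced by $K_{2,q}$, giving the linear independence of $\{S^3_{1/n}(K_{2,q})\}_{n=1}^\infty$ in $\Theta^3_\z$.

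I expect the main obstacle to be the Kirby-calculus bookkeeping that identifies $S^3_1(K_{2,q})$, or a suitable homology-cobordant representative, as a connected sum (or definite-cobordant manifold) having a torus-knot-surgery summand with controlled $h$; one must be careful that the framing $+1$ on the cable corresponds to a surgery coefficient on $K$ lying in the range where $h$ is computable, and that the auxiliary summand genuinely bounds a definite $4$-manifold so that it is invisible to $h$. An alternative that sidesteps some of this: use the surgery-formula/inequality behavior of $h$ under $1/n$-surgery together with the fact (from \cite{Fr02}) that for a cable of the form $K_{2,q}$ with $q\ge 3$ odd the relevant $d$-invariant/$h$-type estimate is governed by the torus knot $T_{2,q}$, for which the value is negative; this reduces the claim to the already-known torus-knot case $\Sigma(2,q,2qn-1)$ treated in \Cref{Seifert}.
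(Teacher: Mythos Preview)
You correctly reduce the problem to verifying $h(S^3_1(K_{2,q}))<0$ and then invoking \Cref{knot surgery}; that reduction is exactly what the paper does. However, your proposed verification of $h(S^3_1(K_{2,q}))<0$ has a genuine gap.

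Your argument rests on decomposing $S^3_1(K_{2,q})$, up to homology cobordism, as a connected sum with a torus-knot-surgery summand of known $h$ plus a piece bounding a definite $4$-manifold. But the surgery on a cable $K_{2,q}$ with coefficient $1$ is \emph{not} the reducible case of Gordon's cable surgery formula (that would require surgery coefficient $2q$, not $1$); for coefficient $1$ you obtain a graph manifold which is a nontrivial splice of $E(K)$ with a Seifert piece, not a connected sum. There is no general mechanism producing a homology cobordism from this splice to a connected sum of the type you describe, and your ``extra definite-bounding summand contributes $0$ to $h$'' step is unjustified: you never identify the summand or prove it bounds a definite $4$-manifold. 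The alternative you sketch at the end (``the relevant $h$-type estimate is governed by the torus knot $T_{2,q}$'') is not an argument; it is precisely the assertion to be proved, and it depends essentially on $K$ in general.

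The paper's route is entirely different and avoids any surgery decomposition. It quotes two external results: Sato \cite{Sa17} proves that for any knot $K$ and odd $q\ge 3$, the manifold $S^3_1(K_{2,q})$ bounds a positive definite \emph{spin} $4$-manifold; Fr{\o}yshov \cite[Theorem~3]{Fr02} proves that if $Y$ bounds a positive definite $4$-manifold with non-diagonalizable intersection form, then $h(Y)<0$. Since a spin form is even and hence non-diagonalizable, these combine to give $h(S^3_1(K_{2,q}))<0$ directly. This is both shorter and uniform in $K$; your approach, even if the Kirby bookkeeping could be made to work in special cases, would not obviously handle an arbitrary companion knot $K$.
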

\setcounter{section}{5}
\setcounter{thm}{9}
Note that \Cref{K_k indep} immediately follows from \Cref{knot surgery}, Lemmas~\ref{twist knots} and \ref{Froyshov=1}.
On the other hand, \Cref{cable indep} follows from
\Cref{knot surgery} and the following two facts.
{Note that the intersection form of any spin 4-manifold whose boundary is a homology 3-sphere is even, and hence it is non-diagonalizable, namely the intersection form is not isomorphic to $\bigoplus (\pm 1)$.}

\begin{thm}[\text{\cite[Theorem 3]{Fr02}}]
If $Y$ bounds a positive definite $4$-manifold with non-diagonalizable intersection form,
then $h(Y)<0$.
\end{thm}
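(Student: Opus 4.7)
The plan is to prove the contrapositive, which is Frøyshov's refinement of Donaldson's diagonalization theorem: if $h(Y) \geq 0$ and $Y = \partial X$ with $X$ a smooth positive definite $4$-manifold, then the intersection form $Q_X$ is diagonalizable over $\z$. I would follow the classical Yang--Mills diagonalization argument, adapted to the cylindrical-end setting and using the Floer-theoretic obstruction $\theta_Y$ introduced in \Cref{defiofthetar}.

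First, I would work on the cylindrical-end manifold $\hat{X} := X \cup_Y (Y \times [0,\infty))$. For each class $e \in H^2(X;\z)$, consider the $U(1)$-reducible ASD connection $A_e$ on $\hat{X}$ whose first Chern class restricts to $e$ and whose limit at infinity is the product connection $\theta$ on $Y$; its Yang--Mills energy equals $e \cdot e$. By the Atiyah--Patodi--Singer index formula, the expected dimension of the moduli space of ASD connections near $A_e$ limiting to $\theta$ is determined by $e \cdot e$ together with eta-invariant correction terms from $Y$. For the minimal case where the dimension is $1$, the ends of this moduli are of two types: (i) bubbling of a unit-energy instanton at a point, which locally looks like the $S^3$ theory and contributes $\pm 1$, and (ii) broken trajectories whose intermediate limit is an irreducible Chern--Simons critical point on $Y$ connected to $\theta$ by a trajectory counted by $\theta_Y$.

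Counting these ends produces, for each such $e$, an identity in the Floer complex $CI_\ast(Y)$ expressing that the obstruction to the existence of $A_e$ as a genuine reducible in the parameterized moduli is given by a cocycle of the form $\theta^{[-\infty,r]}_Y$ evaluated at a suitable chain. The invariant $h(Y)$ is defined in \cite{Fr02} precisely so that the condition $h(Y) \geq 0$ forces this obstruction to vanish for a sufficiently rich collection of classes $e$ with $e \cdot e = 1$. Using Donaldson's classical combinatorial argument, together with positive definiteness and unimodularity of $Q_X$, I would extract $b_2(X)$ linearly independent norm-one vectors in $H^2(X;\z)/\text{torsion}$. By positive definiteness these vectors form an orthonormal basis, which diagonalizes $Q_X$.

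The hard part is the precise matching between the $h$-invariant and the Floer-theoretic obstruction arising from the reducible ends of the moduli spaces: one must carry out careful gluing and transversality near the reducible $A_e$ so that the vanishing of $h(Y)$ (in the appropriate sign/grading) translates into the existence of the required reducibles. This matching is the technical content of \cite[Theorem 3]{Fr02}. Taking the contrapositive of the result, if $Q_X$ is non-diagonalizable then the norm-one classes promised above cannot all exist, which forces the obstruction to be nontrivial at the relevant level, and hence $h(Y) < 0$.
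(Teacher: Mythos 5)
This statement is cited directly from Fr{\o}yshov's paper and is \emph{not} proved in the paper under review; it is used as a black box to deduce \Cref{cable indep}. So there is no internal proof to compare against.

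Your sketch captures the broad outline of Fr{\o}yshov's argument: pass to the cylindrical-end manifold $\hat X$, study the moduli space of (perturbed) ASD connections near the $U(1)$-reducible $A_e$ for $e\in H^2(X;\z)$, analyze its ends, and match Donaldson's combinatorial criterion for diagonalizability against a Floer-theoretic obstruction on $Y$. But as written the argument has a genuine gap in exactly the place you flag: the assertion that ``$h(Y)\geq 0$ forces this obstruction to vanish for a sufficiently rich collection of classes $e$ with $e\cdot e=1$'' is not the definition of $h$, it \emph{is} the content of Fr{\o}yshov's Theorem~3, so the contrapositive as stated is circular. Concretely, $h$ is defined via the degree-$4$ map on reduced Floer homology (minimal degree of a cycle hit by the relative Donaldson invariant), and relating that to the vanishing of the obstruction cochain $\theta_Y$ at the ends of the reducible moduli requires the gluing and excision arguments that are the bulk of Fr{\o}yshov's paper; nothing in the present paper supplies them. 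There is also a convention issue worth noting: since $Y$ bounds a positive definite manifold one already has $h(Y)\leq 0$ (equivalently, $-Y$ bounds negative definite, so $r_{-\infty}(-Y)=\infty$ and thus $h(-Y)\geq 0$), so the contrapositive should be phrased as ``$h(Y)=0$ and $Y=\partial X$ positive definite implies $Q_X$ diagonal,'' and the relevant reducibles live on $\widehat{X}$ with the opposite orientation convention from the negative-definite cobordism maps used throughout this paper. In short, the proposal is a reasonable road map, but it defers the hard step to the very theorem it is purporting to prove.
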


\begin{thm}[\text{\cite[Theorem 1.7]{Sa17}}]
For any knot $K$ and odd $q \geq 3$,
the homology $3$-sphere $S^3_1(K_{2,q})$ bounds a positive definite spin $4$-manifold.
\end{thm}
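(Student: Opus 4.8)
The plan is to exhibit, by Kirby calculus, an explicit $4$-manifold $W$ with $\partial W = S^3_1(K_{2,q})$ whose intersection form is even and positive definite; the whole argument is then $4$-dimensional handle theory, and the hypothesis that $q$ is odd is used twice: it makes $K_{2,q}$ a knot rather than a two-component link, and it forces $W$ to be spin.

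The geometric input is that, for $q$ odd, $K_{2,q}$ bounds a M\"obius band in $S^3$---the $q$-half-twisted cabling band, dragged along $K$---whose boundary slope is an even integer. I would begin from the surgery description in which $K_{2,q}$ appears as two parallel strands running along $K$ together with a $q$-half-twisted band, decorated with coefficient $+1$, and apply the classical cabling formula for Dehn surgery on satellite knots: $S^3_1(K_{2,q})$ is the graph manifold obtained by gluing $S^3\smallsetminus\nu(K)$ to the Seifert fibered space over the disk with three exceptional fibers of orders $2$, $q$ and $2q-1$ (when $K$ is the unknot this manifold is $-\Sigma(2,q,2q-1)=S^3_1(T_{2,q})$). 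The Seifert piece bounds a star-shaped plumbing, the companion side $S^3\smallsetminus\nu(K)$ is capped off by the $4$-manifold obtained from $B^4$ by removing a neighbourhood of a pushed-in Seifert surface of $K$, and gluing these along the separating torus produces the candidate $W$. Note that $W$ is not simply connected---its $\pi_1$ detects the meridian of $K$---so its intersection form is in general not unimodular, which is exactly the room needed, since an \emph{unimodular} even positive definite filling of a homology sphere would have to be a connected sum of copies of the $E_8$-manifold.

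Two properties then have to be arranged. \textbf{Evenness} of the intersection form, equivalently that $W$ is spin, is where $q$ odd enters: the $q$-half-twisted band together with the $+1$-framing normalize, after a single blow-up, to even framings, while the companion $K$ is threaded through \emph{two parallel} strands, so its handles occur in pairs and contribute an even, block-symmetric summand. \textbf{Positive-definiteness}: after handle slides and blow-downs one should reduce $W$ to a plumbing along a tree all of whose weights are positive, together with the companion block; such a plumbing is automatically positive definite. The sign $+1$ of the surgery coefficient is precisely what puts us on the positive side---the identical construction applied to $(-1)$-surgery would produce a negative definite spin $4$-manifold. Once $W$ is built, $H_1(W)$ and $w_2(W)$ are read off the handle decomposition to confirm the spin condition.

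The main obstacle is the positive-definiteness for an arbitrary companion $K$; even the case $K=\text{unknot}$---an even negative definite filling of $\Sigma(2,q,2q-1)$---already requires a genuine construction, since the canonical resolution plumbing of this Brieskorn sphere need not be even. For general $K$ one must show that the Seifert-surface $4$-manifold glued onto the companion side cannot destroy positive-definiteness; I would establish this by putting the handle diagram into a normal form in which $K$ is absorbed into a cancelling $1$-handle/$2$-handle pattern whose net contribution to the form is a positive even block, or, failing that, by an induction that simplifies $K$ one crossing at a time and tracks the effect of each crossing change (a $\pm1$-framed blow-up in the surgery diagram) on definiteness and evenness. What remains is the bookkeeping to confirm that $W$ is connected and that its (non-unimodular) intersection form is even and positive definite in exactly the sense the statement requires.
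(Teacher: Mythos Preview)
The paper does not prove this statement; it is quoted from \cite{Sa17} and used as a black box, together with Fr{\o}yshov's criterion, to deduce \Cref{cable indep}. So there is no in-paper argument to compare against, and your proposal must stand on its own.

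It does not. You assert that because your candidate $W$ has nontrivial $\pi_1$, ``its intersection form is in general not unimodular, which is exactly the room needed.'' This is false: whenever $\partial W$ is an integral homology sphere, the long exact sequence of $(W,\partial W)$ and Poincar\'e--Lefschetz duality give $H_2(W)\cong H_2(W,\partial W)\cong H^2(W)$, so the intersection form on $H_2(W)/\text{torsion}$ is unimodular regardless of $\pi_1(W)$. Any spin positive-definite filling of a homology sphere therefore has intersection form $kE_8$ and in particular $b_2\in 8\z$. This is not an obstruction to the theorem---for instance $S^3_1(T_{2,3})=-\Sigma(2,3,5)$ bounds the orientation-reversed $E_8$-plumbing---but it is a rigid constraint your sketch nowhere addresses; nothing about ``a star-shaped plumbing glued to $B^4\setminus\nu(F)$'' forces $b_2$ to be a multiple of $8$. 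The crossing-change induction you offer as a fallback is worse: each crossing change attaches a $(\pm 1)$-framed $2$-handle, which is odd-framed and immediately kills the spin condition you are trying to establish. Beyond the unimodularity error, the gluing itself is never pinned down (both pieces have boundary a $3$-manifold with torus boundary, and you do not specify the identification or verify that the glued boundary is the correct closed $3$-manifold), and no intersection form is actually computed. What you have is a direction, built on an incorrect guiding heuristic, rather than a proof.
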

The proof of \Cref{knot surgery} is obtained by combining \Cref{linear independence}
with the following theorem.

\begin{thm}
\label{1/n r_s}
For any knot $K$ in $S^3$, if $h(S^3_1(K))<0$,  then for any $s$, we have
$$
\infty > r_s(S^3_1(K)) > r_s(S^3_{1/2}(K)) > \cdots
$$
and
$$
\infty = r_s(-S^3_1(K)) = r_s(-S^3_{1/2}(K)) = \cdots.
$$
\end{thm}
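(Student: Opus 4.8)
\textbf{Proof plan for \Cref{1/n r_s}.}

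The plan is to build, for each $n\ge 1$, an explicit negative definite cobordism relating consecutive surgeries and to use the cobordism inequality of \Cref{cobneq} together with the finiteness criterion of \Cref{Froyshov} to get strict monotonicity on one side and the value $\infty$ on the other. First I would recall the standard Kirby-calculus fact that $S^3_{1/(n+1)}(K)$ is obtained from $S^3_{1/n}(K)$ by a single $(-1)$-framed two-handle attachment along a meridian of $K$; dually, there is a connected cobordism $W_n$ with $\partial W_n = S^3_{1/n}(K)\amalg -S^3_{1/(n+1)}(K)$ whose intersection form is $(-1)$, in particular $W_n$ is negative definite and simply connected (it is a single two-handle attached to $S^3_{1/n}(K)\times[0,1]$, so $\pi_1(W_n)=\pi_1(S^3_{1/n}(K))$ has no effect on the relevant statement — what matters is that $H^1(W_n;\R)=0$ and, for the strict inequality, that $W_n$ is simply connected; attaching a $2$-handle along a meridian circle which is null-homotopic in an integral homology sphere makes $W_n$ simply connected). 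Reversing and mirroring gives a negative definite simply connected cobordism $W_n'$ with $\partial W_n' = -S^3_{1/n}(K)\amalg S^3_{1/(n+1)}(K)$ as well, i.e. between $-S^3_{1/(n+1)}(K)$ and $-S^3_{1/n}(K)$.

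Next I would establish finiteness of $r_s(S^3_1(K))$: by hypothesis $h(S^3_1(K))<0$, so \Cref{Froyshov} gives $r_s(S^3_1(K))<\infty$ for all $s\in[-\infty,0]$. Feeding the cobordisms $W_n$ into \Cref{cobneq}(1) yields
\[
r_s(S^3_{1/(n+1)}(K)) \le r_s(S^3_{1/n}(K)),
\]
and since $W_n$ can be taken simply connected and $r_s(S^3_{1/n}(K))<\infty$ (by induction, starting from $n=1$), the second half of \Cref{cobneq}(1)/\Cref{pi1=1} upgrades this to the strict inequality
\[
r_s(S^3_{1/(n+1)}(K)) < r_s(S^3_{1/n}(K)) < \infty.
\]
For the other sequence: $-S^3_1(K) = S^3_{-1}(\bar K)$ for the mirror $\bar K$, and this bounds a negative definite $4$-manifold — indeed $S^3_{-1}(\bar K)$ is $(-1)$-surgery, so the trace of the surgery is a negative definite $4$-manifold with this boundary — hence by \Cref{obstruct bounding} we get $r_s(-S^3_1(K))=\infty$. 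Then the cobordisms $W_n'$ and \Cref{cobneq}(1) give
\[
\infty = r_s(-S^3_1(K)) \le r_s(-S^3_{1/2}(K)) \le \cdots,
\]
forcing every term to equal $\infty$.

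The main obstacle I anticipate is the topological bookkeeping: verifying carefully that the handle cobordism $W_n$ between $S^3_{1/n}(K)$ and $S^3_{1/(n+1)}(K)$ is genuinely negative definite with $b_1 = 0$ and (for strictness) simply connected, and getting the orientations/direction of the cobordism right so that the inequality points the correct way (one must be careful that $r_s$ of the "smaller" surgery is bounded by $r_s$ of the "larger" one, matching the direction $Y_1 \amalg -Y_2$ in \Cref{cobneq}). The surgery-theoretic input is classical, but the linking-form and orientation computations must be pinned down; once that is done, the two applications of \Cref{cobneq} together with \Cref{Froyshov} and \Cref{obstruct bounding} finish the proof, and \Cref{knot surgery} then follows by invoking \Cref{linear independence}.
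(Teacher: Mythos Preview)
Your strategy is exactly the paper's: construct a single 2-handle cobordism $W_n$ between $S^3_{1/n}(K)$ and $S^3_{1/(n+1)}(K)$, verify definiteness and simple connectivity, and feed it into \Cref{pi1=1} together with \Cref{Froyshov} and \Cref{obstruct bounding}. However, two of your topological claims are wrong as stated and would need to be fixed.

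First, the 2-handle cobordism $W_n$ built on $S^3_{1/n}(K)\times[0,1]$ with outgoing boundary $S^3_{1/(n+1)}(K)$ is \emph{positive} definite, not negative definite: the attaching circle (the core of the reglued solid torus, which is indeed isotopic to the image of a meridian of $K$) must carry framing $+1$ relative to its Seifert framing in $S^3_{1/n}(K)$. The paper checks this by embedding $W_n$ into an explicit 4-manifold $X_n$ and comparing signatures. One then applies \Cref{pi1=1} to $-W_n$, which is negative definite with $\partial(-W_n)=S^3_{1/n}(K)\amalg -S^3_{1/(n+1)}(K)$, to get the strict inequality in the correct direction. Second, your simple-connectivity argument is backwards: if the attaching circle were null-homotopic, attaching a 2-handle along it would \emph{not} change $\pi_1$. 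What you need is that the meridian \emph{normally generates} $\pi_1(S^3_{1/n}(K))$ (it already normally generates $\pi_1(E(K))$), so killing it trivializes the group; the paper does this via an explicit Wirtinger-type presentation. Finally, for the sequence $r_s(-S^3_{1/n}(K))=\infty$ no cobordism chain is needed: each $S^3_{1/n}(K)$ bounds a positive definite 4-manifold directly (e.g.\ stack $W_1,\dots,W_{n-1}$ on $D^4$), so \Cref{obstruct bounding} applies immediately; your $W_n'$ construction has the orientations and hence the direction of the inequality confused.
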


\begin{proof}
\def\proofname{Proof}

For any knot $K$ and $n \in \z_{>0}$, since
$S^3_{1/n}(K)$ bounds a positive definite 4-manifold, 
we have
$$
\infty = r_s(-S^3_1(K)) = r_s(-S^3_{1/2}(K)) = \cdots.
$$
Suppose that $K$ satisfies $h(S^3_1(K))<0$.
Then \Cref{Froyshov} gives $r_s(S^3_1(K)) < \infty$.
For any $n \in \z_{>0}$, let $W_n$ be the cobordism given by the relative
Kirby diagram in Figure~\ref{W_n}. It is easy to see that 
$\partial W_n = S^3_{1/(n+1)}(K) \amalg -S^3_{1/n}(K)$.

\begin{figure}[htbp]
\begin{center}
\hspace{5mm}
\includegraphics[scale= 0.5]{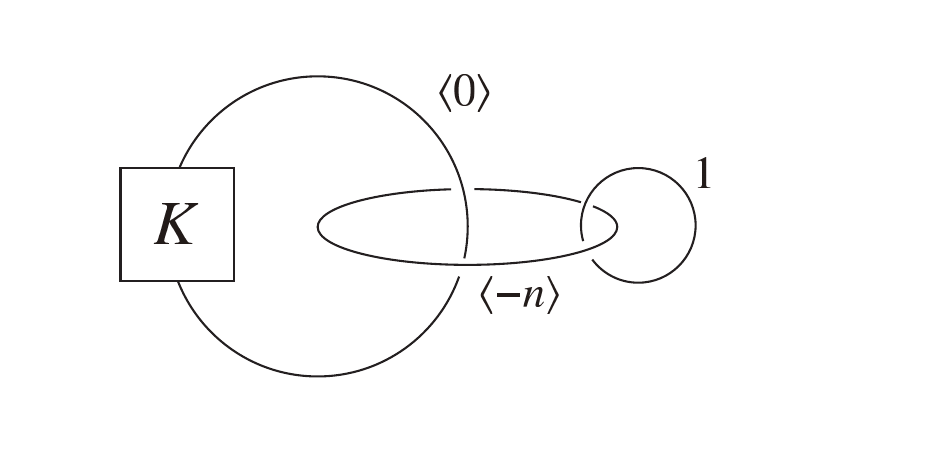}
\vspace{-4mm}
\caption{The cobordism $W_n$. \label{W_n}}
\end{center}
\end{figure}

\begin{claim}
\label{W_n pos}
The cobordism $W_n$ is positive definite.
\end{claim}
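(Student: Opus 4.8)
The plan is to read the handle structure of $W_n$ off its relative Kirby diagram and then pin down its intersection form by comparing two explicit $4$-manifold fillings of its boundary. From Figure~\ref{W_n}, the cobordism $W_n$ is obtained from $S^3_{1/n}(K)\times I$ by attaching a single $2$-handle, so at the very least $b_1(W_n)=0$ and $b_2(W_n)\le 1$. More concretely, present $S^3_{1/n}(K)$ by the integral surgery diagram $L_n:=K(0)\cup U_1(-n)$, where $U_1$ is a meridian of $K$; then $W_n$ is the cobordism realizing the Kirby move that introduces a $(+1)$-framed unknot $c$ meridional to $U_1$ and blows it down, turning $L_n$ into $L_{n+1}=K(0)\cup U_1(-(n+1))$, a surgery diagram for $S^3_{1/(n+1)}(K)$. (The blow-down of a $(+1)$-framed meridian lowers the coefficient on $U_1$ by one, which is the reason $\partial W_n=S^3_{1/(n+1)}(K)\amalg -S^3_{1/n}(K)$.)

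Now set $X_-$ to be $B^4$ with $2$-handles attached along $L_n$, so that $\partial X_-=S^3_{1/n}(K)$ and $Q_{X_-}=\begin{pmatrix}0&1\\1&-n\end{pmatrix}$, and set $X_+$ to be $B^4$ with $2$-handles along $L_n\cup c(+1)$, so that $\partial X_+=S^3_{1/(n+1)}(K)$ and $Q_{X_+}=\begin{pmatrix}0&1&0\\1&-n&1\\0&1&1\end{pmatrix}$. By construction $X_+=X_-\cup_{S^3_{1/n}(K)}W_n$. Since $S^3_{1/n}(K)$ is a rational homology sphere, Novikov additivity of the signature gives $\sigma(W_n)=\sigma(X_+)-\sigma(X_-)$, while the Mayer--Vietoris sequence with $\q$--coefficients (using $H_1(S^3_{1/n}(K);\q)=H_2(S^3_{1/n}(K);\q)=0$) gives $b_2(W_n)=b_2(X_+)-b_2(X_-)=3-2=1$. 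A short Sylvester/congruence computation shows $\sigma(X_-)=0$ (the form $\begin{pmatrix}0&1\\1&-n\end{pmatrix}$ has determinant $-1$, hence is indefinite of rank $2$) and $\sigma(X_+)=1$ (diagonalizing $Q_{X_+}$ against its $(+1)$ entry leaves the indefinite rank-$2$ block $\begin{pmatrix}0&1\\1&-(n+1)\end{pmatrix}$). Therefore $\sigma(W_n)=1=b_2(W_n)$, so the intersection form of $W_n$ is the rank-one form $(1)$; in particular $W_n$ is positive definite.

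The only real obstacle is bookkeeping the orientation and framing conventions of Figure~\ref{W_n}: one must verify that the handle added over $S^3_{1/n}(K)\times I$ carries framing $+1$ rather than $-1$, equivalently that the resulting boundary is $S^3_{1/(n+1)}(K)$ rather than $S^3_{1/(n-1)}(K)$ (the latter would instead give a negative definite cobordism). Once this sign is fixed consistently with the stated identification $\partial W_n=S^3_{1/(n+1)}(K)\amalg -S^3_{1/n}(K)$, the signature and Betti-number computation above is forced and yields the claim.
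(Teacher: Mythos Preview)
Your proof is correct and follows essentially the same approach as the paper. Your fillings $X_-$ and $X_+$ are precisely the paper's $X'_n$ and $X_n$ (the paper also uses the integral surgery description $K(0)\cup U_1(-n)$ for $S^3_{1/n}(K)$ and adds one more $2$-handle), and your Novikov-additivity computation $\sigma(W_n)=\sigma(X_+)-\sigma(X_-)=1-0=1=b_2(W_n)$ is exactly the paper's count $b_2^+(X_n)-b_2^+(X'_n)=2-1=1$, $b_2^-(X_n)-b_2^-(X'_n)=1-1=0$.
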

\begin{proof}
Let $X_n$ be a 4-manifold given by 
the Kirby diagrams in Figure~\ref{X_n},
and $X'_n$ a 4-dimensional submanifold of $X_n$ obtained by
attaching 2-handles along the 2-component sublink in the left diagram
of Figure~\ref{X_n} whose framing is $(0, -n)$.
Then we have 
the diffeomorphism $X_n \cong X'_n \cup_{S^3_{1/n}(K)} W_n$.
For a 4-manifold $M$, let $b^+_2(M)$ (resp.\ $b^-_2(M)$) denote
the number of positive (resp.\ negative) eigenvalues of the
intersection form of $M$.
Then it is easy to check that $b^+_2(X_n)=2$, $b^-_2(X_n)=1$ and 
$b^+_2(X'_n)=b^-_2(X'_n)=1$.
These imply that $b_2(W_n)=b^+_2(W_n)=1$ and $b^-_2(W_n)=0$.
\end{proof} 

\begin{figure}[htbp]
\begin{center}
\hspace{5mm}
\includegraphics[scale= 0.5]{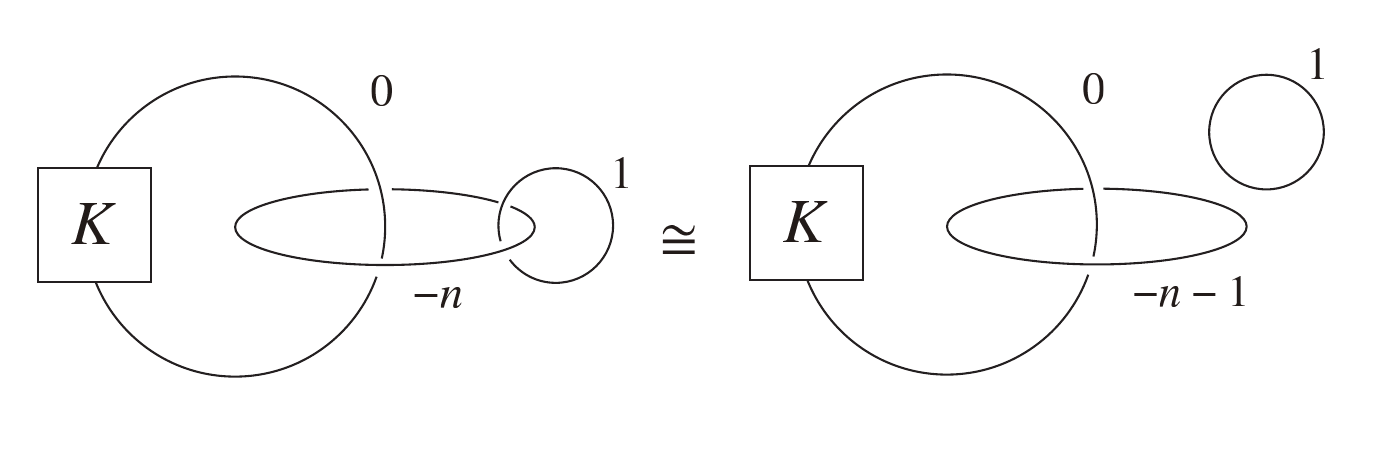}
\vspace{-4mm}
\caption{The 4-manifold $X_n$. \label{X_n}}
\end{center}
\end{figure}

\begin{claim}
\label{W_n pi_1}
The cobordism $W_n$ is simply connected.
\end{claim}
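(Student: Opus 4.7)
The plan is to read off the handle structure of $W_n$ from the relative Kirby diagram in Figure~\ref{W_n} and then apply van Kampen's theorem to the resulting decomposition. From the figure, one expects that $W_n$ admits a handle decomposition, relative to the collar of $S^3_{1/n}(K)$, consisting of a single $2$-handle attached along a meridian $\mu$ of $K$ viewed inside $S^3_{1/n}(K)$, with no $1$-handles. This is the standard cobordism realizing the slope change from $1/n$ to $1/(n+1)$ via a Rolfsen twist, and it is consistent with $b_2(W_n)=1$ established in Claim~\ref{W_n pos}, which rules out extra $2$-handles appearing in a canceling pair.

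Assuming such a decomposition, van Kampen's theorem yields
\[
\pi_1(W_n) \;\cong\; \pi_1(S^3_{1/n}(K)) \big/ \langle\!\langle [\mu] \rangle\!\rangle,
\]
the quotient by the normal closure of the class of the attaching circle. I would then invoke the standard fact that any meridian of $K$ normally generates $\pi_1(S^3\setminus K)$, which is immediate from the Wirtinger presentation (all Wirtinger generators are conjugate to a fixed meridian, and they generate the whole group). Since $\pi_1(S^3_{1/n}(K))$ is a further quotient of $\pi_1(S^3\setminus K)$ obtained by Dehn filling, the meridian $\mu$ continues to normally generate $\pi_1(S^3_{1/n}(K))$. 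Hence the displayed quotient is trivial and $\pi_1(W_n)=1$.

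The main obstacle is the first step: correctly interpreting the Kirby diagram in Figure~\ref{W_n} to confirm that $W_n$ really is obtained from a single $2$-handle attached along a meridian of $K$ and that no $1$-handles are present. Once this geometric identification is secured, the $\pi_1$-computation reduces to the van Kampen argument above together with the Wirtinger presentation, and no further difficulty arises.
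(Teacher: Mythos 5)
You correctly flagged the geometric identification of the attaching circle as the main obstacle, and indeed this is where the proposal goes wrong. In the Kirby-diagram description used in the paper, the single $2$-handle of $W_n$ is attached not along a meridian of $K$, but along a small linking circle $y$ of the $(-n)$-framed unknot $m$ that appears in the integral surgery presentation of $S^3_{1/n}(K)$ (here $m$ is itself a meridian of $K$). Blowing down a $(+1)$-framed meridian of $m$ changes $m$'s coefficient from $-n$ to $-(n+1)$, realizing the slope change, and this also makes the intersection form of $W_n$ equal to $(+1)$, consistent with \Cref{W_n pos}; attaching instead along a genuine meridian of $K$ would require a different framing and give a different cobordism. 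In $\pi_1(S^3_{1/n}(K))$ the surgery relation for $m$ reads $\mu_K = y^{\,n}$, so for $n>1$ the attaching circle $y$ is not conjugate to $\mu_K$, and your normal-generation argument cannot be invoked verbatim for the quotient $\pi_1(S^3_{1/n}(K))/\langle\!\langle y\rangle\!\rangle$.

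The repair is small and is exactly what the paper carries out: adding the relation $y=1$ forces $\mu_K = y^{\,n} = 1$ (and $x_m = x_1$ from the commutation relation), and then your Wirtinger observation that all generators of $\pi_1(E(K))$ are conjugate to a fixed meridian kills the remaining generators by induction, giving $\pi_1(W_n)=1$. So your high-level strategy — van Kampen together with normal generation of $\pi_1(E(K))$ by a meridian — coincides with the paper's; the unresolved geometric input on which your plan rests does not match the actual cobordism, and that is the gap.
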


\begin{proof}
Suppose that the number of crossings in the diagram of Figure~\ref{pres1}
is $m+1$.
Then, $\pi_1(S^3_{1/n}(K))$ has the presentation
$$
\left\langle
\begin{array}{l|l}
\  & x_{i+1} = x^{\varepsilon_i}_{k_i} x_i x^{-\varepsilon_i}_{k_i}
\ (i=1, \ldots, m-1),\\
x_1, x_2, \ldots, x_m, y & x_m y = y  x_1,\ y x_1 = x_1 y ,\\
\ & \lambda=1,\ x_1 y^{-n}=1
\end{array}
\right\rangle,
$$
where
\begin{itemize}
\item
the labels $x_1$, $x_m$ and $y$ are associated as shown in
Figure \ref{pres1},
\item
$\varepsilon_i \in \{ \pm 1\}$ and $k_i \in \{1, \ldots, m\}$
($i=1, \ldots, m$), and
\item
$\lambda$ is a word corresponding to a longitude of $K$ with framing 0.
\end{itemize}
Moreover, since the attaching sphere of the (unique) 2-handle of $W_n$ is homotopic to $y$,
$\pi_1(W_n)$ has the presentation
$$
\left\langle
\begin{array}{l|l}
\  & x_{i+1} = x^{\varepsilon_i}_{k_i} x_i x^{-\varepsilon_i}_{k_i}
\ (i=1, \ldots, m-1),\\
x_1, x_2, \ldots, x_m, y & x_m y = y  x_1,\ y x_1 = x_1 y ,\\
\ & \lambda=1,\ x_1 y^{-n}=1,\ y=1
\end{array}
\right\rangle
$$
$$
\cong
\left\langle
\begin{array}{l|l}
x_1, x_2, \ldots, x_m  & 
\begin{array}{l}
x_{i+1} = x^{\varepsilon_i}_{k_i} x_i x^{-\varepsilon_i}_{k_i}
\ (i=1, \ldots, m-1),
\vspace{1mm}\\
x_m = x_1 =1,\ \lambda|_{y=1}=1
\end{array}
\end{array}
\right\rangle,
$$
where $\lambda|_{y=1}$ is the word obtained from $\lambda$ by
substituting 1 for $y$.
Now, by induction, we see that the relations $x_1 = x_2 = \cdots = x_m =1$ hold,
and hence $\pi_1(W_n)=1$.
\end{proof}

\begin{figure}[htbp]
\begin{center}
\hspace{5mm}
\includegraphics[scale= 0.5]{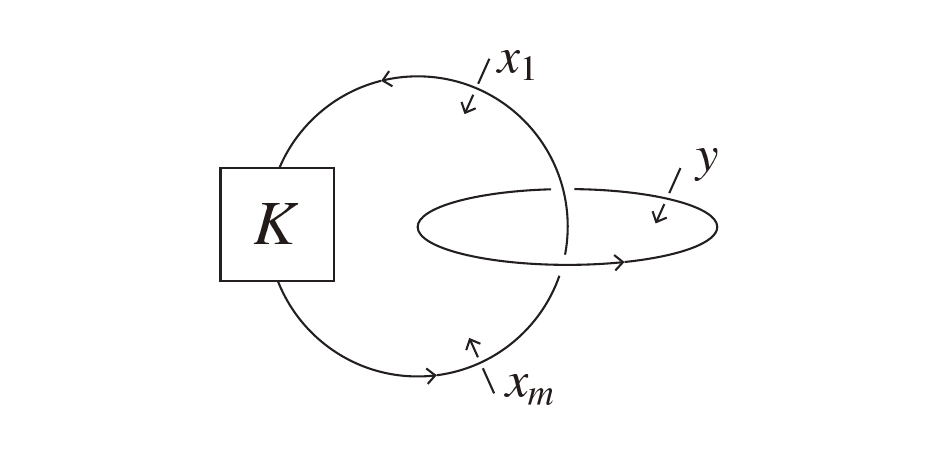}
\vspace{-4mm}
\caption{A surgery link for $S^3_{1/n}(K)$. \label{pres1}}
\end{center}
\end{figure}

By Claims~\ref{W_n pos} and \ref{W_n pi_1}, we can apply \Cref{pi1=1}
to all $-W_n$ ($n=1, 2, \ldots$), and obtain
$$
\infty >r_s(S^3_1(K)) > r_s(S^3_{1/2}(K)) > \cdots.
$$
\end{proof}

%%%%%%%%%5.4
\subsection{Linear independence of Whitehead doubles}
In this subsection, we prove \Cref{np+q}.

\setcounter{section}{1}
\setcounter{thm}{11}
\begin{thm}
For any coprime integers $p,q>1$,
the Whitehead doubles $\{ D(T_{p,np+q})\}^{\infty}_{n=0}$ are linearly independent
in $\mathcal{C}$.
\end{thm}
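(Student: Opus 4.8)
The plan is to reduce the linear independence of $\{D(T_{p,np+q})\}_{n=0}^\infty$ in $\mathcal{C}$ to a statement about the invariants $r_0$ of the homology $3$-spheres obtained by $(+1)$-surgery on these doubles, and then to apply \Cref{linear independence}. Concretely, I would set $K_n := D(T_{p,np+q})$ and $Y_n := S^3_{1}(K_n)$ (or $S^3_{-1}(K_n)$ with the appropriate sign convention), and aim to show that after passing to a suitable subsequence one has $\infty > r_0(Y_{n_1}) > r_0(Y_{n_2}) > \cdots$ while $r_0(-Y_{n_i}) = \infty$ for all $i$. Since concordant knots give homology-cobordant surgeries, and since the map $[K]\mapsto [S^3_{\pm1}(K)]$ is a (non-homomorphic but concordance-respecting) map, a $\z$-linear relation $\sum n_i [D(T_{p,n_ip+q})]=0$ in $\mathcal{C}$ would, via the standard argument (cf.\ Hedden--Kirk), force a corresponding relation among the $[Y_{n_i}]$ in $\Theta^3_\z$ up to terms controlled by simpler pieces; combined with \Cref{combination} this yields a contradiction.

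The key input on the instanton side is a computation or estimate of $r_0(S^3_1(D(T_{p,m})))$. The essential geometric fact is that $S^3_{1}(D(K))$ admits a negative definite filling only in degenerate cases, and more importantly that its small positive critical value of the $\SU(2)$ Chern-Simons functional — which by \Cref{value} governs $r_0$ — decreases as $m$ grows. I would exploit the surgery description of $S^3_1(D(T_{p,m}))$: it is obtained from $S^3$ by surgery on a two-component link (the pattern solid torus and the companion $T_{p,m}$), and there is a natural negative definite cobordism from $S^3_1(D(T_{p,m+1}))$ (or a connected sum thereof) to $S^3_1(D(T_{p,m}))$ built from a single $2$-handle, analogous to the cobordisms $W_n$ in \Cref{section 5.3}. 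Checking that this cobordism is negative definite and, where needed, simply connected, then invoking \Cref{cobneq} and \Cref{pi1=1}, gives the strict chain of inequalities $r_0(Y_{m+1}) < r_0(Y_m)$. The condition $h(S^3_1(D(T_{p,m})))<0$, needed to guarantee finiteness $r_0(Y_m)<\infty$ via \Cref{Froyshov}, should follow from the fact that $D(T_{p,m})$ has large positive $\tau$ or $\nu^+$ (for $m$ large relative to $p$), together with the Frøyshov-type inequality \cite[Theorem 3]{Fr02} applied to a positive definite spin filling coming from the positive-clasp structure, exactly as in \Cref{cable indep}.

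The remaining point is to control $r_0(-Y_m)$. Here one uses that $D(K)$ is a Whitehead double with positive clasp, so $S^3_1(D(K))$ bounds a positive definite $4$-manifold (indeed, a positive knot bounds a null-homologous disk in $B^4 \# k\,\overline{\mathbb{C}P}{}^2$ by \cite{CL86}, or one can use the explicit blown-up ribbon structure), whence \Cref{obstruct bounding} applied to $-Y_m$ gives $r_0(-Y_m)=\infty$ for all $m$. With these three ingredients — $r_0(Y_{n_i})$ strictly decreasing and finite along a subsequence, and $r_0(-Y_{n_i})=\infty$ — \Cref{linear independence} shows the $Y_{n_i}$ are linearly independent in $\Theta^3_\z$, and hence (by the concordance-to-homology-cobordism argument, carefully tracking the $+1$-surgery map and using that only the "top" term $Y_m$ with largest surgery parameter dominates via \Cref{combination}) the $D(T_{p,np+q})$ are linearly independent in $\mathcal{C}$.

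The main obstacle I anticipate is the last step: the surgery map $\mathcal{C}\to\Theta^3_\z$, $[K]\mapsto[S^3_1(K)]$, is not a homomorphism, so a relation $\sum n_i[D(T_{p,n_ip+q})]=0$ in $\mathcal{C}$ does not immediately give $\sum n_i[S^3_1(D(T_{p,n_ip+q}))]=0$ in $\Theta^3_\z$. The standard workaround (due to Hedden--Kirk, and extended in \cite{PJ17}) is to use that if $K$ is slice then $S^3_1(K)$ bounds a homology ball, and to feed the connected sum $\#_i\, n_i\, D(T_{p,n_ip+q})$ (which is slice) through this; one then needs that $r_0$ of $S^3_1$ of that connected sum is computable as $r_0(Y_m)$ of the dominant term, which is precisely where \Cref{combination} does the work, after verifying the hypotheses $r_0(Y_m)<\min_{k<m}\{r_0(Y_k),r_0(-Y_k)\}$ and $r_0(-Y_m)=\infty$. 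Making the comparison $r_0(S^3_1(\#_i n_i D(T_{p,n_ip+q})))$ versus $\sum r_0$ rigorous — i.e.\ relating surgery on a connected sum of doubles to connected sums of surgeries, which requires a cobordism argument rather than a diffeomorphism — is the subtle technical heart of the proof.
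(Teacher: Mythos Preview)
Your approach diverges from the paper's at the very first step, and the divergence creates a gap you yourself identify but do not close.

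The paper does \emph{not} pass through $S^3_{\pm 1}(D(T_{p,np+q}))$. Instead it uses the \emph{double branched cover}: the assignment $[K]\mapsto [\Sigma(K)]_\q$ is a genuine group homomorphism $\mathcal{C}\to\Theta^3_\q$, and one has the concrete identification $\Sigma(D(K))\cong S^3_{1/2}(K\#(-K))$. Thus the homology $3$-sphere in play is $\Sigma(D_{p,q})=S^3_{1/2}(T_{p,q}\# T_{p,q})$, and linear independence of the knots in $\mathcal{C}$ follows \emph{immediately} from linear independence of the $\Sigma(D_{p,np+q})$ in $\Theta^3_\q$ (\Cref{linear independence D}). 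The strict inequalities $r_s(\Sigma(D_{p,q}))>r_s(\Sigma(D_{p,p+q}))>\cdots$ are then obtained by building an explicit \emph{simply connected} negative definite cobordism from $S^3_{1/2}(T_{p,p+q}\# T_{p,p+q})$ to $S^3_{1/2}(T_{p,q}\# T_{p,q})$ via positive crossing changes on a braid diagram (\Cref{cc cob}); the argument that $\pi_1(W)=1$ uses a careful analysis of Wirtinger generators (\Cref{arcs}).

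Your route via $S^3_{\pm 1}$ fails exactly where you say it does: the map $[K]\mapsto [S^3_1(K)]$ is not a homomorphism, so a relation $\sum n_i[D(T_{p,n_ip+q})]=0$ in $\mathcal{C}$ gives only that $S^3_1(\#_i n_i D(T_{p,n_ip+q}))$ bounds a homology ball. But the connected sum formula (\Cref{conn}) and \Cref{combination} compare $r_0$ of \emph{connected sums of homology spheres}, not $r_0$ of \emph{surgery on a connected sum of knots}; there is no mechanism here to relate $r_0(S^3_1(\#_i n_i D_i))$ to the individual $r_0(S^3_1(D_i))$. The ``cobordism argument rather than a diffeomorphism'' you allude to is the entire content of the Hedden--Kirk strategy, and it is not supplied. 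A secondary issue: Whitehead doubles have genus one, so $\tau(D(K))\in\{0,1\}$ and your appeal to ``large positive $\tau$ or $\nu^+$'' to force $h(S^3_1(D(K)))<0$ is not available.
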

\setcounter{section}{5}
\setcounter{thm}{14}

Let $\Theta^3_{\q}$ denote the rational homology cobordism group of rational
homology 3-spheres.
Then we have a natural group homomorphism
$$
\Theta^3_\z \to \Theta^3_\q, \ [Y] \mapsto [Y]_\q,
$$
where $[Y]_\q$ is the rational homology cobordism class of $Y$.
We say that the rational homology 3-spheres $\{Y_k\}_{k=1}^\infty$ are
\emph{linearly independent in} $\Theta^3_\q$
if $\{[Y_k]_\q \}_{k=1}^\infty$ are linearly independent in $\Theta^3_\q$.
Then the invariance of $r_s$ and \Cref{linear independence}
are naturally generalized in the following sense.

\begin{thm}
\label{linear independence q}
For any homology $3$-sphere $Y$ and $s \in [-\infty, 0]$,
the value $r_s(Y)$ is invariant under rational homology cobordism.
Moreover, if a sequence $\{Y_k\}_{k=1}^\infty$ of homology $3$-spheres
satisfies the assumption of \Cref{linear independence},
then the $Y_k$'s are linearly independent in $\Theta^3_\q$.
\end{thm}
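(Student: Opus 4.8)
The statement has two parts: (i) $r_s$ is a rational homology cobordism invariant, and (ii) the linear independence criterion of \Cref{linear independence} upgrades to $\Theta^3_\q$. My plan is to prove (i) by retracing the proof of \Cref{general cobneq}/\Cref{cobneq} and checking that the only place where $Y_1\amalg -Y_2=\partial W$ with $W$ \emph{integral} homology cobordism was used is to guarantee that $W$ is negative definite with $H^1(W;\R)=0$ and that $c(W)=\#H_1(W;\z)$ is invertible in $\q$; all three of these survive when $W$ is merely a \emph{rational} homology cobordism. Then (ii) is a formal consequence: the hypotheses of \Cref{linear independence} (a strictly decreasing sequence $r_0(Y_k)$ together with $r_0(-Y_k)=\infty$) together with the connected-sum formula \Cref{conn} and the now-established rational invariance force any nontrivial integral combination $\sum n_i[Y_{k_i}]$ to have $r_0\big(\sum n_i[Y_{k_i}]\big)=r_0(Y_m)<\infty=r_0(S^3)$, so it cannot be rationally null-cobordant.

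\medskip

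\textbf{Step 1: rational invariance of $r_s$.} I would first recall that if $Q$ is a rational homology cobordism from $Y_1$ to $Y_2$ (so $H_*(Q;\q)\cong H_*(Y_i\times[0,1];\q)$), then in particular $b_1(Q)=b_2^+(Q)=b_2^-(Q)=0$, so $Q$ is trivially negative definite, and $H^1(Q;\R)=0$. The integer $c(Q):=\#H_1(Q;\z)$ is finite and nonzero, hence invertible in $\q$. Now inspect the proof of \Cref{func}: the key geometric inputs are (a) the dimension/gluing count of the ends of $M(a,W^*,\theta)$, which only uses $b_1=0$, $b_2^+=0$, and (b) Donaldson's count $\#M(\theta,W^*,\theta)=\#H_1(W;\z)$ with coherent orientations, which is exactly the statement needed here with $W=Q$. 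Likewise \Cref{useful} is a purely analytic statement about energies of ASD connections on $W^*$ and uses only that $W$ is negative definite (to control $\int_W \Tr F^-\wedge F^-\le 0$) and the perturbation bound $(\ast\ast)$; neither uses integrality. Consequently \Cref{general cobneq} applies verbatim with $R=\q$: $r_s(Y_2)\le r_s(Y_1)$. Running the cobordism $-Q$ (a rational homology cobordism from $Y_2$ to $Y_1$) gives the reverse inequality, so $r_s(Y_1)=r_s(Y_2)$. The equality-case conclusion about common irreducible $\SU(2)$-representations of $\pi_1(Q)$ carries over as well, though it is not needed for (ii).

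\medskip

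\textbf{Step 2: rational linear independence.} With Step 1 in hand, the proof of \Cref{linear independence} goes through word for word: the ingredients were \Cref{combination} (which rests on \Cref{order}, \Cref{subtraction}, and \Cref{conn}, all of which are statements about $r_0$ of connected sums and are unaffected) and the fact that homology-cobordant manifolds have equal $r_0$ — now replaced by \emph{rationally} homology-cobordant manifolds have equal $r_0$, by Step 1. Thus for a sequence $\{Y_k\}$ as in \Cref{linear independence} and a combination $\sum_{i=1}^m n_i[Y_{k_i}]$ with $n_m\ne 0$, writing $\varepsilon=\operatorname{sgn}(n_m)$, \Cref{combination} gives $r_0\big(\sum_i \varepsilon n_i[Y_{k_i}]\big)=r_0(Y_m)<\infty$, while $r_0(S^3)=\infty$ by \Cref{value}. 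Since $r_0$ is a rational homology cobordism invariant, $[\sum_i \varepsilon n_i[Y_{k_i}]]_\q\ne[S^3]_\q=0$ in $\Theta^3_\q$, hence $\sum_i n_i[Y_{k_i}]$ is nonzero in $\Theta^3_\q$. This proves linear independence in $\Theta^3_\q$.

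\medskip

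\textbf{Main obstacle.} The only substantive point is Step 1: one must be confident that every lemma invoked in the chain \Cref{useful} $\Rightarrow$ \Cref{func} $\Rightarrow$ \Cref{general cobneq} really uses nothing beyond ``$W$ negative definite, $H^1(W;\R)=0$, and $c(W)$ finite nonzero.'' The place to look most carefully is Donaldson's reducible-connection count $\#M(\theta,W^*,\theta)=\#H_1(W;\z)$ with signs — the argument of \cite{Do87} is stated for negative definite $4$-manifolds bounding homology spheres and the relevant input is $b_1(W)=b_2^+(W)=0$, which a rational homology cobordism satisfies, so this is fine; but it deserves an explicit remark. Everything downstream (the connected-sum formula, \Cref{combination}, \Cref{linear independence}) is then purely formal.
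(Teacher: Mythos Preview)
Your proposal is correct and is precisely the argument the paper has in mind: the paper does not spell out a proof for this theorem, merely asserting that ``the invariance of $r_s$ and \Cref{linear independence} are naturally generalized,'' and your two steps supply exactly that generalization. The key observation---that a rational homology cobordism $Q$ has $b_1(Q)=b_2^+(Q)=b_2^-(Q)=0$, hence is (vacuously) negative definite with $H^1(Q;\R)=0$ and finite $c(Q)$ invertible in $\q$, so \Cref{general cobneq} applies in both directions---is the intended one, and your downstream reuse of \Cref{combination} and the proof of \Cref{linear independence} is correct since those results are statements about $r_0$ of connected sums and do not depend on which cobordism group one is working in.
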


Next, let $K$ be an oriented knot and $\Sigma(K)$ the double branched cover of $S^3$ over $K$. Then it is known that the map
$$
\mathcal{C} \to \Theta^3_\q, \ [K] \mapsto [\Sigma(K)]_\q
$$
is well-defined and a group homomorphism.
Moreover, for Whitehead doubles,
it is also known that
$\Sigma(D(K)) \cong S^3_{1/2}(K \# -K)$,
where $-K$ is orientation-reversed $K$.
In particular, $\Sigma(D(K))$ is a homology 3-sphere and
bounds a positive definite 4-manifold. Hence
$r_0(-\Sigma(D(K)))=\infty$ for any $K$.
These arguments imply the following.
\begin{lem}\label{linear independence D}
For a sequence $\{K_n\}_{n=1}^\infty$ of oriented knots,
if the homology 3-spheres $\{\Sigma(D(K_n))\}_{n=1}^\infty$ satisfy
$$
\infty > r_0(\Sigma(D(K_1))) > r_0(\Sigma(D(K_2))) > \cdots,
$$
then the Whitehead doubles $\{D(K_n)\}_{n=1}^\infty$
are linearly independent in $\mathcal{C}$.
\end{lem}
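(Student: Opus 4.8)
The statement to prove is \Cref{linear independence D}: if the Whitehead double branched covers $\{\Sigma(D(K_n))\}$ have strictly decreasing finite $r_0$-values, then the doubles $\{D(K_n)\}$ are linearly independent in $\mathcal{C}$.

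\textbf{Plan.} The strategy is to push everything through the composite homomorphism $\mathcal{C} \to \Theta^3_\q$, $[K] \mapsto [\Sigma(K)]_\q$, and then apply the already-established rational-homology-cobordism version of the linear independence criterion, namely \Cref{linear independence q}. First I would recall the two structural inputs cited just before the lemma: (i) the map $[K]\mapsto [\Sigma(K)]_\q$ is a well-defined group homomorphism from $\mathcal{C}$ to $\Theta^3_\q$, and (ii) for Whitehead doubles one has the diffeomorphism $\Sigma(D(K)) \cong S^3_{1/2}(K\#-K)$, so that $\Sigma(D(K))$ is an \emph{integral} homology 3-sphere bounding a positive definite 4-manifold, whence $r_0(-\Sigma(D(K)))=\infty$ by \Cref{obstruct bounding}. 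Combining (i) and (ii), the assignment $[K]\mapsto [\Sigma(D(K))]_\q$ is itself a group homomorphism $\mathcal{C}\to\Theta^3_\q$ (it is the composition of $D\colon \mathcal{C}\to\mathcal{C}$ with $[\cdot]\mapsto[\Sigma(\cdot)]_\q$).

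\textbf{Key steps.} Given the hypothesis $\infty > r_0(\Sigma(D(K_1))) > r_0(\Sigma(D(K_2))) > \cdots$ together with $r_0(-\Sigma(D(K_n)))=\infty$ for all $n$, the sequence $\{\Sigma(D(K_n))\}_{n=1}^\infty$ of integral homology 3-spheres satisfies exactly the hypotheses of \Cref{linear independence} (and hence of its rational refinement \Cref{linear independence q}). Therefore $\{[\Sigma(D(K_n))]_\q\}_{n=1}^\infty$ are linearly independent in $\Theta^3_\q$. Now suppose $\sum_{i=1}^m a_i [D(K_{n_i})] = 0$ in $\mathcal{C}$ with not all $a_i$ zero. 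Applying the homomorphism $[K]\mapsto[\Sigma(K)]_\q$ gives $\sum_{i=1}^m a_i [\Sigma(D(K_{n_i}))]_\q = 0$ in $\Theta^3_\q$, contradicting the linear independence just established. Hence no such nontrivial relation exists, and $\{D(K_n)\}_{n=1}^\infty$ are linearly independent in $\mathcal{C}$.

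\textbf{Main obstacle.} There is essentially no analytic obstacle here: all the gauge-theoretic content has been front-loaded into \Cref{main theorem}, \Cref{conn}, \Cref{linear independence}, and the rational upgrade \Cref{linear independence q}. The only points requiring a little care are purely topological bookkeeping: verifying that $\Sigma(D(K))$ is a homology 3-sphere and bounds a positive definite 4-manifold (so that $r_0$ of its negative is $\infty$), and confirming that $[K]\mapsto[\Sigma(K)]_\q$ really is a group homomorphism on $\mathcal{C}$ — both of which are standard and are invoked in the text immediately preceding the lemma. So the ``hard part'' is merely assembling these cited facts in the right order; the lemma is a formal corollary of \Cref{linear independence q} once the homomorphism and the positive-definite bounding are in hand.
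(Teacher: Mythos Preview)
Your proof is correct and matches the paper's approach exactly: the paper simply writes ``These arguments imply the following'' before stating the lemma, relying on precisely the homomorphism $[K]\mapsto[\Sigma(K)]_\q$, the fact $r_0(-\Sigma(D(K)))=\infty$, and \Cref{linear independence q}. One caution: your parenthetical claim that $[K]\mapsto[\Sigma(D(K))]_\q$ is a group homomorphism (as a composition with $D$) is unjustified, since $D\colon\mathcal{C}\to\mathcal{C}$ is not known to be a homomorphism---but your actual argument in the Key steps paragraph does not use this, and correctly applies only the genuine homomorphism $[K]\mapsto[\Sigma(K)]_\q$ to a relation among the $D(K_{n_i})$.
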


For any coprime integers $p,q>1$, we abbreviate $D(T_{p,q})$ to $D_{p,q}$.
The proof of \Cref{np+q} is obtained by combining \Cref{linear independence D}
with the following theorem.

\begin{thm}\label{np+q r_s}
For any coprime integers $p,q>1$,
the inequalities
\[
\frac{1}{4pq(2pq-1)} \geq r_s(\Sigma(D_{p,q})) > r_s(\Sigma(D_{p,p+q}))
> r_s(\Sigma(D_{p,2p+q})) > \cdots
\] 
hold.
\end{thm}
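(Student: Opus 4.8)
The plan is to prove \Cref{np+q r_s} by establishing a chain of negative definite cobordisms relating consecutive $\Sigma(D_{p,np+q})$'s, together with the identification $\Sigma(D(K)) \cong S^3_{1/2}(K \# -K)$ and the computation $r_s(-\Sigma(D_{p,q})) = \infty$ already noted in the excerpt. The upper bound $r_s(\Sigma(D_{p,q})) \leq \frac{1}{4pq(2pq-1)}$ should come from exhibiting a negative definite cobordism from $\Sigma(D_{p,q})$ to $-\Sigma(2,3,\ldots)$-type Seifert manifold, or more directly from comparing with $-\Sigma(p,q,2pq-1)$; since $\Sigma(D(T_{p,q})) = S^3_{1/2}(T_{p,q}\#-T_{p,q})$, and there should be a negative definite cobordism from this to $-S^3_{-1}(T_{p,q}) = \Sigma(p,q,pq+1)$ or to something whose $r_s$ is computed via \Cref{general Seifert}. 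I would first pin down exactly which Seifert manifold dominates $\Sigma(D_{p,q})$ through a negative definite cobordism, apply \Cref{cobneq} (monotonicity) and \Cref{Seifert}/\Cref{general Seifert} to get $r_s(\Sigma(D_{p,q})) \leq \frac{1}{4pq(2pq-1)}$.

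For the strict decreasing chain, the key step is a Kirby-calculus construction, entirely parallel to the proof of \Cref{1/n r_s}: I would build cobordisms $V_n$ with $\partial V_n = \Sigma(D_{p,(n+1)p+q}) \amalg -\Sigma(D_{p,np+q})$, or equivalently cobordisms between the branched double covers $S^3_{1/2}(T_{p,(n+1)p+q}\#-T_{p,(n+1)p+q})$ and $S^3_{1/2}(T_{p,np+q}\#-T_{p,np+q})$. The torus knots $T_{p,np+q}$ form a natural family obtained by adding full twists, so there should be a standard cobordism realizing the passage $T_{p,np+q} \rightsquigarrow T_{p,(n+1)p+q}$ which, after taking connected sum with the mirror and double branched covering, yields a 4-dimensional cobordism between the corresponding surgered 3-spheres. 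I would then verify two claims in the style of \Cref{W_n pos} and \Cref{W_n pi_1}: first, that $-V_n$ (or $V_n$, depending on orientation conventions) is negative definite — this is checked by computing $b_2^\pm$ from the Kirby diagram; second, that $V_n$ is simply connected — this is checked via a Wirtinger-type presentation of $\pi_1$ of the surgered manifold together with the relation killed by the 2-handle, exactly as in \Cref{W_n pi_1}.

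Given these two claims, \Cref{pi1=1} (the strict inequality for simply connected negative definite cobordisms, valid since $r_s(\Sigma(D_{p,q})) < \infty$ from the upper bound) applies to each $-V_n$ and gives
\[
r_s(\Sigma(D_{p,q})) > r_s(\Sigma(D_{p,p+q})) > r_s(\Sigma(D_{p,2p+q})) > \cdots,
\]
which together with the upper bound $\frac{1}{4pq(2pq-1)} \geq r_s(\Sigma(D_{p,q}))$ completes the proof. The finiteness needed to invoke \Cref{pi1=1} is automatic once the upper bound is in hand.

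The main obstacle I expect is the explicit Kirby-calculus identification of the cobordism $V_n$ and the verification of its definiteness and simple-connectivity. Taking a connected sum $K\#-K$, forming $1/2$-surgery, and passing to the double branched cover makes the surgery diagrams substantially more complicated than in \Cref{1/n r_s}; in particular the $\pi_1$-presentation will involve the Wirtinger generators of both $T_{p,np+q}$ and its mirror, and one must carefully track how the 2-handle attaching curve sits. Controlling the intersection form (showing exactly one positive and no negative new class) requires a clean choice of handle decomposition. Once a good diagram is fixed, the linear-algebra and group-theory computations are routine in the manner of Claims~\ref{W_n pos} and \ref{W_n pi_1}, but finding that diagram — and correctly orienting it so that the cobordism is negative definite in the direction needed for \Cref{cobneq} and \Cref{pi1=1} — is the crux.
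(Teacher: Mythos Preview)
Your overall architecture matches the paper's: both the upper bound and the strict chain are obtained from negative definite cobordisms, with the strict inequalities coming from \Cref{pi1=1} once simple connectivity is verified. However, the paper does \emph{not} construct the cobordisms via Kirby calculus in the style of the $W_n$ from \Cref{1/n r_s}; instead it uses the crossing-change machinery of \Cref{cc cob}. This is precisely the simplifying tool you are missing, and it dissolves the obstacles you anticipate.

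Concretely, the paper observes that $T_{p,p+q}$ is the closure of $\Delta_H^2\Delta_p^q$, where $\Delta_H$ is the half-twist on $p$ strands. Changing all crossings in one $\Delta_H$ converts it to $\Delta_H^{-1}$, so the closure becomes $T_{p,q}$. Doing this on both summands of the diagram $\mathcal{D}$ for $T_{p,p+q}\# T_{p,p+q}$ yields $T_{p,q}\# T_{p,q}$, and \Cref{cc cob} supplies a negative definite cobordism $W$ with $\partial W = -S^3_{1/2}(T_{p,p+q}\# T_{p,p+q}) \amalg S^3_{1/2}(T_{p,q}\# T_{p,q})$ together with an explicit presentation of $\pi_1(W)$: the Wirtinger generators subject to the extra relations $x_{i_{c_k}}=x_{j_{c_k}}$ coming from each crossing change. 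The first $p$ crossing changes in each $\Delta_H$ force $x_1=\cdots=x_p$ and $x_{p+1}=\cdots=x_{2p}$; \Cref{arcs} (a direct Wirtinger induction) shows every arc on either side of the connected-sum line is a conjugate of one of these by a word in the same generators, hence all generators coincide; the longitude lies in the commutator subgroup, so it and therefore $x_1$ become trivial. Thus $\pi_1(W)=1$ and \Cref{pi1=1} applies.

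For the upper bound, the same crossing-change lemma is used once more: change all crossings in one $T_{p,q}$-summand of $T_{p,q}\# T_{p,q}$ to the unknot, giving a negative definite cobordism from $S^3_{1/2}(T_{p,q}\# T_{p,q})$ to $S^3_{1/2}(T_{p,q}) = -\Sigma(p,q,2pq-1)$, whose $r_s$ is $\tfrac{1}{4pq(2pq-1)}$ by \Cref{Seifert}. (Your mention of $\Sigma(p,q,pq+1)$ is off by the surgery coefficient; it is the $1/2$-surgery that matters here.)

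In short, the paper's crossing-change approach makes negative definiteness automatic (each crossing change contributes a $(-1)$-summand) and reduces the $\pi_1$ computation to an elementary Wirtinger argument, avoiding entirely the delicate Kirby-diagram analysis you flagged as the crux.
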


As another corollary of \Cref{np+q r_s},
we also have the following family of linearly independent elements.

\begin{cor}
\label{Euclid}
Let $a$ and $b$ be coprime integers with $1<a<b$ and
\[
b = q_0a+r_0,\ 
a = q_1r_0+r_1,\dots,
r_{N-1} = q_{N+1}r_N+1
\]
the sequence derived from the Euclidean algorithm.
Then we have
\begin{align*}
\infty 
&> r_s(\Sigma(D_{r_N, r_N+1})) > r_s(\Sigma(D_{r_N, 2r_N+1})) > \cdots
> r_s(\Sigma(D_{r_N, r_{N-1}})) \\
&> r_s(\Sigma(D_{r_{N-1}, r_{N-1} + r_N})) > \cdots >  r_s(\Sigma(D_{r_{N-1}, r_{N-2}})) \\
&> \cdots \\
&> r_s(\Sigma(D_{a, a + r_0}))  > \cdots >  r_s(\Sigma(D_{a, b})).
\end{align*}
In particular, all of these Whitehead doubles are linearly independent in $\mathcal{C}$.
\end{cor}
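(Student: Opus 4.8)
The plan is to realize the long displayed chain as a concatenation of finitely many instances of \Cref{np+q r_s}, one for each step of the Euclidean algorithm, glued together along the symmetry $T_{p,q}\cong T_{q,p}$ of torus knots. To organize the bookkeeping I adopt the conventions $r_{-1}:=a$, $r_{-2}:=b$ and $r_{N+1}:=1$, so that the algorithm reads uniformly as $r_{i-2}=q_ir_{i-1}+r_i$ with $0<r_i<r_{i-1}$ for $0\le i\le N+1$; consequently $a=r_{-1}>r_0>\dots>r_N>r_{N+1}=1$, so $r_i\ge 2$ for $-1\le i\le N$, and $\gcd(r_{i-1},r_i)=\gcd(a,b)=1$ for every $i$.

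First I treat the generic blocks. For each $i=0,1,\dots,N$ I apply \Cref{np+q r_s} with $(p,q)=(r_{i-1},r_i)$, which is legitimate because $r_{i-1},r_i>1$ and $\gcd(r_{i-1},r_i)=1$; this yields the strict chain $r_s(\Sigma(D_{r_{i-1},r_i}))>r_s(\Sigma(D_{r_{i-1},r_{i-1}+r_i}))>\dots>r_s(\Sigma(D_{r_{i-1},\,q_ir_{i-1}+r_i}))$. Since the $i$-th Euclidean step gives $q_ir_{i-1}+r_i=r_{i-2}$, the last term here is $r_s(\Sigma(D_{r_{i-1},r_{i-2}}))$, and since $T_{r_{i-1},r_i}\cong T_{r_i,r_{i-1}}$ the first term equals $r_s(\Sigma(D_{r_i,r_{i-1}}))$, which is exactly the last term of the block belonging to the next Euclidean step $i+1$. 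Thus this single application simultaneously produces the base-$r_{i-1}$ block of the corollary's chain and the strict inequality attaching it to the preceding base-$r_i$ block.

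The only application not fitting this pattern is the top block, which comes from the final Euclidean step $r_{N-1}=q_{N+1}r_N+1$; here the naive pair $(r_N,r_{N+1})=(r_N,1)$ is ruled out by the hypothesis $q>1$ of \Cref{np+q r_s}. I get around this by a one-term shift: I apply \Cref{np+q r_s} with $(p,q)=(r_N,r_N+1)$, whose $n$-th term is $\Sigma(D_{r_N,(n+1)r_N+1})$, so running $n=0,1,\dots,q_{N+1}-1$ recovers precisely $\Sigma(D_{r_N,r_N+1}),\Sigma(D_{r_N,2r_N+1}),\dots,\Sigma(D_{r_N,q_{N+1}r_N+1})=\Sigma(D_{r_N,r_{N-1}})$. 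The leading estimate $\tfrac{1}{4pq(2pq-1)}\ge r_s(\Sigma(D_{r_N,r_N+1}))$ in \Cref{np+q r_s} supplies the finiteness at the top, $\infty>r_s(\Sigma(D_{r_N,r_N+1}))$ (positivity being \Cref{fundamental lemma}), and $T_{r_N,r_{N-1}}\cong T_{r_{N-1},r_N}$ glues this block to the base-$r_{N-1}$ block. Concatenating the top block with the blocks for $i=N,N-1,\dots,0$ gives the full displayed chain. (If $r_0=1$, i.e.\ $N=-1$, only the shifted top block survives and the argument degenerates; I will flag this edge case, which needs nothing new.)

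Finally, for the linear-independence conclusion: each $\Sigma(D(K))$ bounds a positive definite $4$-manifold, so $r_0(-\Sigma(D(K)))=\infty$; taking $s=0$ in the chain just built and listing the knots appearing in it as $K_1,\dots,K_M$ in order, we obtain $\infty>r_0(\Sigma(D(K_1)))>\dots>r_0(\Sigma(D(K_M)))$. The argument of \Cref{combination} (equivalently, the finite analogue of \Cref{linear independence D}) then forces $D(K_1),\dots,D(K_M)$ to be linearly independent in $\mathcal{C}$. I expect the only genuine point of care in the whole proof to be the indexing of the Euclidean algorithm and the enforcement of the hypothesis $q>1$, which is exactly what the shift in the top block addresses; the rest is a direct invocation of \Cref{np+q r_s} and \Cref{linear independence D}.
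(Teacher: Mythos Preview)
Your proof is correct and is exactly the argument the paper has in mind: the corollary is stated without proof, simply as ``another corollary of \Cref{np+q r_s}'', and you have spelled out precisely how to realize the long chain as a concatenation of instances of \Cref{np+q r_s} glued along the torus-knot symmetry $T_{p,q}\cong T_{q,p}$. Your care with the top block (shifting to $(p,q)=(r_N,r_N+1)$ to avoid $q=1$) and your remark on the degenerate case $r_0=1$ are the only nontrivial points, and you handle them correctly.
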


Now we start to prove \Cref{np+q r_s}. 
Let $K$ be an oriented knot, $\mathcal{D}$ a diagram of $K$
and $x_1, \ldots, x_m$ the arcs of $\mathcal{D}$.  
Fix a base point in $S^3 \setminus K$,
and associate a loop in $S^3 \setminus K$
to each $x_i$
in the usual way.
(For instance, see \cite[Section 3.D]{Ro90}.)
Then, for any $n \in \z$, 
we have a presentation of $\pi_1(S^3_{1/n}(K))$
in the form
$$
\langle x_1, \ldots, x_m \mid R \cup \{ \lambda^n x_1 =1 \} \rangle,
$$
where $R$ is the set of relations induced from the crossings of $\mathcal{D}$
(in the same way as the Wirtinger presentation),
and $\lambda$ is a word corresponding to a longitude of $K$
with framing 0. (In particular, $\lambda$ is in the commutator subgroup of 
$\pi_1(S^3_{1/n}(K))$.)

Next, we consider the \emph{positive crossing change} at a \emph{positive crossing} $c$, which is a deformation of $\mathcal{D}$ shown in Figure~\ref{pos cc}.
Then we denote the labels of the arcs around $c$ by
$x_{i_c}$, $x_{j_c}$ and $x_{j'_c}$ as shown in Figure~\ref{pos cc}.

\begin{figure}[htbp]
\begin{center}
\includegraphics[scale= 0.5]{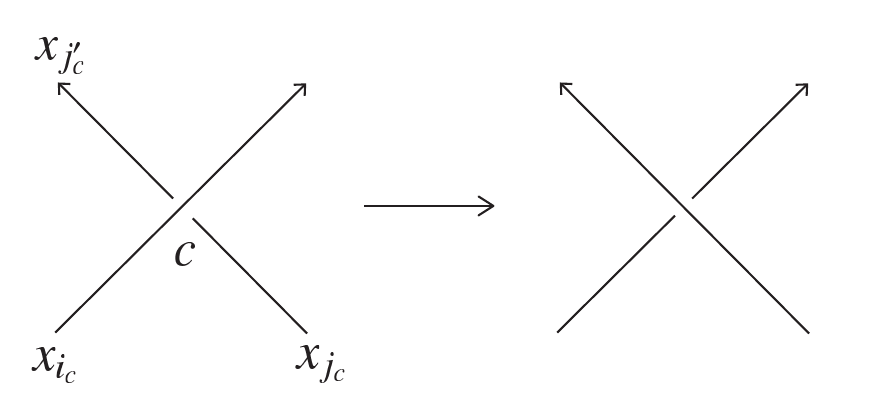}
\vspace{-4mm}
\caption{A positive crossing change. \label{pos cc}}
\end{center}
\end{figure}

\begin{lem}
\label{cc cob}
Suppose that $\mathcal{D}$ is deformed into a diagram $\mathcal{D}'$  of a knot $K'$
by performing positive crossing changes at
crossings $c_1, c_2, \ldots, c_l$ respectively.
Then there exists a negative definite cobordism $W$ such that
$\partial W = - S^3_{1/n}(K) \amalg S^3_{1/n}(K')$ and
\[
\pi_1(W) \cong 
\left\langle
x_1, \ldots, x_m \Bigm| R \cup \{ \lambda^n x_1=1\} \cup \{ x_{i_{c_k}}=x_{j_{c_k}} \}_{k=1}^l
\right\rangle.
\]
\end{lem}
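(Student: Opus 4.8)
The plan is to construct $W$ explicitly by attaching $2$-handles to the cylinder $[0,1]\times S^3_{1/n}(K)$ along framed knots that realize the given positive crossing changes. Recall the standard fact that a single positive-to-negative crossing change on a diagram can be realized by $(-1)$-framed surgery on a small unknot $U$ encircling the two strands at the crossing $c_k$; attaching a $2$-handle to $[0,1]\times S^3_{1/n}(K)$ along $\{1\}\times U_k$ with framing $-1$ thus produces a cobordism one of whose boundary components is the result of that crossing change. First I would set $W$ to be $[0,1]\times S^3_{1/n}(K)$ with $l$ such $2$-handles attached, one for each crossing $c_k$, with framing $-1$, along disjoint copies $U_1,\dots,U_l$ of the encircling unknots (disjointness is automatic since the crossings are distinct and the encircling circles can be taken in disjoint balls). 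Reversing the roles of the two boundary components, $\partial W = -S^3_{1/n}(K)\amalg S^3_{1/n}(K')$, where $K'$ is the knot obtained after all $l$ crossing changes.

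Next I would verify negative definiteness. The $2$-handles are attached along a link $U_1\amalg\cdots\amalg U_l$ each of whose components is an unknot with framing $-1$, and since the $U_k$ lie in disjoint balls their pairwise linking numbers vanish; hence the intersection form of $W$ on $H_2(W;\z)\cong\z^l$ (relative to the incoming boundary, after noting $H_2$ is generated by the cores of the handles capped with Seifert surfaces of the $U_k$) is the diagonal matrix $\operatorname{diag}(-1,\dots,-1)$, which is negative definite. Also $b_1(W)=0$ since attaching $2$-handles to a cylinder over a homology sphere kills no $H_1$ and creates none.

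For the fundamental group computation I would use van Kampen on the handle attachments. The cylinder contributes $\pi_1(S^3_{1/n}(K)) = \langle x_1,\dots,x_m\mid R\cup\{\lambda^n x_1=1\}\rangle$. Each $2$-handle along $U_k$ imposes the relation that the homotopy class of its attaching circle is trivial; since $U_k$ encircles exactly the two strands meeting at $c_k$, which carry meridian labels $x_{i_{c_k}}$ and $x_{j_{c_k}}$ (with appropriate orientations making the attaching loop conjugate to $x_{i_{c_k}}x_{j_{c_k}}^{-1}$ up to the sign conventions in Figure~\ref{pos cc}), the added relation is precisely $x_{i_{c_k}}=x_{j_{c_k}}$. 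One must be a little careful that the attaching circle of the $2$-handle is the circle $U_k$ itself (killing its meridian-commutator class, i.e.\ the product of the two strand meridians) and not its meridian; this is exactly the familiar description of a crossing-change cobordism, and it yields the asserted presentation of $\pi_1(W)$.

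The main obstacle I anticipate is the last bookkeeping point: pinning down which word in the $x_i$'s the attaching circle of each $2$-handle represents, so that the new relation comes out as $x_{i_{c_k}}=x_{j_{c_k}}$ exactly (and not, say, $x_{i_{c_k}}=x_{j'_{c_k}}$ or a conjugate thereof), with signs consistent with the positive-crossing convention of Figure~\ref{pos cc}. This is a purely diagrammatic/Wirtinger-calculus check: track the two strands through the local picture of the crossing change, observe that before and after the change the two arcs $x_{i_{c_k}}$ and $x_{j_{c_k}}$ become identified while $x_{j'_{c_k}}$ is expressed via the remaining Wirtinger relation, and confirm the encircling $(-1)$-framed unknot realizes this. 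Once that local verification is in hand, van Kampen assembles the global presentation and the rest (negative definiteness, the boundary identification) is routine.
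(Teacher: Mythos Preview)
Your proposal is correct and follows essentially the same approach as the paper: the paper also builds $W$ by attaching $l$ two-handles with framing $-1$ to $S^3_{1/n}(K)\times[0,1]$ along small unknots encircling the two strands at each $c_k$, reads off the intersection form as $\bigoplus_{k=1}^l(-1)$, and notes that the attaching sphere of the $k$-th handle is the word $x_{j_{c_k}}^{-1}x_{i_{c_k}}$, giving the presentation via van Kampen. The bookkeeping you flagged as a potential obstacle is handled in the paper simply by inspection of the local picture (their Figure for the handle near $c_k$), exactly as you outline.
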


\begin{proof}
We make a relative Kirby diagram $\langle \mathcal{D} \rangle$ from $\mathcal{D}$
in the following way:
\begin{itemize}
\item
Replace a neighborhood of each crossing $c_k$ with the picture
shown in Figure~\ref{cc handle}. (Then, each component
except for the original one has a framing.)
\item Associate the framing $\langle \frac{1}{n} \rangle$
to the original component.
\end{itemize}
Then $\langle \mathcal{D} \rangle$ is a diagram for a cobordism $W$
obtained by attaching $l$ copies of 2-handles to $S^3_{1/n}(K) \times [0,1]$.
Moreover, we can verify that
\begin{itemize}
\item
$\partial W = -S^3_{1/n}(K) \amalg S^3_{1/n}(K')$,
\item
the intersection form of $W$ is 
isomorphic to $\bigoplus^l_{k=1} (-1)$, and
\item
as a loop, the attaching sphere of the 2-handle near $c_k$ is written
by $x^{-1}_{j_{c_k}}x_{i_{c_k}}$.
\end{itemize}
This completes the proof.
\end{proof}

\begin{figure}[htbp]
\begin{center}
\hspace{5mm}
\includegraphics[scale= 0.5]{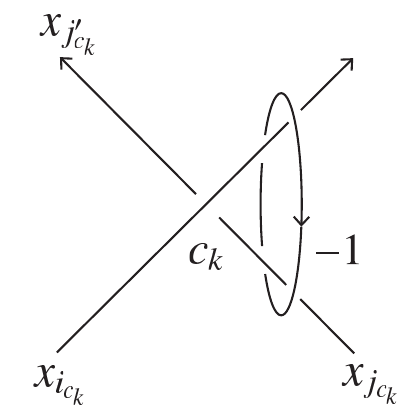}
\vspace{-4mm}
\caption{The 2-handle attached to $S^3_{1/n}(K) \times [0,1]$ 
near $c_k$. \label{cc handle}}
\end{center}
\end{figure}

Next, for any coprime $p,q >1$, we consider 
the homology 3-sphere
$\Sigma(D_{p,q})$.

\begin{lem}
\label{$D_{p,q}$}
$r_s(\Sigma(D_{p,q})) \leq \frac{1}{4pq(2pq-1)}$.
\end{lem}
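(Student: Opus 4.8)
The plan is to realize $\Sigma(D_{p,q})$ as obtained from a Seifert homology sphere whose $r_s$-value we already know by a negative definite cobordism, and then apply the cobordism inequality \Cref{cobneq}. Recall $\Sigma(D_{p,q}) \cong S^3_{1/2}(T_{p,q} \# -T_{p,q})$, which I would like to compare with $-\Sigma(p,q,2pq-1) \cong S^3_{-1/2}(T_{p,q})$; by \Cref{general Seifert} (or rather \Cref{Seifert}), $r_s(-\Sigma(p,q,2pq-1)) = \frac{1}{4pq(2pq-1)}$, and $r_s(\Sigma(p,q,2pq-1)) = \infty$. Wait --- note the sign: since $S^3_{1/n}(K)$ bounds a positive definite 4-manifold, we expect $\Sigma(D_{p,q})$ to behave like $-\Sigma(p,q,2pq-1)$, i.e. to have finite $r_s$ with the stated bound. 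So the goal is to produce a negative definite cobordism $W$ with $\partial W = \Sigma(D_{p,q}) \amalg -(-\Sigma(p,q,2pq-1))$, i.e. from $-\Sigma(p,q,2pq-1)$ up to $\Sigma(D_{p,q})$; then \Cref{cobneq} gives $r_s(\Sigma(D_{p,q})) \leq r_s(-\Sigma(p,q,2pq-1)) = \frac{1}{4pq(2pq-1)}$.

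First I would observe that $T_{p,q} \# -T_{p,q}$ is obtained from the torus knot $T_{p,q}$ (more precisely, from a knot concordant/related to $T_{p,q}$) by a sequence of \emph{positive} crossing changes: the connected sum $T_{p,q}\# -T_{p,q}$ is a slice knot, and one standard way to see the slice disk is to unknot one summand by crossing changes. More usefully, I want to pass from $S^3_{1/2}(U) = S^3$ (unknot) by \emph{negative} crossing changes --- or equivalently pass from $S^3_{1/2}(T_{p,q}\#-T_{p,q})$ backwards --- but the cleaner route is: $-T_{p,q}$ can be converted to $T_{p,q}$ (hence $T_{p,q}\#-T_{p,q}$ to $T_{p,q}\#T_{p,q} = T_{p,q} \# T_{p,q}$, no --- I actually want to kill one summand). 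The right statement: performing positive crossing changes on a diagram of $T_{p,q}\#-T_{p,q}$ at the crossings of the $-T_{p,q}$ summand that realize its unknotting turns it into $T_{p,q}\# U = T_{p,q}$, wait that still isn't $-\Sigma(p,q,2pq-1)$. Let me reconsider: I want to compare with $S^3_{-1/2}(T_{p,q})$, so I should convert $T_{p,q}\#-T_{p,q}$ with $1/2$-framing... Actually the known identity $\Sigma(D(K)) \cong S^3_{1/2}(K\#-K)$ combined with the fact that $K\#-K$ bounds an obvious ribbon disk obtained by $g(K)$ positive and $g(K)$ negative crossing changes; applying \Cref{cc cob} to the \emph{positive} ones yields a negative definite cobordism from $S^3_{1/2}(K\#-K)$ to $S^3_{1/2}(K')$ for the intermediate knot $K'$. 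Carrying this out with $K=T_{p,q}$ so that, after the positive crossing changes needed, $K'$ is unknotted or is $T_{p,q}$ itself with the remaining framing giving $S^3_{-1}(T_{p,q})$ or $S^3_{1/2}$ of something recognizable --- this bookkeeping is where care is needed.

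The key steps, in order: (i) fix a genus-minimizing or band-presentation diagram of $T_{p,q}\#-T_{p,q}$ exhibiting the ribbon disk as a sequence of crossing changes, half of them positive; (ii) apply \Cref{cc cob} to the positive crossing changes to get a negative definite cobordism $W$ with $\partial W = -S^3_{1/2}(T_{p,q}\#-T_{p,q}) \amalg S^3_{1/2}(K')$ where $K'$ is the result of only those changes, and identify $S^3_{1/2}(K')$ with $S^3$ or with a manifold whose $r_s$ is $\infty$; (iii) reverse orientation and compose to build a negative definite cobordism \emph{from} $-\Sigma(p,q,2pq-1)$ (equivalently from a manifold with $r_s = \frac{1}{4pq(2pq-1)}$) \emph{to} $\Sigma(D_{p,q})$ --- or, more directly, apply \Cref{cobneq} in the form $r_s(Y_2)\le r_s(Y_1)$ with $W$ negative definite and $\partial W = Y_1 \amalg -Y_2$, taking $Y_2 = \Sigma(D_{p,q})$ and $Y_1$ the Seifert piece; (iv) invoke \Cref{Seifert} to evaluate $r_s(Y_1) = \frac{1}{4pq(2pq-1)}$, yielding the claimed inequality.

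The main obstacle I expect is step (iii): getting the \emph{direction} of the cobordism and the \emph{sign} of its intersection form to line up correctly so that \Cref{cobneq} applies with the Seifert manifold on the correct side. A positive crossing change on a knot diagram naturally produces a cobordism that is negative definite when attached on one end, but whether the resulting inequality reads $r_s(\Sigma(D_{p,q})) \leq \frac{1}{4pq(2pq-1)}$ rather than the reverse depends delicately on orientation conventions for $S^3_{1/2}$ versus $S^3_{-1/2}$ surgery and on which summand of $K\#-K$ we unknot. A secondary point is verifying that the intermediate surgered manifold $S^3_{1/2}(K')$ really has $r_s = \infty$ (e.g. because $K'$ is the unknot, giving $S^3$, or because it still bounds a positive definite 4-manifold). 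I would resolve the sign issue by tracking a concrete small case ($p=2, q=3$, where $\Sigma(D_{2,3}) = \Sigma(D(3_1))$ and $2pq-1 = 11$) against known computations before committing to the general argument.
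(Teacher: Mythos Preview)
Your overall strategy---unknot one summand of $T_{p,q}\#(-T_{p,q})$ by positive crossing changes, invoke \Cref{cc cob} to get a negative definite cobordism, and then apply \Cref{cobneq} together with the Seifert value from \Cref{Seifert}---is exactly the paper's approach. What you are missing are two simple observations that dissolve all of the sign and direction worries you flag.

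First, torus knots are invertible, so $-T_{p,q}=T_{p,q}$ and hence $\Sigma(D_{p,q})\cong S^3_{1/2}(T_{p,q}\#T_{p,q})$. Second, the standard closed-braid diagram $\overline{\Delta_p^q}$ of $T_{p,q}$ has \emph{only positive} crossings, so an unknotting sequence for one summand consists entirely of positive crossing changes. Applying \Cref{cc cob} to those changes in $\overline{\Delta_p^q}\#\overline{\Delta_p^q}$ yields a negative definite cobordism $W$ with $\partial W = -S^3_{1/2}(T_{p,q}\#T_{p,q}) \amalg S^3_{1/2}(T_{p,q})$. Writing $\partial W = Y_1 \amalg (-Y_2)$ with $Y_1 = S^3_{1/2}(T_{p,q})$ and $Y_2 = \Sigma(D_{p,q})$, \Cref{cobneq} gives $r_s(\Sigma(D_{p,q})) \le r_s(S^3_{1/2}(T_{p,q}))$. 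Finally, $S^3_{1/2}(T_{p,q}) = -\Sigma(p,q,2pq-1)$ (not $S^3_{-1/2}(T_{p,q})$ as you wrote), so \Cref{Seifert} finishes the argument. There is no need for any ribbon-disk bookkeeping, and the intermediate knot $K'$ is $T_{p,q}$ itself, not the unknot.
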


\begin{proof}
Note that $T_{p,q}$ has a diagram $\overline{\Delta_p^q}$ with only positive crossings.
(Indeed, the closure of the braid $\Delta_p^q=(\sigma_1 \sigma_2 \cdots \sigma_{p-1})^q$ with $p$ strands is such a diagram for $T_{p,q}$.)
For any knot diagram, there exist
finitely many crossings such that after crossing changes at the crossings, 
the resulting diagram describes the unknot.
As a consequence, we have finitely many positive crossings of $\overline{\Delta_p^q}$
such that after positive crossing changes at the crossings, the resulting diagram
$(\overline{\Delta_p^q})^U$ is
for the unknot.
Now, considering the connected sum of two $\overline{\Delta_p^q}$'s, 
we have finitely many positive crossings of $\overline{\Delta_p^q} \# \overline{\Delta_p^q}$
such that after positive crossing changes at the crossings, we have
the diagram $\overline{\Delta_p^q} \# (\overline{\Delta_p^q})^U$.
Hence, by applying \Cref{cc cob}, we have a negative definite cobordism with
boundary $-S^3_{1/2}(T_{p,q} \# T_{p,q}) \amalg S^3_{1/2}(T_{p,q})$.
Therefore, it follows from \Cref{cobneq} and \Cref{Seifert}
that
\begin{align*}
r_s(\Sigma(D_{p,q}))
&= r_s(S^3_{1/2}(T_{p,q} \# T_{p,q})) \\
&\leq r_s(S^3_{1/2}(T_{p,q})) \\
&= r_s(\Sigma(p,q,2pq-1)) = \frac{1}{4pq(2pq-1)}.
\end{align*}
\end{proof}

Now, let us consider a concrete diagram of $T_{p,p+q} \# T_{p,p+q}$, which
is depicted in Figure~\ref{braid} and denoted by $\mathcal{D}$.
Here $\Delta_H$ is the braid
$$
(\sigma_1\sigma_2 \cdots \sigma_{p-1})(\sigma_1 \sigma_2 \cdots \sigma_{p-2})
\cdots (\sigma_1 \sigma_2) \sigma_1,
$$
which is often called the \emph{half-twist}.
In addition, we associate the labels $\{x_k\}_{k=1}^{2p}$ to arcs in $D$ as
shown in Figure~\ref{braid}.

\begin{figure}[htbp]
\begin{center}
\includegraphics[scale= 0.6]{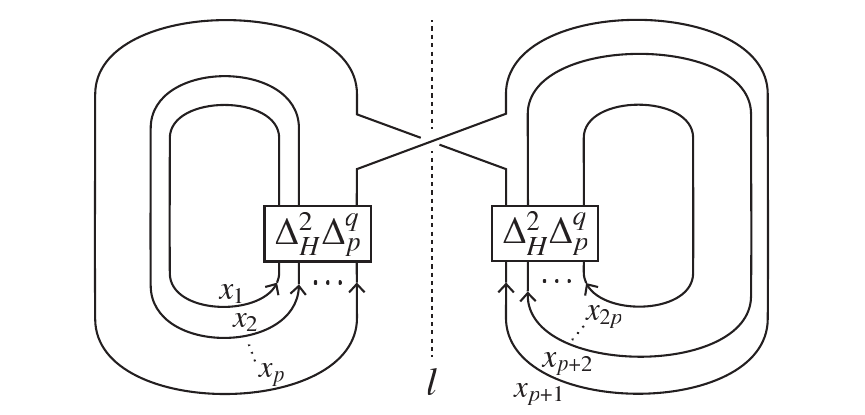}
\vspace{-4mm}
\caption{The diagram $\mathcal{D}$. \label{braid}}
\end{center}
\end{figure}

\begin{lem}
\label{arcs}
As elements of $\pi_1(S^3 \setminus T_{p,p+q} \# T_{p,p+q})$,
all arcs belonging to the left (resp.\ right) side of the dashed line $l$
in the diagram $\mathcal{D}$ are written as a conjugate of
$x_k$ by a word $w$ consisting of $x_1, \ldots, x_p$ (resp.\ $x_{p+1}, \ldots, x_{2p}$)
for some $k \in \{1, \ldots, p\}$ (resp.\ $k \in \{ p+1, \ldots, 2p\}$).
\end{lem}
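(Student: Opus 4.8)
The statement is essentially a Wirtinger-presentation bookkeeping claim: in the fundamental group of the complement of the connected sum $T_{p,p+q}\#T_{p,p+q}$, the arcs of the diagram $\mathcal{D}$ lying on the left of the separating line $l$ are conjugates of one of $x_1,\dots,x_p$ by a word in $x_1,\dots,x_p$, and symmetrically on the right. The plan is to argue by a ``local move'' induction along the braid $\mathcal D$, reading it from the $x_k$-labelled arcs outward. The key structural fact is that in a Wirtinger presentation each crossing relation has the form $x_{\mathrm{out}} = x_\ell^{\pm 1} x_{\mathrm{in}} x_\ell^{\mp 1}$, where the overstrand label $x_\ell$ is itself, inductively, a conjugate of some $x_j$ by a word $w_\ell$ in the appropriate half of the generators; hence the outgoing arc is a conjugate of the incoming arc by $w_\ell x_j^{\pm 1} w_\ell^{-1}$, again a word in the same half. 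So the property ``is a conjugate of some $x_k$ with $k$ in the left (resp.\ right) index block, by a word supported on the left (resp.\ right) generators'' is preserved as we pass through any crossing, provided both strands at that crossing already have the property.

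First I would set up the induction carefully: observe that the diagram $\mathcal D$ (Figure~\ref{braid}) is built from two braid boxes of the form $\Delta_p^{p+q}$ (or, as drawn, half-twists $\Delta_H$), one on each side of $l$, and that every crossing of $\mathcal D$ has both its strands entirely on one side of $l$ — this is exactly why $l$ is a separating line and why the connected-sum decomposition makes sense at the level of $\pi_1$ (a free product with amalgamation over the meridian identified across the band). Then, starting from the arcs explicitly labelled $x_1,\dots,x_p$ on the left and $x_{p+1},\dots,x_{2p}$ on the right, I would push through the crossings of the left box one at a time in the natural order, at each step invoking the crossing relation and the inductive hypothesis on the two strands meeting there to conclude the new arc has the desired form; the right box is handled identically. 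The base case is trivial since $x_k = e\, x_k\, e^{-1}$.

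The main obstacle — really the only subtlety — is verifying that \emph{both} strands entering each crossing have already been assigned the required conjugacy form before we use that crossing, i.e.\ choosing a consistent traversal order of the crossings within each braid box so that the induction is well-founded, and checking that the overstrand label at every crossing indeed involves only generators from the same side of $l$ (equivalently, that the separating line is never crossed by an arc that serves as an overstrand for a crossing on the other side). This follows from the planar structure of the braid-closure diagram in Figure~\ref{braid}: within $\Delta_H$ the strands are linearly ordered and each generator $\sigma_i$ only interacts with adjacent strands, so a left-to-right, top-to-bottom sweep through each box supplies a valid order. Once this ordering is fixed, the remainder is the routine conjugation computation sketched above, and I would present it as a single induction over the crossings of $\mathcal D$ rather than spelling out each of the $O(p(p+q))$ steps.
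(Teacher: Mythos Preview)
Your proposal is correct and follows essentially the same approach as the paper: an induction through the crossings of each braid box, using the Wirtinger relation at each step to show that the conjugacy-by-a-word-in-the-same-block property propagates. The paper's proof reads the box $\Delta_H^2\Delta_p^q$ from bottom to top (rather than your left-to-right, top-to-bottom sweep) and writes the new arc directly as $(w\,x_{i_{\sigma_k}})^{-1}x_{k'}(w\,x_{i_{\sigma_k}})$, but the content is the same; your identification of the well-foundedness of the traversal order as the only real point to check is exactly right.
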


\begin{proof}
We prove the lemma by induction on the place of arcs.
Here we first consider the left side of $l$.
Let us start from the bottom of the box $\Delta_H^2 \Delta^q_p$.
Then, for any $k \in \{1, \ldots, p\}$, the $k$-th arc from
the left is just $x_k$, and hence these $p$ arcs satisfy the assertion of
this lemma. 

Next, fix a crossing $\sigma_k$ in $\Delta^2_H \Delta^q_p$
and assume that all arcs below this $\sigma_k$ satisfy
the assertion of the lemma.
Then, since the upper right arc of the  $\sigma_k$
is the same as the bottom left arc (denoted $x_{i_{\sigma_k}}$),
it also satisfies the assertion.
Moreover, the upper left arc is equal
to $x^{-1}_{i_{\sigma_k}}x_{j_{\sigma_k}}x_{i_{\sigma_k}}$,
where $x_{j_{\sigma_k}}$ denotes the bottom right arc.
Here, by assumption, there exist some $k' \in \{1, \ldots, p\}$
and a word $w$ consisting of $x_1, \ldots, x_p$
such that $x_{j_{\sigma_k}}=w^{-1}x_{k'} w$.
Therefore, we have
$x^{-1}_{i_{\sigma_k}}x_{j_{\sigma_k}}x_{i_{\sigma_k}}=
(w x_{i_{\sigma_k}})^{-1}x_{k'} (w x_{i_{\sigma_k}})$.
Since $x_{i_{\sigma_k}}$ also consists of 
$x_1, \ldots, x_p$, this completes the proof
for the left side of the dashed line $l$.

Similarly, we can prove the lemma for the right side of $l$.
\end{proof}

\def\proofname{Proof of \Cref{np+q r_s}}

\begin{proof}
It is easy to check that the equality
$$
\Delta_H = \sigma_1 (\sigma_2 \sigma_1) \cdots (\sigma_{p-1} \cdots \sigma_2 \sigma_1) 
$$
holds. In particular, 
we have
$$
(\sigma_1^{-1}\sigma_2^{-1} \cdots \sigma_{p-1}^{-1})
(\sigma_1^{-1} \sigma_2^{-1} \cdots \sigma_{p-2}^{-1})
\cdots (\sigma_1^{-1} \sigma_2^{-1}) \sigma_1^{-1}
 = \Delta^{-1}_H,
$$
and hence
the positive crossing changes at all crossings in $\Delta_H$
gives $\Delta^{-1}_H$.
Now we perform the positive crossing changes at all crossings in the first
$\Delta_H$ of both two $\Delta_H^2 \Delta^q_p$'s
in Figure~\ref{braid}.
Then, by \Cref{cc cob}, we obtain a negative definite cobordism
$W$ with boundary $-S^3_{1/2}(T_{p,p+q} \# T_{p,p+q}) \# S^3_{1/2}(T_{p,q} \# T_{p,q})$
such that $\pi_1(W)$ has a presentation of the form shown in \Cref{cc cob}.
Here, we note that the crossing changes at the first $p$ crossings in 
$\Delta_H^2 \Delta^q_p$ on  the left gives the relations
$$
x_1 = x_2 = \cdots = x_p.
$$   
Similarly, we have $x_{p+1}= \cdots = x_{2p}$.
Therefore, by \Cref{arcs}, all arcs in the left (resp.\ right) side of the dashed line $l$
in $D$ are equal to $x_1$ (resp.\ $x_{p+1}$) as elements of $\pi_1(W)$.
Moreover, $x_{p+1}$ belongs to not only the right side of $l$ but also the left side,
and hence $x_{p+1}=x_1$.

Now, any two generators in our presentation of $\pi_1(W)$ are equal.
Moreover, since $\lambda$ is in the commutator subgroup, 
we have $\lambda=1$, and hence $x_1=1$. 
This gives $\pi_1(W)=1$.
Therefore, by applying \Cref{pi1=1} to $W$,
we have
$$
r_s(\Sigma(D_{p,p+q}))  =
r_s(S^3_{1/2}(T_{p,p+q} \# T_{p,p+q})) 
< r_s(S^3_{1/2}(T_{p,q} \# T_{p,q})) = r_s(\Sigma(D_{p,q})).
$$
Since $q$ is an arbitrary integer with $q>1$ and
$\gcd(p,q)=1$, 
this inequality holds even if we replace $q$ with $kp+q$ for any $k\in \z_{>0}$.
Consequently, we have
$$
r_s(\Sigma(D_{p,q})) > r_s(\Sigma(D_{p,p+q}))
>r_s(\Sigma(D_{p,2p+q}))
> \cdots.
$$
Combining with \Cref{$D_{p,q}$}, this completes the proof.
\end{proof}

\def\proofname{Proof}
%%%%%%%%%%5.5
\section{Additional structures on $\Theta^3_{\z}$ and $\ker h$}
\label{sec:Additional_structures}
%\subsection{A filtration on $\Theta^3_{\z}$}
In this section, we prove Theorems~\ref{Theta_zr} and Theorem~\ref{s infty}. 
Recall that for any $r \in [0, \infty]$, the subgroup $\Theta^3_{\z,r} \subset \Theta^3_{\z}$
is defined by
\[
\Theta^3_{\z, r} := \left\{ [Y] \in \Theta^3_\z \mid \min \{ r_0 (Y), r_0 (-Y )\} \geq r \right\}.
\]

\setcounter{section}{1}
\setcounter{thm}{13}
\begin{thm}
For any $r \in (0,\infty]$, 
the quotient group $\Theta^3_\z/\Theta^3_{\z, r}$ contains $\z^\infty$
as a subgroup.
\end{thm}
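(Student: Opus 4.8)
The plan is to produce an explicit infinite family $\{Y_k\}$ of homology $3$-spheres whose classes in $\Theta^3_\z/\Theta^3_{\z,r}$ are $\z$-linearly independent, by combining the Seifert computation \Cref{Seifert} with the combination formula \Cref{combination}. Concretely, fix coprime integers $p,q>1$ (for instance $p=2$, $q=3$) and set $Y_k:=-\Sigma(p,q,pqk-1)$. By \Cref{Seifert} we have $r_0(Y_k)=\tfrac{1}{4pq(pqk-1)}$ and $r_0(-Y_k)=\infty$; in particular $r_0(Y_k)$ is strictly decreasing in $k$ and tends to $0$. Hence, for the given $r\in(0,\infty]$, all but finitely many $k$ satisfy $r_0(Y_k)<r$, and after discarding those indices and relabelling we may assume the family $\{Y_k\}_{k\ge 1}$ satisfies
\[
r > r_0(Y_1) > r_0(Y_2) > \cdots, \qquad \infty = r_0(-Y_1) = r_0(-Y_2) = \cdots .
\]
(When $r=\infty$ nothing needs to be discarded, since $r_0(Y_k)<\infty$ for every $k$.)

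The key step is to show that no nontrivial integral combination of the $[Y_k]$ lies in $\Theta^3_{\z,r}$. So take $\sum_{i=1}^{p}n_i[Y_{k_i}]$ with $k_1<\cdots<k_p$ and $n_p\ne 0$, let $\varepsilon\in\{\pm1\}$ be the sign of $n_p$, and put $W:=\varepsilon\sum_{i=1}^{p}n_i[Y_{k_i}]$. Then $Y_{k_p}$ has $r_0(Y_{k_p})<r_0(Y_{k_i})$ for $i<p$, $r_0(-Y_{k_i})=\infty$ for all $i$, $r_0(-Y_{k_p})=\infty$, and the coefficient of $[Y_{k_p}]$ in $W$ equals $\varepsilon n_p>0$; hence \Cref{combination} applies and gives $r_0(W)=r_0(Y_{k_p})<r$. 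Therefore $\min\{r_0(W),r_0(-W)\}\le r_0(W)<r$, so $W\notin\Theta^3_{\z,r}$, and since $\Theta^3_{\z,r}$ is a subgroup of $\Theta^3_\z$ (a consequence of the connected sum inequality \Cref{conn}) and $\sum n_i[Y_{k_i}]=\varepsilon W$, we conclude $\sum n_i[Y_{k_i}]\notin\Theta^3_{\z,r}$. Using also the homology cobordism invariance of $r_0$ (\Cref{invariance}), this says exactly that the images $\{[Y_k]+\Theta^3_{\z,r}\}_{k\ge1}$ are linearly independent in $\Theta^3_\z/\Theta^3_{\z,r}$, so that quotient contains $\z^\infty$.

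I do not expect a serious obstacle here: this is essentially the argument underlying \Cref{linear independence}, refined by retaining the exact value $r_0(W)=r_0(Y_{k_p})$ rather than just the inequality $r_0(W)\ne\infty$. The only points requiring care are (i) arranging that every relevant $r_0$-value is strictly smaller than $r$, which forces us to pass to a cofinite subfamily when $r<\infty$, and (ii) the borderline case $r=\infty$, where one uses that $\Theta^3_{\z,\infty}$ is well defined and that each $r_0(Y_k)$ is finite so the $Y_k$ genuinely lie outside $\Theta^3_{\z,\infty}$.
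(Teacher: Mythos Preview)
Your proof is correct and follows essentially the same approach as the paper's own argument: both use the Seifert family $\{-\Sigma(p,q,pqk-1)\}$ (the paper specializes to $p=2$, $q=3$), pass to a cofinite tail to ensure $r_0(Y_k)<r$, and then apply \Cref{combination} to compute $r_0$ of an arbitrary nontrivial combination and see it is strictly below $r$. The only cosmetic difference is that the paper keeps the manifolds as $\Sigma(2,3,6k-1)$ and negates the entire sum before invoking \Cref{combination}, whereas you build the sign reversal into the definition of $Y_k$ from the start.
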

\setcounter{section}{6}
\setcounter{thm}{0}

\begin{proof}
To prove the theorem, we use the sequence $\{\Sigma(2,3,6k-1)\}_{k=1}^\infty$.
(In fact, we can replace it with any sequence $\{Y_k\}_{k=1}^\infty$ 
such that $\{r_0(Y_k)\}_{k=1}^\infty$ is a decreasing sequence and converges to zero.)
Fix $r \in (0,\infty]$.
Then, since $r_0(-\Sigma(2,3,6k-1))=1/24(6k-1)$ converges to zero, there exists an integer $N$ such that 
$1/24(6N-1) < r$. Let $[Y]_r$ denote the equivalence class of $[Y]$ in 
$\Theta^3_\z/ \Theta^3_{\z,r}$. We prove that
$\{[\Sigma(2,3,6k-1)]_r\}_{k=N}^\infty$ are linearly independent in 
$\Theta^3_\z/\Theta^3_{\z,r}$.

Assume that $\sum_{k=N}^{M} n_k [\Sigma(2,3,6k-1)]_r=0$ and $n_M \neq 0$.
Without loss of generality, we may assume that $n_M >0$.
Then, by the definition of $\Theta^3_{\z,r}$, we have
$$
\min\left\{r_0\big(\sum_{k=N}^{M} n_k [\Sigma(2,3,6k-1)]\big),
r_0\big(-\sum_{k=N}^{M} n_k [\Sigma(2,3,6k-1)]\big)\right\} \geq r.
$$
However,  \Cref{combination} implies
\begin{align*}
r_0\big(-\sum_{k=N}^{M} n_k [\Sigma(2,3,6k-1)]\big)
& = r_0(-\Sigma(2,3,6M-1)) \\
& = \frac{1}{24(6M-1)} \leq \frac{1}{24(6N-1)} < r,
\end{align*}
a contradiction.
\end{proof}

\subsection{A pseudometric on $\ker h$}\label{metric}
 We next  consider a pseudometric on $\ker h$, where $h : \Theta^3_\z \to \z$ is the Fr\o yshov invariant.  To define it, set 
\[
s_\infty(Y):= \sup \{ s \in [-\infty,0] \mid r_s(Y)= \infty \}.
\]
Then, as a corollary of the connected sum formula for $\{r_s\}$, we have the following theorem.
\begin{thm}\label{s infty}
$s_\infty(Y_1\# Y_2) \geq s_\infty(Y_1)+s_\infty(Y_2)$.
\end{thm}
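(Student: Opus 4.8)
The plan is to deduce Theorem~\ref{s infty} directly from the connected sum formula in Theorem~\ref{conn} (i.e.\ \Cref{main theorem}(4)). Recall that $s_\infty(Y)=\sup\{s\in[-\infty,0]\mid r_s(Y)=\infty\}$. If either $s_\infty(Y_1)=-\infty$ or $s_\infty(Y_2)=-\infty$, then $s_\infty(Y_1)+s_\infty(Y_2)=-\infty$ and the inequality is trivial, so I may assume both are finite (and in $[-\infty,0]$ — in fact in $(-\infty,0]$). It then suffices to show that for every $s$ with $-\infty<s<s_\infty(Y_1)+s_\infty(Y_2)$ one has $r_s(Y_1\#Y_2)=\infty$; taking the supremum over such $s$ gives $s_\infty(Y_1\#Y_2)\ge s_\infty(Y_1)+s_\infty(Y_2)$.

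First I would fix such an $s$. Since $s<s_\infty(Y_1)+s_\infty(Y_2)$, I can choose $s_1,s_2\in(-\infty,0]$ with $s=s_1+s_2$, $s_1<s_\infty(Y_1)$ and $s_2<s_\infty(Y_2)$: concretely, write $s = s_\infty(Y_1)+s_\infty(Y_2) - \eta$ for some $\eta>0$, and set $s_1 = s_\infty(Y_1) - \eta/2$, $s_2 = s_\infty(Y_2) - \eta/2$, truncating if necessary to stay $\le 0$ (one must check one can keep both $\le 0$; if, say, $s_\infty(Y_1)-\eta/2>0$ this cannot happen because $s_\infty(Y_1)\le 0$, so there is no issue, and $s_1,s_2\le 0$ automatically, while $s_1+s_2=s$). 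By the definition of $s_\infty$ and the monotonicity \Cref{main theorem}(3) (equivalently \Cref{different s}), $s_i<s_\infty(Y_i)$ forces $r_{s_i}(Y_i)=\infty$ for $i=1,2$. Now apply Theorem~\ref{conn} with this choice of $s,s_1,s_2$:
\[
r_s(Y_1\#Y_2)\ \ge\ \min\{r_{s_1}(Y_1)+s_2,\ r_{s_2}(Y_2)+s_1\}\ =\ \min\{\infty+s_2,\ \infty+s_1\}\ =\ \infty,
\]
using the convention $\infty+c=\infty$ for finite $c$. Hence $r_s(Y_1\#Y_2)=\infty$, as desired, and letting $s\uparrow s_\infty(Y_1)+s_\infty(Y_2)$ yields the theorem.

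The only genuinely delicate point is the bookkeeping around $s_\infty$: namely that $r_s(Y)=\infty$ for \emph{all} $s<s_\infty(Y)$, which is exactly what \Cref{different s} (monotonicity of $r_s$ in $s$) gives together with the definition of $s_\infty$ as a supremum — one must be slightly careful that the supremum is attained or not, but since we only need strict inequality $s<s_\infty(Y_i)$ we can always pick $s_i'$ with $s_i<s_i'<s_\infty(Y_i)$ and $r_{s_i'}(Y_i)=\infty$, whence $r_{s_i}(Y_i)\ge r_{s_i'}(Y_i)=\infty$. The other mild subtlety is matching the hypotheses of Theorem~\ref{conn}, which requires $s,s_1,s_2\in(-\infty,0]$ with $s=s_1+s_2$; the construction above is arranged precisely to meet this. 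No further analysis is needed — the substance is entirely contained in the connected sum inequality already proved.
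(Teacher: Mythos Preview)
Your proof is correct and follows essentially the same approach as the paper: reduce to showing $r_s(Y_1\#Y_2)=\infty$ for every $s<s_\infty(Y_1)+s_\infty(Y_2)$ by splitting $s=s_1+s_2$ with $s_i<s_\infty(Y_i)$ and applying the connected sum inequality (\Cref{conn}). You spell out a bit more detail than the paper (the explicit choice $s_i=s_\infty(Y_i)-\eta/2$ and the appeal to monotonicity from \Cref{different s} to get $r_{s_i}(Y_i)=\infty$), but the argument is the same.
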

Moreover, \Cref{Froyshov} implies that if $h(Y)=0$, then $\max \{-s_\infty(Y), -s_\infty(-Y)\} < \infty$.
Now we can define a pseudometric on $\ker h$ as
\[
d_\infty([Y_1],[Y_2]):= - s_\infty(Y_1\# (-Y_2)) - s_\infty((-Y_1)\# Y_2).
\]
Moreover, the set of elements with $d_\infty([S^3],[Y])=0$ coincides with $\Theta^3_{\z,\infty}$.
\begin{thm}
\label{d infty}
The map $d_\infty$ gives a metric on $\ker h/ \Theta^3_{\z,\infty}$,
and the action of $\ker h/\Theta^3_{\z, \infty}$ on $(\ker h/\Theta^3_{\z, \infty},d_{\infty})$
is an isometry.
In particular, $\ker h/\Theta^3_{\z, \infty}$ is a topological group 
with respect to the metric topology induced by $d_\infty$.
\end{thm}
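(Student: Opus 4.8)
The plan is to verify the three claims of Theorem~\ref{d infty} in turn: that $d_\infty$ descends to a genuine metric on $\ker h/\Theta^3_{\z,\infty}$, that the group acts by isometries, and that this makes the quotient a topological group. First I would record the basic properties of $d_\infty$ as a pseudometric on $\ker h$. Symmetry is immediate from the definition, since swapping $Y_1$ and $Y_2$ just exchanges the two summands $-s_\infty(Y_1\#(-Y_2))$ and $-s_\infty((-Y_1)\#Y_2)$. Nonnegativity follows because $r_0(Z)\le r_s(Z)$ for all $s$ (using \Cref{different s}), so $r_0(Z)=\infty$ whenever $r_s(Z)=\infty$ for some $s$, which forces $s_\infty(Z)\le 0$, hence $-s_\infty(Z)\ge 0$. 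For the triangle inequality I would apply \Cref{s infty}: for homology spheres $A,B$ we have $s_\infty(A\#B)\ge s_\infty(A)+s_\infty(B)$; applying this with $A=Y_1\#(-Y_2)$, $B=Y_2\#(-Y_3)$ gives $s_\infty(Y_1\#(-Y_3))\ge s_\infty(Y_1\#(-Y_2))+s_\infty(Y_2\#(-Y_3))$, and similarly for the orientation-reversed version; adding the two and negating yields $d_\infty([Y_1],[Y_3])\le d_\infty([Y_1],[Y_2])+d_\infty([Y_2],[Y_3])$. Finally, well-definedness on $\ker h$ requires that $h$ is additive so that $Y_1\#(-Y_2)\in\ker h$ whenever $[Y_1],[Y_2]\in\ker h$, and that $d_\infty$ depends only on homology cobordism classes, which follows from the homology cobordism invariance of each $r_s$ (\Cref{invariance}) and hence of $s_\infty$.

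Next I would pin down exactly when $d_\infty$ vanishes, in order to identify the correct quotient on which it becomes a metric. By definition $d_\infty([Y],[S^3])=0$ iff $s_\infty(Y)=s_\infty(-Y)=0$, i.e.\ iff $r_0(Y)=\infty$ and $r_0(-Y)=\infty$, which is precisely the condition $[Y]\in\Theta^3_{\z,\infty}$. More generally, using translation invariance under the group operation (established in the next paragraph), $d_\infty([Y_1],[Y_2])=0$ iff $d_\infty([Y_1\#(-Y_2)],[S^3])=0$ iff $[Y_1\#(-Y_2)]\in\Theta^3_{\z,\infty}$ iff $[Y_1]$ and $[Y_2]$ have the same image in $\ker h/\Theta^3_{\z,\infty}$. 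Hence $d_\infty$ is a pseudometric that separates points exactly after passing to this quotient, so it induces a well-defined metric there.

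For the isometry statement, I would observe that for any $[Z]\in\ker h$ the translation $[Y]\mapsto[Y\#Z]$ satisfies
\[
d_\infty([Y_1\#Z],[Y_2\#Z])= -s_\infty\big((Y_1\#Z)\#(-(Y_2\#Z))\big)-s_\infty\big((-(Y_1\#Z))\#(Y_2\#Z)\big),
\]
and since connected sum is commutative and associative up to orientation-preserving diffeomorphism, $(Y_1\#Z)\#(-(Y_2\#Z))\cong Y_1\#(-Y_2)$ and likewise for the other term; thus the expression equals $d_\infty([Y_1],[Y_2])$. This shows the $\ker h/\Theta^3_{\z,\infty}$-action on itself by translation is by isometries. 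Continuity of multiplication then follows from the isometry property together with the triangle inequality in the standard way: $d_\infty([Y_1\#Y_2],[Y_1'\#Y_2'])\le d_\infty([Y_1\#Y_2],[Y_1'\#Y_2])+d_\infty([Y_1'\#Y_2],[Y_1'\#Y_2'])= d_\infty([Y_1],[Y_1'])+d_\infty([Y_2],[Y_2'])$; continuity of inversion follows from $d_\infty([-Y_1],[-Y_2])=d_\infty([Y_1],[Y_2])$, which is again immediate from the definition. Therefore $\ker h/\Theta^3_{\z,\infty}$ is a topological group in the metric topology.

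The main obstacle is really a bookkeeping one rather than a conceptual one: one must be careful that all the identities such as $(Y_1\#Z)\#(-(Y_2\#Z))\cong Y_1\#(-Y_2)$ and the orientation conventions are consistent, and that $s_\infty$ genuinely inherits homology cobordism invariance and the relevant subadditivity from $\{r_s\}$. In particular the subtle point is checking that the sup defining $s_\infty$ behaves well under connected sum — i.e.\ that \Cref{s infty}, which is itself a consequence of the connected sum formula \Cref{conn}, is correctly applied with the additive shift parameters $s=s_1+s_2$ — and that $s_\infty$ takes values in $[-\infty,0]$ so that $d_\infty$ is real-valued precisely on $\ker h$ (which is where \Cref{Froyshov} is used to guarantee finiteness of $-s_\infty(Y)$). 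Once these foundational facts are in place, the three assertions of the theorem reduce to the routine verifications sketched above.
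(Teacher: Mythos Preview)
Your overall architecture matches the paper's: symmetry is immediate, the triangle inequality comes from the subadditivity of $s_\infty$ (\Cref{s infty}), translation invariance is checked directly, and positive definiteness on the quotient is reduced to identifying when $d_\infty([Y],[S^3])=0$. However, there is a genuine gap at exactly this last point.

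You assert that $s_\infty(Y)=0$ is equivalent to $r_0(Y)=\infty$, treating it as definitional (``i.e.''). One direction is clear, but the implication $s_\infty(Y)=0\Rightarrow r_0(Y)=\infty$ is \emph{not} automatic: knowing $r_s(Y)=\infty$ for all $s<0$ does not formally force $r_0(Y)=\infty$, because monotonicity (\Cref{different s}) only gives $r_0(Y)\le r_s(Y)$, which is vacuous when the right side is $\infty$. Your sentence in the first paragraph, ``$r_0(Z)=\infty$ whenever $r_s(Z)=\infty$ for some $s$,'' reads the monotonicity in the wrong direction and is in fact false: the paper exhibits $Y_k=2\Sigma(2,3,5)\#(-\Sigma(2,3,6k+5))$ with $r_{-\infty}(Y_k)=\infty$ but $r_0(Y_k)<\infty$. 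The paper handles this via a separate lemma (\Cref{r_0 and s infty}): one picks $s<0$ with $[s,0]\cap\Lambda^*_Y=\emptyset$ and invokes the chain homotopy equivalence of \Cref{filtration}(3) together with \Cref{inclusion} to transport $[\theta^{[s,\infty]}_Y]=0$ to $[\theta^{[0,\infty]}_Y]=0$ (with a separate remark for the case $0\in\Lambda^*_Y$, using the $\lambda_Y/2$-shift built into the definition). Without this, the ``metric'' part of the theorem is unproven.

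One minor imprecision: in the isometry step you write $(Y_1\#Z)\#(-(Y_2\#Z))\cong Y_1\#(-Y_2)$. These manifolds are not diffeomorphic in general; they are homology cobordant (since $Z\#(-Z)$ bounds), which is all you need because $s_\infty$ is a homology cobordism invariant. The paper's proof phrases this the same way you intend, working in $\Theta^3_\z$ throughout.
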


Note that $\{\Sigma(2,3,5) \# (-\Sigma(2,3,6k-1)) \}_{k\in \z_{>1}}$ are linearly independent in $\ker h/ \Theta^3_{\z,\infty}$, and hence $\ker h/ \Theta^3_{\z,\infty}$ contains $\z^\infty$ as a subgroup.
Here, we post the following question.

\begin{ques}
\label{ques topology}
What is the isomorphism type of $(\ker h/ \Theta^3_{\z,\infty}, d_\infty)$
as a topological group? In particular, is it a discrete group?
\end{ques}

%First, we discuss a pseudometric on $\ker h$. 
%Recall that 
%\[
%s_\infty(Y):= \sup \{ s \in [-\infty,0] \mid r_s(Y)= \infty \}.
%\]
%In particular, if $r_0(Y)=\infty$, then $s_\infty(Y)=0$.
%Actually, the converse is also true.

\begin{lem}
\label{r_0 and s infty}
If $s_\infty(Y)=0$, then $r_0(Y)=\infty$.
\end{lem}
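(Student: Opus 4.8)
The plan is to prove the contrapositive combined with the structural fact that $r_s$ is non-increasing in the filtration parameter. Concretely, I would argue as follows. Suppose $s_\infty(Y)=0$; I want $r_0(Y)=\infty$. By definition of $s_\infty$ as a supremum, there is a sequence $s_n \in [-\infty,0)$ with $s_n \to 0$ and $r_{s_n}(Y)=\infty$ for every $n$. The goal is to transfer this to $s=0$ itself. The natural tool is \Cref{different s} (i.e.\ \Cref{main theorem}(3)), which says $r_{s}(Y)$ is monotone: if $s \le s'$ then $r_{s'}(Y) \le r_s(Y)$. Since each $s_n \le 0$, this monotonicity gives only $r_0(Y) \le r_{s_n}(Y) = \infty$, which is vacuous. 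So monotonicity in this direction is the wrong way; I actually need a \emph{continuity/semicontinuity} statement at $s=0$, or else a direct argument using the obstruction class.

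So the main approach I would actually pursue is the direct one via the definition of $r_s$ in terms of the obstruction class $[\theta^{[s,r]}_Y]$ together with \Cref{inclusion}. Fix any $r \in \R^{>0}_Y$; I must show $[\theta^{[0,r]}_Y]=0 \in I^1_{[0,r]}(Y;\q)$, which by definition of $r_0(Y)$ as a supremum (and $r$ arbitrary, using \Cref{value} to pass between $\R_Y$ and arbitrary $r$) will give $r_0(Y)=\infty$. Pick $s_n \to 0^-$ with $r_{s_n}(Y)=\infty$; then by \Cref{r_0 and theta^r} (applied with this $s_n$ and $r$, noting $r < \infty = r_{s_n}(Y)$ and $r\in \R^{>0}_Y$) we get $[\theta^{[s_n,r]}_Y]=0$. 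Now I would use \Cref{inclusion}: the inclusion-induced map $i^{[s_n,r]}_{[0,r]} \colon I^1_{[s_n,r]}(Y) \to I^1_{[0,r]}(Y)$ sends $[\theta^{[s_n,r]}_Y]$ to $[\theta^{[0,r]}_Y]$. Here I must be careful about the direction of the inclusion map: since $s_n \le 0$, we have $[s_n,r] \supseteq [0,r]$, and \Cref{inclusion}/\Cref{filtration} are stated for $s \le s' \le 0 \le r \le r'$, i.e.\ $i^{[s,r]}_{[s',r']}$ goes from the wider-at-the-bottom interval to the narrower one. With $s=s_n$, $s'=0$, $r=r'$, this reads $i^{[s_n,r]}_{[0,r]}[\theta^{[0,r]}_Y] = [\theta^{[s_n,r]}_Y]$ — wait, the indexing in \Cref{inclusion} puts the \emph{source} superscript as $[s',r']$; reading it literally, $i^{[s,r]}_{[s',r']}[\theta^{[s',r']}_Y]=[\theta^{[s,r]}_Y]$, so with $s=s_n, s'=0$ it gives $i^{[s_n,r]}_{[0,r]}[\theta^{[0,r]}_Y]=[\theta^{[s_n,r]}_Y]=0$. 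So I would then need that this particular map $i^{[s_n,r]}_{[0,r]}$ is injective on the relevant class for $n$ large, or rather that $[\theta^{[0,r]}_Y]$ lies in its kernel forces $[\theta^{[0,r]}_Y]=0$.

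To close that gap I would invoke \Cref{filtration}(3): if $[s_n,0]$ and $[r,r]$ avoid $\Lambda^*_Y$, then $i^{[s_n,r]}_{[0,r]}$ is a chain homotopy equivalence, hence the map on $I^1$ is an isomorphism, and $i^{[s_n,r]}_{[0,r]}[\theta^{[0,r]}_Y]=0$ forces $[\theta^{[0,r]}_Y]=0$. Since $\Lambda^*_Y$ is locally finite and $0 \notin \Lambda^*_Y$ only if $Y=S^3$ — so in general I cannot assume $[s_n,0]\cap\Lambda^*_Y=\emptyset$ — the correct move is: choose $s_n$ small enough in absolute value that $[s_n,0]\cap\Lambda^*_Y \subseteq \{0\}$, and handle the possible critical value at $0$ separately. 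Actually the cleanest route, and the one I expect the authors use, is to instead pick $r' \in \R^{>0}_Y$ with $0 < r' < r_0(Y)$-candidate region and note: if $r_{s_n}(Y)=\infty$ for $s_n$ arbitrarily close to $0$, then for \emph{every} $r\in\R^{>0}_Y$, choosing $n$ with $|s_n|$ small relative to $d(r,\Lambda_Y)$ and such that $(s_n,0]\cap\Lambda^*_Y=\emptyset$, \Cref{filtration}(3) gives $I^1_{[s_n,r]}(Y)\cong I^1_{[0,r]}(Y)$ compatibly with the obstruction classes via \Cref{inclusion}, whence $[\theta^{[0,r]}_Y]=0$; as $r$ was an arbitrary element of $\R^{>0}_Y$, $r_0(Y)=\infty$. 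The main obstacle is precisely this bookkeeping at the endpoint $s=0$: ensuring that shrinking $s_n$ towards $0$ can be done while keeping the half-open interval $(s_n,0]$ free of starred critical values so that \Cref{filtration}(3) applies — if $0\in\Lambda^*_Y$ this needs the same $\lambda_Y$-shift trick already built into \Cref{defofIr*} for $s\in\Lambda_Y$, and I would spell that out rather than gloss it.
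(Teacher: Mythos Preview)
Your approach is essentially the same as the paper's: show the obstruction class vanishes at some $s<0$, then use \Cref{filtration}(3) together with \Cref{inclusion} to transfer this to $s=0$, handling the case $0\in\Lambda^*_Y$ via the $\lambda_Y/2$-shift built into \Cref{defofIr*}. The paper streamlines this slightly by working directly with $r=\infty$ (so that only $[\theta^{[0,\infty]}_Y]=0$ needs to be checked) and by first noting, via monotonicity, that $r_s(Y)=\infty$ for \emph{every} $s<0$ rather than only along a sequence; otherwise the arguments coincide.
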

\begin{proof}
Since $s_\infty(Y)=0$, 
for any $s<0$, we have $r_s(Y)=\infty$, and hence
$[\theta^{[s,\infty]}_Y]=0$.
Suppose that $0 \in \R \setminus \Lambda^*_Y$. Then we can take $s<0$ such that
$[s,0] \subset \R \setminus \Lambda^*_Y$.
Hence, Lemmas~\ref{filtration} and \ref{inclusion} give 
an isomorphism from $I^1_{[0,\infty]}(Y)$ to $I^1_{[s,\infty]}(Y)$
which maps $[\theta^{[0,\infty]}_Y]$ to $[\theta^{[s,\infty]}_Y]$.
This implies $[\theta^{[0,\infty]}_Y]=0$, and hence $r_0(Y)=\infty$.

Next, suppose that $0 \in \Lambda^*_Y$. Then, by the definition of $CI^*_{[0,\infty]}(Y)$,
the cohomology group $I^1_{[0,\infty]}(Y)$ and $[\theta^{[0,\infty]}_Y]$ coincide with
$I^1_{[-\frac{1}{2}\lambda_Y,\infty]}(Y)$ and $[\theta^{[-\frac{1}{2}\lambda_Y,\infty]}_Y]$ respectively, where $\lambda_Y:= \min \{ |a-b| \mid a,b \in \Lambda_Y \text{ with }a\neq b  \}>0$. 
This implies $[\theta^{[0,\infty]}_Y]=0$, and hence $r_0(Y)=\infty$.
\end{proof}

%Moreover, as a corollary of the connected sum formula for $\{r_s\}$, we have the following theorem.
Next, we prove \Cref{s infty}.

\begin{proof}[Proof of \Cref{s infty}]
We may assume $s_\infty(Y_1)+s_\infty(Y_2) \neq -\infty $. For any $s  \in (-\infty, s_\infty(Y_1)+s_\infty(Y_2))$, there exist $s_1 \in (-\infty, s_\infty(Y_1))$ and
$s_2 \in (-\infty, s_\infty(Y_2))$ such that $s=s_1+s_2$. For such $s_1$ and $s_2$, we have the following connected sum formula 
\[
r_s(Y_1\# Y_2 ) \geq \min \{ r_{s_1}(Y_1) +s_2,  r_{s_2}(Y_2) +s_1\}.
\]
Since $s_1$ and $s_2$ are in $(-\infty, s_\infty(Y_1))$ and $(-\infty, s_\infty(Y_2))$
respectively, we have $r_s(Y_1\# Y_2 )= \infty$. This completes the proof. 
\end{proof}
Now we prove \Cref{d infty}.
Recall that $d_\infty$ is a function on $\ker h \times \ker h$ defined by
\[
d_\infty([Y_1],[Y_2]):= - s_\infty(Y_1\# -Y_2) - s_\infty(-Y_1\# Y_2).
\]
By \Cref{Froyshov}, the equalities $r_{-\infty}(Y)= r_{-\infty}(-Y)=0$ hold if and only if $h(Y)=0$.
Moreover, we have $r_{-\infty}(\pm Y) = r_s(\pm Y)$ for sufficiently small $s \in (-\infty, 0]$.
These imply that $d_\infty([Y_1],[Y_2])$ is finite  
for any pair $([Y_1],[Y_2]) \in \ker h \times \ker h$.

\begin{proof}[Proof of \Cref{d infty}]
For any $[Y_1], [Y_2] \in \ker h$, the equalities $d_\infty([Y_1], [Y_1])=0$
and $d_\infty([Y_1], [Y_2])= d_\infty([Y_2], [Y_1])$ obviously hold.
Suppose that $[Y_1]$, $[Y_2]$ and $[Y_3]$ are three elements of $\ker h$.
Then, by \Cref{s infty}, we have
\begin{align*} 
 d_\infty ( [Y_1], [Y_3] ) 
& = - s_\infty(Y_1\# -Y_3) - s_\infty(-Y_1\# Y_3)  \\
& =- s_\infty(Y_1\# -Y_3 \# Y_2 \# -Y_2) - s_\infty(-Y_1\# Y_3 \# Y_2 \# -Y_2) \\
& \leq -s_\infty (Y_1 \# -Y_2 ) -s_\infty (Y_2 \# -Y_3 ) - s_\infty (-Y_1\# Y_2)  
- s_\infty (-Y_2 \# Y_3)   \\
& = d_\infty ([Y_1],[Y_2]) + d_\infty ([Y_2],[Y_3]) .
\end{align*}
Therefore $d_\infty$ gives a pseudometric on $\ker h$.  

We next prove that $d_\infty$ is well-defined on $\ker h/ \Theta^3_{\z,\infty}$.
Let $[Y_1], [Y_2] \in \ker h$ and $[M_1], [M_2] \in \Theta^3_{\z,\infty}$. 
Then we have 
$s_\infty(M_i)=s_\infty(-M_i)= 0$ ($i=1,2$), and
hence \Cref{s infty} implies that
\begin{align*} 
d_\infty([Y_1]+[M_1 ],[Y_2]+[M_2 ]) 
& =  - s_\infty(Y_1 \# M_1 \# -Y_2 \# -M_2) - s_\infty(-Y_1\#-M_1\# Y_2\# M_2)\\
& \leq   - s_\infty(Y_1  \# -Y_2) - s_\infty(-Y_1 \# Y_2) = d_\infty([Y_1 ],[Y_2 ])
\end{align*}
and 
\begin{align*} 
&d_\infty([Y_1 ],[Y_2 ])\\
&=   - s_\infty(Y_1  \# -Y_2) - s_\infty(-Y_1 \# Y_2) \\
&=   - s_\infty(Y_1  \# -Y_2\#M_1\#-M_1 \# M_2\#-M_2) - 
s_\infty(-Y_1 \# Y_2 \#M_1\#-M_1 \# M_2\#-M_2) \\
&\leq   - s_\infty(Y_1 \# M_1 \# -Y_2\# -M_2) - s_\infty(-Y_1\# -M_1\# Y_2\# M_2)\\
&=  d_\infty([Y_1]+ [M_1],[Y_2]+[M_2 ]).
\end{align*}
Therefore, $d_\infty$ is well-defined on $\ker h/ \Theta^3_{\z,\infty}$. 

Next, we prove that $d_\infty$ is a metric on $\ker h/ \Theta^3_{\z,\infty}$.
It is easy to check that $d_\infty$ is a pseudometric on $\ker h/ \Theta^3_{\z,\infty}$.
Suppose that elements $[Y_1]$ and $[Y_2]$ of $\ker h/ \Theta^3_{\z,\infty}$ satisfy $d_\infty([Y_1],[Y_2])=0$.
Then we have 
$s_\infty(Y_1\#-Y_2)=s_\infty(-Y_1\#Y_2)=0$, and these equalities 
and \Cref{r_0 and s infty} imply that 
$r_0(Y_1 \# -Y_2)=r_0(-Y_1 \# Y_2)=\infty$.
Therefore, by the definition of $\Theta^3_{\z,\infty}$ we have 
$[Y_1]=[Y_2]$ as elements of $\ker h/ \Theta^3_{\z,\infty}$. 
This proves that $d_\infty$ is a metric on $\ker h/ \Theta^3_{\z,\infty}$.

Finally, we prove that the group operation of $\ker h/ \Theta^3_{\z,\infty}$ is an isometry with respect to $d_\infty$.
Indeed, we see that
\begin{align*} 
d_\infty ( [Y_1]+[M], [Y_2]+ [M] ) 
& = - s_\infty(Y_1\# M \# -Y_2\# -M ) - s_\infty(-Y_1\# -M \# Y_2\# M )  \\
& = d_\infty ( [Y_1], [Y_2 ])
\end{align*}
for any elements $[Y_1]$, $[Y_2]$ and $[M]$ of $\ker h/\Theta^3_{\z, \infty}$.
This completes the proof. 
\end{proof}
As a study of concrete cases,
we give partial estimates of $d_\infty$ for the connected sums of some Seifert homology $3$-spheres. 
\begin{prop} 
\label{partial estimate}
For any $n \in \z_{>0}$, we have 
\[
 d_\infty([S^3] , [\Sigma(2,3,6n-1)\# -\Sigma(2,3,6n+5)]) \geq \frac{1} {4(6n-1)(6n+5)} .
 \]
\end{prop}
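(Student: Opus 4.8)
\textbf{Proof proposal for \Cref{partial estimate}.}

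The plan is to estimate $d_\infty([S^3], [\Sigma(2,3,6n-1)\# -\Sigma(2,3,6n+5)])$ from below by exhibiting a negative value of $s$ for which $r_s$ of the relevant connected sums stays infinite, and then unwinding the definition of $d_\infty$. Write $Y_1 := \Sigma(2,3,6n-1)$ and $Y_2 := \Sigma(2,3,6n+5)$, and set $Z := Y_1 \# -Y_2$. By definition,
\[
d_\infty([S^3],[Z]) = -s_\infty(Z) - s_\infty(-Z),
\]
so it suffices to bound $-s_\infty(Z)$ and $-s_\infty(-Z)$ below, each by a suitable amount. Recall $s_\infty(W) = \sup\{s \in [-\infty,0] \mid r_s(W) = \infty\}$, so $-s_\infty(W) \ge c$ is equivalent to $r_{-c}(W) = \infty$ (using \Cref{different s}, the monotonicity of $r_s$ in $s$).

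The key input is the connected sum formula \Cref{conn}: for $s = s_1 + s_2$ with $s_1, s_2 \le 0$,
\[
r_s(A \# B) \ge \min\{r_{s_1}(A) + s_2,\ r_{s_2}(B) + s_1\}.
\]
First I would handle $-Z = -Y_1 \# Y_2$. By \Cref{general Seifert} (and \Cref{Seifert}), since $R(2,3,6n-1) > 0$ and $R(2,3,6n+5) > 0$, we have $r_s(-Y_1) = r_s(-\Sigma(2,3,6n-1)) = \frac{1}{4\cdot 2 \cdot 3 \cdot (6n-1)} = \frac{1}{24(6n-1)}$ and $r_s(Y_2) = \infty$ for all $s \in [-\infty,0]$. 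Now apply the connected sum formula to $-Z = -Y_1 \# Y_2$ with $s = s_1 + s_2$: we get $r_s(-Z) \ge \min\{ r_{s_1}(-Y_1) + s_2,\ r_{s_2}(Y_2) + s_1\} = \min\{\frac{1}{24(6n-1)} + s_2,\ \infty + s_1\} = \frac{1}{24(6n-1)} + s_2$ provided $s_2 \ge -\frac{1}{24(6n-1)}$... but this only shows $r_s(-Z)$ is positive, not infinite. So this crude application is not enough, and the real work is in the other direction.

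The productive direction is $Z = Y_1 \# -Y_2$. Here $r_s(Y_1) = \infty$ for all $s$ (by \Cref{general Seifert}, since $Y_1$ is an oriented Seifert homology sphere with $R > 0$, it bounds a negative definite $4$-manifold so $r_s(Y_1) = \infty$ by \Cref{obstruct bounding}), while $r_s(-Y_2) = \frac{1}{24(6n+5)}$. Apply \Cref{conn} to $Z = Y_1 \# -Y_2$ with $s = s_1 + s_2$:
\[
r_s(Z) \ge \min\{r_{s_1}(Y_1) + s_2,\ r_{s_2}(-Y_2) + s_1\} = \min\{\infty + s_2,\ \tfrac{1}{24(6n+5)} + s_1\} = \tfrac{1}{24(6n+5)} + s_1.
\]
Hmm — again this is finite. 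The issue is that $-Y_2$ genuinely has finite $r_s$, so $Z$ cannot have $r_s(Z) = \infty$ for any $s$; thus $-s_\infty(Z)$ could be $+\infty$ only if $r_s(Z) = \infty$ for some $s < 0$, which seems false. Let me reconsider: the claimed bound $\frac{1}{4(6n-1)(6n+5)}$ suggests $-s_\infty$ of one of the two summands equals roughly the difference $\frac{1}{24(6n-1)} - \frac{1}{24(6n+5)}$, and indeed $\frac{1}{24(6n-1)} - \frac{1}{24(6n+5)} = \frac{6}{24(6n-1)(6n+5)} = \frac{1}{4(6n-1)(6n+5)}$. So the right statement is: $-s_\infty(-Z) = -s_\infty(-Y_1 \# Y_2) \ge \frac{1}{24(6n-1)} - \frac{1}{24(6n+5)}$, coming from a \emph{sharper} use of the connected sum formula where we feed in $r_s(Y_2)$ computed not as $\infty$ but via the fact that $r_s(Y_2) = \infty$ forces, through the formula applied to $-Y_1 \# Y_2 \# Y_1 = \cdots$, control on $r_s(-Y_1 \# Y_2)$.

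\textbf{Revised plan.} Set $W := -Y_1 \# Y_2$. I want to show $r_s(W) = \infty$ for $s = -\left(\frac{1}{24(6n-1)} - \frac{1}{24(6n+5)}\right) + \varepsilon$... let me instead argue: by \Cref{invariance} and \Cref{conn} applied to $W \# Y_1 = -Y_1 \# Y_1 \# Y_2 \sim Y_2$ (homology cobordant), we have for $s = s_1 + s_2$ with $s_1 \le 0$, $s_2 \le 0$:
\[
r_{s_1 + s_2}(Y_2) = r_{s_1+s_2}(W \# Y_1) \ge \min\{r_{s_1}(W) + s_2,\ r_{s_2}(Y_1) + s_1\}.
\]
Since $r_{s_2}(Y_1) = \infty$, the right side equals $r_{s_1}(W) + s_2$ whenever it is the smaller term, which holds once $r_{s_1}(W) + s_2 < \infty$; and since $r_{s_1+s_2}(Y_2) = \infty$ always, there is no contradiction — this gives no bound on $r_{s_1}(W)$ from above, hence doesn't preclude $r_{s_1}(W) = \infty$. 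Conversely, applying \Cref{conn} directly to $W = -Y_1 \# Y_2$ with $s = s_1 + s_2$: $r_s(W) \ge \min\{r_{s_1}(-Y_1) + s_2,\ r_{s_2}(Y_2) + s_1\} = \min\{\frac{1}{24(6n-1)} + s_2,\ \infty\} = \frac{1}{24(6n-1)} + s_2$ as long as this is $\le$ the second term (automatic). To make $r_s(W) = \infty$ we would need $s_2$ such that $\frac{1}{24(6n-1)} + s_2 = \infty$, impossible. So in fact $r_s(W) < \infty$ for all finite $s$, and $s_\infty(W) = -\infty$?? That cannot be, since $W \in \ker h$ forces $s_\infty(W) > -\infty$ by \Cref{Froyshov}. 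The resolution must be: one computes $h(W) = h(-Y_1) + h(Y_2) = -1 + 1 = 0$ using \Cref{Froyshov=1}, so $W \in \ker h$ and $r_{-\infty}(W) = 0$ (finite), consistent with $s_\infty(W)$ being finite but negative. The actual lower bound on $-s_\infty(W)$, i.e. the largest $c$ with $r_{-c}(W) = \infty$, is where the connected sum formula \emph{does} help: we want the threshold below which $r_s(W) = \infty$.

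\textbf{Final plan, concisely.} First, verify via \Cref{Froyshov}, \Cref{Froyshov=1}, and \Cref{general Seifert} that $h(\Sigma(2,3,6n-1)\# -\Sigma(2,3,6n+5)) = 0$, so the pair $([S^3], [Z])$ lies in $\ker h \times \ker h$ and $d_\infty$ is finite. Second, unwind $d_\infty([S^3],[Z]) = -s_\infty(Z) - s_\infty(-Z)$, and show it suffices to prove $r_s(-Z) = \infty$ for all $s \in (-\frac{1}{4(6n-1)(6n+5)}, 0]$, i.e. $-s_\infty(-Z) \ge \frac{1}{4(6n-1)(6n+5)}$ (the other term $-s_\infty(Z) \ge 0$ trivially). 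Third, this is the crux: apply \Cref{conn} to $-Z = -\Sigma(2,3,6n-1) \# \Sigma(2,3,6n+5)$ writing $s = s_1 + s_2$ with $s_2 = 0$ and $s_1 = s$, using $r_0(\Sigma(2,3,6n+5)) = \infty$ (from \Cref{general Seifert}, the positive Seifert sphere) and $r_s(-\Sigma(2,3,6n-1)) = \frac{1}{24(6n-1)}$ — wait, this still gives finite. The correct pairing must take $s_2 < 0$ on the $\Sigma(2,3,6n+5)$ factor: $r_s(-Z) \ge \min\{r_{s_1}(-\Sigma(2,3,6n-1)) + s_2,\ r_{s_2}(\Sigma(2,3,6n+5)) + s_1\}$. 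Since $r_{s_2}(\Sigma(2,3,6n+5)) = \infty$, the second term is $\infty$; the first term is $\frac{1}{24(6n-1)} + s_2$. For $r_s(-Z) = \infty$ we cannot use this. I therefore expect the actual argument routes through the \emph{upper} bound on $r_0(-Z)$ combined with a delicate $s$-shifted estimate: apply \Cref{conn} to $\Sigma(2,3,6n+5) = (-Z) \# \Sigma(2,3,6n-1)$ to get, for $s = s_1 + s_2$, $\infty = r_s(\Sigma(2,3,6n+5)) \ge \min\{r_{s_1}(-Z) + s_2, r_{s_2}(\Sigma(2,3,6n-1)) + s_1\}$; since $r_{s_2}(\Sigma(2,3,6n-1)) = \infty$ this is vacuous.

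Given the genuine subtlety, \textbf{the main obstacle} is identifying the correct chain of connected-sum inequalities that produces the sharp constant $\frac{1}{4(6n-1)(6n+5)} = \frac{1}{24(6n-1)} - \frac{1}{24(6n+5)}$; I expect it comes from combining the connected sum formula with the computed values $r_s(-\Sigma(2,3,m)) = \frac{1}{4\cdot 6 \cdot m}$ and the observation that $s_\infty(-\Sigma(2,3,6n-1)) = 0$ while $s_\infty(-\Sigma(2,3,6n+5))$ must be estimated, then using $d_\infty$-triangle-type manipulations $d_\infty([S^3],[Z]) \ge |d_\infty([S^3], [\Sigma(2,3,6n-1)]_{\text{rel}}) - \cdots|$ in $\ker h / \Theta^3_{\z,\infty}$ — concretely, applying the connected sum formula to $Z \# \Sigma(2,3,6n+5) \sim \Sigma(2,3,6n-1)$ with a nonzero split $s = s_1 + s_2$ to extract $r_{s_1}(Z) \le \frac{1}{24(6n-1)} - s_2$ type bounds, and dually a lower bound, pinning $-s_\infty$ of one summand to the stated difference. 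Once the correct inequality is set up, the remaining arithmetic ($\frac{1}{24(6n-1)} - \frac{1}{24(6n+5)} = \frac{1}{4(6n-1)(6n+5)}$) is immediate. I would structure the writeup as: (i) reduce to an $r_s = \infty$ statement for a single connected sum; (ii) prove that statement by a single application of \Cref{conn} with the right choice of $s_1, s_2$; (iii) conclude by the definition of $d_\infty$ and the arithmetic identity.
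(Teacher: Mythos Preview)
Your proposal has a genuine gap rooted in a sign confusion about $s_\infty$. You write that ``$-s_\infty(W) \ge c$ is equivalent to $r_{-c}(W) = \infty$'', but this is backwards: $-s_\infty(W)\ge c$ means $s_\infty(W)\le -c$, i.e.\ $r_s(W)<\infty$ for every $s>-c$. So lower-bounding $-s_\infty$ amounts to showing \emph{finiteness} of $r_s$ near $s=0$, not infiniteness. Because of this, you spend the whole attempt trying to force $r_s(-Z)=\infty$ via \Cref{conn} (which only gives lower bounds and never yields $\infty$ unless a summand already has $r=\infty$), and you target the wrong factor: the nontrivial contribution in the paper comes from $-s_\infty(Z)$, not $-s_\infty(-Z)$.

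The paper's argument is exactly the one you grope towards at the end but never execute: apply \Cref{conn} to the decomposition $-\Sigma(2,3,6n+5)\sim Z\#(-\Sigma(2,3,6n-1))$, with $s_2=0$, to get
\[
\tfrac{1}{24(6n+5)}=r_{s_1}(-\Sigma(2,3,6n+5))\ \ge\ \min\bigl\{r_{s_1}(Z),\ \tfrac{1}{24(6n-1)}+s_1\bigr\}.
\]
Whenever $s_1>-\bigl(\tfrac{1}{24(6n-1)}-\tfrac{1}{24(6n+5)}\bigr)=-\tfrac{1}{4(6n-1)(6n+5)}$ the second entry exceeds the left side, so the minimum must be $r_{s_1}(Z)$, forcing $r_{s_1}(Z)<\infty$. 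Hence $-s_\infty(Z)\ge\tfrac{1}{4(6n-1)(6n+5)}$. The other term is handled by a separate, easy step you did not identify: $-Z=-\Sigma(2,3,6n-1)\#\Sigma(2,3,6n+5)$ bounds a negative definite $4$-manifold (built from the reversed-orientation cobordism $-W_n$ of \Cref{section 5.3}), so $r_0(-Z)=\infty$ by \Cref{obstruct bounding} and $s_\infty(-Z)=0$. Adding the two gives the proposition. Your decompositions $(-Z)\#Y_1\sim Y_2$ and $Z\#Y_2\sim Y_1$ are useless precisely because both sides have $r_s=\infty$; the productive one is the decomposition of $-Y_2$, where both known values are finite.
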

\begin{proof}The connected sum formula for $r_s$ gives 
\[
r_s( -\Sigma(2,3,6n+5)) \geq \min \{ r_{s_1}(\Sigma(2,3,6n-1)\# -\Sigma(2,3,6n+5)) +s_2, r_{s_2}(-\Sigma(2,3,6n-1)) +s_1\}
\]
for any $s$, $s_1, s_2 \in (-\infty, 0]$ with $s=s_1+s_2$. In particular, if $s_2=0$, 
then we have
\[
 \frac{1}{24(6n+5)} \geq  \min \{ r_{s_1}(\Sigma(2,3,6n-1)\# -\Sigma(2,3,6n+5)) ,  \frac{1} {24(6n-1)} +s_1\}.
 \]
This implies that if  $\frac{1}{24(6n+5)}< \frac{1} {24(6n-1)} +s_1$, then $r_{s_1}(\Sigma(2,3,6n-1)\# -\Sigma(2,3,6n+5))< \infty$.
Hence we have
\begin{align*}
-s_\infty (\Sigma(2,3,6n-1)\# -\Sigma(2,3,6n+5))
& \geq \frac{1} {24(6n-1)}-\frac{1} {24(6n+5)}\\
&= \frac{1}{4(6n-1)(6n+5)}.
\end{align*}
Moreover, 
since $-\Sigma(2,3,6n-1)=S^3_{1/n}(3_1^*)$, 
we obtain a negative definite 4-manifold with boundary 
$-\Sigma(2,3,6n-1)\# \Sigma(2,3,6n+5)$ from 
the cobordism $W_n$ in Section~\ref{section 5.3} with reversed orientation.
Therefore, we have $r_0(-\Sigma(2,3,6n-1)\# \Sigma(2,3,6n+5))=\infty$ 
and $s_\infty (-\Sigma(2,3,6n-1)\# \Sigma(2,3,6n+5)) =0$. This completes the proof.
\end{proof}
%\begin{cor}
%\label{non iso}
%\sout{The sequence } \sout{$\{a_n\}_{n=1}^{\infty} := \{[\Sigma(2,3,6n-1)\# -\Sigma(2,3,6n+5)]\}_{n=1}^{\infty}$}
%\sout{ converges to $[S^3]$ in $\ker h/\Theta^3_{\z,\infty}$.}
%\end{cor}
%\setcounter{section}{1}
%\setcounter{thm}{18}
%\begin{thm}
%\sout{For any element $a \in \ker h/ \Theta^3_{\z,\infty}$, any open neighborhood of $a$ contains infinitely many elements. In particular, the induced topology of $d_\infty$ on $\ker h/ \Theta^3_{\z,\infty}$ is not homeomorphic to the discrete topology on  $\ker h/ \Theta^3_{\z,\infty}$.}
%\end{thm}
%\begin{proof}
%For any $a \in \ker h/\Theta^3_{\z,\infty}$, we define $b_n:= a+a_n= a + [\Sigma(2,3,6n-1)\# -\Sigma(2,3,6n+5)]$. Then the sequence $\{b_n\}_{n=1}^{\infty}$ converges to $a$ by Corollary~\ref{non iso}. This completes the proof. 
%\end{proof}
%\setcounter{section}{6}
%\setcounter{thm}{2}
Here we post the following question:
\begin{ques}
\label{ques distance}
Does the equality
\[
 d_\infty([S^3] , [\Sigma(2,3,6n-1)\# -\Sigma(2,3,6n+5)]) = \frac{1} {4(6n-1)(6n+5)} 
 \]
hold?
\end{ques}
If the equality holds, then the sequence
\[
\{a_n\}_{n=1}^\infty := \{[\Sigma(2,3,6n-1)\# -\Sigma(2,3,6n+5)]\}_{n=1}^\infty
\]
converges to $[S^3]$ in $\ker h/ \Theta_{\z, \infty}$.
In particular, we would conclude that the topology on $\ker h/ \Theta_{\z, \infty}$
induced by $d_{\infty}$ is different from the discrete topology on $\ker h/ \Theta_{\z, \infty}$.
%%%%%%%%%6.2

%%%%%%%%%%%%%%%%%%Section 7
\section{Computation for a hyperbolic 3-manifold}\label{Calculations}
In this section, we give approximations of the critical values of the Chern-Simons functional on a certain hyperbolic 3-manifold.
Moreover, using the computer, we obtain an approximated value of $r_s(Y)$ for this hyperbolic 3-manifold.

%%%
\subsection{$1/n$-surgery along a knot $K$}
We here review a formula of $\cs$ due to Kirk and Klassen~\cite{KiKl90}, and explain our method of computing an approximate value of $\cs$.
For a compact manifold $M$, we define $\calR(M)$ by $\calR(M) = \Hom(\pi_1(M),\SL(2,\co))$ and call it the \emph{$\SL(2,\co)$-representation space} of $M$.
In this paper, we equip $\calR(M)$ with the compact open topology.

%Here, $\rho \in \calR$ is said to be \emph{parabolic} if $\rho$ is non-trivial with $\Tr\rho(x) = \pm2$ for any $x \in \pi_1(M)$.
For a knot $K$ in $S^3$, let $E(K)$ denote the exterior of an open tubular neighborhood of $K$, and $\mu$, $\lambda \in \pi_1(T^2)$ a meridian and (preferred) longitude respectively.

\begin{thm}[{\cite[Theorem~4.2]{KiKl90}}]\label{KK}
 Let $\rho_0$, $\rho_1$ be $\SU(2)$-representations of $\pi_1(S^3_{1/n}(K))$ and 
 $\gamma\colon [s_0,s_1] \to \{\rho \in \calR(E(K)) \mid \text{$\rho|_{\pi_1(T^2)}$ is completely reducible}\}$
 a piecewise smooth path with $\gamma(s_i)=\rho_i$ in $\calR(E(K))$.
 Then, 
\begin{align}\label{KKformula}
\cs(\rho_1) - \cs(\rho_0) \equiv 2\int_{s_0}^{s_1} \beta(s)\alpha'(s) ds +n\left(\beta(s_1)^2-\beta(s_0)^2\right) \mod \z,
\end{align}
 where $\alpha, \beta \colon [s_0,s_1] \to \co$ are piecewise smooth functions such that the matrices $\gamma(s)(\mu)$, $\gamma(s)(\lambda)$ are simultaneously diagonalized as 
\[
\begin{pmatrix}
 e^{2\pi i \alpha(s)} & 0 \\
 0 & e^{-2\pi i \alpha(s)}
\end{pmatrix},\ 
\begin{pmatrix}
 e^{2\pi i \beta(s)} & 0 \\
 0 & e^{-2\pi i \beta(s)}
\end{pmatrix},
\]
respectively.
\end{thm}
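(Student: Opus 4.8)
\textbf{Proof proposal for Theorem~\ref{KK}.}

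The plan is to reduce the statement about $S^3_{1/n}(K)$ to the known Chern--Simons variation formula for paths of $\SU(2)$-representations of the torus bundle, i.e.\ to Kirk--Klassen's original computation for the knot exterior $E(K)$, and then to account for the Dehn filling by the explicit correction term $n(\beta(s_1)^2-\beta(s_0)^2)$. Concretely, $S^3_{1/n}(K)$ is obtained from $E(K)$ by gluing in a solid torus $V$ so that the curve $\mu\lambda^n$ (the $1/n$-framed meridian of the surgery) bounds a disk. Since $\rho_i$ are honest $\SU(2)$-representations of $\pi_1(S^3_{1/n}(K))$, their restrictions $\gamma(s_i)|_{\pi_1(T^2)}$ kill $\mu\lambda^n$, i.e.\ $\alpha(s_i)+n\beta(s_i)\in\z$. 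I would first record this boundary constraint, as it is exactly what makes the right-hand side of \eqref{KKformula} well-defined mod $\z$ independently of the chosen diagonalization/branch.

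First I would invoke the variation-of-Chern--Simons formula for a path of flat connections on $E(K)$ whose restriction to $\partial E(K)=T^2$ stays completely reducible: for such a path $\gamma$ one has
\[
\cs_{E(K)}(\rho_1)-\cs_{E(K)}(\rho_0)\equiv 2\int_{s_0}^{s_1}\beta(s)\,\alpha'(s)\,ds \pmod{\z},
\]
which is precisely the content of the interior part of \cite[Theorem~4.2]{KiKl90} (derived from the transgression formula $\tfrac{1}{8\pi^2}\int_{T^2\times[s_0,s_1]}\Tr(F\wedge F)$ evaluated on the boundary torus, together with the fact that on an abelian/completely reducible representation the Chern--Simons integrand reduces to the classical $2\beta\,d\alpha$ pairing of the $U(1)$-eigenvalue data). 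Next I would analyze the solid torus side. On $V=S^1\times D^2$ the relevant flat connection is determined by the holonomy of the core circle, which is conjugate to the image of $\lambda$ (since $\lambda$ is isotopic to the core of $V$ after the $1/n$-surgery identification), so its ``rotation number'' along the path is $\beta(s)$; a direct computation of the Chern--Simons invariant of a flat connection on a solid torus bounding $T^2$, relative to the same diagonalizing frame, contributes $-n\beta(s)^2$ at the endpoint parameter $s$ up to the usual mod-$\z$ ambiguity. (The coefficient $n$ enters because the meridian disk of $V$ is glued along $\mu\lambda^n$, so in the basis $(\mu,\lambda)$ the framing of the filling solid torus is $1/n$; the self-linking of the filling curve produces the quadratic term with coefficient $n$.)

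Then I would glue: write $S^3_{1/n}(K)=E(K)\cup_{T^2}V$, pick smooth flat connections over each piece representing $\gamma(s)$ and agreeing on the neck, and apply additivity of the Chern--Simons functional under gluing along $T^2$, which holds mod $\z$ because the indeterminacy of $\cs$ on a closed $3$-manifold lives in $\z$ and the boundary contributions cancel once the frames match. Collecting the interior term $2\int\beta\alpha'\,ds$ from $E(K)$ and the two endpoint terms $\pm n\beta(s_i)^2$ from inserting $V$ at $\rho_1$ and removing it conceptually at $\rho_0$ yields
\[
\cs(\rho_1)-\cs(\rho_0)\equiv 2\int_{s_0}^{s_1}\beta(s)\alpha'(s)\,ds+n\bigl(\beta(s_1)^2-\beta(s_0)^2\bigr)\pmod{\z},
\]
which is \eqref{KKformula}. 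I expect the main obstacle to be the bookkeeping of the mod-$\z$ ambiguities and of the trivialization/framing conventions: one must check that the same simultaneous diagonalization of $\gamma(s)(\mu),\gamma(s)(\lambda)$ is used consistently on both sides of the gluing torus, that the solid-torus contribution really is $-n\beta^2$ (and not, say, $-\tfrac1n\alpha^2$ or a term mixing $\alpha$ and $\beta$) for the $1/n$-framing, and that changing branches of $\alpha,\beta$ by integers changes the right-hand side by an integer — the last point being guaranteed exactly by the boundary relation $\alpha(s_i)+n\beta(s_i)\in\z$ noted at the outset. Everything else is the standard transgression computation, so I would cite \cite{KiKl90} for the interior piece rather than reproving it.
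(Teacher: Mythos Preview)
The paper does not give a proof of this statement; it is quoted from \cite[Theorem~4.2]{KiKl90} as a computational tool for Section~\ref{Calculations}, with only a short remark afterward on extending the formula to $\SL(2,\co)$-representations and on the meaning of ``piecewise smooth'' for paths in $\calR(E(K))$. So there is no in-paper proof to compare your proposal against.

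Your outline is essentially the Kirk--Klassen argument itself: decompose $S^3_{1/n}(K)=E(K)\cup_{T^2}V$, apply Stokes' theorem to $\tfrac{1}{8\pi^2}\Tr(F\wedge F)$ on $E(K)\times[s_0,s_1]$ to obtain the $2\int\beta\,\alpha'\,ds$ term on the torus boundary, and compute the Chern--Simons invariant of the filling solid torus at each endpoint. One small correction to your bookkeeping: the connections $\gamma(s)$ for intermediate $s$ do \emph{not} extend over $V$ (they are only representations of $\pi_1(E(K))$), so the solid-torus contribution is not the variation of $\cs_V$ along the path but rather the difference of the two endpoint values $\cs_V(\rho_1)-\cs_V(\rho_0)$, each of which makes sense precisely because $\alpha(s_i)+n\beta(s_i)\in\z$. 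With that caveat the structure you describe is correct and matches the original source.
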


\begin{rem}
Kirk and Klassen showed \Cref{KK} for a family of $\SU(2)$-connections.
As written in \cite[p.~354]{KiKl90}, the formula can be extended to the case of $\SL(2,\co)$. 
We need to define the smoothness of a path 
\[
\gamma\colon [s_0,s_1] \to \{\rho \in \calR(E(K)) \mid \text{$\rho|_{\pi_1(T^2)}$ is completely reducible}\}
\]
since Stokes' theorem is used in the proof of \Cref{KK}.
If we fix a generating system of $\pi_1(E(K))$, the space $\calR(E(K))$ can be embedded into $SL(2,\co)^N$, where $N$ is the number of generators.
If the composite of $\gamma\colon [s_0,s_1] \to \calR(E(K))$ and $\calR(E(K)) \to SL(2,\co)^N$ is piecewise smooth, we call $\gamma$ a piecewise smooth path.
For such a path $\gamma$ on $[s_0,s_1] = \bigcup_j I_j$, a piecewise smooth family of $\SL(2,\co)$-connections $A_s$ on $E(K)$ is defined by considering the inverse map of the holonomy correspondence.
Then we have a smooth connection on $E(K)\times I_j$ for each $j$, and one can check the formula \eqref{KKformula}.
\end{rem}

It is difficult to find a suitable path and compute the above integral in general.
For a 2-bridge knot $K$, the subspace $\calR^\irr(E(K))$ of the irreducible representations is explicitly described by the Riley polynomial as follows.
We first recall that $\pi_1(E(K))$ admits a presentation of the form $\langle x, y \mid wx=yw \rangle$, where $w$ is a certain word in $x$ and $y$ (see \cite[p.~358]{KiKl90}).
For $t \in \co\setminus\{0\}$, $u \in \co$ and $\varepsilon \in \{\pm 1\}$, let $\rho_{t,u,\varepsilon}$ denote the representation of the free group $\langle x, y \mid -\rangle$ of rank 2 given by 
\[
\rho_{t,u,\varepsilon}(x)=\varepsilon
\begin{pmatrix}
 \sqrt{t} & 1/\sqrt{t} \\
 0 & 1/\sqrt{t}
\end{pmatrix},\ 
\rho_{t,u,\varepsilon}(y)=\varepsilon
\begin{pmatrix}
 \sqrt{t} & 0 \\
 -\sqrt{t}u & 1/\sqrt{t}
\end{pmatrix},
\]
where $\sqrt{re^{i\theta}}=\sqrt{r}e^{i\theta/2}$ for $r \geq 0$ and $-\pi < \theta \leq \pi$.
Here, the Riley polynomial of $K$ (for the above presentation) is defined by $\phi(t,u) = w_{11}+(1-t)w_{12} \in \z[t^{\pm 1/2},u]$, where $w_{ij}$ is the $(i,j)$-entry of $\rho_{t,u,\varepsilon}(w)$.
Then $\rho_{t,u,\varepsilon}$ gives a representation of $\pi_1(E(K))$ if and only if $\phi(t,u)=0$.
Moreover, any irreducible representation of $\pi_1(E(K))$ is conjugate to $\rho_{t,u,\varepsilon}$ for some $t$, $u$ and $\varepsilon$.

Here, $\rho_{t,u,\varepsilon}$ is conjugate to an $\SU(2)$-representation if and only if $|t|=1$, $t \neq 1$ and $u \in (t+t^{-1}-2, 0)$.
Note that an $\SU(2)$-representation $\rho_{t,u,\varepsilon}$ is $\SU(2)$-conjugate to $\rho_{t^{-1},u,\varepsilon}$.

Let us find a path from $\rho_{t_0,u_0,\varepsilon_0}$ to $\rho_{t_1,u_1,\varepsilon_1}$ in
$$\{\rho \in \calR^\irr(E(K)) \mid \text{$\rho|_{\pi_1(T^2)}$ is completely reducible}\}.$$
First note that one needs not care about $\varepsilon_i$ since the right-hand side of \eqref{KKformula} is independent of the choice of $\varepsilon_i$.
Consider the $d$-fold branched cover 
$$\pr_1\colon \{(t,u) \in (\co\setminus\{0,1\})\times\co \mid \phi(t,u)=0\} \to \co\setminus\{0,1\},$$
where $d=\deg_u \phi$.
In order to find a path, we first take a path $\gamma$ from $t_0$ to $t_1$ and its lift $\tilde{\gamma}$ satisfying $\pr_2\circ\tilde{\gamma}(s_j)=u_j$.
Since the lift starting from $(t_0,u_0)$ might end at $(t_1,u_1')$ with $u_1' \neq u_1$, one should choose a path $\gamma$ carefully.
We now have $\alpha(s)=\frac{1}{4\pi i}\log\gamma(s)$ with an analytic continuation along $\gamma$.

Once the function $u(s)$ satisfying $\tilde{\gamma}(s)=(\gamma(s),u(s))$ is given explicitly, one gets $\beta(s)=\frac{1}{2\pi i}\log (P^{-1}\rho_{\gamma(s),u(s),\varepsilon}(\lambda)P)_{11}$, where $P=P(s)$ is a matrix satisfying $(P^{-1}\rho_{\gamma(s),u(s),\varepsilon}(\mu)P)_{11} = e^{2\pi i\alpha(s)}$.
We finally integrate $\beta(s)\alpha'(s)$ on $[s_0,s_1]$.

In fact, one can express $u(s)$ explicitly by solving $\phi(t,u)=0$ when $\deg_u\phi \leq 4$.
Here, we should be careful to connect the solutions.
For instance, let $\phi(t,u)=t-u^2$.
Then we have $u_0(t)=\sqrt{t}$, $u_1(t)=-\sqrt{t}$.
In order to find a path from $(i,e^{\pi i/4})$ to $(-i,e^{3\pi i/4})$, we define $\gamma\colon [1/2,3/2] \to \co$ by $\gamma(s)=e^{s \pi i}$.
The lift of $\gamma$ is obtained by combining $u_0$ and $u_1$:
$$
\tilde{\gamma}(s) = 
\begin{cases}
 (\gamma(s), u_0(\gamma(s))) & \text{if $1/2 \leq s \leq 1$,} \\
 (\gamma(s), u_1(\gamma(s))) & \text{if $1 < s \leq 3/2$.}
\end{cases}
$$

\begin{rem}
 It is difficult to solve $\phi(t,u)=0$ and $\rho_{t,u,\varepsilon}(\mu\lambda^n)=I_2$ simultaneously.
 We actually use the $A$-polynomial $A_K(L,M) \in \z[L,M]$ of $K$.
 Indeed, first solve the one variable equation $A_K(L,L^{-n})=0$, and then $t=M^2=L^{-2n}$.
 We next solve $\phi(L^{-2n},u)=0$ with respect to $u$.
\end{rem}

\subsection{$1/2$-surgery along the knot $5_2^\ast$}
We actually consider the manifold $S^3_{-1/2}(5_2) = -S^3_{1/2}(5_2^\ast)$ and multiply the result of computation of $\cs(\rho)$ by $-1$.
Recall that $\cs_Y(\rho) = -\cs_{-Y}(\rho)$.

We first fix the presentation of the group $\pi_1(E(5_2))$ as $\langle x, y \mid [y,x^{-1}]^2x = y[y,x^{-1}]^2 \rangle$, where a meridian and longitude are expressed as $x$ and $[x,y^{-1}]^2[y,x^{-1}]^2$, respectively.
Then the Riley polynomial and $A$-polynomial of $5_2$ are given by 
\begin{align*}
 \phi(t,u) &= -(t^{-2}+t^2)u+(t^{-1}+t)(2+3u+2u^2)-(3+6u+3u^2 +u^3), \\
 A_{5_2}(L,M) &= -L^3 -M^{14} +L^2 (1-2M^2 -2M^4 +M^8 -M^{10}) \\
 &\quad +LM^4(-1+M^2 -2M^6 -2M^8 +M^{10}).
\end{align*}

\begin{figure}[h]
 \centering
 \includegraphics[width=0.4\columnwidth]{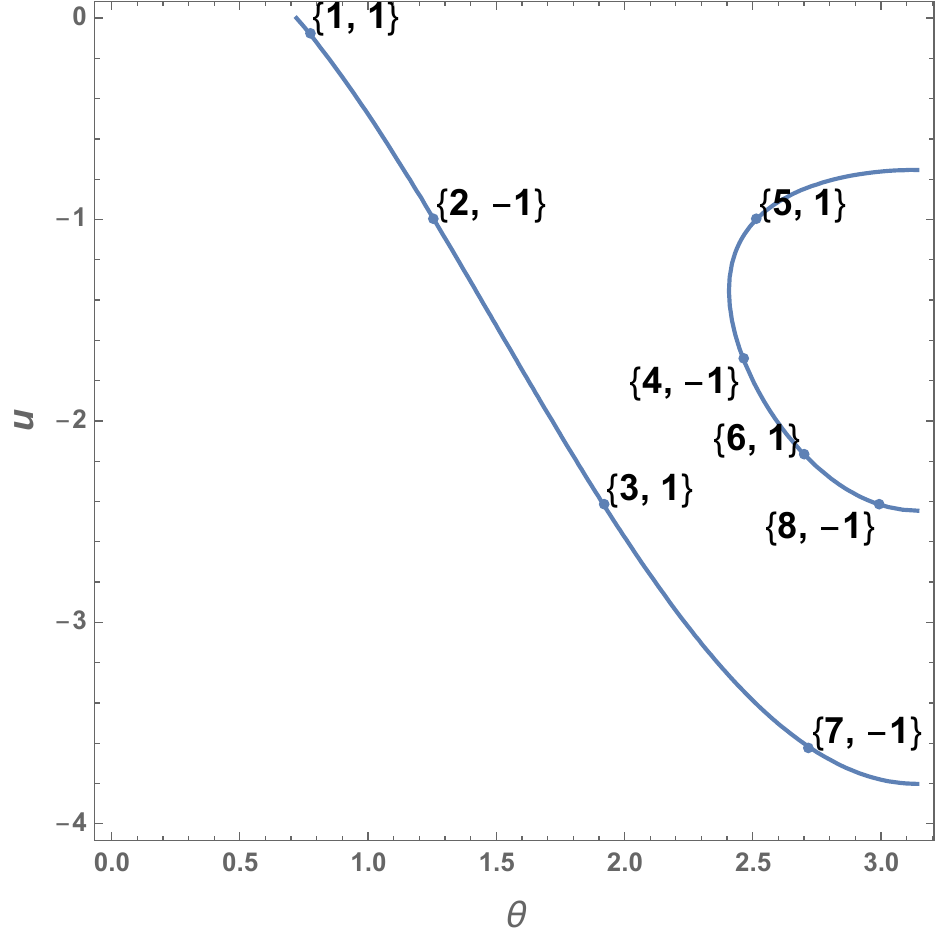}
 \caption{The 8 non-trivial representations of $\pi_1(S^3_{-1/2}(5_2))$ in the quotient space obtained from the non-abelian representations of $\pi_1(E(5_2))$ by identifying $\rho_{t,u,\varepsilon}$ and $\rho_{t,u,-\varepsilon}$, where $t=e^{i\theta}$.
 The second entry of a label indicates $\varepsilon$.}
 \label{fig:RepSp}
\end{figure}

Here, one sees that there are 8 conjugacy classes of non-trivial $\SU(2)$-representations of $\pi_1(S^3_{-1/2}(5_2))$ as drawn in Figure~\ref{fig:RepSp}.
Strictly speaking, these are candidates of representations because of a numerical computation.
Assume that some of them do not give representations.
Then there exists a non-trivial representation $\rho$ close to one of the candidates such that $H^1(S^3_{-1/2}(5_2); \su_{{\Ad}\circ\rho}) \neq 0$ since the Casson invariant of $S^3_{-1/2}(5_2)$ is equal to $-4$ and $|-4|\times 2=8$.
Here, since $\rho$ is non-abelian, we have $\rank_\R \partial_1 =3$ in the chain complex
\[
C_\ast(E(5_2); \su_{{\Ad}\circ\rho}) =
\begin{cases}
 \R^3 & \text{if $\ast=0,2$,} \\
 \R^6 & \text{if $\ast=1$,} \\
 0 & \text{otherwise}
\end{cases}
\]
obtained from the above presentation of $\pi_1(E(5_2))$.
It follows from the Mayer-Vietoris exact sequence and Poincar\'e-Lefschetz duality that $\rank_\R \partial_2 \leq 1$.
On the other hand, we can see that this inequality does not hold for the 8 candidates by the computer.
Therefore, they correspond to true representations.
%we can confirm that $\rank_\R \partial_2 \geq 2$ for the 8 candidates 
%Indeed, the computer finds 8 candidates of representations $\rho$ 
%for the true representation $\rho$ close to each candidate, if it exists, then we can check that $H^1(S^3_{-1/2}(5_2); \su_{{\Ad}\circ\rho}) = 0$.
%On the other hand, the Casson invariant of $S^3_{-1/2}(5_2)$ is equal to $-4$.

\begin{figure}[h]
 \centering
 \includegraphics[width=0.5\columnwidth]{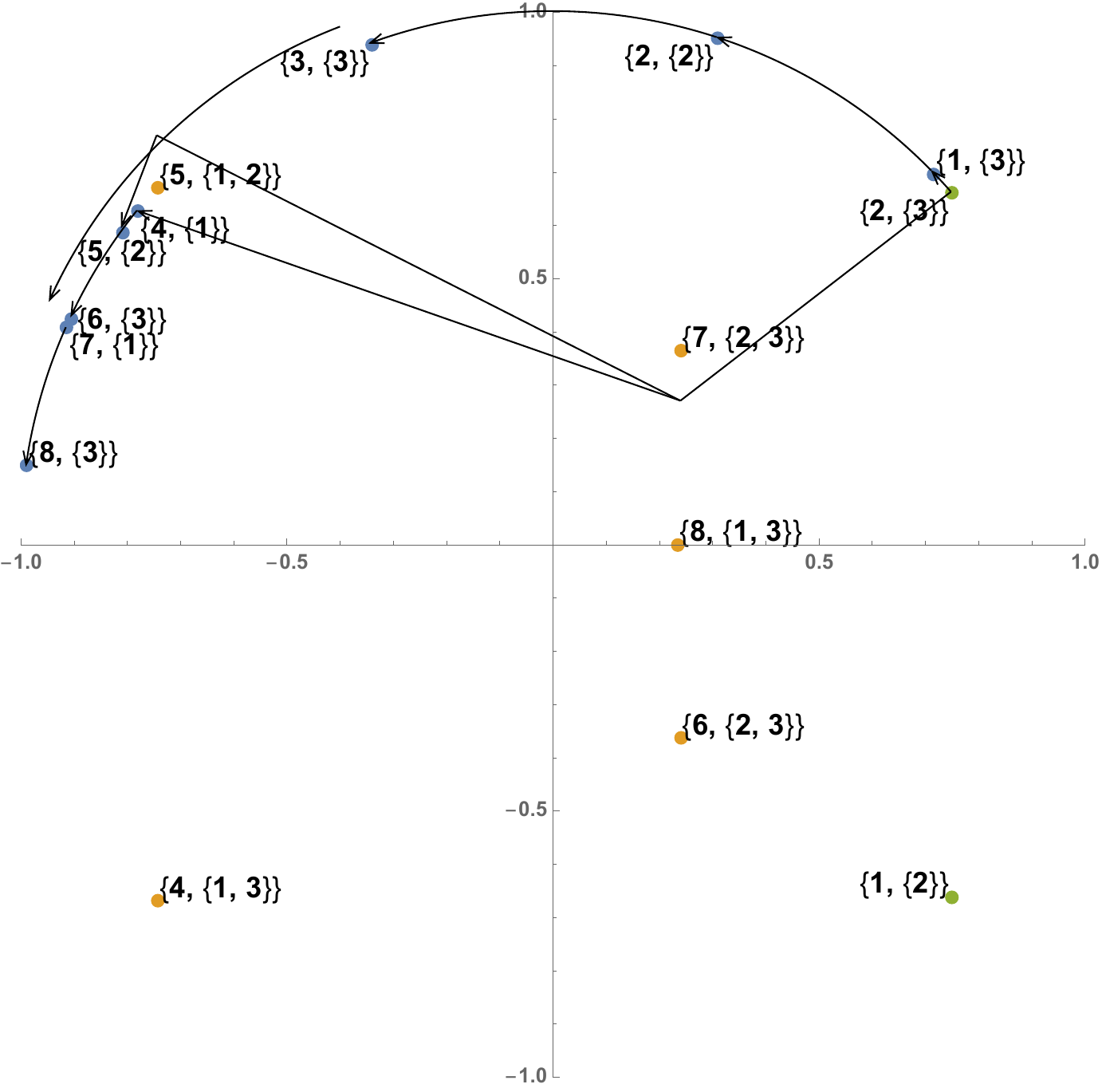}
 \caption{The blue (resp.\ green, orange) dots correspond to representations of $\pi_1(S^3_{-1/2}(5_2))$ (resp.\ the roots of $\Delta_{5_2}(t)$, some branched points).
 Here the label $\{i,\{j\}\}$ (resp.\ $\{i,\{j_1,j_2\}\}$) at $t \in \co$ means $\phi(t, u_j(t))=0$ (resp.\ $\phi(t, u_{j_k}(t))=0$ for $k=1,2$).}
 \label{fig:tPlanePath}
\end{figure}

The following computation is based on Mathematica.
Since $\deg_u \phi = 3$, one gets the explicit solutions $u_1(t), u_2(t), u_3(t)$ of $\phi(t,u) = 0$.
We take 8 paths as illustrated in Figure~\ref{fig:tPlanePath} and apply \Cref{KK} to these paths.
Note that some paths start from a root of the Alexander polynomial $\Delta_{5_2}(t)$ of $5_2$, and for these paths we use \cite[Lemma~5.3]{Fr82} to compute integrals.
The result of the computation is listed in Table~\ref{tab:SU2reps}.

\begin{table}[h]
\centering
$
\begin{array}{r|r|r|r|r}
 & t & u & \varepsilon & -\cs \\\hline
 \rho_1 & 0.716932 + 0.697143 i & -0.0755806 & 1 & 0.00176489 \\\hline
 \rho_2 & 0.309017 + 0.951057 i & -1.00000 & -1 & 0.166667 \\\hline
 \rho_3 & -0.339570 + 0.940581 i & -2.41421 & 1 & 0.604167 \\\hline
 \rho_4 & -0.778407 + 0.627759 i & -1.69110 & -1 & 0.388460 \\\hline
 \rho_5 & -0.809017 + 0.587785 i & -1.00000 & 1 & 0.166667 \\\hline
 \rho_6 & -0.905371 + 0.424621 i & -2.16991 & 1 & 0.865934 \\\hline
 \rho_7 & -0.912712 + 0.408603 i & -3.62043 & -1 & 0.321158 \\\hline
 \rho_8 & -0.988857 + 0.148870 i & -2.41421 & -1 & 0.604167 
\end{array}
$
\vspace*{5pt}
\caption{The values of $-\cs$ for the representations of $\pi_1(S^3_{-1/2}(5_2))$.
Note that $0.16666\cdots67 \approx 1/6$ and $0.60416\cdots67 \approx 29/48$, where both decimals have 46 digits of 6's in the omitted part.}
\label{tab:SU2reps}
\end{table}%

Recall that $r_s(S^3_1(5_2^\ast))=1/4\cdot 2\cdot 3\cdot 11 \approx 0.00379$.
Since 0.00176489 is the only value less than 0.00379 among the 8 values, we conclude that $r_s(S^3_{1/2}(5_2^\ast)) \approx 0.00176489$.
Moreover, we improve the precision, and get
$$
r_s(S^3_{1/2}(5_2^\ast)) \approx 
0.0017648904\ 7864885113\ 0739625897\ 0947779330\ 4925308209
$$
for all $s \in [-\infty, 0]$.

%%%%%%%%%%%%%%%%%%%%%%%%%%%%
\appendix
\section{Hendricks, Hom, Stoffregen, and Zemke's example} \label{sec:HHSZ20}
In \cite{HHSZ20}, they intensively studied the homology 3-sphere obtained from $S^3$ by Dehn surgery along the framed knot at the top-left in Figure~\ref{fig:Kirby3}.
This appendix is devoted to showing that their homology 3-sphere is a graph manifold.
%
%As mentioned in \Cref{subsec:IntroHyperbolic}, our observation is motivated by highlighting hyperbolic 3-manifolds, especially Question~\ref{ques:graph}.
%

\begin{figure}[h]
 \centering
 \includegraphics[width=\textwidth]{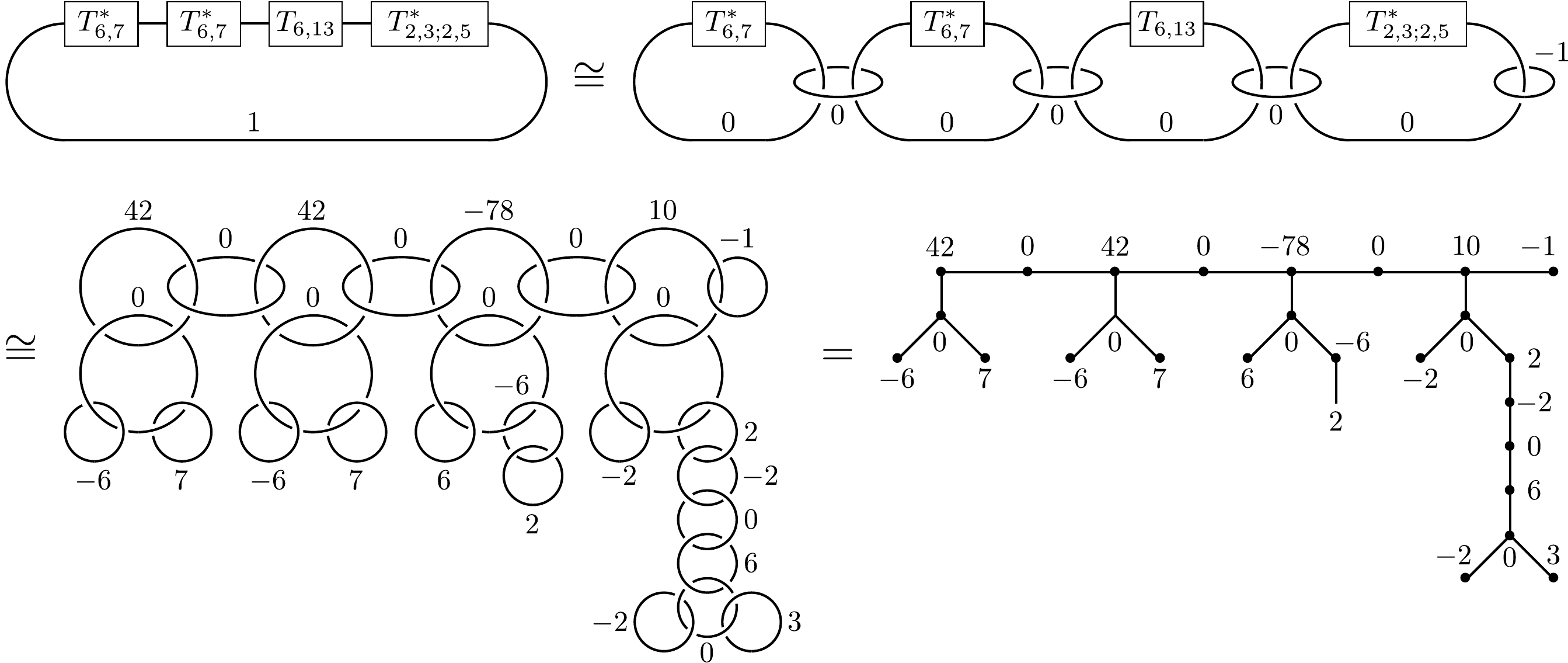}
 \caption{A diffeomorphism between $S^3_1(2T_{6,7}^\ast \# T_{6,13} \# T_{2,3;2,5}^\ast)$ and a graph manifold.}
 \label{fig:Kirby3}
\end{figure}

The first diffeomorphism in Figure~\ref{fig:Kirby3} follows from standard Kirby calculus.
In order to prove the second diffeomorphism, we consider three 3-manifolds obtained by Dehn surgery along $T_{6,7}^\ast$, $T_{6,13}$, and the mirror of the $(2,5)$-cable of $T_{2,3}$, respectively.
Here we put framed knots in these 3-manifolds as drawn in thick lines in Figures~\ref{fig:Kirby1} and \ref{fig:Kirby2}.
Then, regarding Figures~\ref{fig:Kirby1} and \ref{fig:Kirby2} as diffeomorphisms of the exteriors of the framed knots, respectively, one obtains the 3-manifold in Figure~\ref{fig:Kirby3}.

\begin{figure}[h]
 \centering
 \includegraphics[width=\textwidth]{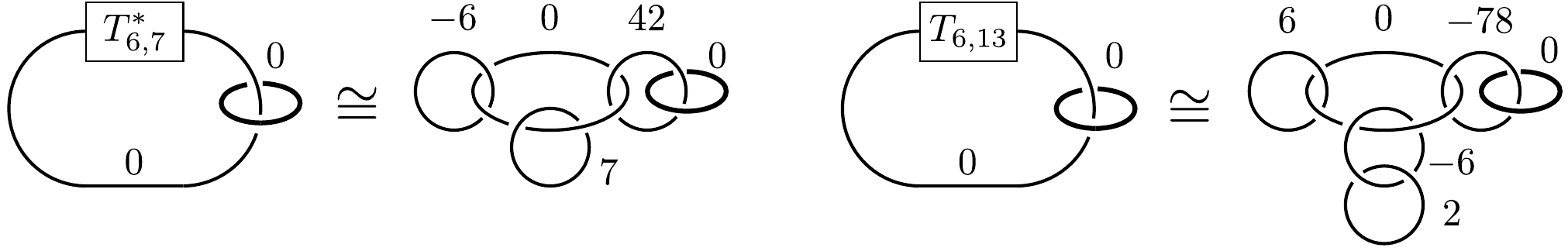}
 \caption{Diffeomorphisms between 3-manifolds with framed knots. Note that the thick components are not used for surgery.}
 \label{fig:Kirby1}
\end{figure}

In Figure~\ref{fig:Kirby1}, the diffeomorphisms between 3-manifolds with framed knots are shown by Kirby calculus including a Rolfsen twist (or the slam-dunk)

\begin{figure}[h]
 \centering
 \includegraphics[width=\textwidth]{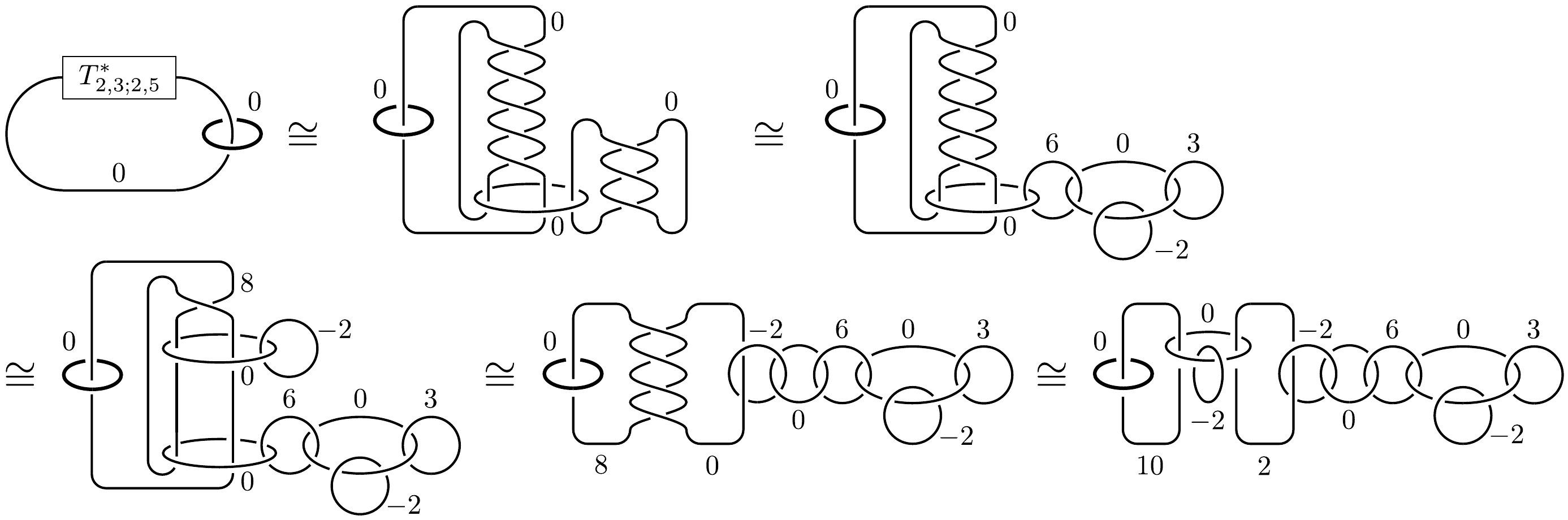}
 \caption{Diffeomorphisms of 3-manifolds with framed knots.}
 \label{fig:Kirby2}
\end{figure}

In Figure~\ref{fig:Kirby2}, the first diffeomorphism follows from the definition of the $(2,5)$-cable of $T_{2,3}$.
The fourth diffeomorphism is obtained by sliding the 0-framed unknot at the bottom to the one at the top.
The rest of the diffeomorphisms are shown by standard Kirby calculus.

\bibliographystyle{hplain}
\bibliography{tex}

\end{document}